\def\draft{n}
\documentclass[12pt]{amsart}
\usepackage[headings]{fullpage}
\usepackage{amssymb,epic,eepic,epsfig,amsbsy,amsmath,amscd,color,hyperref,bm}
\usepackage{bbm} 
\usepackage{enumitem,fvextra} 
\usepackage{ulem}


\usepackage{tikz,ifthen}
\usetikzlibrary{positioning, math, decorations.markings, arrows.meta}
\usetikzlibrary{calc,shapes,cd}
\usetikzlibrary{knots} 
\usepgfmodule{decorations}


\tikzset{edge/.style={line width=0.8}}
\tikzset{wall/.style={very thick}} 

\tikzset{->-/.style n args={2}{decoration={markings, mark=at position #1 with {\arrow{#2}}}, postaction={decorate}}} 

\ExplSyntaxOn
\cs_new_eq:NN \ifstreqF \str_if_eq:nnF
\cs_new_eq:NN \ifstreqTF \str_if_eq:nnTF
\ExplSyntaxOff

\tikzset{-o-/.code 2 args={\ifstreqF{#2}{} 
{\ifstreqTF{#2}{>}
   {\pgfkeysalso{decoration={markings,mark=at position #1 with {\arrow[scale=0.8]{#2}}}
                    ,postaction={decorate}}
    }
   {\ifstreqTF{#2}{<}
       {\pgfkeysalso{decoration={markings,mark=at position #1 with {\arrow[scale=0.8]{#2}}}
                    ,postaction={decorate}}
        }
       {\pgfkeysalso{decoration={markings,
                    mark=at position #1 with 
                    {\draw[black, fill={#2}] circle[radius=2pt];}}
                    ,postaction={decorate}}
        }
     }
  }}}





\newcommand{\diskpic}[1]{ 
\raisebox{-0.4\height}{\begin{tikzpicture}
    \begin{scope}[scale=1.2]
    \tikzmath{\rad=0.4;\d=0.3;}
    \fill[gray!20!white] (0,0)circle(\rad);
    #1
    \end{scope}
\end{tikzpicture}
}} 

\newcommand{\squarepic}[1]{ 
\raisebox{-0.4\height}{\begin{tikzpicture}[scale=1.2]
    \tikzmath{\rad=0.4;\d=0.3;}
    \fill[gray!20!white] (-\rad,-\rad) rectangle (\rad,\rad);
    #1
\end{tikzpicture}
}} 

\newcommand{\crossround}[2]{
\diskpic{
    \ifthenelse{\equal{#2}{p}}
     {\draw[edge, #1] (-\d,\d) -- (\d,-\d);
    \ifthenelse{\equal{#1}{<-}}
      {\draw[edge] (\d,\d) -- (0.05,0.05);
       \draw[edge, <-] (-\d,-\d) -- (-0.05,-0.05);}
      {\draw[edge, #1] (0.05,0.05)-- (\d,\d);
       \draw[edge] (-\d,-\d) -- (-0.05,-0.05);}}
    {
      \draw[edge, #1] (-\d,-\d) -- (\d,\d);
      \ifthenelse{\equal{#1}{<-}}
     {\draw[edge, <-] (-\d,\d) -- (-0.05,0.05);
    \draw[edge] (\d,-\d) -- (0.05,-0.05);}
    {\draw[edge] (-\d,\d) -- (-0.05,0.05);
    \draw[edge,#1] (0.05,-0.05)-- (\d,-\d);}
}    }}

\newcommand{\smoothA}[2]{
\diskpic{
\clip (0,0)circle(\rad);
\ifthenelse{\equal{#1}{<}}{
\draw[edge, -o-={.2}{#1}] (-\d,\d)..controls (0,0)..(\d,\d);
}{
\draw[edge, -o-={.9}{#1}] (-\d,\d)..controls (0,0)..(\d,\d);}
\ifthenelse{\equal{#2}{<}}{
\draw[edge,-o-={.2}{#2}] (-\d,-\d)..controls (0,0)..(\d,-\d);
}{
\draw[edge,-o-={.9}{#2}] (-\d,-\d)..controls (0,0)..(\d,-\d);
}}}

\newcommand{\smoothB}[2]{
\diskpic{
\clip (0,0)circle(\rad);
\draw[edge, -o-={.9}{#1}] (-\d,-\d)..controls (0,0)..(-\d,\d);
\draw[edge,-o-={.2}{#2}] (\d,-\d)..controls (0,0)..(\d,\d);
}}

\newcommand{\emptyd}[1][]{
\raisebox{-0.4\height}{\begin{tikzpicture}[scale=0.5]
    \tikzmath{\yo=-0.3; \xw=2; \yw=1.1;}
    \fill[gray!20!white] (0,\yo) rectangle (\xw,\yw);
    \ifthenelse{\equal{#1}{w}}{\draw[wall] (0,\yo) -- (0,\yw); \draw[wall] (\xw,\yo) -- (\xw,\yw);}{}
\end{tikzpicture}
}}

\newcommand{\circlediag}[2][]{
\raisebox{-0.4\height}{\begin{tikzpicture}[scale=0.5]
    \tikzmath{\yo=-0.3; \xw=2; \yw=1.1;}
    \fill[gray!20!white] (0,\yo) rectangle (\xw,\yw);
    \ifthenelse{\equal{#1}{w}}{\draw[wall] (0,\yo) -- (0,\yw); \draw[wall] (\xw,\yo) -- (\xw,\yw);}{}
    \draw[edge, -o-={0.1}{#2}] (\xw/2,{(\yo + \yw)/2}) circle (0.4);
\end{tikzpicture}
}}

\tikzset{knotarrow/.pic={ \draw[edge, <-] (0,0) -- +(-.001,0);}}

\newcommand{\kink}[1][]{\raisebox{-0.5\height}{
\begin{tikzpicture}[pics/arrow/.style={code={%
  \draw[line width=0pt,-{Computer Modern Rightarrow[line
  width=0.8pt,width=1ex,length=.6ex]}] (-0.5ex,0) -- (0.5ex,0);}}, scale=0.5]
   \fill[gray!20!white] (0,-0.6) rectangle (2,1); 
   \ifthenelse{\equal{#1}{w}}{\draw[wall] (0,-0.6) -- (0,1); \draw[wall] (2,-0.6) -- (2,1);}{}
    \begin{knot}[clip width=5, clip radius=5pt, consider self intersections,
    end tolerance=3pt, background color=gray!20!white, fill opacity=0.5]
       \strand[edge] (2,0)
          to[out=left, in=right] pic[pos=0.5,sloped]{arrow} (1.4,0) 
          to[out=left, in=down] (0.6,0.4)
          to[out=up, in=left] (1,0.8)
          to[out=right, in=up]  (1.4,0.4)
          to[out=down, in=right]  (0.6,0)
          to[out=left, in=right]  (0,0);
    \end{knot}
  \end{tikzpicture}}}

\newcommand{\twokinks}[1]{\squarepic{
\tikzmath{\h=-0.2;}
\draw[edge, -o-={0.7}{#1}] (-\rad,\h) --(-\rad/4,\h);
\draw[edge, -o-={0.7}{#1}] (\rad/4,\h) --(\rad,\h);
\draw[edge,-o-={0.5}{<}] (-\rad/4,\h) ..controls (-\rad,\rad) and (\rad,\rad).. (\rad/4,\h);
}}

\newcommand{\horizontaledge}[1]{\squarepic{
\draw[edge, -o-={0.8}{#1}] (-\rad,0) --(\rad,0);
}}

\newcommand{\Iweb}[2]{
\raisebox{-.2in}{
\begin{tikzpicture}[scale=1.4]
    \tikzmath{\d=0.3;\h=0.2;}
    \fill[gray!20!white] (0,0)circle(0.4);
    \draw[edge, decoration={markings, mark=at position 0.7 with {\arrow{#1}}},postaction={decorate}]  (-\d,\d)-- (-\h,0);
    \draw[edge, decoration={markings, mark=at position 0.7 with {\arrow{#1}}},postaction={decorate}]  (-\d,-\d)-- (-\h,0);
    \draw[edge, decoration={markings, mark=at position 0.8 with {\arrow{#1}}},postaction={decorate}] (\h,0) -- (\d,-\d);
    \draw[edge, decoration={markings, mark=at position 0.8 with {\arrow{#1}}},postaction={decorate}] (\h,0) -- (\d,\d);
    \draw[edge, decoration={markings, mark=at position 0.5 with {\arrow{#1}}},postaction={decorate}] (\h,0) -- (-\h,0) node[midway, above] {\small{#2}};
\end{tikzpicture}
}   }

\newcommand{\sinksourcethree}[2][]{
\raisebox{-.35in}{\begin{tikzpicture}
        \tikzmath{\x1=0; \s=0.35; \b=-.004;
         \y1=\b+\s-0.08; \t=\b+4*\s; \y2=\y1+\s; \y3=\y2+\s; \y4=\y1+3*\s; \x2=1.4; }
         \fill[gray!20!white] (0,0)rectangle(\x2,\y4+.2);
        \ifthenelse{\equal{#1}{w}}{\draw[wall] (0,0) -- (0,{\y4+.2});\draw[wall] (\x2,0) -- (\x2,{\y4+.2});}{}
         \coordinate (V0) at (0.6,\t/2);
          \coordinate (V1) at (0.8,\t/2);
         \draw[edge, -o-={.35}{#2}]  (\x1,\y4) to [out=0, in=120] (V0); 
         \draw[edge, -o-={.4}{#2}] (\x1,\y1) to [out=0, in=-120] (V0); 
         \draw[edge, -o-={.65}{#2}]  (V1) to [out=60, in=180] (\x2,\y4); 
          \draw[edge, -o-={.65}{#2}] (V1) to [out=-60, in=180] (\x2,\y1);
        \draw[edge, -o-={.4}{#2}] (\x1,\y2) to [out=0, in=-150] (V0);
         \draw[edge, -o-={.58}{#2}] (V1) to [out=-30, in=180] (\x2,\y2);
         \node at ($(0.2,\y2)!.5!(0.2,\y4)$) {\vdots};
         \node at ($(\x2-0.2,\y2)!.5!(\x2-0.2,\y4)$) {\vdots};
         \end{tikzpicture}
}
}

\newcommand{\sinksourcethreewithtb}[3][]{
\raisebox{-.35in}{\begin{tikzpicture}[scale=0.4]
        \tikzmath{\x1=0; \s=0.8; \b=-.1;
         \y1=\b+\s-0.2; \t=\b+4*\s; \y2=\y1+\s; \y3=\y2+\s; \y4=\y1+3*\s; \x2=3.5; }
         \fill[gray!20!white] (0,0)rectangle(\x2,\y4+.5);
        \ifthenelse{\equal{#1}{w}}{\draw[wall] (0,0) -- (0,{\y4+.5});\draw[wall] (\x2,0) -- (\x2,{\y4+.5});}{}
         \coordinate (V0) at (1.5,\t/2);
          \coordinate (V1) at (2,\t/2);
         \draw[edge, -o-={.35}{#3}]  (\x1,\y4) to [out=0, in=120] (V0); 
         \draw[edge, -o-={.4}{#3}] (\x1,\y1) to [out=0, in=-120] (V0); 
         \draw[edge, -o-={.65}{#3}]  (V1) to [out=60, in=180] (\x2,\y4); 
          \draw[edge, -o-={.65}{#3}] (V1) to [out=-60, in=180] (\x2,\y1);
     
         \ifthenelse{\equal{#2}{b}}
        {\draw[edge, -o-={.4}{#3}] (\x1,\y2) to [out=0, in=-150] (V0);
         \draw[edge, -o-={.58}{#3}] (V1) to [out=-30, in=180] (\x2,\y2);
         \node at ($(1-0.5,\y2)!.5!(1-0.5,\y4)$) {\vdots};
         \node at ($(\x2-0.5,\y2)!.5!(\x2-0.5,\y4)$) {\vdots};}
         { \draw[edge, -o-={.4}{#3}] (\x1,\y3) to [out=0, in=150] (V0);
         \draw[edge, -o-={.58}{#3}] (V1) to [out=30, in=180] (\x2,\y3);
         \node at ($(1 -0.5,\y1+0.1)!.5!(1-0.5,\y3+0.1)$) {\vdots};
         \node at ($(\x2 -0.5,\y1+0.1)!.5!(\x2-0.5,\y3+0.1)$) {\vdots};}
         \end{tikzpicture}
}
}

\newcommand{\coupon}[6][]{
\raisebox{-.35in}{\begin{tikzpicture}
        \tikzmath{\x1=0; \s=0.35; \b=-.004;
         \y1=\b+\s-0.15; \t=\b+4*\s; \y2=\y1+\s; \y4=\y1+3*\s+0.05; \x2=1.4;} 
         \fill[gray!20!white] (0,0)rectangle(\x2,\y4+.2);
          \node at (\x2/2,\y4/2+.1) [ellipse, minimum height =2.5em, draw] (V) {\small{#2}\hspace*{-.05in}}; 
        \ifthenelse{\equal{#1}{w}}{\draw[wall] (0,0) -- (0,{\y4+.2});\draw[wall] (\x2,0) -- (\x2,{\y4+.2});}{}
         \coordinate (V0) at (0.6,\t/2);
          \coordinate (V1) at (0.8,\t/2);
         \draw[edge, -o-={.35}{#3}]  (\x1,\y4) node[left]{{\small #4}} to [out=0, in=120] (V); 
         \draw[edge, -o-={.35}{#3}] (\x1,\y1) node[left]{{\small #6}} to [out=0, in=-120] (V); 
         \draw[edge, -o-={.65}{#3}]  (V) to [out=60, in=180] (\x2,\y4); 
          \draw[edge, -o-={.65}{#3}] (V) to [out=-60, in=180] (\x2,\y1);
        \draw[edge, -o-={.5}{#3}] (\x1,\y2) node[left]{{\small #5}} to [out=0, in=-155] (V);
         \draw[edge, -o-={.58}{#3}] (V) to [out=-25, in=180] (\x2,\y2);
         \node at ($(0.1,\y2+.1)!.5!(0.1,\y4+.1)$) {\vdots};
         \node at ($(\x2-0.1,\y2+.1)!.5!(\x2-0.1,\y4+.1)$) {\vdots};
         \end{tikzpicture}
}
}



\newcommand{\edgewall}[4][]{
\raisebox{-0.4\height}{
\begin{tikzpicture}
\tikzmath{\xw=0.8;\yw=0.8;}
\fill[gray!20!white] (0,0)rectangle(\xw,\yw);
\draw[wall, #2] (\xw,0)--(\xw,\yw);
\ifthenelse{\equal{#1}{}}{}{\draw[wall, #1] (0,0) -- (0,\yw);}
\draw[edge, -o-={.5}{#3}] (0,\yw/2) -- (\xw,\yw/2)node[right]{{\small #4}};
\end{tikzpicture}
}}

\newcommand{\walledgewall}[5]{
\raisebox{-0.4\height}{
\begin{tikzpicture}
\tikzmath{\xw=1.2;\yw=0.8;}
\fill[gray!20!white] (0,0)rectangle(\xw,\yw);
\draw[wall, #2] (\xw,0)--(\xw,\yw);
\draw[wall, #1] (0,0) -- (0,\yw);
\draw[edge, -o-={.5}{#3}] (0,\yw/2) node[left]{{\small #5}} -- (\xw,\yw/2)node[right]{{\small #4}};
\end{tikzpicture}
}}

\newcommand{\wallcuspswall}[6]{
\raisebox{-0.4\height}{
\begin{tikzpicture}
\tikzmath{\xw=1.5;\yw=0.8;\yww=0.7;\dx=.05;}
\fill[gray!20!white] (0,0)rectangle(\xw,\yw);
\draw[wall, #2] (\xw,0)--(\xw,\yw);
\draw[wall, #1] (0,0) -- (0,\yw);
\draw[edge, -o-={.5}{#3}] (0,\yw/2) node[left]{{\small #6}} .. controls +(.2,0) and +(-\dx,-.3) ..  (\xw/3, \yww);
\draw[edge, -o-={.5}{#4}] (\xw/3, \yww) .. controls +(\dx,-.3) and +(-\dx,-.3) ..  (\xw*2/3, \yww);
\draw[edge, -o-={.5}{#3}] (\xw*2/3, \yww) .. controls +(\dx,-.3) and +(-.2,0) .. 
(\xw,\yw/2)node[right]{{\small #5}};
\end{tikzpicture}
}}



\newcommand{\twowallpic}[6][0]{ 
\raisebox{-0.4\height}{\begin{tikzpicture}
        \tikzmath{\xwidth=0.7;}
        \fill[gray!20!white] (#1,0)rectangle(\xwidth,1);
        \ifthenelse{\equal{#3}{left}}{\tikzmath{\xw=0;}}{\tikzmath{\xw=\xwidth;}}
        \ifthenelse{\equal{#2}{<-}}{\tikzmath{\yw=0.2;}}{
        \ifthenelse{\equal{#2}{->}}{\tikzmath{\yw=0;}}{\tikzmath{\yw=0.1;}}}
        \ifthenelse{\equal{#2}{n}}{}{\draw[wall, #2] (\xw,0) --(\xw,1);}
        \tikzmath{\ya=\yw+0.6; \yb=\yw+0.2;} 
        \coordinate[label={#3:{\small #4}}] (A) at (\xw,\ya); 
        \coordinate[label={#3:{\small #5}}] (B) at (\xw,\yb);
        #6
\end{tikzpicture}
}\hspace*{-.1in}} 

\newcommand{\walltwo}[5]{
\twowallpic{#1}{left}{#4}{#5}{
\draw[edge, -o-={0.5}{#2}] (A) --(\xwidth,\ya);
\draw[edge, -o-={0.5}{#3}] (B) --(\xwidth,\yb);}}

\newcommand{\wallcross}[6]{
\twowallpic{#1}{left}{#5}{#6}{
\tikzmath{\xcent=\xwidth*4/7;\ycent=\ya/2+\yb/2;}
\ifthenelse{\equal{#2}{p}}
    {\draw[edge,  -o-={0.25}{#3}] (0,\ya) to [out=0, in=180]  (\xwidth,\yb);
    \draw[edge,  -o-={0.5}{#4}] (0,\yb) to [out=0, in=-135] (\xcent-0.08,\ycent-0.08);
    \draw[edge] (\xcent+0.04,\ycent+0.04) to [out=45, in=180] (\xwidth,\ya);
    }   
    {\draw[edge, -o-={0.2}{#3}] (0,\yb) to [out=0, in=180]  (\xwidth,\ya);
    \draw[edge, -o-={0.5}{#4}] (0,\ya) to [out=0, in=135] (\xcent-0.08,\ycent+0.08);
    \draw[edge] (\xcent+0.045,\ycent-0.045) to [out=-45, in=180] (\xwidth,\yb);}
    }}
   
\newcommand{\twowall}[5]{
\twowallpic{#1}{right}{#4}{#5}{
\draw[edge, -o-={0.5}{#2}] (0,\ya)-- (A);
\draw[edge, -o-={0.5}{#3}] (0,\yb)-- (B);
}}

\newcommand{\Hwall}[6]{
\twowallpic{#1}{right}{#5}{#6}{
\draw[edge, -o-={0.25}{#2}, -o-={0.75}{#3}] (0,\ya)-- (A);
\draw[edge, -o-={0.25}{#3}, -o-={0.75}{#2}] (0,\yb)-- (B);
\draw[edge, -o-={0.7}{#4}] (\xw/2, \yb)--(\xw/2,\ya);
}}

\newcommand{\twoonetwowall}[7][]{
\raisebox{-0.4\height}{\begin{tikzpicture}
        \ifthenelse{\equal{#2}{<-}}{\tikzmath{\y0=0.2;\y1=.1;}}{
        \ifthenelse{\equal{#2}{->}}{\tikzmath{\y0=0; \y1=.2;}}{\tikzmath{\y0=0.1; \y1=0.1;}}}
        \tikzmath{\xw=1.5;\yb=\y0+0.1; \ya=\yb+0.4; \ycent=\ya/2+\yb/2; \yh=\ya+ .1+\y1;}
        \fill[gray!20!white] (0,0)rectangle(\xw,\yh);
        \ifthenelse{\equal{#2}{n}}{}{\draw[wall, #2] (\xw,0) --(\xw,\yh);} 
        \ifthenelse{\equal{#1}{w}}{\draw[wall] (0,0) --(0,\yh);}{}
        \draw[edge, -o-={0.8}{#3}] (1.05,\ycent) to[out=30, in=180] (\xw,\ya) node[right] {\small #6};
        \draw[edge, -o-={0.8}{#4}] (1.05,\ycent) to[out=-30, in=180] (\xw,\yb) node[right] {\small #7};
        \draw[edge] (0.45,\ycent) -- (1.05,\ycent);
        \coordinate[label={above:{\small #5}}] (B) at (0.85,\yb+0.2);
        \draw[edge, -o-={0.4}{#3}] (0,\ya) to[out=0, in=150] (0.45,\ycent);
        \draw[edge, -o-={0.4}{#4}] (0,\yb) to[out=0, in=-150]  (0.45,\ycent);
\end{tikzpicture}
}\hspace*{-.1in}}

\newcommand{\forkwall}[7][]{
\raisebox{-0.4\height}{\begin{tikzpicture}
        \ifthenelse{\equal{#2}{<-}}{\tikzmath{\y0=0.2;\y1=.1;}}{
        \ifthenelse{\equal{#2}{->}}{\tikzmath{\y0=0; \y1=.2;}}{\tikzmath{\y0=0.1; \y1=0.1;}}}
        \tikzmath{\xw=1.5;\yb=\y0+0.1; \ya=\yb+0.4; \ycent=\ya/2+\yb/2; \yh=\ya+ .1+\y1;}
        \fill[gray!20!white] (0.55,0)rectangle(\xw,\yh);
        \draw[wall, #2] (\xw,0) --(\xw,\yh); 
        \ifthenelse{\equal{#1}{w}}{\draw[wall] (0,0) --(0,\yh);}{}
        \draw[edge, -o-={0.5}{#3}] (1.05,\ycent) to[out=30, in=180] (\xw,\ya) node[right] {\small #6};
        \draw[edge, -o-={0.5}{#4}] (1.05,\ycent) to[out=-30, in=180] (\xw,\yb) node[right] {\small #7};
        \draw[edge, -o-={0.5}{#5}] (0.55,\ycent) -- (1.05,\ycent);
\end{tikzpicture}
}\hspace*{-.1in}} 

\newcommand{\crosswall}[6]{ 
\twowallpic{#1}{right}{#5}{#6}{
 \tikzmath{\xcent=\xwidth*4/7;\ycent=\ya/2+\yb/2;}
  \ifthenelse{\equal{#2}{p}}
    {\ifthenelse{\equal{#3}{>}}{\draw[edge,  -o-={0.4}{>}] (0,\ya) to [out=0, in=180]  (\xwidth,\yb);}
    {\draw[edge,  -o-={0.2}{#3}] (0,\ya) to [out=0, in=180]  (\xwidth,\yb);}
    \ifthenelse{\equal{#4}{>}}{\draw[edge,  -o-={0.9}{>}] (0,\yb) to [out=0, in=-135] (\xcent-0.08,\ycent-0.08);}
    {\draw[edge,  -o-={0.4}{#4}] (0,\yb) to [out=0, in=-135] (\xcent-0.08,\ycent-0.08);}
    \draw[edge] (\xcent+0.04,\ycent+0.04) to [out=45, in=180] (\xwidth,\ya);}
    {\ifthenelse{\equal{#4}{>}}{\draw[edge, -o-={0.4}{>}] (0,\yb) to [out=0, in=180]  (\xwidth,\ya);}
    {\draw[edge, -o-={0.2}{#4}] (0,\yb) to [out=0, in=180]  (\xwidth,\ya);}
    \ifthenelse{\equal{#3}{>}}{\draw[edge, -o-={0.9}{>}] (0,\ya) to [out=0, in=135] (\xcent-0.08,\ycent+0.08);}
    {\draw[edge, -o-={0.5}{#3}] (0,\ya) to [out=0, in=135] (\xcent-0.08,\ycent+0.08);}
    \draw[edge] (\xcent+0.045,\ycent -0.045) to [out=-45, in=180] (\xwidth,\yb);}
}    }

\newcommand{\doublecrosswall}[5]{ 
\twowallpic[-.5]{#1}{right}{#4}{#5}{
 \tikzmath{\xw=.5;\xcent=\xwidth*4/7;\ycent=\ya/2+\yb/2;}
 \draw[edge,  -o-={0.2}{#2}] (0,\ya) to [out=0, in=180]  (\xwidth,\yb);
 \draw[edge,  -o-={0.4}{#3}] (0,\yb) to [out=0, in=-135] (\xcent-0.08,\ycent-0.08);
 \draw[edge] (\xcent+0.04,\ycent+0.04) to [out=45, in=180] (\xwidth,\ya);
 \draw[edge] (-\xw,\ya) to [out=0, in=180]  (0,\yb);
 \draw[edge] (-\xw,\yb) to [out=0, in=-135] (-\xcent+0.04,\ycent-0.05);
 \draw[edge] (-\xcent+0.2,\ycent+0.08) to [out=45, in=180] (0,\ya);
    }}
    
\newcommand{\crosswithcaps}[1]{
\begin{tikzpicture}
    \tikzmath{\x0=0.2; \xw=1; \y0=.2; \s=0.3; \y2=\y0+2*\s;\yw=\y2+\s+.2;}
    \fill[gray!20!white] (0,0)rectangle(\xw,\yw);
    \draw[wall] (\xw,0) -- (\xw,\yw); 
    \begin{knot}[end tolerance=3pt, clip radius=3pt,background color=gray!20!white]
    \strand[draw, edge, only when rendering/.style={-o-={0.2}{#1}}] (\x0,\y0) .. controls ++(0:0.5) and ++(0:0.5) .. (\x0,\y2);
    \strand[draw, edge, only when rendering/.style={-o-={0.2}{#1}}] (\x0,\y0+\s) .. controls ++(0:0.5) and ++(0:0.5) .. (\x0,\y2+\s);
   \end{knot}
\end{tikzpicture}
}

\newcommand{\crosswithtwoextra}{
\begin{tikzpicture}
   \tikzmath{\x0=0; \xw=1; \y0=.4; \s=0.3; \y1=\y0+\s; \y2=\y1+\s; \y3=\y2+\s; \yw=\y3+.2;}
    \fill[gray!20!white] (0,0)rectangle(\xw,\yw);
    \draw[wall, <-] (\xw,0) -- (\xw,\yw); 
    \draw[edge, -o-={0.2}{<}] (\x0,\y2) -- (\xw,\y2);
    \draw[edge, -o-={0.2}{<}] (\x0,\y3) -- (\xw,\y3);
   \begin{knot}[end tolerance=3pt, clip radius=3pt,background color=gray!20!white]
   \strand[draw, edge, only when rendering/.style={-o-={0.2}{>}}] (\x0,\y0) .. controls ++(0:0.5) and ++(180:0.5) .. (\xw,\y1);
    \strand[edge, draw, only when rendering/.style={-o-={0.2}{>}}] (\x0,\y1) .. controls ++(0:0.5) and ++(180:0.5) .. (\xw,\y0);
   \end{knot}
\end{tikzpicture}
}

\newcommand{\crosswallrecent}[6]{ 
\twowallpic{#1}{right}{#5}{#6}{
 \tikzmath{\xcent=\xwidth*4/7;\ycent=\ya/2+\yb/2;}
  \ifthenelse{\equal{#2}{p}}
    {\draw[edge,  -o-={0.2}{#3}] (0,\ya) to [out=0, in=180]  (\xwidth,\yb);
    \draw[edge,  -o-={0.4}{#4}] (0,\yb) to [out=0, in=-135] (\xcent-0.08,\ycent-0.08);
    \draw[edge] (\xcent+0.04,\ycent+0.04) to [out=45, in=180] (\xwidth,\ya);}   
    {\draw[edge, -o-={0.2}{#4}] (0,\yb) to [out=0, in=180]  (\xwidth,\ya);
    \draw[edge, -o-={0.5}{#3}] (0,\ya) to [out=0, in=135] (\xcent-0.08,\ycent+0.08);
    \draw[edge] (\xcent+0.045,\ycent -0.045) to [out=-45, in=180] (\xwidth,\yb);}
}    }

\newcommand{\crosswallveryold}[6]{
\raisebox{-.15in}{
\begin{tikzpicture}[scale=1.2]
    \fill[gray!20!white] (-0.4,-0.5)rectangle(0.3,.5);
     \coordinate[label={right:{\small #5}}] (A) at (0.3,0.2); 
     \coordinate[label={right:{\small #6}}] (B) at (0.3,-0.2);
    \draw[wall, #1] (0.3,-0.5) -- (0.3,0.5); 
    \ifthenelse{\equal{#2}{p}}
    {\draw[edge,  -o-={0.2}{#3}] (-0.4,0.2) to [out=0, in=180]  (0.3,-0.2);
    \draw[edge,  -o-={0.4}{#4}] (-0.4,-0.2) to [out=0, in=-135] (-0.08,-0.08);
    \draw[edge] (0.04,0.04) to [out=45, in=180] (0.3,0.2);}   
    {\draw[edge, -o-={0.2}{#4}] (-0.4,-0.2) to [out=0, in=180]  (0.3,0.2);
    \draw[edge, -o-={0.5}{#3}] (-0.4,0.2) to [out=0, in=135] (-0.08,0.08);
    \draw[edge] (0.045,-0.045) to [out=-45, in=180] (0.3,-0.2);}
\end{tikzpicture}
}    }

\newcommand{\capwall}[5][]{
\twowallpic{#2}{right}{#4}{#5}{
\ifthenelse{\equal{#1}{w}}{\draw[wall]  (0,0) --(0,1);}{}
\draw[edge, -o-={0.8}{#3}] (\xwidth,\yb) ..controls (0,\yb) and (0,\ya) .. (\xwidth,\ya);
}}

\newcommand{\wallnearcap}[2][0]{
\twowallpic[#1]{}{left}{}{}{
\draw[edge, -o-={0.8}{white}] (\xwidth,\yb)
	..controls (0,\yb) and (0,\ya).. (\xwidth,\ya);	
}}

\newcommand{\wallcup}[5][]{
\twowallpic{#2}{left}{#4}{#5}{
 \ifthenelse{\equal{#1}{w}}{\draw[wall]  (\xwidth,0) --(\xwidth,1);}{}
\draw[edge, -o-={0.8}{#3}] (0,\yb) ..controls (\xwidth,\yb) and (\xwidth,\ya) .. (0,\ya);
}}

\newcommand{\tangleeg}[6]{
\raisebox{-0.4\height}{\begin{tikzpicture}
        \tikzmath{\xw=0.7;\yd=0.25; \s=0.35; \yc=\yd+\s; \yb=\yc+\s; \ya=\yb+\s; \yh=\ya+.25;}
        \fill[gray!20!white] (0,0)rectangle(\xw,\yh);
        \draw[wall]  (0,0) --(0,\yh); \draw[wall] (\xw,0) --(\xw,\yh); 
        \coordinate[label={left:{\small #1}}] (A) at (0,\ya); 
        \coordinate[label={left:{\small #2}}] (B) at (0,\yb);
        \coordinate[label={left:{\small #3}}] (C) at (0,\yc); 
        \coordinate[label={left:{\small #4}}] (D) at (0,\yd);
        \coordinate[label={right:{\small #5}}] (E) at (\xw,\ya-.5*\s); 
        \coordinate[label={right:{\small #6}}] (F) at (\xw,\yd+.5*\s);
        \draw[edge, -o-={0.5}{<}] (A) ..controls ++(0:0.5) and ++(180:0.5) .. (E);
        \draw[edge, -o-={0.5}{>}] (D) ..controls ++(0:0.5) and ++(180:0.5) .. (F);
        \draw[edge, -o-={0.8}{>}] (B) ..controls ++(0:0.5) and ++(0:0.5) .. (C);
\end{tikzpicture}
}\hspace*{-.1in}}

\newcommand{\capnearwall}[1]{
\twowallpic{}{right}{}{}{
\draw[edge, -o-={0.8}{#1}] (0,\yb) ..controls (\xwidth,\yb) and (\xwidth,\ya).. (0,\ya);
}}

\newcommand*{\capcrosswall}[5]{
\raisebox{-.15in}{\begin{tikzpicture}[scale=0.5]
        \fill[gray!20!white] (-0.3,0)rectangle(1,2.2);
        \draw[wall, #1] (1,2.2) --(1,0); 
        \coordinate[label={right:{\small #4}}] (A) at (1,1.5); 
        \coordinate[label={right:{\small #5}}] (B) at (1,0.3);
         \ifthenelse{\equal{#2}{p}}
         {\draw[edge] {(B) -- (0.8,0.6)}; 
         \draw[edge, -o-={.5}{#3}] plot [smooth,tension=1] coordinates {(0.65,0.8) (0.3,1.2) (0,0.7) (0.5,0.5) (A)};} 
         {\draw[edge] {(A) -- (0.75,1.05)}; 
         \draw[edge, -o-={.5}{#3}] plot [smooth,tension=1] coordinates {(0.6,0.9) (0.2,0.6) (0,1.2) (0.5,1.2) (B)};}  
\end{tikzpicture}
}\hspace*{-.1in}}

\newcommand*{\kinkwall}[2]{
\raisebox{-.15in}{\begin{tikzpicture}[scale=0.5]
        \draw[wall] (1.3,1.8) --(1.3,0); 
        \coordinate (A) at (0,1.5); 
        \coordinate (B) at (0,0.3);
         \ifthenelse{\equal{#1}{p}}
         {\draw[edge] {(A) -- (0.25,1.05)}; 
         \draw[edge, -o-={.5}{#2}] plot [smooth,tension=1] coordinates {(0.4,0.9) (0.8,0.6) (1,1.2) (0.5,1.2) (B)};}           
         {\draw[edge] {(B) -- (0.2,0.6)}; 
         \draw[edge, -o-={.5}{#2}] plot [smooth,tension=1] coordinates {(0.35,0.8) (0.7,1.2) (1,0.7) (0.5,0.5) (A)};} 
\end{tikzpicture}
}\hspace*{-.1in}}

\newcommand*{\capwm}[4]{
\raisebox{-.15in}{\begin{tikzpicture}[scale=0.5]
        \ifthenelse{\equal{#2}{u}}{\draw[<-, wall] (0,2.2) --(0,0)}{\draw[->, wall] (0,1.9) --(0,-0.3)};
        \coordinate[label={left:{\small #3}}] (A) at (0,1.5); 
        \coordinate[label={left:{\small #4}}] (B) at (0,0.3);
         \draw[edge, ->-={.7}{#1}] plot [smooth,tension=1] coordinates {(B) (1,0.9) (A)}; 
\end{tikzpicture}
}\hspace*{-.1in}}

\newcommand{\veenearwall}{ 
\raisebox{-0.4\height}{
\begin{tikzpicture}[scale=1.2]
\fill[gray!20!white] (0,0)rectangle(0.6,1);
\draw[wall, <-] (0.6,0)--(0.6,1);
\draw[edge,decoration={markings, mark=at position 0.5 with {\draw[black, fill=white] circle[radius=2pt];}},postaction={decorate}] (0,0.8)--(0.5,0.6);
\draw[edge,decoration={markings, mark=at position 0.5 with {
\draw[black, fill=white] circle[radius=2pt];}},postaction={decorate}] (0,0.4)--(0.5,0.6);
\end{tikzpicture}
}}

\newcommand{\capveewall}[1]{
\raisebox{-0.4\height}{
\begin{tikzpicture}[scale=1.2]
\fill[gray!20!white] (0,0)rectangle(1,1);
\draw[wall, <-] (1,0)--(1,1);
\draw[edge,decoration={markings, 
mark=at position 0.2 with {\draw[black, fill=black] circle[radius=2pt];}},postaction={decorate}] (1,0.3)node[right]{{\small #1}}
	..controls (0.3,0.3) and (0.3,0.8).. (0.8,0.8);
\draw[edge,decoration={markings, 
mark=at position 0.8 with {\draw[black, fill=white] circle[radius=2pt];}},postaction={decorate}] (0.8,0.8) -- (0,0.8);
\end{tikzpicture}
}}




\newcommand{\walltwowall}[8]{ 
\twowallpic{#2}{right}{#7}{#8}{
   \ifthenelse{\equal{#1}{n}}{}{\draw[wall, #1]  (0,0) --(0,1);} 
   \coordinate[label={left:{\small #5}}] (C) at (0,\ya); 
   \coordinate[label={left:{\small #6}}] (D) at (0,\yb);
   \draw[edge,  -o-={0.5}{#3}] (C)  -- (A);
   \draw[edge,  -o-={0.5}{#4}] (D)  -- (B);
    
}    }

\newcommand{\wallcapcupwall}[8]{ 
\twowallpic{#2}{right}{#7}{#8}{
   \ifthenelse{\equal{#1}{n}}{}{\draw[wall, #1]  (0,0) --(0,1);} 
   \coordinate[label={left:{\small #5}}] (C) at (0,\ya); 
   \coordinate[label={left:{\small #6}}] (D) at (0,\yb);
   \draw[edge,  -o-={0.5}{#3}] (C) ..controls (\xwidth*0.4,\ya) and (\xwidth*0.4,\yb) .. (D); 
   \draw[edge,  -o-={0.5}{#4}] (A) ..controls (\xwidth*0.6,\ya) and (\xwidth*0.6,\yb) .. (B);
}    }

\newcommand{\wallsinksourcetwowall}[6]{
\twowallpic{#2}{right}{#5}{#6}{
   \ifthenelse{\equal{#1}{n}}{}{\draw[wall, #1]  (0,0) --(0,1);} 
   \coordinate[label={left:{\small #3}}] (C) at (0,\ya); 
   \coordinate[label={left:{\small #4}}] (D) at (0,\yb);
   \tikzmath{\ym=\ya/2+\yb/2; \xm=\xwidth/2;}
   \draw[edge, decoration={markings, mark=at position 0.7 with {\arrow{<}}},postaction={decorate}] (\xm-0.05,\ym) -- (C);
   \draw[edge, decoration={markings, mark=at position 0.7 with {\arrow{<}}},postaction={decorate}] (\xm-0.05,\ym) -- (D);
   \draw[edge, decoration={markings, mark=at position 0.95 with {\arrow{>}}},postaction={decorate}] (\xm+0.05,\ym) -- (A);
   \draw[edge, decoration={markings, mark=at position 0.95 with {\arrow{>}}},postaction={decorate}] (\xm+0.05,\ym) -- (B);
}    }

\newcommand{\cross}[9]{ 
\raisebox{-0.4\height}{\begin{tikzpicture}
        \tikzmath{\xw=0.7;}
        \fill[gray!20!white] (0,0)rectangle(\xw,1);
        \ifthenelse{\equal{#2}{<-}}{\tikzmath{\yw=0.2;}}{
        \ifthenelse{\equal{#2}{->}}{\tikzmath{\yw=0;}}{\tikzmath{\yw=0.1;}}}
        \ifthenelse{\equal{#1}{n}}{}{\draw[wall, #1] (0,0) --(0,1);} 
        \ifthenelse{\equal{#2}{n}}{}{\draw[wall, #2] (\xw,0) --(\xw,1);} 
        \tikzmath{\ya=\yw+0.6; \yb=\yw+0.2;}
        \coordinate[label={left:{\small #6}}] (C) at (0,\ya); 
        \coordinate[label={left:{\small #7}}] (D) at (0,\yb); 
        \coordinate[label={right:{\small #8}}] (A) at (\xw,\ya); 
        \coordinate[label={right:{\small #9}}] (B) at (\xw,\yb);
       \tikzmath{\xcent=\xw*4/7;\ycent=\ya/2+\yb/2;}
  \ifthenelse{\equal{#3}{p}}
    {\draw[edge,  -o-={0.9}{#4}] (0,\ya) to [out=0, in=180]  (\xw,\yb);
    \draw[edge] (0,\yb) to [out=0, in=-135] (\xcent-0.08,\ycent-0.08);
    \draw[edge, -o-={0.7}{#5}] (\xcent+0.04,\ycent+0.04) to [out=45, in=180] (\xw,\ya);}   
    {\draw[edge, -o-={0.9}{#5}] (0,\yb) to [out=0, in=180]  (\xw,\ya);
    \draw[edge] (0,\ya) to [out=0, in=135] (\xcent-0.08,\ycent+0.08);
    \draw[edge, -o-={0.7}{#4}] (\xcent+0.045,\ycent -0.045) to [out=-45, in=180] (\xw,\yb);}
\end{tikzpicture}    
}    }

\newcommand{\crossextr}[9]{ 
\raisebox{-0.4\height}{\begin{tikzpicture}
        \tikzmath{\xw=0.7;\xww=0.7+0.4;}
        \fill[gray!20!white] (0,0)rectangle(\xww,1);
        \ifthenelse{\equal{#2}{<-}}{\tikzmath{\yw=0.2;}}{
        \ifthenelse{\equal{#2}{->}}{\tikzmath{\yw=0;}}{\tikzmath{\yw=0.1;}}}
        \ifthenelse{\equal{#1}{n}}{}{\draw[wall, #1] (0,0) --(0,1);} 
        \draw[dashed] (\xw,0) --(\xw,1); 
        \ifthenelse{\equal{#2}{n}}{}{\draw[wall, #2] (\xww,0) --(\xww,1);} 
        \tikzmath{\ya=\yw+0.6; \yb=\yw+0.2;}
        \coordinate[label={left:{\small #6}}] (C) at (0,\ya); 
        \coordinate[label={left:{\small #7}}] (D) at (0,\yb); 
        \coordinate[label={right:{\small #8}}] (A) at (\xww,\ya); 
        \coordinate[label={right:{\small #9}}] (B) at (\xww,\yb);
       \tikzmath{\xcent=\xw*4/7;\ycent=\ya/2+\yb/2;}
  \ifthenelse{\equal{#3}{p}}
    {\draw[edge,  -o-={0.9}{#4}] (0,\ya) to [out=0, in=180]  (\xw,\yb);
    \draw[edge] (0,\yb) to [out=0, in=-135] (\xcent-0.08,\ycent-0.08);
    \draw[edge, -o-={0.7}{#5}] (\xcent+0.04,\ycent+0.04) to [out=45, in=180] (\xw,\ya);}   
    {\draw[edge, -o-={0.9}{#5}] (0,\yb) to [out=0, in=180]  (\xw,\ya);
    \draw[edge] (0,\ya) to [out=0, in=135] (\xcent-0.08,\ycent+0.08);
    \draw[edge, -o-={0.7}{#4}] (\xcent+0.045,\ycent -0.045) to [out=-45, in=180] (\xw,\yb);}
    \draw[edge, -o-={0.6}{>}] (\xw,\ya) -- (\xww,\ya);
    \draw[edge, -o-={0.6}{>}] (\xw,\yb) -- (\xww,\yb);
\end{tikzpicture}    
}    }

\newcommand{\crossextl}[9]{ 
\raisebox{-0.4\height}{\begin{tikzpicture}
        \tikzmath{\xw=0.7;\xl=-0.4;}
        \fill[gray!20!white] (\xl,0)rectangle(\xw,1);
        \ifthenelse{\equal{#2}{<-}}{\tikzmath{\yw=0.2;}}{
        \ifthenelse{\equal{#2}{->}}{\tikzmath{\yw=0;}}{\tikzmath{\yw=0.1;}}}
        \ifthenelse{\equal{#1}{n}}{}{\draw[wall, #1] (\xl,0) --(\xl,1);} 
        \draw[dashed] (0,0) --(0,1); 
        \ifthenelse{\equal{#2}{n}}{}{\draw[wall, #2] (\xw,0) --(\xw,1);} 
        \tikzmath{\ya=\yw+0.6; \yb=\yw+0.2;}
        \coordinate[label={left:{\small #6}}] (C) at (\xl,\ya); 
        \coordinate[label={left:{\small #7}}] (D) at (\xl,\yb); 
        \coordinate[label={right:{\small #8}}] (A) at (\xw,\ya); 
        \coordinate[label={right:{\small #9}}] (B) at (\xw,\yb);
       \tikzmath{\xcent=\xw*4/7;\ycent=\ya/2+\yb/2;}
  \ifthenelse{\equal{#3}{p}}
    {\draw[edge,  -o-={0.9}{#4}] (0,\ya) to [out=0, in=180]  (\xw,\yb);
    \draw[edge] (0,\yb) to [out=0, in=-135] (\xcent-0.08,\ycent-0.08);
    \draw[edge, -o-={0.7}{#5}] (\xcent+0.04,\ycent+0.04) to [out=45, in=180] (\xw,\ya);}   
    {\draw[edge, -o-={0.9}{#5}] (0,\yb) to [out=0, in=180]  (\xw,\ya);
    \draw[edge] (0,\ya) to [out=0, in=135] (\xcent-0.08,\ycent+0.08);
    \draw[edge, -o-={0.7}{#4}] (\xcent+0.045,\ycent -0.045) to [out=-45, in=180] (\xw,\yb);}
    \draw[edge, -o-={0.6}{>}] (\xl,\ya) -- (0,\ya);
    \draw[edge, -o-={0.6}{>}] (\xl,\yb) -- (0,\yb);
\end{tikzpicture}    
}    }

\newcommand{\wallcrosswallleft}[9]{ 
\raisebox{-0.4\height}{\begin{tikzpicture}
        \tikzmath{\xwidth=0.8;}
        \fill[gray!20!white] (0,0)rectangle(\xwidth,1);
        \ifthenelse{\equal{#2}{<-}}{\tikzmath{\yw=0.2;}}{
        \ifthenelse{\equal{#2}{->}}{\tikzmath{\yw=0;}}{\tikzmath{\yw=0.1;}}}
        \draw[wall, #1] (0,0) --(0,1); 
        \draw[wall, #2] (\xwidth,0) --(\xwidth,1); 
        \tikzmath{\ya=\yw+0.6; \yb=\yw+0.2;}
        \coordinate[label={left:{\small #6}}] (C) at (0,\ya); 
        \coordinate[label={left:{\small #7}}] (D) at (0,\yb); 
        \coordinate[label={right:{\small #8}}] (A) at (\xwidth,\ya); 
        \coordinate[label={right:{\small #9}}] (B) at (\xwidth,\yb);
       \tikzmath{\xcent=\xwidth*4/7;\ycent=\ya/2+\yb/2;}
  \ifthenelse{\equal{#3}{p}}
    {\draw[edge,  -o-={0.3}{#4}] (0,\ya) to [out=0, in=180]  (\xwidth,\yb);
    \draw[edge,  -o-={0.6}{#5}] (0,\yb) to [out=0, in=-135] (\xcent-0.08,\ycent-0.08);
    \draw[edge] (\xcent+0.04,\ycent+0.04) to [out=45, in=180] (\xwidth,\ya);}   
    {\draw[edge, -o-={0.2}{#5}] (0,\yb) to [out=0, in=180]  (\xwidth,\ya);
    \draw[edge, -o-={0.5}{#4}] (0,\ya) to [out=0, in=135] (\xcent-0.08,\ycent+0.08);
    \draw[edge] (\xcent+0.045,\ycent -0.045) to [out=-45, in=180] (\xwidth,\yb);}
\end{tikzpicture}    
}    }




\newcommand{\crosswallext}[3]{
\raisebox{-.15in}{
\begin{tikzpicture}[scale=1.2]
    \fill[gray!20!white] (-0.4,-0.5)rectangle(0.6,.5);
    \draw[wall, #1] (0.3,-0.5) -- (0.3,0.5);   
    \ifthenelse{\equal{#2}{p}}
    {\draw[edge, -o-={0.2}{#3}] (-0.4,0.2) to [out=0, in=180]  (0.3,-0.2);
    \draw[edge, -o-={0.4}{#3}] (-0.4,-0.2) to [out=0, in=-135] (-0.08,-0.08);
    \draw[edge] (0.05,0.05) to [out=45, in=180] (0.3,0.2);}
    {\draw[edge, -o-={0.2}{#3}] (-0.4,-0.2) to [out=0, in=180]  (0.3,0.2);
    \draw[edge, -o-={0.4}{#3}] (-0.4,0.2) to [out=0, in=135] (-0.08,0.08);
    \draw[edge] (0.045,-0.045) to [out=-45, in=180] (0.3,-0.2);}
    \draw[edge] (0.3,-0.2)--(0.6,-0.2);
    \draw[edge] (0.3,0.2)--(0.6, 0.2);
\end{tikzpicture}
}    }

\newcommand{\capthroughwall}{
\raisebox{-0.4\height}{
\begin{tikzpicture}[scale=1.2]
\fill[gray!20!white] (0,0)rectangle(1,1);
\draw[edge, <-] (0.8,0)--(0.8,1);
\draw[edge,decoration={markings, 
mark=at position 0.8 with {\draw[black, fill=white] circle[radius=2pt];}},postaction={decorate}] (1,0.3)
	..controls (0,0.3) and (0,0.8).. (1,0.8);
\end{tikzpicture}
}}

\newcommand{\capseparatedwall}[2]{
\raisebox{-0.4\height}{
\begin{tikzpicture}[scale=1.2]
\fill[gray!20!white] (0,0)rectangle(0.6,1);
\draw[wall, <-] (0.6,0)--(0.6,1);
\fill[gray!20!white] (1,0)rectangle(1.2,1);
\draw[wall, <-] (1,0)--(1,1);
\draw[edge,decoration={markings, 
mark=at position 0.8 with {\draw[black, fill=white] circle[radius=2pt];}},postaction={decorate}] (0.6,0.3)node[right]{{\small #2}}
	..controls (0,0.3) and (0,0.8).. (0.6,0.8)node[right]{{\small #1}};
\draw[edge] (1,0.8)--(1.2,0.8);	
\draw[edge] (1,0.3)--(1.2,0.3);	
\end{tikzpicture}
}}

%
%

\newcommand{\capedgewall}[3]{
\raisebox{-0.4\height}{
\begin{tikzpicture}[scale=1.2]
\fill[gray!20!white] (0,0)rectangle(0.6,1.2);
\draw[wall, <-] (0.6,0)--(0.6,1.2);
\draw[edge,decoration={markings, 
mark=at position 0.2 with {\draw[black, fill=black] circle[radius=2pt];}},postaction={decorate}] (0.6,0.3)node[right]{{\small #3}}
	..controls (0,0.3) and (0,0.6).. (0.6,0.6)node[right]{{\small #2}};
\draw[edge,decoration={markings, 
mark=at position 0.7 with {\draw[black, fill=white] circle[radius=2pt];}},postaction={decorate}] (0,1) -- (0.6,1)node[right]{{\small #1}};
\end{tikzpicture}
}}

%
%

\newcommand{\nwallpicold}[7]{ 
\raisebox{-0.4\height}{\begin{tikzpicture}
        \tikzmath{\xwidth=1;\s=0.3;} 
        \ifthenelse{\equal{#2}{left}}{\tikzmath{\xw=0;}}{\tikzmath{\xw=\xwidth;}}
        \ifthenelse{\equal{#1}{<-}}{\tikzmath{\yd=0.2;\yu=0;}}{
        \ifthenelse{\equal{#1}{->}}{\tikzmath{\yd=0;\yu=0.2;}}{\tikzmath{\yd=0;\yu=0;}}}
        \tikzmath{\yh=4*\s+\yd+\yu;}
        \fill[gray!20!white] (0,0)rectangle(\xwidth,\yh);
        \draw[wall, #1] (\xw,0) --(\xw,\yh); 
         \tikzmath{\ya=\yd+3.5*\s; \yc=\yd+0.5*\s;}        
        \ifthenelse{\equal{#3}{b}}{\tikzmath{\yb=\yd+1.5*\s;}}{\tikzmath{\yb=\yd+2.5*\s;}}
        \coordinate[label={[label distance=-1pt] #2:{\small #4}}] (A) at (\xw,\ya); 
        \coordinate[label={[label distance=-1pt] #2:{\small #5}}] (B) at (\xw,\yb);
        \coordinate[label={[label distance=-1pt] #2:{\small #6}}] (C) at (\xw,\yc);
        #7
\end{tikzpicture}
}\hspace*{-.1in}} 

\newcommand{\nwallpic}[8][0.4]{ 
\raisebox{-0.4\height}{\begin{tikzpicture}
        \tikzmath{\xwidth=1;\s=#1;} 
        \ifthenelse{\equal{#3}{left}}{\tikzmath{\xw=0;}}{\tikzmath{\xw=\xwidth;}}
        \ifthenelse{\equal{#2}{<-}}{\tikzmath{\yd=0.2;\yu=0;}}{
        \ifthenelse{\equal{#2}{->}}{\tikzmath{\yd=0;\yu=0.2;}}{\tikzmath{\yd=0;\yu=0;}}}
        \tikzmath{\yh=4*\s+\yd+\yu;}
        \fill[gray!20!white] (0,0)rectangle(\xwidth,\yh);
        \ifthenelse{\equal{#2}{b}}{\draw[wall] (0,0) --(0,\yh);\draw[wall] (\xw,0) --(\xw,\yh);}{
        \ifthenelse{\equal{#2}{n}}{}{\draw[wall, #2] (\xw,0) --(\xw,\yh);}} 
         \tikzmath{\ya=\yd+3.5*\s; \yc=\yd+0.5*\s;}        
        \ifthenelse{\equal{#4}{b}}{\tikzmath{\yb=\yd+1.5*\s;}}{\tikzmath{\yb=\yd+2.5*\s;}}
        \coordinate[label={[label distance=-1pt] #3:{\small #5}}] (A) at (\xw,\ya); 
        \coordinate[label={[label distance=-1pt] #3:{\small #6}}] (B) at (\xw,\yb);
        \coordinate[label={[label distance=-1pt] #3:{\small #7}}] (C) at (\xw,\yc);
        #8
\end{tikzpicture}
}\hspace*{-.1in}}

\newcommand*{\vertexwall}[7][]{
\nwallpic{#2}{right}{#3}{#5}{#6}{#7}{
         \ifthenelse{\equal{#1}{w}}{\draw[wall] (0,0)--(0,\yh);}{}
         \coordinate (V) at (0.1,\yd+2*\s);
         \draw[edge, -o-={.65}{#4}]  (V) to [out=60, in=180] (A); 
         \draw[edge, -o-={.65}{#4}] (V) to [out=-60, in=180] (C); 
         \tikzmath{\dotp=0.8*\xwidth;}
         \ifthenelse{\equal{#3}{b}}
        {\draw[edge, -o-={.58}{#4}] (V) to [out=-30, in=180] (B);
         \node at ($(\dotp,\ya)!.3!(\dotp,\yb)$) {\vdots};}
         {\draw[edge, -o-={.58}{#4}] (V) to [out=30, in=180] (B);
         \node at ($(\dotp,\yb)!.3!(\dotp,\yc)$) {\vdots};}; 
}
}

\newcommand*{\vertexwalltwist}[7][]{
\nwallpic{#2}{right}{b}{#5}{#6}{#7}{
         \ifthenelse{\equal{#1}{w}}{\draw[wall] (0,0)--(0,\yh);}{}
         \coordinate (V) at (0.1,\yd+2*\s);
         \draw[edge, -o-={.85}{#4}]  (V) to [out=-60, in=180] (A); 
         \draw[edge, -o-={.85}{#4}] (V) to [out= 60, in=180] (C); 
         \tikzmath{\dotp=0.8*\xwidth;}
         \draw[edge, -o-={.58}{#4}] (V) to [out=30, in=180] (B);
         \node at ($(\dotp,\ya)!.3!(\dotp,\yb)$) {\vdots}; 
}
}

\newcommand*{\vertexwallcyclic}[2][]{
\raisebox{-0.4\height}{\begin{tikzpicture}
        \tikzmath{\xw=1.2;\xv=0.5; \s=0.4;\yc=0.5*\s; \yb=1.3*\s; \ya=\yb+1.6*\s; \yh=\ya+ .8*\s; \yv=\yb+.8*\s;} 
         \fill[gray!20!white] (0,0)rectangle(\xw,\yh);
        \ifthenelse{\equal{#1}{w}}{\draw[wall] (0,0) --(0,\yh); \draw[wall] (\xw,0) --(\xw,\yh);}{} 
         \coordinate (V) at (\xv,\yv);
         \draw[edge, -o-={.65}{#2}]  (V) .. controls ++(60:0.7) and ++(90:0.5).. (\xv/2,\yv);
         \draw[edge, -o-={.65}{#2}] (\xv/2,\yv) .. controls ++(-90:0.4) and ++(180:0.4).. (\xw,\yc);
         \draw[edge, -o-={.65}{#2}]  (V) to [out=40, in=180] (\xw,\ya); 
         \draw[edge, -o-={.65}{#2}] (V) to [out=-60, in=180] (\xw,\yb); 
         \tikzmath{\dotp=0.85*\xw;}
         \node at ($(\dotp,\yb+.1)!.5!(\dotp,\ya+.1)$) {\vdots};
         \end{tikzpicture}
 }\hspace*{-.1in}}         

\newcommand*{\vertexwallturn}[4]{ 
\raisebox{-0.4\height}{\begin{tikzpicture}
        \tikzmath{\xw=1.2;\xv=1;  \yv= 0.4; \s=0.4;\yc=0.8; \yb=\yc+\s; \ya=\yb+1.6*\s; \yh=\ya+ .5*\s;} 
         \fill[gray!20!white] (0,0)rectangle(\xw,\yh);
         \draw[wall,<-] (\xw,0) --(\xw,\yh); 
         \coordinate (V) at (\xv,\yv);
         \draw[edge, -o-={.7}{#1}] (V) .. controls ++(150:0.4) and ++(180:0.2).. (\xw,\yc) node[right]{\small #4};
         \draw[edge, -o-={.8}{#1}] (V) .. controls ++(180:0.7) and ++(180:0.5).. (\xw,\yb) node[right]{\small #3};
         \draw[edge, -o-={.85}{#1}] (V) .. controls ++(210:1.3) and ++(180:0.8).. (\xw,\ya) node[right]{\small #2};
         \tikzmath{\dotp=0.85*\xw;}
         \node at ($(\dotp,\yb+.1)!.5!(\dotp,\ya+.1)$) {\vdots};
         \end{tikzpicture}
 }\hspace*{-.1in}}  
 
 \newcommand*{\threecaps}[7]{ 
\raisebox{-0.4\height}{\begin{tikzpicture}
        \tikzmath{\xw=1.2; \s=0.4; \yf=0.3; \ye=\yf+1.6*\s; \yd=\ye+\s; \yc=\yd+\s; \yb=\yc+\s; \ya=\yb+1.6*\s; \yh=\ya+ .5*\s;} 
         \fill[gray!20!white] (0,0)rectangle(\xw,\yh);
         \draw[wall,<-] (\xw,0) --(\xw,\yh); 
         \draw[edge, -o-={.7}{#1}] (\xw,\yd) node[right]{\small #5} .. controls ++(180:0.4) and ++(180:0.4).. (\xw,\yc) node[right]{\small #4};
         \draw[edge, -o-={.8}{#1}] (\xw,\ye) node[right]{\small #6} .. controls ++(180:0.7) and ++(180:0.7).. (\xw,\yb) node[right]{\small #3};
         \draw[edge, -o-={.85}{#1}] (\xw,\yf) node[right]{\small #7}  .. controls ++(180:1) and ++(180:1).. (\xw,\ya) node[right]{\small #2};
         \tikzmath{\dotp=0.85*\xw;}
         \node at ($(\dotp,\yb+.1)!.5!(\dotp,\ya+.1)$) {\vdots};
         \node at ($(\dotp,\yf+.1)!.5!(\dotp,\ye+.1)$) {\vdots};
         \end{tikzpicture}
 }\hspace*{-.1in}}

\newcommand*{\onevertexwall}[7]{
\raisebox{-0.4\height}{\begin{tikzpicture}
    \tikzmath{\xwidth=1;\s=0.4;} 
        \tikzmath{\xw=\xwidth;}
        \ifthenelse{\equal{#1}{<-}}{\tikzmath{\yd=0.2;\yu=0;}}{
        \ifthenelse{\equal{#1}{->}}{\tikzmath{\yd=0;\yu=0.2;}}{\tikzmath{\yd=0;\yu=0;}}}
        \tikzmath{\yh=4*\s+\yd+\yu;\lx=-0.3;}
        \fill[gray!20!white] (\lx,0)rectangle(\xwidth,\yh);
        \draw[wall, #1] (\xw,0) --(\xw,\yh); 
         \tikzmath{\ya=\yd+3.5*\s; \yc=\yd+0.5*\s;}        
        \ifthenelse{\equal{#2}{b}}{\tikzmath{\yb=\yd+1.5*\s;}}{\tikzmath{\yb=\yd+2.5*\s;}}
        \coordinate[label={[label distance=-1pt] right:{\small #5}}] (A) at (\xw,\ya); 
        \coordinate[label={[label distance=-1pt] right:{\small #6}}] (B) at (\xw,\yb);
        \coordinate[label={[label distance=-1pt] right:{\small #7}}] (C) at (\xw,\yc);
        \coordinate (V) at (0.1,\yd+2*\s);
         \draw[edge, -o-={.65}{#4}]  (V) to [out=60, in=180] (A); 
         \draw[edge, -o-={.65}{#4}] (V) to [out=-60, in=180] (C); 
         \tikzmath{\dotp=0.8*\xwidth;}
         \ifthenelse{\equal{#2}{b}}
        {\draw[edge, -o-={.58}{#4}] (V) to [out=-30, in=180] (B);
         \node at ($(\dotp,\ya)!.3!(\dotp,\yb)$) {\vdots};}
         {\draw[edge, -o-={.58}{#4}] (V) to [out=30, in=180] (B);
         \node at ($(\dotp,\yb)!.3!(\dotp,\yc)$) {\vdots};};    
         \draw[edge, -o-={.65}{#3}]  (V) to [out=180, in=0] (\lx,\yd+2*\s);
\end{tikzpicture}
}\hspace*{-.1in}}

\newcommand*{\extvertexwall}[7]{
\raisebox{-0.4\height}{\begin{tikzpicture}
    \tikzmath{\xwidth=1;\s=0.4;} 
        \tikzmath{\xw=\xwidth;}
        \ifthenelse{\equal{#1}{<-}}{\tikzmath{\yd=0.2;\yu=0;}}{
        \ifthenelse{\equal{#1}{->}}{\tikzmath{\yd=0;\yu=0.2;}}{\tikzmath{\yd=0;\yu=0;}}}
        \tikzmath{\yh=4*\s+\yd+\yu;\lx=-0.3;}
        \fill[gray!20!white] (\lx,0)rectangle(\xwidth,\yh);
        \draw[wall, #1] (\xw,0) --(\xw,\yh); 
         \tikzmath{\ya=\yd+3.5*\s; \yc=\yd+0.5*\s;}        
        \ifthenelse{\equal{#2}{b}}{\tikzmath{\yb=\yd+1.5*\s;}}{\tikzmath{\yb=\yd+2.5*\s;}}
        \coordinate[label={[label distance=-1pt] right:{\small #5}}] (A) at (\xw,\ya); 
        \coordinate[label={[label distance=-1pt] right:{\small #6}}] (B) at (\xw,\yb);
        \coordinate[label={[label distance=-1pt] right:{\small #7}}] (C) at (\xw,\yc);
        \coordinate (V) at (0.1,\yd+2*\s);
         \draw[edge, -o-={.65}{#4}]  (V) to [out=60, in=180] (A); 
         \draw[edge, -o-={.65}{#4}] (V) to [out=-60, in=180] (C); 
         \tikzmath{\dotp=0.8*\xwidth;}
         \ifthenelse{\equal{#2}{b}}
        {\draw[edge, -o-={.58}{#4}] (V) to [out=-30, in=180] (B);
         \node at ($(\dotp,\ya)!.3!(\dotp,\yb)$) {\vdots};}
         {\draw[edge, -o-={.58}{#4}] (V) to [out=30, in=180] (B);
         \node at ($(\dotp,\yb)!.3!(\dotp,\yc)$) {\vdots};};    
         \draw[edge, -o-={.65}{#3}]  (V) to [out=120, in=-30] (\lx,\yd+3*\s);
         \draw[edge, -o-={.65}{#3}] (V) to [out=-120, in=30] (\lx,\yd+\s); 
         \node at ($(\lx*2/3,\yd+3*\s)!.4!(\lx*2/3,\yd+\s)$) {\vdots};
\end{tikzpicture}
}\hspace*{-.1in}}

\newcommand*{\vertexwallold}[4]{
\raisebox{-.35in}{\begin{tikzpicture}[scale=0.35]
        \tikzmath{\x1=2.5; \s=1; \b=-.1;
         \y1=\b+\s-0.2; \t=\b+4*\s; \y4=\y1+3*\s;}
          \fill[gray!20!white] (-0.3,-0.6)rectangle(\x1,4.2);
        \ifthenelse{\equal{#1}{b}} 
        {\tikzmath{\y2=\y1+\s; \y3=\y2+\s;};}
        {\tikzmath{\y2=\y1+\s; \y3=\y2+\s;};}
        \draw[wall,<-] (\x1,-.6) -- (\x1,{\y4+.5}); 
         \coordinate (V) at (0,\t/2);
         \coordinate[label={[label distance=-1pt] right:{\small #2}}] (A) at (\x1,\y4); 
         \coordinate[label={[label distance=-1pt] right:{\small #4}}] (C) at (\x1,\y1);
         \draw[edge, -o-={.65}{white}]  (V) to [out=60, in=180] (A); 
         \draw[edge, -o-={.65}{white}] (V) to [out=-60, in=180] (C); 
         \ifthenelse{\equal{#1}{b}}
        {\coordinate[label={[label distance=-1pt] right:{\small #3}}] (B) at (\x1,\y2);
        \draw[edge, -o-={.58}{white}] (V) to [out=-30, in=180] (B);
         \node at ($(\x1-0.5,\y2)!.5!(\x1-0.5,\y4)$) {\vdots};}
         {\draw[edge, -o-={.58}{white}] (V) to [out=30, in=180] (\x1,\y3);
          \coordinate[label={[label distance=-1pt] right:{\small #3}}] (B) at (\x1,\y3);
         \node at ($(\x1 -0.5,\y1+0.1)!.5!(\x1-0.5,\y3+0.1)$) {\vdots};}; 
\end{tikzpicture}
}
}

\newcommand*{\vertexnearwall}[3][]{
\raisebox{-.35in}{\begin{tikzpicture}[scale=0.4]
        \tikzmath{\x1=0; \s=1; \b=-.1;
         \y1=\b+\s-0.2; \t=\b+4*\s; \y4=\y1+3*\s;}
          \fill[gray!20!white] (0,0)rectangle(2.5,4.2);
        \ifthenelse{\equal{#2}{b}} 
        {\tikzmath{\y2=\y1+\s; \y3=\y2+\s;};}
        {\tikzmath{\y2=\y1+\s; \y3=\y2+\s;};}
        \draw[wall] (2.5,0) -- (2.5,{\y4+.5}); 
        \ifthenelse{\equal{#1}{}}{}{\draw[wall, #1] (0,0) -- (0,{\y4+.5});}
         \coordinate (V) at (2,\t/2);
         \coordinate (A) at (\x1,\y4); 
         \coordinate (C) at (\x1,\y1);
         \draw[edge, -o-={.35}{#3}]  (A) to [out=0, in=120] (V); 
         \draw[edge, -o-={.4}{#3}] (C) to [out=0, in=-120] (V); 
         \ifthenelse{\equal{#1}{b}}
        {\coordinate (B) at (\x1,\y2);
        \draw[edge, -o-={.4}{#3}] (B) to [out=0, in=-150] (V);
         \node at ($(1-0.5,\y2)!.5!(1-0.5,\y4)$) {\vdots};}
         {\coordinate (B) at (\x1,\y3);
         \draw[edge, -o-={.4}{#3}] (B) to [out=0, in=150] (V);
         \node at ($(1 -0.5,\y1+0.1)!.5!(1-0.5,\y3+0.1)$) {\vdots};}; 
\end{tikzpicture}
}
}

\newcommand*{\wallnvertex}[6]{ 
\raisebox{-.35in}{\begin{tikzpicture}[scale=0.4]
        \tikzmath{\x1=0; \s=1; \b=-.1;
         \y1=\b+\s-0.2; \t=\b+4*\s; \y4=\y1+3*\s;}
          \fill[gray!20!white] (0,0)rectangle(2.5,4.2);
        \ifthenelse{\equal{#2}{b}} 
        {\tikzmath{\y2=\y1+\s; \y3=\y2+\s;};}
        {\tikzmath{\y2=\y1+\s; \y3=\y2+\s;};}
        \draw[wall, #1] (0,0) -- (0,{\y4+.5});
         \coordinate (V) at (2,\t/2);
         \coordinate[label={[label distance=-1pt] left:{\small #4}}] (A) at (\x1,\y4); 
         \coordinate[label={[label distance=-1pt] left:{\small #6}}] (C) at (\x1,\y1);
         \draw[edge, -o-={.35}{#3}]  (A) to [out=0, in=120] (V); 
         \draw[edge, -o-={.4}{#3}] (C) to [out=0, in=-120] (V); 
         \ifthenelse{\equal{#1}{b}}
        {\coordinate[label={[label distance=-1pt] left:{\small #5}}] (B) at (\x1,\y2);
        \draw[edge, -o-={.4}{#3}] (B) to [out=0, in=-150] (V);
         \node at ($(1-0.5,\y2)!.5!(1-0.5,\y4)$) {\vdots};}
         {\coordinate[label={[label distance=-1pt] left:{\small #5}}] (B) at (\x1,\y3);
         \draw[edge, -o-={.4}{#3}] (B) to [out=0, in=150] (V);
         \node at ($(1 -0.5,\y1+0.1)!.5!(1-0.5,\y3+0.1)$) {\vdots};}; 
\end{tikzpicture}
}
}

\newcommand*{\vertexfour}[2][]{
\raisebox{-.35in}{\begin{tikzpicture}[scale=0.4]
        \tikzmath{\x1=2.5; \s=1; 
         \y1=.5; \y2= \y1+1.5*\s; \y3=\y1+ 2.2*\s; \y4=\y1+3.5*\s; \t=\y4+.5;}
          \fill[gray!20!white] (0,0)rectangle(\x1,\t);
        \ifthenelse{\equal{#1}{w}}{\draw[wall] (0,0) -- (0,{\y4+.5});}{}
        \draw[wall] (2.5,0) -- (2.5,{\y4+.5}); 
         \coordinate (V) at (0.5,\t/2);
         \draw[edge, -o-={.8}{#2}]  (V) to [out=60, in=180] (\x1,\y4);
         \draw[edge, -o-={.75}{#2}]  (V) to [out=25, in=180] (\x1,\y3);
         \draw[edge, -o-={.75}{#2}] (V) to [out=-25, in=180] (\x1,\y2);
         \draw[edge, -o-={.8}{#2}] (V) to [out=-60, in=180] (\x1,\y1); 
         \node at ($(\x1-0.3,\y3+0.15)!.5!(\x1-0.3,\y4+0.15)$) {\vdots};
         \node at ($(\x1 -0.3,\y1+0.1)!.5!(\x1-0.3,\y2+0.1)$) {\vdots};        
\end{tikzpicture}
}
}

\newcommand*{\vertexfourcross}[2][]{
\raisebox{-.35in}{\begin{tikzpicture}[scale=0.4]
        \tikzmath{\x1=2.5; \s=1; 
         \y1=.5; \y2= \y1+1.5*\s; \y3=\y1+ 2.2*\s; \y4=\y1+3.5*\s; \t=\y4+.5;}
          \fill[gray!20!white] (0,0)rectangle(\x1,\t);
        \ifthenelse{\equal{#1}{w}}{\draw[wall] (0,0) -- (0,{\y4+.5});}{}
        \draw[wall] (2.5,0) -- (2.5,{\y4+.5}); 
         \coordinate (V) at (0.5,\t/2);
         \draw[edge, -o-={.8}{#2}]  (V) to [out=60, in=180] (\x1,\y4);
         \draw[edge, -o-={.9}{#2}]  (V) to [out=25, in=155] (\x1,\y2-.1);
         
         \draw[edge] (V) to [out=-25, in=-155] (1.7,\y2+0.1);
         \draw[edge, -o-={.75}{#2}] (1.9,\y2+.4) to [out=25, in=180] (\x1,\y3);
         
         \draw[edge, -o-={.8}{#2}] (V) to [out=-60, in=180] (\x1,\y1); 
         \node at ($(\x1-0.3,\y3+0.15)!.5!(\x1-0.3,\y4+0.15)$) {\vdots};
         \node at ($(\x1 -0.3,\y1+0.1)!.5!(\x1-0.3,\y2+0.1)$) {\vdots};        
\end{tikzpicture}
}
}

\newcommand*{\vertexnearwalll}[6]{
\raisebox{-.35in}{\begin{tikzpicture}[scale=0.4]
        \tikzmath{\x1=0; \s=1;
         \y0=0; \y1=\y0+\s; \y3=\y1+3*\s+.2; \x2=2.5;}
          \fill[gray!20!white] (0,0)rectangle(2.5,\y3+0.5);
        \ifthenelse{\equal{#2}{b}} 
        {\tikzmath{\y2=\y1+\s;};}
        {\tikzmath{\y2=\y1+2*\s;};}
        \draw[wall] (\x2,0) -- (\x2,{\y3+.5}); 
        \draw[wall, #1] (0,0) -- (0,\y3+.5);
         \coordinate (V) at (2,\y1+1.5*\s);
         \coordinate[label={[label distance=-1pt] left:{\small #4}}] (A) at (\x1,\y3); 
         \coordinate[label={[label distance=-1pt] left:{\small #5}}] (B) at (\x1,\y2);
         \coordinate[label={[label distance=-1pt] left:{\small #6}}] (C) at (\x1,\y1);
         \draw[edge, -o-={.35}{#3}]  (A) to [out=0, in=120] (V); 
         \draw[edge, -o-={.4}{#3}] (C) to [out=0, in=-120] (V); 
         \ifthenelse{\equal{#2}{b}}
        {\draw[edge, -o-={.4}{#3}] (B) to [out=0, in=-150] (V);
         \node at ($(1-0.5,\y2)!.5!(1-0.5,\y3)$) {\vdots};}
         {\draw[edge, -o-={.4}{#3}] (B) to [out=0, in=150] (V);
         \node at ($(1 -0.5,\y1+0.1)!.5!(1-0.5,\y2+0.1)$) {\vdots};}; 
 \end{tikzpicture}
}
}

\newcommand*{\vertexnearwalledge}[9]{
\raisebox{-.35in}{\begin{tikzpicture}[scale=0.4]
        \tikzmath{\x1=0; \s=1;
         \y0=1; \y1=\y0+\s; \y3=\y1+3*\s+.2; \x2=2.5;}
          \fill[gray!20!white] (0,0)rectangle(2.5,\y3+0.5);
        \ifthenelse{\equal{#2}{b}} 
        {\tikzmath{\y2=\y1+\s;};}
        {\tikzmath{\y2=\y1+2*\s;};}
        \draw[wall, #1] (\x2,0) -- (\x2,{\y3+.5}); 
        \draw[wall, #1] (0,0) -- (0,\y3+.5);
         \coordinate (V) at (2,\y1+1.5*\s);
         \coordinate[label={[label distance=-1pt] left:{\small #4}}] (A) at (\x1,\y3); 
         \coordinate[label={[label distance=-1pt] left:{\small #5}}] (B) at (\x1,\y2);
         \coordinate[label={[label distance=-1pt] left:{\small #6}}] (C) at (\x1,\y1);
         \draw[edge, -o-={.35}{#3}]  (A) to [out=0, in=120] (V); 
         \draw[edge, -o-={.4}{#3}] (C) to [out=0, in=-120] (V); 
         \ifthenelse{\equal{#2}{b}}
        {\draw[edge, -o-={.4}{#3}] (B) to [out=0, in=-150] (V);
         \node at ($(1-0.5,\y2)!.5!(1-0.5,\y3)$) {\vdots};}
         {\draw[edge, -o-={.4}{#3}] (B) to [out=0, in=150] (V);
         \node at ($(1 -0.5,\y1+0.1)!.5!(1-0.5,\y2+0.1)$) {\vdots};}; 
         \coordinate[label={[label distance=-1pt] left:{\small #7}}] (D) at (\x1,\y0);
         \coordinate[label={[label distance=-1pt] right:{\small #8}}] (E) at (\x2,\y0);
         \draw[edge, -o-={.5}{#9}] (D) --(E);
 \end{tikzpicture}
}
}

\newcommand*{\vertexnearwallcap}[6]{
\raisebox{-.35in}{\begin{tikzpicture}[scale=0.4]
        \tikzmath{\x1=0; \s=1;
         \y0=1; \y1=\y0+0.3; \y3=\y1+3*\s+.2; \x2=2.5;}
          \fill[gray!20!white] (0,0)rectangle(2.5,\y3+0.5);
         \tikzmath{\y2=\y1+\s;}
        \draw[wall, #1] (\x2,0) -- (\x2,\y3+.5); 
        \draw[wall, #1] (0,0) -- (0,\y3+.5);
         \coordinate (V) at (2,\y1+1.5*\s);
         \coordinate[label={[label distance=-1pt] left:{\small #3}}] (A) at (\x1,\y3); 
         \coordinate[label={[label distance=-1pt] left:{\small #4}}] (B) at (\x1,\y2);
         \draw[edge, -o-={.35}{#2}]  (A) to [out=0, in=120] (V); 
         \draw[edge, -o-={.4}{#2}] (B) to [out=0, in=-150] (V);
         \node at ($(1-0.5,\y2)!.5!(1-0.5,\y3)$) {\vdots};
         \coordinate[label={[label distance=-1pt] right:{\small #5}}] (E) at (\x2,\y0);
         \draw[edge, -o-={.5}{#6}] (V) to [out=-150, in=180] (E);
 \end{tikzpicture}
}
}

\newcommand*{\nedgewall}[6]{
\nwallpic{#1}{right}{#2}{#4}{#5}{#6}{ 
         \draw[edge, -o-={.5}{#3}]  (0,\ya) -- (A);
         \draw[edge, -o-={.5}{#3}]  (0,\yb) -- (B); 
         \draw[edge, -o-={.5}{#3}] (0,\yc) -- (C);
         \ifthenelse{\equal{#2}{b}}
        {\node at ($(\xwidth*0.75,\ya)!.3!(\xwidth*0.75,\yb)$) {\vdots};}
        {\node at ($(\xwidth*0.75,\yb)!.3!(\xwidth*0.75,\yc)$) {\vdots};};
}
}

\newcommand*{\phalftwist}[5]{ 
\nwallpic{#1}{right}{b}{#3}{#4}{#5}{
\begin{knot}[consider self intersections, background color=gray!20!white, clip radius=3pt, end tolerance=3pt, clip radius=3pt]
     \strand[draw, edge, only when rendering/.style={-o-={.2}{#2}}] (0,\ya) .. controls +(0:.8) and +(180:.5) ..  (C); 
     \strand[draw, edge, only when rendering/.style={-o-={.2}{#2}}]  (0,\yb+\s) .. controls +(0:.5) and +(180:.5) ..  (B); 
     \strand[draw, edge, only when rendering/.style={-o-={.2}{#2}}]  (0,\yc) .. controls +(0:.5) and +(180:.5) ..  (\xw,\ya);
\end{knot}         
        \node at ($(\xwidth*0.9,\ya)!.3!(\xwidth*0.9,\yb)$) {\vdots};
        \node at ($(\xwidth*0.1,\yb+.1)!.3!(\xwidth*0.1,\yc+.1)$) {\vdots};
}
}

\newcommand*{\nhalftwist}[5]{ 
\nwallpic{#1}{right}{b}{#3}{#4}{#5}{
\begin{knot}[consider self intersections, background color=gray!20!white, clip radius=3pt, end tolerance=3pt,
 clip radius=3pt]
      \strand[draw, edge, only when rendering/.style={-o-={.2}{#2}}]  (0,\yc) .. controls +(0:.5) and +(180:.5) ..  (\xw,\ya);
      \strand[draw, edge, only when rendering/.style={-o-={.2}{#2}}]  (0,\yb+\s) .. controls +(0:.5) and +(180:.5) ..  (B); 
      \strand[draw, edge, only when rendering/.style={-o-={.2}{#2}}] (0,\ya) .. controls +(0:.8) and +(180:.5) ..  (C); 
\end{knot}         
       \node at ($(\xwidth*0.9,\ya)!.3!(\xwidth*0.9,\yb)$) {\vdots};
       \node at ($(\xwidth*0.1,\yb+.1)!.3!(\xwidth*0.1,\yc+.1)$) {\vdots};
}
}

\newcommand*{\nedgewalltall}[6]{
\nwallpic[0.4]{#1}{right}{#2}{#4}{#5}{#6}{
         \draw[edge, -o-={.5}{#3}]  (0,\ya) -- (A);
         \draw[edge, -o-={.5}{#3}]  (0,\yb) -- (B); 
         \draw[edge, -o-={.5}{#3}] (0,\yc) -- (C);
         \ifthenelse{\equal{#2}{b}}
        {\node at ($(\xwidth*0.75,\ya)!.3!(\xwidth*0.75,\yb)$) {\vdots};}
        {\node at ($(\xwidth*0.75,\yb)!.3!(\xwidth*0.75,\yc)$) {\vdots};};
}
}

\newcommand*{\wallnedge}[6]{
\nwallpic{#1}{left}{#2}{#4}{#5}{#6}{
         \draw[edge, -o-={.5}{#3}]  (\xwidth,\ya) -- (A);
         \draw[edge, -o-={.5}{#3}]  (\xwidth,\yb) -- (B); 
         \draw[edge, -o-={.5}{#3}] (\xwidth,\yc) -- (C);
         \ifthenelse{\equal{#2}{b}}
        {\node at ($(\xwidth*0.25,\ya)!.3!(\xwidth*0.25,\yb)$) {\vdots};}
        {\node at ($(\xwidth*0.25,\yb)!.3!(\xwidth*0.25,\yc)$) {\vdots};};
}
}

\newcommand*{\nedgewallold}[5]{
\raisebox{-.35in}{\begin{tikzpicture}[scale=0.35]
        \tikzmath{\x1=2.5; \s=1; \b=-.1;
         \y1=\b+\s-0.2; \t=\b+4*\s; \y4=\y1+3*\s;}
          \fill[gray!20!white] (-0.3,-0.6)rectangle(\x1,4.2);
        \ifthenelse{\equal{#2}{b}} 
        {\tikzmath{\y2=\y1+\s; \y3=\y2+\s;};}
        {\tikzmath{\y2=\y1+\s; \y3=\y2+\s;};}
        \draw[wall,#1] (\x1,-.5) -- (\x1,\y4+.7); 
         \coordinate[label={[label distance=-1pt] right:{\small #3}}] (A) at (\x1,\y4); 
         \coordinate[label={[label distance=-1pt] right:{\small #5}}] (C) at (\x1,\y1);
         \draw[edge, -o-={.5}{white}]  (0,\y4) -- (A); 
         \draw[edge, -o-={.5}{white}] (0,\y1) -- (C); 
         \ifthenelse{\equal{#2}{b}}
        {\coordinate[label={[label distance=-1pt] right:{\small #4}}] (B) at (\x1,\y2);
        \draw[edge, -o-={.5}{white}] (0,\y2) -- (B); 
         \node at ($(\x1-0.5,\y2)!.5!(\x1-0.5,\y4)$) {\vdots};}
         {\coordinate[label={[label distance=-1pt] right:{\small #4}}] (B) at (\x1,\y3);
          \draw[edge, -o-={.5}{white}] (0,\y3) -- (B);
         \node at ($(\x1 -0.5,\y1+0.1)!.5!(\x1-0.5,\y3+0.1)$) {\vdots};}; 
\end{tikzpicture}
}
}

\newcommand*{\wallnedgewall}[9]{
\raisebox{-.35in}{\begin{tikzpicture}[scale=0.35]
        \tikzmath{\x0=-0.3; \x1=2.5; \s=1; \b=-.1;
         \y1=\b+\s-0.2; \t=\b+4*\s; \y4=\y1+3*\s;}
          \fill[gray!20!white] (\x0,-0.6)rectangle(\x1,4.2);
        \ifthenelse{\equal{#2}{b}} 
        {\tikzmath{\y2=\y1+\s; \y3=\y2+\s;};}
        {\tikzmath{\y2=\y1+\s; \y3=\y2+\s;};}
        \draw[wall,#1] (\x0,-.5) -- (\x0,\y4+.7); 
        \draw[wall,#1] (\x1,-.5) -- (\x1,\y4+.7); 
         \coordinate[label={[label distance=-1pt] right:{\small #7}}] (A) at (\x1,\y4); 
         \coordinate[label={[label distance=-1pt] right:{\small #9}}] (C) at (\x1,\y1);
         \coordinate[label={[label distance=-1pt] left:{\small #4}}] (A0) at (\x0,\y4); 
         \coordinate[label={[label distance=-1pt] left:{\small #6}}] (C0) at (\x0,\y1);
         \draw[edge, -o-={.5}{#3}]  (\x0,\y4) -- (A); 
         \draw[edge, -o-={.5}{#3}] (\x0,\y1) -- (C); 
         \ifthenelse{\equal{#2}{b}}
        {\coordinate[label={[label distance=-1pt] right:{\small #8}}] (B) at (\x1,\y2);
         \coordinate[label={[label distance=-1pt] left:{\small #5}}] (B0) at (\x0,\y2);
        \draw[edge, -o-={.5}{#3}] (\x0,\y2) -- (\x1,\y2); 
         \node at ($(\x1-0.5,\y2)!.6!(\x1-0.6,\y4)$) {\vdots};}
         {\coordinate[label={[label distance=-1pt] right:{\small #8}}] (B) at (\x1,\y3);
          \coordinate[label={[label distance=-1pt] left:{\small #5}}] (B0) at (\x0,\y3);
          \draw[edge, -o-={.5}{#3}] (\x0,\y3) -- (B);
         \node at ($(\x1 -0.5,\y1+0.1)!.6!(\x1-0.5,\y3+0.1)$) {\vdots};}; 
\end{tikzpicture}
}
}

\newcommand*{\wallnedgewallcap}[7]{
\raisebox{-.35in}{\begin{tikzpicture}[scale=0.35]
        \tikzmath{\x0=-0.3; \x1=2.5; \s=1;
        \yc0=0.4; \yc1=\yc0+\s;
         \y1=\yc1+\s; \t=4*\s; \y2=\y1+3*\s;}
         \fill[gray!20!white] (\x0,-0.6)rectangle(\x1,\y2+.7);
        \draw[wall,<-] (\x0,-.6) -- (\x0,\y2+.7); 
        \draw[wall,<-] (\x1,-.6) -- (\x1,\y2+.7); 
         \coordinate[label={[label distance=-1pt] right:{\small #4}}] (A) at (\x1,\y2); 
         \coordinate[label={[label distance=-1pt] right:{\small #5}}] (B) at (\x1,\y1);
         \coordinate[label={[label distance=-1pt] left:{\small #2}}] (A0) at (\x0,\y2); 
         \coordinate[label={[label distance=-1pt] left:{\small #3}}] (B0) at (\x0,\y1);
         \draw[edge, -o-={.5}{>}]  (A0) -- (A); 
         \draw[edge, -o-={.5}{>}] (B0) -- (B); 
         \node at ($(\x1-0.5,\y1)!.6!(\x1-0.6,\y2)$) {\vdots};
         \coordinate[label={[label distance=-1pt] right:{\small #7}}] (C) at (\x1,\yc0);
         \coordinate[label={[label distance=-1pt] right:{\small #6}}] (D) at (\x1,\yc1);
         \draw[edge, -o-={.8}{>}] (C) ..controls ++(180:1.5) and ++(180:1.5) .. (D);
\end{tikzpicture}
}
}

\newcommand*{\vertexwallext}[1]{
\raisebox{-.35in}{\begin{tikzpicture}[scale=0.35]
       \fill[gray!20!white] (-0.3,-0.6)rectangle(4.2,4.2);
        \tikzmath{\x1=2.5; \s=1; \b=-.1;
         \y1=\b+\s-0.2; \t=\b+4*\s; \y4=\y1+3*\s;}
        \ifthenelse{\equal{#1}{b}}{\tikzmath{\y2=\y1+\s; \y3=\y2+\s;}}
        {\tikzmath{\y2=\y1+\s; \y3=\y2+\s;}};
        \draw[wall,<-] (\x1,-.6) -- (\x1,{\y4+.5}); 
         \coordinate (V) at (0,\t/2);
         \draw[edge]  (V) to [out=60, in=180] (\x1,\y4); 
         \draw[edge, -o-={.5}{white}]  (\x1,\y4) to coordinate[pos=0.5](M1) (\x1+1.5,\y4);
         \draw[edge] (V) to [out=-60, in=180] (\x1,\y1); 
         \draw[edge, -o-={.5}{white}]  (\x1,\y1) to coordinate[pos=0.5](M4) (\x1+1.5,\y1);
         \ifthenelse{\equal{#1}{b}}
        {\draw[edge] (V) to [out=-30, in=180] (\x1,\y2);
         \draw[edge, -o-={.5}{white}]  (\x1,\y2) to coordinate[pos=0.5](M3) (\x1+1.5,\y2); 
         \node at ($(\x1-0.5,\y2)!.5!(\x1-0.5,\y4)$) {\vdots};
          \node at ($(\x1+0.5,\y2)!.5!(\x1+0.5,\y4)$) {\vdots};}
         {\draw[edge] (V) to [out=30, in=180] (\x1,\y3);
         \draw[edge, -o-={.5}{white}] (\x1,\y3) to coordinate[pos=0.5](M2) (\x1+1.5,\y3); 
         \node at ($(\x1 -0.5,\y1+0.1)!.5!(\x1-0.5,\y3+0.1)$) {\vdots};
         \node at ($(\x1+0.5,\y1+.1)!.5!(\x1+0.5,\y3+.1)$) {\vdots};}; 
\end{tikzpicture}
}
}

\newcommand*{\wallvertex}[2][0]{
\raisebox{-.30in}{\begin{tikzpicture}[scale=0.35]
        \tikzmath{\x1=2.5; \s=1; \b=-.1;
         \y1=\b+\s-0.2; \t=\b+4*\s; \y4=\y1+3*\s;}
          \fill[gray!20!white] (#1-.4,-0.4)rectangle(\x1,4.2);
        \ifthenelse{\equal{#2}{b}} 
        {\tikzmath{\y2=\y1+\s; \y3=\y2+\s;};}
        {\tikzmath{\y2=\y1+\s; \y3=\y2+\s;};}
        \draw[wall,
        ] (-.4,-.4) -- (-.4,{\y4+.5}); 
         \coordinate (V) at (0,\t/2);
         \coordinate (A) at (\x1,\y4); 
         \coordinate (C) at (\x1,\y1);
         \draw[edge, -o-={.6}{white}]  (V) to [out=60, in=180] (A); 
         \draw[edge, -o-={.6}{white}] (V) to [out=-60, in=180] (C); 
         \ifthenelse{\equal{#2}{b}}
        {\coordinate (B) at (\x1,\y2);
        \draw[edge, -o-={.6}{white}] (V) to [out=-30, in=180] (B);
         \node at ($(\x1-0.5,\y2)!.5!(\x1-0.5,\y4)$) {\vdots};}
         {\draw[edge, -o-={.6}{white}] (V) to [out=30, in=180] (\x1,\y3);
          \coordinate (B) at (\x1,\y3);
         \node at ($(\x1 -0.5,\y1+0.1)!.5!(\x1-0.5,\y3+0.1)$) {\vdots};}; 
\end{tikzpicture}
}
}

\newcommand*{\walledges}[4]{
\raisebox{-.3in}{\begin{tikzpicture}[scale=0.35]
        \tikzmath{\x1=0; \s=1; \y1=\s-0.3; \t=-.1+4*\s; \y4=\y1+3*\s;} 
         \fill[gray!20!white] (0,-0.4)rectangle(\x1+2,\y4+.5);
        \ifthenelse{\equal{#3}{b}}
        {\tikzmath{\y2=\y1+\s; \y3=\y2+\s;}}
        {\tikzmath{\y2=\y1+\s; \y3=\y2+\s;}}
         \draw[wall, <-] (\x1,-.4) -- (\x1,\y4+.5); 
         \coordinate[label={
         left:{\small #2}}] (A) at (\x1,\y4); 
         \coordinate[label={left:{\small #4}}] (C) at (\x1,\y1);
         \draw[edge, -o-={.5}{white}] (\x1,\y4) -- (\x1+2,\y4);
         \draw[edge, -o-={.5}{white}] (\x1,\y1) -- (\x1+2,\y1);
         \ifthenelse{\equal{#1}{b}}
         {\coordinate[label={left:{\small #3}}] (B) at (\x1,\y2); 
           \draw[edge, -o-={.5}{white}] (\x1,\y2) -- (\x1+2,\y2);
          \node at ($(\x1+0.5,\y4)!.5!(\x1+0.5,\y2)$) {\vdots}}
         {\coordinate[label={left:{\small #3}}] (B) at (\x1,\y3);
          \draw[edge, -o-={.5}{white}] (\x1,\y3) -- (\x1+2,\y3);
          \node at ($(\x1+0.5,\y3)!.5!(\x1+0.5,\y1)$) {\vdots}}; 
\end{tikzpicture}
}
}

\newcommand*{\vertexnearw}[2]{%
\raisebox{-.35in}{\begin{tikzpicture}[scale=0.3]
        \tikzmath{\x1=2.5; \s=1; \b=0;
         \y1=\b+\s-0.2; \t=\b+4*\s; \tt = \t/2; \y4=\y1+3*\s;}
         \tikzmath{\y2=\y1+\s; \y3=\y2+\s;}
        \draw[->,wall] (\x1,-.4) -- (\x1,{\y4+.6}); 
         \coordinate (V) at (2,\y4/2);
         \draw[edge, ->-={.2}{#1}] (0,\y1) to coordinate[pos=0.2](M1) (V);
         \draw[edge, ->-={.2}{#1}] (0,\y4) to coordinate[pos=0.2](M4) (V);
         \ifthenelse{\equal{#2}{t}}
         {\draw[edge, ->-={.2}{#1}] (0,\y3) to coordinate[pos=0.2](M3) (V);
          \node at ($(M1)!.6!(M3)$) {\small \vdots}}
         {\draw[edge, ->-={.2}{#1}] (0,\y2) to coordinate[pos=0.2](M2) (V);
          \node at ($(M4)!.5!(M2)$) {\small \vdots}};
\end{tikzpicture}
}
}



\newcommand{\bigonpic}[6]{ 
\raisebox{-0.4\height}{\begin{tikzpicture}
        \tikzmath{\xdim=#4; \ydim=#5; \h=0.5; \o=\ydim/2; \t1=\ydim/3; \t2=\ydim*2/3;} 
        \ifthenelse{\equal{#3}{b}}{
         \fill[gray!20!white, rounded corners=1.5mm]  (\xdim/2, -\h)-- (0,0)--(0,\ydim) -- (\xdim/2, \ydim+\h) -- 
         (\xdim,\ydim)  -- (\xdim,0) -- cycle;
          \draw[wall,  -o-={.3}{#1}, rounded corners=1.5mm] (\xdim/2, -\h)-- (0,0)--(0,\ydim) -- (\xdim/2, \ydim+\h);
       \draw[wall, -o-={.3}{#2}, rounded corners=1.5mm] (\xdim/2, -\h)-- (\xdim,0)--(\xdim,\ydim) -- (\xdim/2, \ydim+\h);
        \draw[black, fill=white] (\xdim/2,\ydim+\h) circle[radius=2pt];
        }
        {\draw[wall,  -o-={.51}{#1}, fill=gray!20!white]  
        (\xdim/2, -\h) [rounded corners=1.5mm] --  (0,0) [sharp corners] --(0,\ydim) 
        arc [start angle =180, end angle =0, radius=\xdim/2] 
         [rounded corners=1.5mm]  --(\xdim,0) [sharp corners] -- cycle;} 
        \draw[black, fill=white] (0.5*\xdim,-\h) circle[radius=2pt];
          #6
\end{tikzpicture}
}\hspace*{-.1in}} 

\newcommand{\bigonpicc}[6]{ 
\raisebox{-0.4\height}{\begin{tikzpicture}
        \tikzmath{\xdim=#4; \ydim=#5; \h=0.5; \o=\ydim/2; \t1=\ydim/3; \t2=\ydim*2/3;} 
        \ifthenelse{\equal{#3}{b}}{
         \fill[gray!20!white, rounded corners=1.5mm]  (\xdim/2, -\h)-- (0,0)--(0,\ydim) -- (\xdim/2, \ydim+\h) -- 
         (\xdim,\ydim)  -- (\xdim,0) -- cycle;
          \draw[wall,  -o-={.3}{#1}, rounded corners=1.5mm] (\xdim/2, -\h)-- (0,0)--(0,\ydim) -- (\xdim/2, \ydim+\h);
       \draw[wall, -o-={.3}{#2}, rounded corners=1.5mm] (\xdim/2, -\h)-- (\xdim,0)--(\xdim,\ydim) -- (\xdim/2, \ydim+\h);
        \draw[black, fill=white] (\xdim/2,\ydim+\h) circle[radius=2pt];
        }
        {\draw[wall,  -o-={.51}{#1}, fill=gray!20!white]  
        (\xdim/2, \ydim+\h) [rounded corners=1.5mm] --  (0,\ydim) [sharp corners] --(0,0) 
        arc [start angle =-180, end angle =0, radius=\xdim/2] 
         [rounded corners=1.5mm]  --(\xdim,\ydim) [sharp corners] -- cycle;} 
        \draw[black, fill=white] (0.5*\xdim,\ydim+\h) circle[radius=2pt];
          #6
\end{tikzpicture}
}\hspace*{-.1in}}

\newcommand{\bigonedge}[6]{ 
\bigonpic{#1}{#2}{#3}{.5}{.5}{
\coordinate[label={left:{\small #5}}] (A) at (0,\o);  
\coordinate[label={right:{\small #6}}] (B) at (\xdim,\o);
\draw[edge, -o-={0.6}{#4}] (A) -- (B);
}
}

\newcommand{\bigontwoedges}[9]{ 
\bigonpic{#1}{#2}{#3}{.5}{.5}{
\coordinate[label={left:{\small #6}}] (A) at (0,\ydim); 
\coordinate[label={left:{\small #7}}] (B) at (0,\ydim/4);   
\coordinate[label={right:{\small #8}}] (C) at (\xdim,\ydim);
\coordinate[label={right:{\small #9}}] (D) at (\xdim,\ydim/4);
\draw[edge, -o-={0.55}{#4}] (A) -- (C);
\draw[edge, -o-={0.55}{#5}] (B) -- (D);
}
}

\newcommand{\bigontwonodes}[9]{ 
\bigonpic{#1}{#2}{#3}{.9}{.7}{
\coordinate[label={left:{\small #6}}] (A) at (0,\ydim); 
\coordinate[label={left:{\small #7}}] (B) at (0,0.1);   
\coordinate[label={right:{\small #8}}] (C) at (\xdim,\ydim);
\coordinate[label={right:{\small #9}}] (D) at (\xdim,0.1);
\ifthenelse{\equal{#4}{-}}{
\draw (A)--(C);}{
\node [thick, circle, draw, fill=white, radius=0.5] (N) at (\xdim/2,\ydim) {\hspace*{-.1in}#4\hspace*{-.1in}};
\draw[edge] (A) -- (N);
\draw[edge] (N) -- (C);
}
\node [thick, circle, draw, fill=white, radius=0.5] (M) at (\xdim/2,0.1) {\hspace*{-.1in}#5\hspace*{-.1in}};
\draw[edge] (B) -- (M);
\draw[edge] (M) -- (D);
}
}

\newcommand{\bigontwonodesc}[9]{ 
\bigonpicc{#1}{#2}{#3}{.9}{.7}{
\coordinate[label={left:{\small #6}}] (A) at (0,\ydim); 
\coordinate[label={left:{\small #7}}] (B) at (0,0.1);   
\coordinate[label={right:{\small #8}}] (C) at (\xdim,\ydim);
\coordinate[label={right:{\small #9}}] (D) at (\xdim,0.1);
\ifthenelse{\equal{#4}{-}}{
\draw (A)--(C);}{
\node [thick, circle, draw, fill=white, radius=0.5] (N) at (\xdim/2,\ydim) {\hspace*{-.1in}#4\hspace*{-.1in}};
\draw[edge] (A) -- (N);
\draw[edge] (N) -- (C);
}
\node [thick, circle, draw, fill=white, radius=0.5] (M) at (\xdim/2,0.1) {\hspace*{-.1in}#5\hspace*{-.1in}};
\draw[edge] (B) -- (M);
\draw[edge] (M) -- (D);
}
}

\newcommand{\bigonpunctc}[6]{ 
\bigonpicc{#1}{#2}{#3}{.9}{.7}{
\coordinate[label={left:{\small #5}}] (B) at (0,\ydim-0.1);   
\coordinate[label={right:{\small #6}}] (D) at (\xdim,\ydim-0.1);
\draw[black, fill=white] (\xdim/2,0.1) circle[radius=2pt];
\node [thick, circle, draw, fill=white, radius=0.5] (M) at (\xdim/2,\ydim-0.1) {\hspace*{-.1in}#4\hspace*{-.1in}};
\draw[edge] (B) -- (M);
\draw[edge] (M) -- (D);
}
}

\newcommand{\bigoncap}[6]{ 
\bigonpic{#1}{#2}{#3}{.5}{.5}{
\coordinate[label={right:{\small #5}}] (C) at (\xdim,\ydim);
\coordinate[label={right:{\small #6}}] (D) at (\xdim,\ydim/4);
\draw[edge, -o-={0.8}{#4}] (C) ..controls (0,\ydim) and (0,\ydim/4) .. (D);
}
}

\newcommand{\bigonalpha}[6]{ 
\bigonpic{#1}{#2}{#3}{0.8}{.6}{
\coordinate[label={left:{\small #5}}] (A) at (0,\o); 
\coordinate[label={right:{\small #6}}] (B) at (\xdim,\o);
\node [thick, circle, draw, fill=white, radius=0.5] (N) at (\xdim/2,\o) {\hspace*{-.1in}#4\hspace*{-.1in}};
\draw[edge] (0,\o) -- (N);
\draw[edge] (N) -- (\xdim,\o);
}
}


\usetikzlibrary{cd}

\usepackage[all]{xy}
\usepackage{graphicx}
\usepackage{texdraw}
\usepackage{url}
\usepackage{bbm}
\usepackage{mathrsfs}
\usepackage{soul}
\usepackage[final]{pdfpages}

\def\printname#1{
        \if\draft y
                \smash{\makebox[0pt]{\hspace{-0.5in}
                        \raisebox{8pt}{\tt\tiny #1}}}
        \fi
}

\def\printname#1{
        \if\draft y
                \smash{\makebox[0pt]{\hspace{-0.5in}
                        \raisebox{8pt}{\tt\tiny #1}}}
        \fi
}

\newlength{\standardunitlength}
\setlength{\standardunitlength}{0.0125in}

\catcode`\@=11
\long\def\@makecaption#1#2{
     \vskip 10pt

\setbox\@tempboxa\hbox{
       \small\sf{\bfcaptionfont #1. }\ignorespaces #2}
     \ifdim \wd\@tempboxa >\captionwidth {
         \rightskip=\@captionmargin\leftskip=\@captionmargin
         \unhbox\@tempboxa\par}
       \else
         \hbox to\hsize{\hfil\box\@tempboxa\hfil}
     \fi}
\font\bfcaptionfont=cmssbx10 scaled \magstephalf
\newdimen\@captionmargin\@captionmargin=2\parindent
\newdimen\captionwidth\captionwidth=\hsize
\catcode`\@=12

\def\lbl#1{\label{#1}\printname{#1}}

                        \theoremstyle{plain}

\newtheorem{theorem}{Theorem}[section]
\newtheorem*{theorem*}{Theorem}
\newtheorem{lemma}[theorem]{Lemma}
\newtheorem{corollary}[theorem]{Corollary}
\newtheorem*{corollary*}{Corollary}
\newtheorem{proposition}[theorem]{Proposition}
\newtheorem*{proposition*}{Proposition}
\newtheorem{conjecture}[theorem]{Conjecture}
\newtheorem*{conjecture*}{Conjecture}
\newtheorem{question}[theorem]{Question}

\newtheorem{definition}[theorem]{Definition}

\theoremstyle{definition}

\newtheorem{remark}[theorem]{Remark}
\newtheorem{example}[theorem]{Example}
\newtheorem*{example*}{Example}

\newcommand{\bcon}{\begin{conjecture}}
\newcommand{\econ}{\end{conjecture}}
\newcommand{\bcor}{\begin{corollary}}
\newcommand{\ecor}{\end{corollary}}
\newcommand{\bdf}{\begin{definition}}
\newcommand{\edf}{\end{definition}}
\newcommand{\benu}{\begin{enumerate}}
\newcommand{\eenu}{\end{enumerate}}
\newcommand{\beq}{\begin{equation}}
\newcommand{\eeq}{\end{equation}}
\newcommand{\bexa}{\begin{example}}
\newcommand{\eexa}{\end{example}}
\newcommand{\bexe}{\begin{exercise}}
\newcommand{\eexe}{\end{exercise}}
\newcommand{\bfac}{\begin{fact}}
\newcommand{\efac}{\end{fact}}
\newcommand{\bite}{\begin{itemize}}
\newcommand{\eite}{\end{itemize}}
\newcommand{\blem}{\begin{lemma}}
\newcommand{\elem}{\end{lemma}}
\newcommand{\bmat}{\begin{pmatrix}}
\newcommand{\emat}{\end{pmatrix}}
\newcommand{\bprb}{\begin{problem}}
\newcommand{\eprb}{\end{problem}}
\newcommand{\bpro}{\begin{proposition}}
\newcommand{\epro}{\end{proposition}}

\newcommand{\bque}{\begin{question}}
\newcommand{\eque}{\end{question}}
\newcommand{\brem}{\begin{remark}}
\newcommand{\erem}{\end{remark}}
\newcommand{\bthm}{\begin{theorem}}
\newcommand{\ethm}{\end{theorem}}

\newcommand{\bpr}{\begin{proof}}
\newcommand{\epr}{\end{proof}}
\newcommand{\ignore}[1]{}

\newcommand{\lb}{\label}
\newcommand{\la}{\langle}
\newcommand{\ra}{\rangle}
\newcommand{\comment}[1]{\,}
\newcommand{\wh}{\widehat}
\newcommand{\cal}{\mathcal}
\newcommand{\p}{\partial}

\newcommand{\Z}{\mathbb Z}
\newcommand{\Q}{\mathbb Q}
\newcommand{\K}{\mathcal K}
\newcommand{\R}{\mathbb R}
\newcommand{\C}{\mathbb C}

\newcommand{\ve}{\varepsilon}

\setlength{\unitlength}{1in}

\newcommand{\diagg}[2]{\raisebox{-.5\height}{\includegraphics[height=#2]{#1}}}

\renewcommand{\S}{\mathcal S}

\def\dl{\partial_\ell}
\def\dr{\partial_r}
\newcommand{\cev}[1]{\reflectbox{\ensuremath{\vec{\reflectbox{\ensuremath{#1}}}}}}
\newcommand{\ca}{{\cev{a}  }}
\def\ep{\epsilon}
\def\hR{\widehat{\mathcal R}}
\def\OqM{\cO_q(M(n))}
\def\Oq{{\cO_q(SL(n))}}
\def\OqZ{\cO_{q,\BZ}(SL(n))}
\def\Zv{{\BZ[v^{\pm1}]}}
\def\Uqq{{U_q(sl_n) } }
\def\tUqq{\widetilde{\Uqq}}
\def\Qv{{\BQ(v)}}
\def\End{\mathrm{End}}
\def\dl{\partial_\ell}
\def\dr{\partial_r}
\def\ba{{\mathbf a}}
\def\buu{{\mathbf u}}
\def\bX{{\mathbf X}}
\def\bY{{\mathbf Y}}

\def\BN{\mathbb N}
\def\BZ{\mathbb Z}
\def\BR{\mathbb R}

\def\BQ{\mathbb Q}

\def\la{\langle}
\def\ra{\rangle}

\def\cS{\mathscr S}
\def\ot{\otimes}

\def\cE{\mathcal E}

\def\cP{\mathcal P}

\def\Id{\mathrm{Id}}
\def\fS{\mathfrak{S}}
\def\fC{\mathfrak C}

\def\embed{\hookrightarrow}

\def\im{\mathrm{Im}}

\def\cT{\mathcal T}

\def\bQ{\overline Q}

\def\tw{\tilde{\ww}}

\def\embed{\hookrightarrow}

\def\im{\mathrm{Im}}

\def\vk{\varkappa}

\def\pS{\partial \Sigma}

\def\al{\alpha}
\def\ve{\varepsilon}
\def\be { \begin{equation} }
\def\ee { \end{equation} }

\def\bD{{\underline \Delta }}

\def\im{\mathrm {Im}}

\def\bS{\bar \Sigma}

\def\wt{\widetilde}

\newcommand\no[1]{}

      \def\nc{\newcommand}

    \nc\FIGc[3]{\begin{figure}[htpb]
    \includegraphics[height=#3]{#1-eps-converted-to.pdf}
    \caption{#2}
    \label{f.#1}
    \end{figure}}

\newcommand\incl[2]{{\includegraphics[height=#1]{#2-eps-converted-to.pdf}}}

\def\cA{\mathcal A}
\def\cB{\mathcal B}

\def\fB{\mathfrak B}
\def\pr{\mathrm{pr}}
\def\cO{\mathcal O}

\def\cR{\mathcal R}

\def\bomu{{\boldsymbol \mu}}

\def\cN{\mathcal N}
\def\MN{(M,\cN)}

   \def\hR{\widehat{\mathcal R}}
\def\OqM{\cO_q(M(n))}
\def\OqZ{\cO_{q,\BZ}(SL(n))}
\def\Zv{{\BZ[v^{\pm1}]}}

\def\tUqq{\widetilde{\Uqq}}
\def\Qv{{\BQ(v)}}
\def\End{\mathrm{End}}
\def\dl{\partial_\ell}
\def\dr{\partial_r}
\def\ba{{\mathbf a}}
\def\buu{{\mathbf u}}
\def\bX{{\mathbf X}}
\def\bY{{\mathbf Y}}
\def\pM{\partial M}
\def\vk{\varkappa}
\def\drQ{\partial_r(Q)}
\def\pal{\partial \al}
\def\bpp{\beta^\perp}

\def\hR{\widehat{\mathcal R}}
\def\OqM{\cO_q(M(n))}
\def\OqZ{\cO_{q,\BZ}(SL(n))}
\def\Zv{{\BZ[v^{\pm1}]}}

\def\tUqq{\widetilde{\Uqq}}
\def\Qv{{\BQ(v)}}
\def\End{\mathrm{End}}
\def\dl{\partial_\ell}
\def\dr{\partial_r}
\def\ba{{\mathbf a}}
\def\buu{{\mathbf u}}
\def\bX{{\mathbf X}}
\def\bY{{\mathbf Y}}
\def\sgn{\mathrm{sgn}}
\def\id{\mathrm{id}}
\def\RT{\mathsf{RT}_0}
\def\RTp{{\mathsf{RT}}}
\def\fM{\mathfrak{M}}

\def\Uq{U_q(sl_n)}
\def\Hom{\mathrm{Hom}}

\begin{document}

\title{Stated SL($n$)-Skein Modules and Algebras}

\author[Thang  T. Q. L\^e]{Thang  T. Q. L\^e}
\address{School of Mathematics, 686 Cherry Street,
 Georgia Tech, Atlanta, GA 30332, USA}
\email{letu@math.gatech.edu}
\author[Adam S. Sikora]{Adam S. Sikora}
\address{Depatrment of Mathematics, University at Buffalo}
\email{asikora@buffalo.edu}
 
\date{\today}

\thanks{The first author was supported in part by National Science Foundation. \\
2020 {\em Mathematics Classification:} 
Primary: 
57K31, 
Secondary: 57K16,  
20G42, 
17B37, 
\\
{\em Key words and phrases: skein module, skein algebra, surface, Reshetikhin-Turaev invariants, quantum group.}}

  \newcommand{\blue}[1]{{\color{blue} #1}}
  \newcommand{\brown}[1]{{\color{brown} #1}}
  
  \def\uot{{\underline{\otimes}}}

\begin{abstract}
We develop a theory of stated SL($n$)-skein modules, $\S_n(M,\cN),$ of $3$-manifolds $M$ marked with intervals $\cN$ in their boundaries. These skein modules, generalizing stated SL($2$)-modules of the first author, 
stated SL($3$)-modules of Higgins', and SU(n)-skein modules of the second author, consist of linear combinations of framed, oriented graphs, called $n$-webs, with ends in $\cN$,  considered up to skein relations of the $\Uq$-Reshetikhin-Turaev functor on tangles, involving coupons representing the anti-symmetrizer and its dual. 

We prove the Splitting Theorem asserting that cutting of a marked $3$-manifold $M$ along a disk resulting in a $3$-manifold $M'$ yields a homomorphism $\S_n(M)\to \S_n(M')$ for all $n$. That result allows to analyze the skein modules of $3$-manifolds through the skein modules of their pieces. 

The theory of stated skein modules is particularly rich for thickened surfaces $M=\Sigma \times (-1,1),$ in whose case, $\S_n(M)$ is an algebra, denoted by $\S_n(\Sigma).$ One of the main results of this paper asserts that the skein algebra of the ideal bigon is isomorphic with $\Oq$ and it provides simple geometric interpretations of the product, coproduct, counit, the antipode, and the cobraided structure on $\Oq.$ (In particular, the coproduct is given by a splitting homomorphism.) We show that for surfaces with boundary $\Sigma$ every splitting homomorphism is injective and that $\S_n(\Sigma)$ is a free module with a basis induced from the Kashiwara-Lusztig canonical bases.

Additionally, we show that a splitting of a thickened bigon near a marking defines a right $\Oq$-comodule structure on $\S_n(M),$ or dually, a left $\Uq$-module structure. Furthermore, we show that the skein algebra of surfaces $\Sigma_1, \Sigma_2$ glued along two sides of a triangle is isomorphic with the braided tensor product $\S_n(\Sigma_1)\uot \S_n(\Sigma_2)$ of Majid.
These results allow for geometric interpretation of further concepts in the theory of quantum groups, for example, of the braided products 
and of Majid's transmutation operation.

Building upon the above results, we prove that the factorization homology 
with coefficients in the category of representations of $\Uq$  is equivalent to the category of left modules over $\S_n(\Sigma)$ for surfaces $\Sigma$ with $\p \Sigma=S^1$.
We also establish isomorphisms of our skein algebras  
with the quantum moduli spaces of Alekseev-Schomerus and with the internal algebras of the skein categories for these surfaces and $\mathfrak g=sl(n)$.

\end{abstract}

\maketitle

\tableofcontents{}

    \newcommand{\orange}[1]{{\color{orange}#1}}
    \newcommand{\gray}[1]{{\color{gray}#1}}
   \def\bi{{\bar i}}
   \def\bj{{\bar j}}
    \def\bF{\bar F}
    \def\pF{\partial {F}}
    \def\inv{\mathrm{inv}}
 \def\ptfS{\partial \tilde{\fS}}
 \def\Cut{{\mathsf{cut}}} 
 \def\CutD{\Cut_D}
    
%
\section{Introduction}
\label{s.intro}
%

%
\subsection{Motivation}
%

Moduli spaces of flat connections on surfaces and their quantizations play a pivotal role in quantum field theory. For example, they appear as the classical phase spaces of the Yang-Mills and Chern-Simons theories, \cite{AB, Wi}, and are quantized by these theories. More rigorous quantizations are achieved in mathematics through the Topological Quantum Field Theories, \cite{Tu2}, Kauffman bracket skein algebras, \cite{Pr, Tu1, PS}, 
the lattice gauge theory, \cite{AGS,BR1}, and more recently, through (quantum) cluster algebras, \cite{FG2, JLSS}, and factorization homology, \cite{BBJ}. These quantizations and relations between them are a subject of current active research and are of central importance to Quantum Topology. 

Based on ideas of \cite{BW}, the first author extended the notion of the Kauffman bracket skein algebras (quantizing $SL(2,\C)$-character varieties) to their stated version, built of links and arcs with stated ends, \cite{Le-triang}. His approach made it possible to analyze skein algebras of surfaces through surface triangulations and provided a conceptual framework for the Bonahon-Wang theory, \cite{BW, Le-qteich}. 

On the other hand, the second author introduced the notion of SL($n$)-skein modules of $3$-manifolds and proved that they quantize their SL($n$)-character varieties, \cite{Si}. Based on these two developments, we develop a theory of stated SL($n$)-skein modules of $3$-manifolds.
Our theory generalizes the recent work of Higgins for $n=3,$~\cite{Hi}, however it is not a straightforward generalization of the $n=2,3$ cases, because the SL($n$)-skein theories for $n=2$ and $3$ rely on explicit bases of skein algebras 
which are unknown for higher $n$. In fact, one of the main achievements of this work is a construction of bases for the SL($n$)-skein algebras of surfaces with non-empty boundary for all $n$.  

We discuss the relation between our stated SL($n$)-skein algebras and other quantizations of the $SL(n)$-character varieties of surfaces in Subsection \ref{ss.intro-facthom} and Section \ref{s.facthom_etc}.

%
\subsection{Skein Modules of marked $3$-manifolds}
%

In this paper we will work with a commutative ring of coefficients $R$ with a distinguished invertible $v=q^{\frac{1}{2n}}.$ 
A marked $3$-manifold is a pair $(M, \cN),$ where $M$ is a smooth oriented $3$-manifold with (possibly empty) boundary $\p M$ and $\cN \subset \p M$ consists of open intervals, called {\bf markings}.

An  $n$-web $\alpha$ in $\MN$ is a disjoint union of an oriented link and a directed ribbon graph whose every vertex is either $1$-valent end in $\cN$ or an internal $n$-valent sink or source, cf. Fig. \ref{f.webexample}. Each web is equipped with a transversal vector field called its framing, which at each end $e$ points in the direction of the marking containing $e$, cf. Subsec. \ref{ss.marked}. 

\begin{figure}[h] 
   \centering \diagg{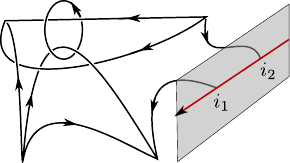}{1.2in}
   \caption{An example of a $3$-web with two endpoints in a marking (in red) stated by $i_1,i_2$.}
   \label{f.webexample}
\end{figure}

A {\bf state} of a web $\alpha$ is an assignment of a label from $\{1,\dots, n\}$ to each of its ends.

The stated $SL(n)$-skein module, $\S_n(M,\cN)$, of $(M,\cN)$ is the space of all $R$-linear combination of stated $n$ webs in $(M,\cN)$, up to internal skein relations \eqref{e.pm}-\eqref{e.sinksource} and boundary skein relations \eqref{e.vertexnearwall}-\eqref{e.crossp-wall}. 
These relations mimic those satisfied by the Reshetikhin-Turaev functor on tangles, with $n$-vertices representing the anti-symmetrizer tensor and its dual. More specifically, the internal relations are based on the skein relations of \cite{Si}. (It may be useful to recall here the premise of \cite{Si}: that although $n$-webs seem unnecessary from the point of view of study of quantum invariants of links in manifolds, they allow for a very effcient formulation of the necessary skein relations.)

However, our specific relations involve a novel sign modification 
which leads to a major technical benefit: the half-edges around each $n$-valent vertex have a cyclic ordering only, rather than a linear ordering required in \cite{Si}, cf. Subsections \ref{ss.unbased-t} and \ref{ss.half-ribbon}. An additional benefit of this modification is that it makes skein relations invariant under the orientation reversal of the webs. (That is reversal of all loop orientations and edge directions.)

The boundary skein relations of $\S_n(M,\cN)$ are new and generalize those of \cite{Le-triang} and \cite{Hi}.

%
\subsection{Splitting Homomorphisms}
%

An important property of stated skein modules is that they behave in a simple manner under the splitting of 3-manifolds along disks. 
Specifically, for a marked 3-manifold $\MN$ with a properly embedded closed disk $D$ in $M-\cN$, let $M'$ be $M$ with an open collar neighborhood of $D$ removed. Then $M'$ is a $3$-manifolds with copies $D_1,D_2$ of $D$ in its boundary, whose gluing together leads to an epimorphism 
$pr: M'\to M.$ Given an oriented open interval $\beta\subset D$, the splitting of $\MN$ along $(D,\beta)$, denoted by  $\Cut_{(D,\beta)}\MN$, is  the marked 3-manifold $(M', \cN')$, where $\cN'= \cN \cup \beta_1 \cup \beta_2$, where $\beta_i\subset D_i$ are the connected components of $pr^{-1}(\beta)$, cf. Fig. \ref{f.intro.splitexample}. Note that for any stated $n$-web $\alpha$ in $\MN$ transversal to $D$ with $\alpha\cap D \subset \beta,$ the inverse image $pr^{-1}(\alpha)$ is an $n$-web in $(M', \cN')$ stated at all its ends except those at $\beta_1\cup \beta_2.$
Given any map $s:\al \cap \beta \to \{1,\dots, n\}$ let $\al(s)$ be $\pr^{-1}(\al)$ with each of its ends $x\in \pr^{-1}(\al) \cap (\beta_1 \cup \beta_2)$ stated by $s(\pr(x))$.

\begin{figure}[h] 
   \centering \diagg{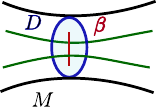}{0.7in} $\xrightarrow{\Theta_{D,\beta}} \sum_{i,j=1}^n$
   \centering \diagg{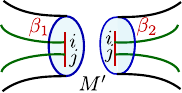}{0.7in}
    {}
   \caption{An example of a splitting of an $n$-web (in green) intersecting the splitting disk $D$ twice.}
   \label{f.intro.splitexample}
\end{figure}
 
The following result generalizes that of \cite{Le-triang, BL} for the Kauffman bracket skein modules ($n=2$) and of \cite{Hi} for $n=3$:

\begin{theorem*}[Splitting Theorem \ref{t.splitting}]
There is a unique $R$-module homomorphism
$$ \Theta_{(D,\beta)}: \S_n(M,\cN) \to \S_n(\Cut_{(D,\beta)}\MN)$$
sending every stated $(D,\beta)$-transverse $n$-web $\alpha$ in $\MN$ to the sum of all of its lifts,
$$\Theta_{(D,\beta)}(\al) = \sum_{s: \al \cap \beta \to \{\pm\}} \al(s).$$
\end{theorem*}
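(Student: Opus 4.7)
The plan is to construct $\Theta_{(D,\beta)}$ by defining it on $(D,\beta)$-transverse stated $n$-webs via the stated formula and then verifying that the resulting map descends to the quotient $\S_n(M,\cN)$. For uniqueness, a standard general position argument shows that every $n$-web in $(M,\cN)$ is ambient-isotopic to a $(D,\beta)$-transverse one: the intersections with $D$ can be perturbed to finitely many transverse points lying in $\beta$, with framing vectors adjusted to point along $\beta$. Hence $(D,\beta)$-transverse webs generate $\S_n(M,\cN)$, and the formula determines $\Theta_{(D,\beta)}$ uniquely.

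For existence I would extend $R$-linearly and verify invariance under all defining relations of $\S_n(M,\cN)$. Isotopies of transverse webs that remain transverse lift directly to ambient isotopies of $\Cut_{(D,\beta)}(M,\cN)$ that carry each stated lift $\alpha(s)$ to an isotopic stated lift, so the sum $\sum_s \alpha(s)$ is trivially invariant. Internal and boundary skein relations are applied inside small balls $B \subset M$; since $D$ is closed and disjoint from $\cN$, such a ball can be isotoped off $D$ by general position. In such a ball $\pr \colon M' \to M$ restricts to a diffeomorphism, so the relation lifts term by term through $\Theta_{(D,\beta)}$, and the sum over states is preserved on each side.

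The main obstacle is invariance under isotopies passing through non-transverse positions. A generic smooth isotopy $\alpha_t$ joining two transverse representatives meets the non-transverse stratum at finitely many times $t_i$ corresponding to ``finger moves through $D$'': a small U-shaped arc of a strand is pushed from one side of $D$ to the other, creating or destroying a pair of adjacent intersections with $\beta$. Locally on the cut manifold, before the move the cut web contains an arc passing cleanly through a half-ball on the $D_2$-side with no intersection with $\beta_2$; after the move, the same arc is replaced by two parallel strands ending on $\beta_2$ together with a small cap on the $D_1$-side joining two points of $\beta_1$. Summing over the states $s_1,s_2 \in \{1,\dots,n\}$ of the new intersection pair, invariance reduces to the zig-zag identity
\[
\sum_{s_1,s_2} \bigl(\text{two strands through }\beta_2\text{ stated by }s_1,s_2\bigr) \cdot \bigl(\text{cap on }\beta_1\text{ stated by }s_1,s_2\bigr) = \bigl(\text{straight strand through the half-ball}\bigr).
\]
This is precisely the cup--cap duality that the boundary skein relations \eqref{e.vertexnearwall}--\eqref{e.crossp-wall} are engineered to enforce: the state-summed cap near a wall composed with two state-labeled strands on the other side of the wall reduces to an unobstructed strand. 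Verifying this identity rigorously is the hard part, and I would do so by isolating a model neighborhood of the tangency, identifying the relevant local skein module with a morphism space of $\Uq$-modules, and recognizing the sum as the image of the identity morphism under the coevaluation--evaluation composition.

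Once the tangency computation is established, combining the four steps above (uniqueness by density of transverse webs, invariance under transverse isotopy, invariance under tangency moves, and invariance under local skein relations) produces the desired well-defined $R$-linear homomorphism. I expect the cleanest way to package the tangency argument is to first treat a single strand transverse to $D$ at one point as a ``base case'' using the zig-zag identity, then bootstrap to arbitrary webs by localizing each finger move to a half-ball disjoint from the rest of the web.
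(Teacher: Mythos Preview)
Your overall scaffolding (density of transverse webs gives uniqueness; check invariance under isotopy and under local skein relations) matches the paper's. The gap is in your analysis of the isotopy invariance: you claim the only critical event is a ``finger move'' creating/destroying a cap near $D$, and you treat this with a zig-zag identity. That is only one of the necessary checks.

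Two further events occur generically and are not reducible to your cap move. First, an $n$-valent sink or source can pass through $D$. On one side of this event the cut web has a vertex on the $D_1$-side with $n$ strands ending on $\beta_1$, and on the other side it has the mirrored picture on the $D_2$-side; equality of the two state sums is exactly the content of relation \eqref{e.vertexnearwall} on one side and its left-hand analogue \eqref{e.wallvertex} on the other, and these are not consequences of the cup--cap relations you invoke. Second, since $\beta$ is one-dimensional, two distinct intersection points of $\alpha$ with $\beta$ cannot exchange their order while remaining on $\beta$; an isotopy that swaps them forces the intersections off $\beta$, and comparing the two transverse endpoints requires the ``height exchange'' identity. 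This is what relation \eqref{e.crossp-wall} and its counterpart \eqref{e.wallcross} encode; again this is independent of the cap identity.

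The paper avoids enumerating 3D singularity types directly: it passes to $n$-tangle diagrams in a collar cube around $D$, decomposes any diagram into the elementary pieces of \eqref{e.id}--\eqref{e.RTT}, and checks that moving each elementary piece (cap, crossing, sink/source) across $\beta$ preserves the state sum, with the height exchange handled as a corollary. Your argument would become correct if you added the vertex-through-$D$ and height-exchange checks to your list and verified them against the corresponding boundary relations; as written, the single finger-move check is insufficient.
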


%
\subsection{Basic properties of stated skein modules}
\label{ss.i-sym}

We discuss symmetries and other properties of stated skein modules of marked $3$-manifolds in Subsections \ref{ss.symmetries}-\ref{ss.edgeauto}.  In particular, we observe that for every marked $3$-manifold $\MN$, the orientation reversal of webs $\alpha\to \cev{\alpha}$ defines an $R$-module automorphism:
$$\cev {\,\cdot\,}: \S_n(M,\cN)\to \S_n(M,\cN),$$
where an \underline{orientation} of a web consists of orientations of all its loop components and directions of all its edges.

Let $\overline{(M,\cN)}$ denote $M$ and $\cN$ with reversed orientations. Let $\bar R$ be the ring $R$ with the distinguished element $v^{-1}$ instead of $v$. For an $n$-web $\al$ of $\MN$ let $\overline{\alpha}$ be the $n$-web in $\overline{(M,\cN)}$ obtained from $\al$ by negating its framing, $f \to -f$.

\begin{theorem*}[Thm. \ref{t.framing-rev}] 
(1) Any ring isomorphism $\vk: R\to \bar R$ sending $v$ to $v^{-1}$ extends to an isomorphism of $R$-modules
$\vk_{(M,\cN)}: \S_n(M,\cN,R)\xrightarrow{\cong} \S_n(\overline{M,\cN}, \bar R)$ sending every stated $n$-web $\alpha$ to $\overline{\alpha}$, where $\S_n(\overline{M,\cN}, \bar R)$ is an $R$-module via $\vk: R\to \bar R.$\\
(2) The composition   $ \vk_{(\overline{M,\cN})} \circ \vk_{(M,\cN)}$ is  the identity on $\S_n(M,\cN,R)$.
\end{theorem*}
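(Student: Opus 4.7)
I would define the candidate map first on the free $R$-module generated by isotopy classes of stated $n$-webs in $(M,\cN)$ via $\sum c_i\alpha_i \mapsto \sum \vk(c_i)\,\overline{\alpha_i}$, where $\overline{\alpha_i}$ is the same underlying graph (with the same edge orientations and state labels) regarded as an $n$-web in $\overline{(M,\cN)}$ with framing negated, $f\mapsto -f$. Since $\vk$ is a ring automorphism, this is $\vk$-semilinear, hence $R$-linear when the target is viewed as an $R$-module through $\vk$. The same prescription defines the candidate inverse $\vk_{(\overline{M,\cN})}$ at the level of free modules.

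The main content of part (1) is checking that the ideal of skein relations is preserved. The crucial geometric input is that a small neighborhood in $M$ displaying a positive crossing (as measured by the orientation of $M$) displays a negative crossing when viewed with the opposite orientation, and that negating framing globally swaps the half-twist convention at each $n$-valent vertex. Combined with the substitution $v\mapsto v^{-1}$ on coefficients, the crossing relation \eqref{e.pm} in $\S_n(M,\cN)$ maps to the opposite-crossing version of that same relation in $\S_n(\overline{M,\cN},\bar R)$, which is itself one of the defining relations; the cap/cup and sink/source relations up to \eqref{e.sinksource} transform similarly, many being already invariant under the framing-reversal symmetry $\cev{\,\cdot\,}$ noted in Subsec.~\ref{ss.i-sym}, which shortens the check. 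The identical local analysis applies in a half-neighborhood of a marking to handle the boundary relations \eqref{e.vertexnearwall}--\eqref{e.crossp-wall}.

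Part (2) is then essentially formal: double-reversing the ambient orientation and double-negating the framing returns each stated $n$-web $\alpha$ to itself, and $\vk\circ\vk$ is the identity on $R$ (since $\vk$ is a ring involution interchanging $v$ and $v^{-1}$). The composition therefore equals the identity on a spanning set, hence on all of $\S_n(M,\cN,R)$; the symmetric argument in the opposite order gives $\vk_{(M,\cN)}\circ\vk_{(\overline{M,\cN})}=\mathrm{id}$, which simultaneously completes the bijectivity claim of (1). The main obstacle I anticipate is the case-by-case bookkeeping for the boundary skein relations, whose diagrams are asymmetric in the wall orientation and in the order of stated endpoints, so one must carefully pair each relation with its mirror partner in $\S_n(\overline{M,\cN},\bar R)$ under the combined action of ambient orientation reversal, framing negation, and $v\mapsto v^{-1}$. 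Once the internal crossing relation is handled correctly, the remaining matchings should be routine verifications.
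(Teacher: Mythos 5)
Your proposal follows essentially the same route as the paper: define $\vk_{(M,\cN)}$ on the free module $R\,\mathcal W_n(M,\cN)$, verify that each defining relation \eqref{e.pm}--\eqref{e.crossp-wall} is carried to a consequence of the relations in $\S_n(\overline{M,\cN},\bar R)$ (with positive crossings becoming negative ones and $v\mapsto v^{-1}$), and deduce (2) formally from $\overline{\overline{\alpha}}=\alpha$ on a spanning set. The one place your plan understates the work is the sink/source relation \eqref{e.sinksource} and the boundary vertex relation \eqref{e.vertexnearwall}: these are not handled by the symmetry $\cev{\,\cdot\,}$ (which reverses web orientations, not framings) but require genuine computations — the paper's Lemma \ref{l.sinksource-involution} (composing with the negative half-twist and re-indexing the permutation sum via $\ell(\tau\sigma)=\binom n2-\ell(\sigma)$) and the identity $(-q)^{\binom n2}t^{n/2}=a^{-1}$, which is exactly where the specific normalization of the constants $a,t,c_i$ enters.
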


For every marking $\beta$ of $\MN$ there is an $R$-module automorphism $g_\beta$ of $\S_n(M,\cN)$, called a {\bf marking automorphism}, sending stated $n$-webs $\alpha$ to
$$g_\beta(\alpha)=\prod_{x\in \alpha\cap \beta} (-1)^{n-1}q^{2s(x)-n-1}\cdot \alpha$$ 
where $s(x)$ is the state of the endpoint $x$ of $\alpha.$  
\def\htw{\mathrm{htw}}
\def\htwf{\raisebox{-25pt}{\incl{2cm}{htwf}}}
\def\htwn{\raisebox{-25pt}{\incl{2cm}{htwn}}}
\def\UL{U^L}

There is an additional automorphism of $\S_n\MN$ associated with each marking of $\MN:$

\begin{proposition*}[Prop. \ref{p.twist}]
 For any marking $\beta$ in $\cN$ there exist unique $R$-linear isomorphisms, called the {\bf half-twist automorphisms},
 $$\htw_{\beta}, \wt\htw_\beta :\S_n(M,\cN) \to \S_n(M,\cN)$$
 sending any stated $n$-web $\alpha$ in $(M,\cN)$ with $k$ endpoints on $\beta$ to
 $$\htw_{\beta}   \left( \nedgewalltall{<-}{b}{}{$i_k$}{$i_2$}{$i_1$}\right)= 
\left( \prod_{j=1}^k c_{\overline{i_j}}\right) \cdot
\nedgewalltall{->}{b}{}{$\overline{i_k}$} {$\overline{i_2}$}{$\overline{i_1}$} 
= \left( \prod_{j=1}^k c_{\overline{i_j}}\right) \cdot \htwf$$
and to 
$$\wt\htw_{\beta}   \left(\nedgewalltall{<-}{b}{}{${i_k}$}{${i_2}$}{${i_1}$}\right)= 
\left( \prod_{j=1}^k c_{i_j}\right) \cdot
\nedgewalltall{->}{b}{}{$\overline{i_k}$} {$\overline{i_2}$}{$\overline{i_1}$} = \left( \prod_{j=1}^k c_{i_j}\right) \cdot \htwf,$$
where $c_i\in R$'s are defined in \eqref{e.a} in Subsec. \ref{ss.notation}, and $\overline{i}=n+1-i$. $H$ is the positive half-twist -- see further details in Subsection \ref{ss.htw}. (The directions of the horizontal edges are arbitrary.)
\end{proposition*}

\subsection{Stated SL($n$)-skein algebras of surfaces}

\def\OqR{\mathcal O_q(sl_n;R)}
\def\OqQ{\mathcal O_q(sl_n;\Qv)}

The theory of stated SL($n$)-skein modules is particularly rich for thickened surfaces $M=\Sigma \times (-1,1).$ 
It is most convenient to consider it in the context of {\bf punctured bordered surfaces} ({\bf pb surfaces} for short) which are of the form $\Sigma = \bS- \cP,$ where $\bS$ is a compact oriented surface and $\cP \subset \bS$ is a finite set, called the {\bf ideal points} of $\Sigma$, which meets each connected component of $\p \bS.$ Then $\pS$ is a union of open intervals. These intervals are called {\bf boundary edges}.

In each boundary edge $e$ choose a point $b_e$.
Let $\S_n({\Sigma})= \S_n({\Sigma} \times (-1,1), \cN)$, where $\cN$ is the union of all $b_e \times (-1,1)$, each having the natural orientation of the interval $(-1,1)$. 

For the monogon $\fM$, which is the closed disk with a boundary point removed, we have

\begin{theorem*}[Thm. \ref{t.monogon}.]  The map 
$\mu: R\to S_n(\fM)$ given by $\mu(r)= r\cdot \emptyset$ is an $R$-algebra isomorphism.
\end{theorem*}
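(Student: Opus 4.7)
The plan is as follows. Being a unital ring homomorphism is immediate: the empty web $\emptyset$ is the multiplicative identity of $\S_n(\fM)$, since stacking it onto any web in $\fM \times (-1,1)$ leaves the web unchanged, so $\mu(1) = \emptyset$ is the unit, and $\mu(rs) = rs\emptyset = (r\emptyset)(s\emptyset) = \mu(r)\mu(s)$ because $\emptyset \cdot \emptyset = \emptyset$. Hence the content of the theorem is bijectivity of $\mu$.

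For surjectivity, the plan is to show by induction that every stated $n$-web $\alpha \subset \fM \times (-1,1)$ is equivalent, modulo the defining skein relations, to a scalar multiple of $\emptyset$. I would induct on the lexicographic triple (number of endpoints of $\alpha$ on the marking, number of $n$-valent interior vertices, number of crossings). Since $\fM$ is contractible, $\alpha$ can be isotoped so that all its endpoints lie on an arbitrarily short subarc of the unique marking and the interior of $\alpha$ sits inside a small $3$-ball. Each $n$-valent vertex can then be pushed adjacent to the wall and expanded using the vertex-wall relation \eqref{e.vertexnearwall}, replacing it by terms with more boundary endpoints but no interior vertex; crossings near the wall can be resolved via the crossing-wall relations up to \eqref{e.crossp-wall}; and adjacent pairs of boundary endpoints can then be contracted to scalars using the cup/cap boundary relations, after possibly reorienting ends by means of the half-twist automorphism $\htw_\beta$ of Proposition~\ref{p.twist} or the framing-reversal symmetry of Theorem~\ref{t.framing-rev}. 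Once no endpoints remain, $\alpha$ is a closed $n$-web in a ball, which the internal skein relations \eqref{e.pm}--\eqref{e.sinksource} reduce to a scalar multiple of $\emptyset$: this is the standard $SL(n)$-web evaluation.

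For injectivity, the plan is to construct an $R$-linear left inverse $\tau : \S_n(\fM)\to R$ to $\mu$. Because the thickened monogon is a $3$-ball whose boundary contains the marking arc, every stated $n$-web $\alpha$ defines a morphism in the Reshetikhin--Turaev category $\mathrm{Rep}(\Uq)$: the cyclic sequence of orientations of the ends along the single marking determines tensor products of copies of $V = \BC^n$ and $V^*$, the states select specific basis (or dual basis) vectors at these ends, and $\alpha$ defines a morphism between the resulting tensor products, which paired with the selected vectors produces a scalar $\tau(\alpha) \in R$. By the very design of the skein relations in Subsections \ref{ss.unbased-t}--\ref{ss.half-ribbon} together with the boundary relations \eqref{e.vertexnearwall}--\eqref{e.crossp-wall}, the assignment $\alpha \mapsto \tau(\alpha)$ respects all defining relations and descends to an $R$-linear map on $\S_n(\fM)$ with $\tau(\emptyset) = 1$; hence $\tau \circ \mu = \id_R$.

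The main obstacle is the surjectivity reduction: one must organize the applications of the boundary relations so that after pushing every vertex and crossing to the wall the resulting endpoints can in fact be paired up and contracted, which requires some care with orientations and with the order of reductions. Once the reduction scheme is made systematic and shown to strictly decrease the chosen complexity measure, both directions follow at once.
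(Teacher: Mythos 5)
Your proposal is correct and takes essentially the paper's route: surjectivity is exactly Lemma~\ref{l.span} specialized to the monogon (every web reduces by the boundary and internal relations to a scalar multiple of $\emptyset$), and injectivity comes from a Reshetikhin--Turaev evaluation, which the paper realizes by embedding $\fM$ into the bigon and composing with the already-constructed $\Phi:\S_n(\fB)\to \Uq^*$ rather than by defining $\tau$ directly on the monogon. The one step you assert rather than verify --- that $\tau$ kills all defining relations --- is exactly what the annihilator machinery of Propositions~\ref{r.int_ann} and~\ref{r.right_ann} provides, and factoring through the bigon is how the paper avoids redoing that check.
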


Despite its simple statement, the proof of the above result is non-trivial -- see the comment at the end of this subsection.

The bigon, $\fB$, is a closed disk with two boundary points removed.  In Lemma \ref{l.S_n-gens} we show that the $R$-algebra $\S_n(\fB)$ is generated by the arcs $a_{ij}=\bigonedge{}{}{b}{>}{$i$}{$j$}$ for $1\leq i,j\leq n.$ 
Splitting $\fB$ along an interior ideal arc connecting its two ideal vertices defines an algebra $R$-homomorphism $$ \Delta: \S_n(\fB) \to \S_n(\fB) \ot \S_n(\fB).$$

Let $\epsilon: \S_n(\fB) \to R$ be the composition 
 $$\epsilon: \S_n(\fB) \xrightarrow{\wt\htw_{e_r}}\S_n(\fB) \xrightarrow{\iota_*} \S_n(\fM)\simeq R,$$
where $\htw_{e_r}$ is the half-twist automorphism defined above and $\iota_*$ is the algebra homomorphism induced by an embedding $\fB\hookrightarrow \fM$ filling in one of the two ideal points of $\fB$, (depicted always on top of $\fB$ in this paper).
\def\counit{\raisebox{-16pt}{\incl{1.6 cm}{counit}}}
 \def\counitb{\raisebox{-16pt}{\incl{1.6 cm}{counitb}}}
 \def\counitc{\raisebox{-12pt}{\incl{1.2 cm}{counitc}}}
On generators,
$$\epsilon(a^i_j)= \epsilon(\ca^i_j)= c_{{\bar  j}}\, \,  \counitc 
=\delta_{i,j}.$$

Let $\Oq$ be the quantized coordinate ring algebra of $sl_n$. This Hopf algebra is the restricted dual of the quantized enveloping algebra, $\Uq$, cf. \cite[9.2.2]{KS}. For technical convenience, we consider
$\Oq$ as defined over $\Q[v^{\pm 1}].$

\begin{theorem*}[Thm. \ref{t.Hopf}]
(a) The algebra $\S_n(\fB)$ has the structure of a Hopf algebra over $R$ with the coproduct $\Delta$, the counit $\ep$, and the antipode $S$ such that 
$$S(a^i_j) = (-q)^{i-j}\,\,  \ca^\bj _\bi\ \text{for}\ i,j=1,\dots, n.$$ 

(b) The map $\Psi (u^i_j) = a^i_j$ extends to a unique Hopf algebra isomorphism  
$$\OqR:=\Oq\otimes R\xrightarrow{\Psi} \S_n(\fB) .$$
\end{theorem*}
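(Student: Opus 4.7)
The plan is to verify the Hopf algebra axioms on $\S_n(\fB)$ for part (a), then in part (b) define $\Psi$ on generators, check it preserves all of the algebra and Hopf structure, and argue that it is an isomorphism.

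For part (a), coassociativity of $\Delta$ is direct from the Splitting Theorem: cutting $\fB$ along two disjoint parallel interior ideal arcs produces a chain of three bigons, and the resulting triple splitting homomorphism can be computed by splitting in either order, forcing $(\Delta\otimes\id)\Delta=(\id\otimes\Delta)\Delta$. The splitting summation on generators gives $\Delta(a^i_j)=\sum_k a^i_k\otimes a^k_j$, from which the counit axiom reduces to $\epsilon(a^i_j)=\delta_{ij}$ combined with one index contraction. For the antipode, I would define $S$ on generators by the stated formula, extend as an anti-algebra map, and then verify the antipode axiom $m(S\otimes\id)\Delta(a^i_j)=\delta_{ij}\cdot 1=m(\id\otimes S)\Delta(a^i_j)$ on generators using the boundary skein relations: the sum $\sum_k (-q)^{i-k}\,\ca^{\bar k}_{\bar i}\, a^k_j$ should reduce, via the anti-symmetrizer sink/source relation combined with the marking automorphism and half-twist from Prop.~\ref{p.twist}, to a cap joining the two markings, and then to $\delta_{ij}$ by the monogon isomorphism (Thm.~\ref{t.monogon}).

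For part (b), I would show that the arcs $a^i_j$ satisfy the defining $R$-matrix relations and the quantum determinant relation of $\Oq$ by direct skein calculation in $\fB$. The $R$-matrix relations between $a^i_j$ and $a^k_\ell$ come from pulling one arc past another using \eqref{e.pm} and the boundary crossing relations \eqref{e.crossp-wall}, which reproduce exactly the braiding of $\Uq$ on its defining vector representation. The quantum determinant relation $\det_q(a)=1$ comes from placing an $n$-valent sink and an $n$-valent source on the two boundary edges and reducing the resulting web via the anti-symmetrizer projector into a collection of monogons, then applying the monogon isomorphism. This gives a well-defined algebra map $\Psi\colon\OqR\to\S_n(\fB)$; compatibility with $\Delta$, $\epsilon$, $S$ is then an on-generators check, since each of these agrees with the standard Hopf formulas for $\Oq$. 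Surjectivity is immediate from Lemma~\ref{l.S_n-gens}.

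The hardest step is injectivity of $\Psi$. My plan is to exhibit a spanning set of $\S_n(\fB)$ parameterized by the same combinatorial data that indexes a PBW-type basis of $\Oq$. Any stated $n$-web in $\fB$ can be brought to normal form -- a system of arcs with endpoints on both boundary edges, ordered by height, with no interior vertices or crossings -- using the framing relation, half-twist, sink/source relations \eqref{e.sinksource}, and the crossing relations \eqref{e.pm}. This expresses every class in $\S_n(\fB)$ as a polynomial in the $a^i_j$ of bounded complexity, so $\Psi$ sends an $\Oq$-basis to a spanning set and is therefore an isomorphism by a rank-comparison argument. The main technical obstacle is controlling the normal-form reduction: the $n$-valent sink/source relations expand into $n!$ terms, and one must confirm that iterated reductions produce no relations beyond those already imposed by $\Oq$. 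A useful safeguard will be the classical limit $v=1$, where $\S_n(\fB)$ should degenerate to the commutative coordinate algebra of $SL_n$ and match the corresponding limit of $\Oq$, ensuring that no torsion or extra relations are introduced.
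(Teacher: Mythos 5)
Your part (a) and the first half of part (b) (the $R$-matrix relations via isotopy across the splitting arc, $\det_q(\ba)=1$ via the sink/source relations, surjectivity from Lemma \ref{l.S_n-gens}) follow the paper's route closely and are fine. The genuine gap is your injectivity argument. Mapping a basis of $\Oq$ onto a spanning set of $\S_n(\fB)$ proves surjectivity of $\Psi$, not injectivity; to conclude by ``rank comparison'' you would need to know in advance that $\S_n(\fB)$ is free of the correct rank, i.e.\ that your normal-form spanning set is linearly independent modulo the skein relations. That is exactly the hard point: establishing it by controlling normal-form reductions is the confluence method, which the authors explicitly note works only for $n=2,3$ and is open for $n>3$ (the $n!$-term expansions of \eqref{e.sinksource} do not form a confluent rewriting system in general). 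The classical limit $v=1$ does not rescue this over $R=\Z[v^{\pm1}]$, and even over a field a specialization argument only bounds dimensions in the wrong direction without additional flatness input.

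The paper's proof avoids bases entirely. It constructs an algebra homomorphism $\Phi:\S_n(\fB)\to\Uq^*$ by identifying stated webs in $\fB$ with stated $n$-tangles and applying the functional $\Gamma$ built from the modified Reshetikhin--Turaev functor $\RTp$; this map is automatically well defined because the defining relations \eqref{e.pm}--\eqref{e.crossp-wall} are precisely basic annihilators of $\RTp$, and it satisfies $\Phi(a^i_j)=u^i_j$, so $\Phi\circ\Psi=\id_{\Oq}$. Combined with surjectivity of $\Psi$, this yields that both $\Phi$ and $\Psi$ are isomorphisms. Note also that the same map $\Phi$ underlies the proof of Theorem \ref{t.monogon}, which you invoke to define $\epsilon$; so if you discard $\Phi$ you must supply an independent proof that $\mu:R\to\S_n(\fM)$ is injective as well. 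To repair your proposal, replace the normal-form step with the construction of such a representation-theoretic ``evaluation'' homomorphism out of $\S_n(\fB)$.
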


The above theorems generalize statements for $n=2$ in \cite{CL} and $n=3$ in \cite{Hi}.
However, it is not a straightforward generalization. The proofs of \cite{CL, Hi} relied on specific bases of $S_n(\Sigma)$ for $n=2,3$ which can be obtained through the confluence method, cf. \cite{SW}.
That method does not work for higher $n$ and a construction of bases for $n>3$ is an important and still open problem. (We discuss our progress on that problem below.) We were able to establish the above theorem without constructing a basis of $\S_n(\fB).$

\subsection{Geometric interpretation of the cobraided structure on $\Oq$}
\label{ss.i-pbsurf}

The Hopf algebra $\OqR$ is {\bf dual quasitriangular} (see  \cite[Section 2.2]{Maj}, 
 \cite[Section 10]{KS}, \cite[Section 10.3]{ES}), also known as {\bf cobraided} (see e.g. \cite[Section VIII.5]{Kass}). This means it has an $R$-form (also known as  co-$R$-matrix), which is a bilinear form
 $$\rho : \OqR \otimes \OqR \to R$$ satisfying certain properties, with the help of which one can make the category of $\OqR$-modules a braided category. 
 {}
 The following generalizes \cite[Theorem 3.5]{CL} from  $n=2$ to all $n$:
 
 \def\letterx{  \raisebox{-9pt}{\incl{.9 cm}{x}} } 
\def\lettery{  \raisebox{-9pt}{\incl{.9 cm}{y}} }
\def\letterxy{  \raisebox{-9pt}{\incl{.9 cm}{xy}} }

\begin{theorem*}[Thm. \ref{t.cobraid}] Under the above identification $\S_n(\cB)\simeq \OqR$ 
 the $R$-form $\rho$ has the following geometric description
 $$\rho \left (\letterx  \ot \lettery \right)  = \epsilon \left(  \letterxy \right),$$  
for any $x,y\in \OqR.$    
 \end{theorem*}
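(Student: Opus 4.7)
The plan is to verify the identity on the algebra generators $a^i_j$ of $\OqR \cong \S_n(\fB)$ and then extend by bilinearity. Since $\rho$ is an $R$-bilinear form on $\OqR \otimes \OqR$ and the right-hand side $\epsilon(\letterxy)$ is also $R$-bilinear in $x$ and $y$ (by the $R$-linearity of the skein module relations and of $\epsilon$), it suffices to establish the equality for $x = a^i_j$ and $y = a^k_l$ with $1 \le i,j,k,l \le n$.

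I would then compute both sides on these generators. For the left-hand side, the co-$R$-matrix of $\OqR$ is dual to the standard universal $R$-matrix of $\Uq$ acting on the vector representation $V$, so $\rho(u^i_j \otimes u^k_l)$ equals the entry $R^{ik}_{jl}$ of the type-$A$ quantum $R$-matrix on $V \otimes V$; this entry vanishes unless $\{i,k\} = \{j,l\}$ as multisets, and is otherwise a prescribed combination of powers of $q^{\pm 1/n}$ and $(q - q^{-1})$. For the right-hand side, I would apply the internal crossing skein relation to $\letterxy$, resolving it into a linear combination of planar stated webs in $\fB$ (parallel smoothings together with sink/source contractions through an $n$-valent vertex). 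Each resulting term is a monomial in the $a^{\bullet}_{\bullet}$, and $\epsilon$ evaluates it via $\epsilon(a^i_j) = \delta_{i,j}$ together with the half-twist normalization $c_{\bar j}$ from Proposition \ref{p.twist}. Collecting the coefficients should reproduce the matrix entries $R^{ik}_{jl}$ term by term.

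The main obstacle will be the bookkeeping of $q$-powers and signs coming from three separate sources: the crossing skein relation (which by construction realizes the Reshetikhin--Turaev $R$-matrix on the vector representation), the half-twist factors $c_{\bar j}$ that enter the counit $\epsilon$, and the framing convention tied to $v = q^{1/2n}$. A cleaner conceptual alternative, should the direct computation grow unwieldy, is an axiomatic approach. Setting $\rho'(x \otimes y) := \epsilon(\letterxy)$, the Splitting Theorem \ref{t.splitting} applied to a horizontal ideal arc inside the bigon yields the cobraided multiplicativity $\rho'(xy \otimes z) = \rho'(x \otimes z_{(1)})\, \rho'(y \otimes z_{(2)})$, and applied to a vertical arc yields the symmetric identity in the second variable. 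The hexagon (Yang--Baxter) identity for $\rho'$ then follows from the Reidemeister III consequence of the skein relations, and compatibility with the antipode from the $n$-valent vertex relations. Combined with agreement on the finite generating set $\{u^i_j\}$---which reduces to one elementary two-strand crossing check---this forces $\rho' = \rho$ by uniqueness of the dual quasitriangular structure on $\OqR$.
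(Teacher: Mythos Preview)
Your first approach has a real gap: bilinearity alone does \emph{not} reduce the claim to the generators $a^i_j$. The elements $a^i_j$ generate $\OqR$ as an \emph{algebra}, not as an $R$-module, so verifying $\rho = \rho'$ on all pairs $(a^i_j, a^k_l)$ and invoking bilinearity says nothing about $\rho'$ on, say, $a^i_j a^{i'}_{j'} \otimes a^k_l$. To pass from generators to arbitrary elements you must know how $\rho'$ interacts with multiplication in each slot --- precisely the cobraided multiplicativity identities
\[
\rho(xy\otimes z)=\sum \rho(x\otimes z')\rho(y\otimes z''),\qquad
\rho(x\otimes yz)=\sum \rho(x'\otimes z)\rho(x''\otimes y).
\]
Once these hold for $\rho'$, the check on generators (together with $\rho'(1\otimes a^i_j)=\rho'(a^i_j\otimes 1)=\delta_{ij}$) suffices, since these identities let you compute $\rho'$ on any monomial.

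Your second ``axiomatic'' alternative is therefore not optional but essential, and it is exactly the paper's proof. Two small corrections there: you do not need the Yang--Baxter relation or antipode compatibility --- the two multiplicativity identities plus the values on $1$ and on the $a^i_j$ already determine $\rho$ completely. And the geometry is simpler than you indicate: both multiplicativity identities come from splitting the bigon along the \emph{vertical} ideal arc (the coproduct) and using that $\epsilon$ is an algebra homomorphism, so $\epsilon(u)=\sum\epsilon(u')\epsilon(u'')$. The generator check $\rho'(a^i_j\otimes a^k_l)=\cR^{ik}_{jl}$ is then immediate from Proposition~\ref{p.counit=RT} (Eq.~\eqref{eq.Re}), with no separate tracking of $c_{\bar j}$ factors required.
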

 
Above we identified $\fB$ with $[-1,1] \times (-1,1)$ by stretching its top and bottom ideal points into horizontal intervals.

\subsection{Relation to Reshetikhin-Turaev theory}
\label{ss.intro-RT}
\def\fCn{{\mathfrak C_n}}
 \def\sgnl{{\sgn_l}}
 \def\sgnr{{\sgn_r}}
 \def\boeta{{\boldsymbol{\eta}}}
 \def\boi{{\bm{i}}}
\def\boj{{\bm{j}}}
\def\bobeta{{\bm{\beta}}}
\def\boal{{\bm{\alpha}}}

Sections \ref{s.quantumg}-\ref{s.RT} mostly summarize the background in quantum groups and in Reshetikhin-Turaev theory necessary for this paper. Section \ref{s.quantumg} however also introduces a novel modification of the Reshetikhin-Turaev functor, utilized throughout the paper. Let us briefly describe its connection to stated skein modules.

When the bigon $\fB$ is identified with $[-1,1] \times (-1,1)$, the webs on $\fB$ can be thought as oriented framed tangles with coupons given by $n$-valent sinks and sources. 

The {\bf sign} $\sgn(e)$ of an endpoint  $e\in \pal$ is positive if the direction of $\al$ goes from left to right at $e$, and negative otherwise. Let $\sgnl(\al)$ (respectively: $\sgnr(\al)$) be the sequence of signs of left (respectively: right) endpoints of $\al$ appearing from the bottom to the top.  

Let $V=\Q(v)^n$ be the defining representation of $\Uq$ with its standard basis $\{e_1,\dots, e_n\}$. Let 
$\{e_1^*,\dots, e_n^*\}$ be the dual basis of $V^*$ and let
$\{f_1,\dots, f_n\}$ be a basis of $V^*$ given by
$f_i=(-1)^{i-1}q^{i-\frac{n+1}{2n}}e^{\bar i}$ for $i=1,\dots, n.$ For any sign sequence $\boeta=(\eta_1,\dots, \eta_k)$, let 
$$V^{\boeta}:= V^{\eta_1} \ot \dots \ot V^{\eta_k},$$
where $V^+= V \ \text{and}\ V^-= V^*$. The above bases $\{e_1,\dots, e_n\}$ and $\{f_1,\dots, f_n\}$
induce the {\bf tensor basis} of $V^{\boeta}$  indexed by the elements of $\{1,\dots, n\}^k,$ cf. Subsec. \ref{ss.Gamma}. 

Note that $V^{\boeta}$ is a $\Uq$-module for every sign sequence $\boeta.$
The key to our work is a modified version of the Reshetikhin-Turaev functor, introduced in Section \ref{s.RT}, which associates to each web $\alpha$ in $\fB$ a $\Uq$-module homomorphism $\RTp(\alpha): V^{\sgnl(\alpha)}\to V^{\sgnr(\alpha)}$. (It is a sign modification of the standard $\Uq$-Reshetikhin-Turaev functor, \cite{RT}, which requires that the half-edges incident to each $n$-valent vertex in $\alpha$ are linearly ordered.)

We show that the benefit of utilizing the basis $\{f_1,\dots, f_n\}$ of $V^*$ rather than the dual basis to $\{e_1,\dots, e_n\}$ is that it makes the modified R-T functor values 
$\RTp(\alpha)$ 
independent of the orientation of tangles.
 
The following result relates our skein algebra of $\fB$ to the Reshetikhin-Turaev theory:

\begin{proposition*}[Prop. \ref{p.counit=RT}]  Let $\boal$ be an $n$-web $\alpha$ on $\fB$ stated by $\boi=(i_1,\dots,i_k)$ on the left and $\boj=(j_1,\dots,j_l)$ on the right. Then $\ep(\bf\al)$ is equal to the $(\boi,\boj)$-entry of the matrix of the modified Reshetikhin-Turaev operator $\RTp(\al)$ of $\alpha$ in the above tensor bases.
\end{proposition*}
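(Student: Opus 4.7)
The strategy is to reduce the identity to a check on elementary pieces of $\alpha$, exploiting the multiplicativity of both sides under vertical stacking in the thickened bigon. I first isotope the stated web $\boal$ into generic position relative to the vertical direction, so that horizontal slices decompose it as a vertical stack of elementary pieces (arcs going across, caps, cups, crossings, and $n$-valent sinks or sources), each carrying states inherited from $\boal$. In the algebra $\S_n(\fB)$ vertical stacking is the algebra product, and since $\epsilon$ is a Hopf algebra counit by Theorem~\ref{t.Hopf}, it is an algebra homomorphism; thus $\epsilon(\boal)$ factors as a product of $\epsilon$-values on the pieces. On the Reshetikhin--Turaev side, vertical stacking corresponds to tensoring operators $\RTp(\alpha')\otimes \RTp(\alpha'')$ on the $\Uq$-modules $V^{\sgnl}$ and $V^{\sgnr}$, and the $(\boi\otimes\boi',\boj\otimes\boj')$-entry of a tensor product operator equals the product of individual entries in the tensor bases. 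Hence it suffices to verify the identity on each elementary piece.

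For an arc $a^i_j$ the identity is immediate: the excerpt gives $\epsilon(a^i_j)=\delta_{i,j}$, and $\RTp(a^i_j)=\mathrm{id}_V$ has $(i,j)$-entry $\delta_{i,j}$ in the basis $\{e_1,\dots,e_n\}$; the reversed arc $\ca^i_j$ is handled analogously in the basis $\{f_1,\dots,f_n\}$ of $V^*$. For crossings and for $n$-valent sinks and sources, the modified functor $\RTp$ is constructed in Section~\ref{s.RT} so that it sends these pieces to the $R$-matrix braiding of $V\otimes V$ and to the $q$-antisymmetrizer (resp.\ its dual); their matrix entries in the tensor basis are built into the very internal skein relations defining $\S_n(\fB)$, and therefore both $\epsilon$ and the matrix-entry function take identical values by construction.

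The delicate case is the check on caps and cups, where both endpoints lie on a single boundary edge. Under $\RTp$ these become (co)evaluation morphisms whose matrix entries in $\{f_i\}$ involve the scalars $(-1)^{i-1}q^{i-(n+1)/(2n)}$. On the $\epsilon$-side, applying $\htw_{e_r}$ followed by $\iota_*$ to an elementary cap or cup produces a stated web in the monogon $\fM$; evaluating via $R\cong \S_n(\fM)$ from Theorem~\ref{t.monogon} yields a scalar that, after collecting the coefficients $c_{\bar i}$ contributed by Proposition~\ref{p.twist}, agrees with the corresponding (co)evaluation matrix entry. This scalar bookkeeping is the main technical obstacle, but the normalizations of $\{f_i\}$, the half-twist coefficients, and the modifications in $\RTp$ are precisely calibrated to make the comparison work, so the verification is routine once the conventions of Section~\ref{s.RT} are in hand.
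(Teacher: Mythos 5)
There is a genuine gap in your reduction to elementary pieces. In $\S_n(\fB)$ the algebra product is stacking in the thickness direction, which under the identification of Remark \ref{r.tangles-webs} is the \emph{tensor product} of tangles (placing one diagram above the other in the page); by contrast, the decomposition of a single diagram into elementary pieces by slicing is the \emph{categorical composition} (horizontal juxtaposition in the bigon), a different operation. A connected web --- say an arc leaving $e_l$, turning around, and returning to $e_l$, or a two-strand full twist --- is not an algebra product of elementary webs, so ``$\epsilon$ is an algebra homomorphism'' does not factor $\ep(\boal)$ over such a slicing. Moreover the pieces produced by slicing have endpoints on the interior cut lines, hence are not stated webs in $\fB$, and $\ep$ is not even defined on them; to handle composition you would need the splitting homomorphism, the counit identity $(\ep\otimes\ep)\circ\Delta=\ep$, and a matrix-multiplication argument on the Reshetikhin--Turaev side, none of which appears in your write-up. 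Your check on caps, cups, crossings and vertices is also asserted rather than carried out: the value of $\ep$ on a stated crossing is a genuine computation through the boundary relations and the half-twist (cf.\ Eq.~\eqref{eq.Re}), not something that holds ``by construction.''

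The paper sidesteps all of this using Lemma \ref{l.S_n-gens}: $\S_n(\fB)$ is generated as an algebra by the arcs $a^i_j$ alone. Both $\ep$ and the matrix-entry map $\boal\mapsto\la\boi\mid\RTp(T(\al))\mid\boj\ra$ --- the latter well defined on $\S_n(\fB)$ because the defining relations lie in $\ker\RTp$, and an algebra homomorphism because $\RTp$ is monoidal and the skein product is the tensor product of tangles --- are then algebra homomorphisms $\S_n(\fB)\to R$ agreeing on the generators (both give $\delta_{i,j}$ on $a^i_j$ by Eq.~\eqref{e.eps}), so they coincide. To repair your argument you must either invoke that generation statement or supply the missing compatibility with categorical composition.
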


\subsection{Module and Co-module structures}
\def\tUL{{\widetilde \UL}}

Given a marking $\beta$ of a marked 3-manifold $\MN$, consider its closed disk neighborhood $D$ in $\pM$, disjoint from the other markings of $\MN$. By pushing the interior of $D$ inside $M$ we get a new disk $D'$ which is properly embedded in $M$. Splitting $\MN$ along $D'$, we get a new marked 3-manifold $(M', \cN')$ isomorphic to $\MN$, and another marked 3-manifold bounded by $D$ and $D'$. The latter,
after removing the common boundary of $D$ and $D'$, is isomorphic to the thickening of the 
bigon. 
Hence, this construction yields an $R$-linear splitting map
$$\Delta_\beta: \S_n\MN \to \S_n\MN\otimes \Oq.$$
which defines a right coaction of $\Oq$ on $\S_n\MN$. Such coactions at different markings commute.

The Hopf algebra $\cal O_q(SL(n);\Z[v^{\pm 1}])$ has a Hopf dual given by a completion $\tUqq$ of the Lusztig's integral version, $\tUL,$ of $\UL$, which is a Hopf algebra over $\Z[v^{\pm 1}],$ cf. Subsection \ref{ss.half-ribbon}and \cite[Sec. 1.3]{Lu-root1}. The duality between these two Hopf algebras turns any right $\Oq$-comodule $W$ to a left $\tUL$-module as follows: 
For $u\in \tUL$ and $ x\in W$,
$$u * x =  \sum x_{(1)} \la f_{(2)}, u\ra,  \quad \text{where}\  \Delta(x) =\sum x_{(1)}  \ot f_{(2)}$$
(in the Sweedler notation)
 is the $\Oq$-coaction map. We make this left  $\tUL$-action on $\S_n\MN$ explicit in Subsec. \ref{ss.coaction}.

The Hopf algebra $\tUL$ contains distinguished charmed element $g$ and the half-ribbon element $X\in \tUL$.
We prove that the action of these elements on $\S_n\MN$ at a marking $\beta$ is exactly the marking automorphism $g_\beta$ and the half-twist homomorphism $\htw_\beta$ of Subsection \ref{ss.i-sym}.

\subsection{Glueing over an ideal triangle} 
\label{ss.i.gluing}

\def\fT{{\mathfrak T}}
\def\glue{{\mathsf{glue}}}
\def\uast{{\underline{\ast}}}
\def\sF{\Sigma}
\def\SF{{\S_n(\sF)}}

The standard ideal triangle $\fT\subset \BR^2$ is the closed triangle with vertices $(-1,0),(1,0)$ and $(0,1)$
 with these vertices removed.  We will denote its sides by $e_1, e_2$, and $\p_b \fT$ as in Figure \ref{f.brtensor0copy}. Suppose $a_1, a_2$ are two distinct boundary edges of a (possibly disconnected) pb surface.
 Define 
$$\Sigma_{a_1\triangle a_2} = (\Sigma \sqcup \fT)/(e_1 =a_1, e_2 = a_2),$$ 
as in Figure \ref{f.brtensor0copy}. Define the $R$-linear homomorphism $\glue_{a_1,a_2}:\S_n(\Sigma) \to \S_n(\Sigma_{a_1\triangle a_2})$ by continuing the strands of any web $\al$ with endpoints on $a_1$  and $a_2$ until they reach $\p_b \fT $, as in Figure  \ref{f.brtensor0copy} (right).  

\begin{figure}[htpb]
    \includegraphics[height=2.8cm]{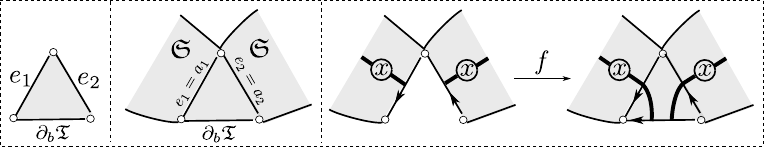} 
\caption{Left: The standard ideal triangle $\fT$. Middle: Glueing $\Sigma$ and $\fT$ by $a_1=e_1$ and $a_2=e_2$ to get $\Sigma_{a_1\triangle a_2}$. Right: tangle diagram $x\in \S_n(\Sigma)$ and its image $\glue_{a_1,a_2}(x)\in \S_n(\Sigma_{a_1\triangle a_2})$}
    \lbl{f.brtensor0copy}
    \end{figure}

\begin{proposition*}[Prop. \ref{p.tri-glue}, see \cite{CL} for $n=2$ and \cite{Hi} for $n=3$]
The map $$\glue_{a_1,a_2}:\S_n(\Sigma) \to \S_n(\Sigma_{a_1\triangle a_2})$$ is an $R$-linear isomorphism.
\end{proposition*}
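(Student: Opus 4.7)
The plan is to construct a two-sided inverse of $\glue_{a_1,a_2}$ by means of the Splitting Theorem~\ref{t.splitting}. In $\Sigma_{a_1\triangle a_2}$ the identified edges $a_1=e_1$ and $a_2=e_2$ become properly embedded interior ideal arcs $c_1$ and $c_2$; splitting successively along $c_1$ and then along $c_2$ yields an $R$-linear map
$$\Theta \colon \S_n(\Sigma_{a_1\triangle a_2}) \to \S_n(\Sigma) \otimes \S_n(\fT).$$
A direct inspection of the definition of $\glue_{a_1,a_2}$ shows that for any stated $n$-web $x$ on $\Sigma$, $\Theta(\glue_{a_1,a_2}(x))$ equals $x$ tensored with a ``parallel-strand'' element of $\S_n(\fT)$ that is determined solely by the endpoint states on $\p_b\fT$.

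Next I would build a linear form $\mathrm{ev}\colon \S_n(\fT)\to R$ which takes value $1$ on each parallel-strand element whose states on the two cut edges match those on $\p_b\fT$ in the expected way. The candidate for $\mathrm{ev}$ comes from the Hopf-algebra identification $\S_n(\fB)\cong \OqR$ of Theorem~\ref{t.Hopf}: splitting $\fT$ along an arc joining two of its ideal vertices decomposes $\S_n(\fT)$ as an $R$-module into a tensor product of two copies of $\OqR$ with an appropriate comodule glue, and $\mathrm{ev}$ is then assembled from the counit $\epsilon$ of this Hopf algebra together with the half-twist automorphisms of Proposition~\ref{p.twist}. Setting $\psi := (\id \otimes \mathrm{ev})\circ \Theta$, the identity $\psi\circ \glue_{a_1,a_2} = \id$ is a direct computation from the previous paragraph.

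The harder direction $\glue_{a_1,a_2}\circ \psi = \id$ reduces to showing that every element of $\S_n(\Sigma_{a_1\triangle a_2})$ already lies in the image of $\glue_{a_1,a_2}$. Given a representative $n$-web $\alpha$ transverse to $c_1,c_2$, its restriction to $\fT$ is a stated $n$-web with endpoints on $c_1\cup c_2\cup \p_b\fT$. By applying the height-exchange boundary skein relations near $\p_b\fT$ together with the internal skein relations inside $\fT$, one pushes all internal vertices, crossings and caps of $\alpha$ out of a collar neighborhood of $\p_b\fT$ and into $\Sigma$; what remains near $\p_b\fT$ is a disjoint union of parallel strands connecting $c_1\cup c_2$ to $\p_b\fT$, which exhibits $\alpha$ as a linear combination of elements of the form $\glue_{a_1,a_2}(y)$.

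The principal obstacle is this last reduction inside $\fT$: without an explicit basis of $\S_n(\fT)$ for general $n$, the diamond-lemma/confluence arguments used for $n=2$ in \cite{CL} and for $n=3$ in \cite{Hi} are unavailable. Instead, the reduction must be carried out abstractly by combining the Hopf-algebraic structure of the bigon (Theorems~\ref{t.Hopf} and~\ref{t.cobraid}) with the marking and half-twist automorphisms of Subsection~\ref{ss.i-sym}, which normalize the triangle portion of $\alpha$ so that it becomes a parallel-strand configuration modulo the image of $\glue_{a_1,a_2}$.
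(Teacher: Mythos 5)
Your overall architecture coincides with the paper's: the inverse is $\Cut_{a_1,a_2}=(\ve_{\fT}\otimes \id)\circ\Theta_{a_1}\circ\Theta_{a_2}$, where the evaluation $\ve_{\fT}$ on the triangle is obtained from the counit of the bigon (itself built from the half-twist automorphism and the monogon) by filling in the top ideal vertex of $\fT$, and the identity $\psi\circ\glue_{a_1,a_2}=\id$ is the direct computation you describe. Your logical reduction of the other composite to surjectivity of $\glue_{a_1,a_2}$ is also sound.

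The gap is precisely the step you yourself flag: you do not prove that every skein on $\Sigma_{a_1\triangle a_2}$ is a linear combination of skeins that are ``vertical'' on $\fT$, and the workaround you propose --- an unspecified ``abstract'' normalization using the cobraided Hopf structure of $\S_n(\fB)$ --- is not an argument. Moreover, your pessimism is unwarranted: no basis of $\S_n(\fT)$ and no confluence argument is needed. One first isotopes the web so that all $n$-valent vertices, crossings, and closed components lie in $\Sigma$ and the intersection with $\fT$ is a disjoint union of embedded arcs; the only arcs that are not already of the vertical type are those running from $c_1$ to $c_2$ across the collar of $\p_b\fT$, and a single application of the boundary relation \eqref{e.capnearwall}, read with $\p_b\fT$ as the marking, converts each such arc into a sum (over conjugate pairs of states, with coefficients $c_i^{\pm1}$) of two vertical strands. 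The explicit evaluation \eqref{e.triangle} then gives $\glue_{a_1,a_2}\circ(\ve_{\fT}\otimes\id)\circ\Theta_{a_1}\circ\Theta_{a_2}(W)=W$ for every vertical diagram $W$, which finishes the proof. So the missing step is an elementary local skein computation rather than anything requiring Theorem \ref{t.cobraid} or canonical bases; as written, though, your proposal leaves it unestablished.
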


We construct an explicit inverse map to $\glue_{a_1,a_2}$ in Subsec. \ref{ss.braidedtensor}. 

Although the bijective map $\glue_{a_1,a_2}$ is not an algebra isomorphism with respect to the standard skein algebra product on $\S_n(\Sigma_{a_1\triangle a_2})$, we show in Subsec. \ref{ss.braidedtensor} that it is one with respect to the {\em self braided tensor product} which we will define right now. 

There are two right $\Oq$-comodule algebra structures on $\S_n(\Sigma)$ given by
$$\Delta_i:= \Delta_{a_i}: \S_n(\Sigma) \to \S_n(\Sigma)\ot \Oq,\quad i=1,2,$$
which  commute. Define the $R$-linear map
$\bD:  \S_n(\Sigma)  \to \S_n(\Sigma) \ot \Oq$  by
$$\bD(x) = \sum x' \ot u_1 u_2,$$
in Sweedler's notation, where
$$(\Delta_1\otimes \Id_\Oq)\circ \Delta_2(x) = \sum x_{(1)}\ot u_{(2)} \ot u_{(3)}.$$
For $x,y\in \S_n(\Sigma)$ define a new product by
$$y\uast x = \sum  y_{(1)} x_{(1)} \rho(u_{(2)} \ot w_{(2)})$$
where
$$\Delta_2 (y) =\sum y_{(1)}  \ot u_{(2)} ,\quad  \Delta_1(x) =\sum x_{(1)}  \ot w_{(2)},$$
and $\rho$ is the $R$-form.

It is proved in \cite{CL} that $\bD$ and $\uast$ together give $\SF$ a right $\Oq$-comodule algebra structure for $n=2$. That proof extends to all $n$. 

Denote by $\uot \S_n(\Sigma)$ the $R$-module $\S_n(\Sigma)$ with this $\Oq$-comodule algebra structure. On the other hand  $\S_n(\Sigma_{a_1\triangle a_2})$ has a right $\Oq$-comodule algebra structure coming from the boundary edge $\partial_b \fT$. Here is a stronger version of the proposition above.

\begin{theorem*}[Thm. \ref{r.braidedtensor}]\label{r.braidedtensorcopy}
The map $\glue_{a_1,a_2} : \uot \S_n(\Sigma) \to \S_n({\Sigma}_{a_1 \triangle a_2})$ is an
isomorphism of right $\Oq$-comodule algebras.
\end{theorem*}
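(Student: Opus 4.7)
The plan is to verify the two structural compatibilities separately, since the underlying $R$-linear bijectivity is already supplied by Proposition \ref{p.tri-glue}.

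For the comodule compatibility, I would show that $\glue_{a_1a_2}$ intertwines $\bD$ with the $\Oq$-coaction $\Delta_{\p_b\fT}$ on $\S_n(\Sigma_{a_1\triangle a_2})$ induced by the boundary edge $\p_b\fT$. By definition, $\Delta_{\p_b\fT}$ is computed by pushing a collar of $\p_b\fT$ into a bigon and splitting; for a web of the form $\glue_{a_1a_2}(\alpha)$ the relevant collar contains only the triangle-extensions of $\alpha$'s endpoints on $a_1$ and on $a_2$. Using the Splitting Theorem \ref{t.splitting} applied successively along disks cutting the triangle off along $a_1$ and $a_2$, together with the identification $\S_n(\fB)\cong \Oq$ of Theorem \ref{t.Hopf} for the resulting bigons, the coaction $\Delta_{\p_b\fT}\circ\glue_{a_1a_2}$ factors as $(\glue_{a_1a_2}\ot \mu)\circ(\Delta_1\ot\mathrm{id})\circ\Delta_2$, where $\mu$ denotes multiplication in $\Oq$. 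Comparison with the definition of $\bD$ then yields
\[
\Delta_{\p_b\fT}\circ \glue_{a_1a_2} \;=\; (\glue_{a_1a_2}\ot \mathrm{id})\circ \bD.
\]

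The algebra compatibility $\glue_{a_1a_2}(y\uast x)=\glue_{a_1a_2}(y)\cdot \glue_{a_1a_2}(x)$ is the main content. The right-hand side is computed by stacking $\glue_{a_1a_2}(y)$ above $\glue_{a_1a_2}(x)$ in the thickening of $\Sigma_{a_1\triangle a_2}$, so in the triangle portion the strands descending from $y$ lie above those from $x$. I would isotope so that all interactions between $y$-strands and $x$-strands inside $\fT$ are concentrated in a small embedded bigon $\fB_0\subset\fT$ positioned between $a_1\cup a_2$ and $\p_b\fT$, with $y$'s strands entering one side of $\fB_0$ and $x$'s the other, and so that outside of $\fB_0$ the diagram agrees with an extension of $y'\cdot x'$ for appropriate stated pieces $y', x'\in \S_n(\Sigma)$. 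Applying the Splitting Theorem \ref{t.splitting} along $\p\fB_0$, together with $\S_n(\fB_0)\cong \Oq$ from Theorem \ref{t.Hopf}, the $\fB_0$-factor evaluates, after pairing with $\epsilon$ coming from the subsequent boundary extension, to $\rho(u\ot w)$ by the geometric description of the co-$R$-matrix in Theorem \ref{t.cobraid}, where $u$ and $w$ come from $\Delta_2(y)$ and $\Delta_1(x)$ respectively. Summing over states then produces $\sum \glue_{a_1a_2}(y'x')\,\rho(u\ot w) = \glue_{a_1a_2}(y\uast x)$.

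The principal obstacle is the combinatorial bookkeeping inside the triangle: one must verify that the over/under pattern produced by stacking $y$ above $x$, once isotoped into $\fB_0$, matches exactly the configuration appearing in the geometric formula for $\rho$ from Theorem \ref{t.cobraid}, and that the assignment of the two tensor factors to the $a_1$- and $a_2$-coactions matches the convention in the definition of $y\uast x$ (namely $\Delta_2$ applied to $y$, paired against $\Delta_1$ applied to $x$). Keeping track of orientations and of the sign conventions of the modified Reshetikhin--Turaev functor from Section \ref{s.RT} is where care is most needed; once this is set up correctly, the remaining work is to assemble the bigon identification $\S_n(\fB)\cong \Oq$ together with the cobraided structure.
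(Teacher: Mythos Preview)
Your proposal is correct and follows essentially the same approach as the paper: bijectivity from Proposition \ref{p.tri-glue}, and for the algebra compatibility you split off a bigon inside the triangle containing the $y$-over-$x$ crossings and invoke Theorem \ref{t.cobraid} to identify the resulting $\epsilon$-value with $\rho(u\ot w)$. The paper streamlines your geometry slightly by splitting along a single ideal arc parallel to $\p_b\fT$ (rather than around an interior bigon $\fB_0$), so that the cut-off piece is immediately a bigon attached to $\p_b\fT$ and the counit property $(\mathrm{id}\ot\epsilon)\circ\Delta_{\p_b\fT}=\mathrm{id}$ applies directly; your version reaches the same conclusion with one extra bookkeeping step. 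The paper also omits the explicit comodule-compatibility check you outline, treating it as straightforward.
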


When $\Sigma=\Sigma_1 \sqcup\Sigma_2$ and $a_i \subset \Sigma_i$ for $i=1,2$ then each $\S_n(\Sigma_i)$ is a right $\Oq$-comodule algebra via the coaction coming from the edge $a_i$ and $\uot(\S_n(\Sigma))$ is the well-known {\em braided tensor product} $\S_n(\Sigma_1)$ and $\S_n(\Sigma_2)$ of the two $\Oq$-module algebras $\S_n(\Sigma_1)$ and $\S_n(\Sigma_2)$ of Majid, cf. \cite[Lemma 9.2.12]{Maj}. 
An analogous braided tensor product in the context of lattice gauge theory appears in \cite{AGS} (quantizing \cite{FR}) and, in the context of factorization homology, in
\cite[Cor. 6.11]{BBJ}, cf. Subsec. \ref{ss.intro-facthom}.


\subsection{On injectivity of splitting homomorphism}

A pb surface ${\Sigma}$ is {\bf essentially bordered} if every connected component of it has non-empty boundary.

\begin{proposition*}[Prop. \ref{r.inj}]
Suppose $\Sigma$ is an essentially bordered pb surface. Then for  any interior ideal arc $c$ of $\Sigma$, the splitting homomorphism $\Theta_c: \S_n(\Sigma) \to \S_n(\Cut_c\Sigma)$ is injective.
\end{proposition*}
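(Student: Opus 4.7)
The plan is to construct a left inverse to $\Theta_c$ by reducing, via the triangle gluing isomorphism, to the Hopf algebra counit identity on $\Oq\cong\S_n(\fB)$.

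First I would use the essentially bordered hypothesis. Since every connected component of $\Sigma$ has a boundary edge, I can glue a triangle $\fT$ along a boundary edge near an endpoint of $c$ using Prop.~\ref{p.tri-glue}, producing an enlarged surface $\widetilde\Sigma$ for which the triangle-gluing map is an isomorphism $\S_n(\Sigma)\xrightarrow{\sim}\S_n(\widetilde\Sigma)$ (and similarly for $\Cut_c\Sigma$). Choosing the glued triangles carefully, the arc $c$ in $\widetilde\Sigma$ can be isotoped so that it lies in a small collar of a new boundary edge $\partial_b\fT$. This reduces the question to splitting along an arc parallel to the boundary, where the local picture near $c$ is that of a bigon neighborhood.

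Second, I would identify this boundary-parallel splitting with the $\Oq$-coaction $\Delta_\beta:\S_n(\widetilde\Sigma)\to\S_n(\widetilde\Sigma)\otimes\Oq$ at the new marking $\beta\subset\partial_b\fT$, where the $\Oq$ factor arises via the bigon isomorphism of Thm.~\ref{t.Hopf}. Up to the marking and half-twist automorphisms of Subsec.~\ref{ss.i-sym} and Prop.~\ref{p.twist}, which account for the conventions distinguishing the two split edges $c_1$ and $c_2$, the splitting $\Theta_c$ becomes $\Delta_\beta$ under these identifications.

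Third, injectivity follows by applying the counit $\epsilon$ of $\Oq$ (Thm.~\ref{t.Hopf}): the Hopf algebra axiom $(\Id\otimes\epsilon)\circ\Delta_\beta=\Id$ produces a left inverse of the coaction, and hence of $\Theta_c$ up to the invertible automorphisms above.

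The main obstacle is the first step: isotoping $c$ into a collar of a boundary edge inside $\widetilde\Sigma$ and tracking the marking, half-twist, and framing corrections that accumulate in the process. The essentially bordered hypothesis is used precisely to provide a nearby boundary edge to which the auxiliary triangle can be glued; on a closed surface this argument breaks down and injectivity can genuinely fail. Once the geometric setup and bookkeeping are in place, the counit argument of the third step is formal.
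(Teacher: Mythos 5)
Your steps (2) and (3) are sound \emph{for a boundary-parallel arc}, but step (1) --- the claim that after gluing triangles the arc $c$ can be isotoped into a collar of a new boundary edge --- is false in general, and this is where the argument breaks. An isotopy does not change the homeomorphism type of the complement of $c$, and gluing triangles onto $\partial\Sigma$ cannot turn an essential interior ideal arc into a boundary-parallel one: for instance, a non-separating interior ideal arc in $\Sigma_{1,1}^*$ changes the genus when you cut along it, whereas a boundary-parallel arc cuts off a bigon. So $\Theta_c$ cannot be identified with a coaction $\Delta_\beta$ by this reduction, and the counit identity $(\id\ot\ep)\circ\Delta_\beta=\id$ is not available. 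This is not a bookkeeping issue about framings and half-twists; the geometric reduction itself is impossible.

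The triangle-gluing isomorphism has to be used in the opposite direction, and this is what the paper does. If an endpoint of $c$ is a boundary ideal point adjacent to a boundary edge $e$, one finds a second interior ideal arc $c'$ in a small neighborhood of $e\cup c$ so that $e$, $c$, $c'$ cobound an ideal triangle \emph{inside} $\Sigma$; then $\Sigma$ is obtained from $\Cut_{c,c'}\Sigma$ by gluing over that triangle, and by Proposition \ref{p.tri-glue} the map $\Cut_{c,c'}=(\ve_{\fT}\ot\id)\circ\Theta_{c'}\circ\Theta_{c}$ is the (bijective) inverse of $\glue_{c,c'}$. Bijectivity of this composition forces its first factor $\Theta_c$ to be injective --- so the counit $\ve_\fT$ does furnish the left inverse you were after, just applied to a triangle sitting in $\Sigma$ rather than attached to it. Your proposal also omits the case where both endpoints of $c$ are interior ideal points: there one first splits along an auxiliary disjoint arc $d$ joining an endpoint of $c$ to a boundary ideal point (this is where essential borderedness is used), applies the previous case to $d$ and then to $c$ inside $\Cut_d\Sigma$, and concludes from the commutativity $\Theta_c\circ\Theta_d=\Theta_d\circ\Theta_c$. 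Finally, a small factual point: for closed pb surfaces injectivity is not known to fail; it is Conjecture \ref{con.inj}, proved for $n=2,3$.
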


\begin{conjecture*}[Conj. \ref{con.inj}] For any punctured bordered surface $\Sigma$ and any interior ideal arc $c$ the splitting homomorphism $\Theta_c$ is injective as well.
\end{conjecture*}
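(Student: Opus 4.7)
The plan is to reduce the conjecture to Proposition \ref{r.inj} for essentially bordered surfaces, using the $\Oq$-comodule structure of the stated skein modules developed earlier in the paper. Since splitting homomorphisms act diagonally on disjoint unions of pb surfaces, I may assume $\Sigma$ is connected. If $\Sigma$ has nonempty boundary, it is essentially bordered and Proposition \ref{r.inj} applies directly. So assume $\Sigma = \bar\Sigma \setminus \cP$ is closed, with $\cP$ nonempty (otherwise $\Sigma$ admits no interior ideal arc).

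Pick any puncture $p \in \cP$ and let $\Sigma^+$ be the essentially bordered pb surface obtained from $\bar\Sigma$ by removing a small open disk $D_p$ around $p$ and placing a single ideal point on $\partial D_p$. Then $\Sigma$ is recovered by gluing a monogon $\fM$ to $\Sigma^+$ along the new boundary edge $e_p$, and Theorem \ref{t.monogon} gives $\S_n(\fM) \cong R$. The Splitting Theorem \ref{t.splitting} applied to a small loop $c_p$ around $p$ produces a homomorphism
\[
\Theta_{c_p}^\Sigma \colon \S_n(\Sigma) \longrightarrow \S_n(\Sigma^+) \otimes \S_n(\fM) \cong \S_n(\Sigma^+).
\]
The key technical claim I would prove is that $\Theta_{c_p}^\Sigma$ realizes $\S_n(\Sigma)$ as the subspace of $\Oq$-coinvariants of $\S_n(\Sigma^+)$ at the boundary edge $e_p$:
\[
\S_n(\Sigma) \xrightarrow{\ \sim\ } \S_n(\Sigma^+)^{\mathrm{co}\Oq_{e_p}} \hookrightarrow \S_n(\Sigma^+).
\]
That the image lies in the coinvariants is essentially a consequence of Theorem \ref{t.monogon}: the monogon part of each splitting contributes via an $\Oq$-comodule map $\S_n(\fM)\to R$ into the trivial comodule, so summing over states at $c_p$ always produces a coinvariant. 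The harder direction is the construction of a two-sided inverse $\mathrm{cap} \colon \S_n(\Sigma^+)^{\mathrm{co}\Oq_{e_p}} \to \S_n(\Sigma)$ that fills $D_p$ back in; the coinvariance hypothesis is precisely what makes $\mathrm{cap}$ well-defined, since it resolves the sum-over-states ambiguity when closing endpoints on $e_p$.

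Given injectivity of $\Theta_{c_p}^\Sigma$, the conjecture for the given arc $c$ would follow from the commutativity of splittings along disjoint arcs,
\[
\Theta_c^{\Sigma^+} \circ \Theta_{c_p}^\Sigma \;=\; \Theta_{c_p}^{\Cut_c\Sigma} \circ \Theta_c^\Sigma,
\]
provided one can pick $p$ so that $c_p$ is disjoint from $c$ (i.e.~whenever $\cP$ contains a puncture outside the closure of $c$). Since $\Sigma^+$ is essentially bordered, Proposition \ref{r.inj} gives injectivity of $\Theta_c^{\Sigma^+}$; combined with injectivity of $\Theta_{c_p}^\Sigma$ the left-hand side is injective, whence $\Theta_c^\Sigma$ is injective. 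The residual cases ($|\cP|=1$ with $c$ a loop at the unique puncture, or $|\cP|=2$ with $c$ connecting the two punctures) are handled by applying the coinvariants identification directly to $\Theta_c^\Sigma$: cutting $\Sigma$ along $c$ produces an essentially bordered surface $\Cut_c\Sigma$ with two new boundary edges, and the same cosheaf argument realizes $\S_n(\Sigma)$ as the $\Oq$-coinvariants of $\S_n(\Cut_c\Sigma)$ under the pair of coactions on those edges.

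The hard part is the coinvariants identification $\S_n(\Sigma) \cong \S_n(\Sigma^+)^{\mathrm{co}\Oq_{e_p}}$ (and its analogue in the residual case). This is a cosheaf-type property of the stated skein modules with respect to edge gluings. In the essentially bordered setting it is implicitly encoded in the canonical-basis machinery underlying Proposition \ref{r.inj}, but for the closed-surface case those techniques are not yet available; indeed, a canonical basis of $\S_n(\Sigma)$ for closed $\Sigma$ is still missing for $n \geq 4$, as noted in the discussion following Theorem \ref{t.Hopf}. A concrete route to the coinvariants identification would combine the explicit description of the $\Oq$-coaction at a marking from Subsection \ref{ss.coaction} with a careful analysis of how the boundary skein relations reduce stated webs near $e_p$, using the monogon identity $\S_n(\fM)\cong R$ as the base case.
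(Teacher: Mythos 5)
This proposal does not prove the conjecture: the ``key technical claim'' --- that $\Theta_{c_p}\colon \S_n(\Sigma)\to\S_n(\Sigma^+)$ is injective with image the $\Oq$-coinvariants at $e_p$ --- is not an auxiliary lemma that you can establish and then feed into the reduction; it is equivalent to the conjecture itself. The kernel of exactly this map is the ideal $\K(\Sigma)$ of Theorem \ref{t.K}, and Corollary \ref{c.projected} records that Conjecture \ref{con.inj} holds if and only if $\K(\Sigma)=0$, i.e.\ if and only if your $\Theta_{c_p}^\Sigma$ is injective. What is actually proved in the paper (Theorem \ref{t-coinvariants}) is that the \emph{projected} algebra $\bar\S_n(\Sigma)=\S_n(\Sigma)/\K(\Sigma)$ coincides with the coinvariants of $\S_n(\Sigma_p)$; this is consistent with $\K(\Sigma)\neq 0$ and gives no information about injectivity. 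Your reduction via commutativity of disjoint splittings is sound, but it is the same reduction already carried out around Proposition \ref{r.inj} and Corollary \ref{c.inj}: it merely relocates the difficulty to the trivial arc $c_p$ around a puncture of a closed surface, which is where the conjecture genuinely lives.

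The unresolved step is the construction of the inverse map $\mathrm{cap}$. The assertion that ``the coinvariance hypothesis is precisely what makes $\mathrm{cap}$ well-defined'' is not substantiated, and it misidentifies the obstruction: the issue is not an ambiguity in closing endpoints on $e_p$, but the possibility that a relation $\sum_i c_i\alpha_i=0$ in $\S_n(\Sigma)$ among webs disjoint from the disk $D_p$ is witnessed only by a chain of skein moves whose intermediate webs pass through $D_p$ and cannot be pushed off it. Equivalently, one must show that filling the puncture back in imposes no new relations on skeins supported in $\Sigma_p$. For $n=2,3$ this is known only through explicit confluent bases; for $n\ge 4$ no such bases exist, and your closing sketch (``a careful analysis of how the boundary skein relations reduce stated webs near $e_p$'') does not supply a substitute. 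As written, the argument is circular, and the residual cases at the end inherit the same circularity.
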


The conjecture is true when $n=2$ by \cite{CL} and for $n=3$ by Higgins \cite{Hi}. In both cases the proofs rely on explicit bases of $\S_n(\Sigma)$. Proposition \ref{r.inj} shows the conjecture is true if $\Sigma$ has non-trivial boundary. 
We will establish an alternative, weaker version of this conjecture for all pb surfaces in Subsec. \ref{ss.i-ker}.

%
\subsection{Skein algebras of surfaces with boundary} 
\label{ss.i-surf-boundary}

Let $\Sigma_{g,p}$ denote the surface of genus $g$ with $p-1$ punctures and $\p \Sigma_{g,p}=S^1.$
Let $\Sigma_{g,p}^*$ be $\Sigma_{g,p}$ with a boundary point removed. Hence, $\Sigma_{0,2}^*$ is a punctured monogon. 

Utilizing the results of Subsec. \ref{ss.i.gluing}, we show in Proposition \ref{p.tm} that $\S_n(\Sigma_{0,2}^*)\simeq \Oq$ as an $R$-module with the $\Oq$-comodule structure, $\Delta_{\p \S_n(\Sigma_{0,2}^*)},$ coinciding with the adjoint $\Oq$-coaction on $\Oq$, \cite[Example 1.6.14]{Maj}. Furthermore, we prove that the product on
$\S_n(\Sigma_{0,2}^*)$ coincides with the {\em braided} (or, {\em covantarised}) {\em product} of Majid, \cite[Example 9.4.10]{Maj}. (That result was shown for $n=2$ in \cite{CL}.)
Consequently, our theory provides simple geometric proofs of the associativity of the braided product on $\Oq$ and of $\Oq$ being an $\Oq$-comodule algebra. (The proofs of these facts are quite technical and involved in \cite{Maj}.) Furthermore, our theory generalizes these statements to the boundary $\Oq$-coaction on the skein algebra of any essentially bordered punctured surface.
We discuss a finite presentation of $\S_n(\Sigma_{0,2}^*)$ in Subsec. \ref{ss.transmut}.

Let $\Sigma$ be now any essentially bordered pb surface. 
A collection $A=\{a_1,\dots , a_r\}$ of disjoint closed oriented arcs properly embedded in $\Sigma$ is {\bf saturated} if 
\begin{itemize}
\item[(i)] each connected component of ${\Sigma} \setminus \bigcup_{i=1}^r a_i$ contains exactly one ideal point (interior or boundary) of ${\Sigma}$, and
\item[(ii)] $A$ is maximal with respect to the above  condition.
\end{itemize}

Let $U(a_1),\dots, U(a_n)$ be a collection of disjoint open tubular neighborhood of a saturated collection of arcs $a_1,\dots, a_n$, respectively. Each $U(a_i)$ is homeomorphic with $a_i\times (-1,1)$ (by an orientation preserving homeomorphism) and we require that $(\p a_i) \times (-1,1)\subset \p \Sigma.$
Let $U(A)= \bigcup_{i=1}^k U(a_i)$.

\def\tri{{\mathsf{tri}}}
\begin{theorem*}[Thm. \ref{t.Aisom}]

(1) We have $r= r(\Sigma):=\# \p \Sigma-\chi(\Sigma),$
where $\# \p \Sigma$ is the number of boundary components of $\Sigma$ and $\chi$ denotes the Euler characteristics.\\
(2) The embedding $U(A) \embed {\Sigma}$ induces an $R$-module isomorphism $f_A: \S_n(U(A)) \to \S_n(\Sigma)$. 
\end{theorem*}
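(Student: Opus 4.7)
For Part (1), I would carry out a direct Euler-characteristic count. Cutting $\Sigma$ along the $r$ arcs in $A$ yields a pb surface $\Sigma'$ with $\chi(\Sigma') = \chi(\Sigma) + r$. Conditions (i) and (ii) force each connected component of $\Sigma'$ to be a minimal pb surface hosting a single ideal point of $\Sigma$: a monogon (contributing $1$ to $\chi$) at a boundary ideal point, or an annulus-type region (contributing $0$) at an interior puncture. Summing these contributions yields $\chi(\Sigma') = \#\partial \Sigma$, hence $r = \#\partial \Sigma - \chi(\Sigma)$.

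For Part (2), my plan is to exhibit $\Sigma$ as the result of a finite sequence of triangle-glueings (as in Proposition \ref{p.tri-glue}) applied to the ``seed'' pb surface
\[
\Sigma_0 := U(A) \,\sqcup\, \bigsqcup_{j} \fM_j,
\]
where each $\fM_j$ is a monogon corresponding to a boundary ideal point of $\Sigma$. By the Monogon Theorem \ref{t.monogon}, $\S_n(\fM_j) \simeq R$, so the inclusion $U(A) \hookrightarrow \Sigma_0$ induces an $R$-module isomorphism $\S_n(U(A)) \xrightarrow{\sim} \S_n(\Sigma_0)$.

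The core geometric input is a triangulation-and-ordering lemma: since $A$ is saturated and $\Sigma$ is essentially bordered, one can extend $A$ by auxiliary ideal arcs so that the complementary region of $U(A)$ at each boundary ideal point is triangulated into ideal triangles, ordered so that each triangle meets the already-built partial surface along two of its edges. This expresses $\Sigma$ as a finite iteration of triangle-glueings applied to $\Sigma_0$. Applying Proposition \ref{p.tri-glue} at each step yields an $R$-module isomorphism $\S_n(\Sigma_0) \xrightarrow{\sim} \S_n(\Sigma)$, and tracing through the construction shows that this composite agrees on $\S_n(U(A))$ with the map induced by the inclusion $U(A) \hookrightarrow \Sigma$, namely $f_A$. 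Hence $f_A$ is an isomorphism.

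The main obstacle is the presence of interior punctures. At an interior puncture the complementary region is annulus-type rather than polygonal; triangulating it produces a loop of triangles that cannot be attached by triangle-glueings in the usual linear order, and the minimal seed piece at such a puncture has skein module $\Oq$ rather than $R$ (compare Proposition \ref{p.tm}). I expect this must be handled by modifying the seed $\Sigma_0$ jointly with the triangulation---most naturally by including a punctured-monogon piece for each interior puncture and using the transmutation/braided-product identifications from Subsection \ref{ss.i-surf-boundary} to account for the extra $\Oq$-factors. Making this bookkeeping precise is the technical heart of the proof.
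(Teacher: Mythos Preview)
Your Part (1) is correct and matches the paper's argument.

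For Part (2), your bottom-up strategy (build $\Sigma$ from the seed $U(A)\sqcup\bigsqcup_j\fM_j$ by iterated triangle-gluings) is close in spirit to the paper's proof, which also reduces everything to Proposition~\ref{p.tri-glue}, but the paper runs the argument \emph{top-down}. It inducts on the number $\tri(\Sigma)$ of triangles in an ideal triangulation, and at each step peels off a single ideal triangle $T$ bounded by a boundary edge $b$ and two interior ideal arcs $e_1,e_2$, obtaining a smaller surface $\Sigma'=\Sigma\setminus(b\cup\operatorname{int}T)$ that still carries the (restricted) saturated system $A'$; then $f_A=\glue_{e_1e_2}\circ f_{A'}\circ f_{A\to A'}$ with each factor an $R$-isomorphism. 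The only auxiliary input is a pigeonhole lemma: some boundary edge of $\Sigma$ carries at least two endpoints of $A$ (else $2|A|\le\#\partial\Sigma$, forcing $\Sigma$ to be a monogon or bigon).

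This top-down induction dissolves your interior-puncture obstacle for free. The ideal point $p'$ at the far end of $e_1$ may well be an interior puncture of $\Sigma$; after peeling off $T$ it becomes a \emph{boundary} ideal point of $\Sigma'$, and the induction proceeds uniformly. By contrast, your proposed fix---seeding a punctured monogon $\Sigma_{0,2}^*$ at each interior puncture---does not work: since $\S_n(\Sigma_{0,2}^*)\cong\Oq$ rather than $R$, your enlarged seed has skein module $\S_n(U(A))\otimes\Oq^{\#(\text{interior punctures})}$, and the triangle-gluing isomorphisms cannot cancel those extra $\Oq$ factors. If you want to salvage the bottom-up approach, the correct observation is that a triangle-gluing $\Sigma_{a_1\triangle a_2}$ with $a_1,a_2$ adjacent at a boundary ideal vertex $v$ can \emph{create} an interior puncture at $v$; hence no seed piece is needed there at all. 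But organizing the gluings so that this happens exactly once per interior puncture, and then verifying that the composite coincides with $f_A$, is considerably fiddlier than the paper's clean descending induction.
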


Note that each $U(a_i) = a_i\times (-1,1)$ is naturally a directed bigon, with its sides $(\p a_i) \times (-1,1)$ oriented in the direction of $(-1,1)$ and that we have an $R$-linear isomorphism
$$\Oq^{\ot r} \xrightarrow{\Psi^{\ot r}} \S_n(U(A)) \xrightarrow{ f_A} \S_n(\Sigma).$$

Since $\Oq$ has a Kashiwara-Lusztig's canonical basis over $\Z[v^{\pm 1}]$, cf. \cite{Kash, Lu-bases}, we have 
 
\begin{corollary*} 
$\S_n(\Sigma)$ is a free $R$-module with a basis given by the image of tensor product of Kashiwara-Lusztig's canonical bases on $\Oq^{\ot r}$ under $f_A\circ \Psi^{\ot r}$.
\end{corollary*}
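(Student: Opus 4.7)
The plan is to combine three $R$-linear isomorphisms: the map $f_A\colon \S_n(U(A))\to \S_n(\Sigma)$ from Theorem~\ref{t.Aisom}, the Hopf algebra isomorphism $\Psi\colon \OqR\to \S_n(\fB)$ from Theorem~\ref{t.Hopf} applied to each thickened bigon component of $U(A)$, and the standard fact that a tensor product over $R$ of free $R$-modules with chosen bases is free with basis equal to the tensor product of those bases.

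First I would record (or verify, if not already recorded earlier in the paper) that the stated skein module of a disjoint union of marked $3$-manifolds factors as a tensor product. Since the connected components of $U(A)$ are the $r$ pairwise disjoint tubular neighborhoods $U(a_i)$, any $n$-web in $U(A)$ decomposes uniquely as a disjoint union of $n$-webs in the individual $U(a_i)$, and both the internal skein relations \eqref{e.pm}--\eqref{e.sinksource} and the boundary skein relations \eqref{e.vertexnearwall}--\eqref{e.crossp-wall} are local, hence are supported in a single connected component. This yields an $R$-linear isomorphism
$$\bigotimes_{i=1}^r \S_n(U(a_i)) \xrightarrow{\ \cong\ } \S_n(U(A)).$$
Each $U(a_i)$ is, by the choice of orientation in the definition of $U(A)$, an orientation-preserving copy of the thickened bigon, so applying Theorem~\ref{t.Hopf} factor by factor gives a further $R$-linear isomorphism
$$\Psi^{\ot r}\colon \OqR^{\ot r}\xrightarrow{\ \cong\ } \bigotimes_{i=1}^r \S_n(U(a_i)).$$

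Now the Kashiwara--Lusztig canonical basis of $\Oq$ over $\Z[v^{\pm 1}]$ base-changes to an $R$-basis of $\OqR$, and by the standard tensor-product principle its $r$-fold tensor product is an $R$-basis of $\OqR^{\ot r}$. Transporting it through the composition $f_A\circ \Psi^{\ot r}$ produces the asserted basis of $\S_n(\Sigma)$. The only step I expect to require genuine care is the disjoint-union decomposition of $\S_n(U(A))$ above; if it is not already isolated as a lemma elsewhere in the paper, it is established directly by the locality of the defining relations. Once that point is in hand, the corollary is a formal consequence of Theorems~\ref{t.Aisom} and~\ref{t.Hopf}.
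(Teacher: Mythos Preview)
Your proposal is correct and follows essentially the same route as the paper, which treats the corollary as an immediate consequence of the displayed isomorphism $\Oq^{\ot r}\xrightarrow{\Psi^{\ot r}}\S_n(U(A))\xrightarrow{f_A}\S_n(\Sigma)$ together with the existence of the Kashiwara--Lusztig canonical basis of $\Oq$ over $\Zv$. The one step you flag as needing care, the tensor-product decomposition for disjoint unions, is already established in Subsection~\ref{ss.functor}, so no additional argument is required there.
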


We apply the above method to show that $\S_n(\Sigma_{1,1}^*)\simeq \Oq^{\otimes 2}$ (as an $R$-module) and to describe the product on it in Subsec. \ref{ss.torus}. Furthermore, we explain a construction of finite presentations for $\S_n(\Sigma)$, for every essentially bordered $\Sigma$.

\subsection{Kernel of the splitting homomorphism.}
\label{ss.i-ker}
\def\bSF{\bar {\S}_n({\Sigma})}

Suppose ${\Sigma}$ is a connected pb surface with an ideal point $p$. Then a trivial ideal arc $c_p$ at $p$ cuts $\Sigma$ into a monogon and a new pb surface ${\Sigma}_p$ which has $c_p$ as its boundary edge, see Figure \ref{f.proj-copy}.

\begin{figure}[htpb]
    \includegraphics[height=2cm]{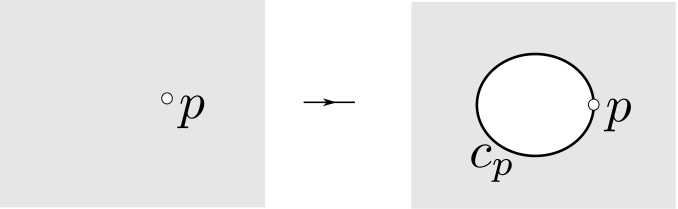}
    \caption{From ${\Sigma}$ to ${\Sigma}_p$. Here $p$ is an interior ideal point. The picture when $p$ is a boundary ideal point is similar.}
 \lbl{f.proj-copy}
    \end{figure}   
    
Theorem \ref{t.K} shows that the kernel of 
$$ \Theta_p: \SF \xrightarrow{ \Theta_{c_p}} \S_n({\Sigma}_p) \ot_R \S_n(\fM)  \xrightarrow \cong \S_n({\Sigma}_p)$$
does not depend on the choice of $p$. Let us denote it by $\K({\Sigma})$.
Then the quotient $\bSF:= \SF/\K(\Sigma)$ is called the {\bf projected stated skein algebra of ${\Sigma}$}. 
By Proposition \ref{r.inj}, $\K$ is trivial and $\bSF= \SF$ if $\pS\neq \emptyset$, 

\begin{corollary*}[Cor. \ref{c.inj}] The splitting homomorphism descends to an injective algebra homomorphism
$$
\bar \Theta_c: \bSF\to \bar \S_n(\Cut_c\, \Sigma)=  \S_n(\Cut_c\, \Sigma).
$$
\end{corollary*}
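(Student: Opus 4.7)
The plan is a three-step argument combining commutativity of splittings along disjoint interior ideal arcs with the injectivity of splittings on essentially bordered pb surfaces (Proposition \ref{r.inj}).

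\emph{Target identification.} I would first show $\bar \S_n(\Cut_c\Sigma) = \S_n(\Cut_c\Sigma)$. Since $c$ is an interior ideal arc, cutting $\Sigma$ along $c$ creates two new boundary edges, one on each side of $c$, and each connected component of $\Cut_c\Sigma$ inherits at least one of them. Thus $\Cut_c\Sigma$ is essentially bordered, and $\K(\Cut_c\Sigma)=0$ by Proposition \ref{r.inj} applied to a trivial arc in each component.

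\emph{Descent and injectivity.} Fix an ideal point $p$ of $\Sigma$ together with a trivial ideal arc $c_p$ at $p$ disjoint from $c$ in $\Sigma$. The Splitting Theorem \ref{t.splitting}, applied iteratively to the two disjoint arcs, yields commutativity on $\S_n(\Sigma)$:
\[\Theta_{c_p} \circ \Theta_c \;=\; \Theta_c \circ \Theta_{c_p}.\]
For descent, if $x \in \K(\Sigma) = \ker \Theta_{c_p}$ (using the monogon identification $\S_n(\fM)\cong R$ from Theorem \ref{t.monogon}), then $\Theta_{c_p}(\Theta_c(x)) = \Theta_c(\Theta_{c_p}(x)) = 0$; since $\Cut_c\Sigma$ is essentially bordered, $\Theta_{c_p}$ is injective on $\S_n(\Cut_c\Sigma)$ by Proposition \ref{r.inj}, forcing $\Theta_c(x)=0$. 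For injectivity, if $\Theta_c(x)=0$, then $\Theta_c(\Theta_{c_p}(x)) = \Theta_{c_p}(\Theta_c(x)) = 0$; since $\S_n(\Sigma_p \sqcup \fM)$ is essentially bordered and $c$ persists as an interior ideal arc there, Proposition \ref{r.inj} gives injectivity of $\Theta_c$ on $\S_n(\Sigma_p \sqcup \fM)$, so $\Theta_{c_p}(x)=0$ and hence $x \in \K(\Sigma)$. The algebra-homomorphism property of $\bar\Theta_c$ is inherited from that of $\Theta_c$, using that $\K(\Sigma)$ is a two-sided ideal.

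The hardest part is to arrange $p$ so that $c_p$ is disjoint from $c$ in $\Sigma$ while $c$ remains an interior ideal arc of $\Sigma_p$ after the cut. This is immediate when $\Sigma$ has two or more ideal points, by picking $p$ off the endpoints of $c$. The degenerate case --- $\Sigma$ having a single interior ideal point that is both endpoints of $c$ --- requires choosing $c_p$ inside a sector of $\Sigma\setminus c$ near $p$ and then reversing the order of the cuts: first cut $\Sigma$ along $c$ to land in the essentially bordered surface $\Cut_c\Sigma$, then apply Proposition \ref{r.inj} to a trivial arc at one of the newly produced boundary ideal points of $\Cut_c\Sigma$, and propagate the resulting vanishing back through the Splitting Theorem together with the $p$-independence of $\K(\Sigma)$ from Theorem \ref{t.K}.
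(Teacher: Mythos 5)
Your proposal is correct and follows essentially the same route as the paper: the paper's proof is exactly the observation that $\Theta_c\circ\Theta_{c_p}=\Theta_{c_p}\circ\Theta_c$ and that in both orderings the second map is injective by Proposition \ref{r.inj} (applied to the essentially bordered surfaces $\Cut_c\Sigma$ and $\Sigma_p\sqcup\fM$), which yields $\ker\Theta_c=\ker\Theta_{c_p}=\K(\Sigma)$. Your closing worry about the degenerate case is unnecessary --- a trivial arc $c_p$ bounding a small disk in a sector at $p$ can always be chosen disjoint from $c$, so no reordering of the cuts is needed.
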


The following is an alternative characterization of projected skein algebras:
\begin{theorem*}
For any $\Sigma$, $p$ and $c_p$ as above,  $\bSF$ coincides with the subalgebra of $\S_n(\Sigma_p)$ coinvariant under the coaction $\Delta_{c_p}: \S_n(\Sigma_p)\to \S_n(\Sigma_p)\otimes \S_n(\fB)$ at $c_p$: 
$$\bSF=\{x\in \S_n(\Sigma_p): \Delta_{c_p}(x)=x\otimes 1\}.$$
\end{theorem*}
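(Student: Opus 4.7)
The plan is to establish both inclusions of $\bSF = \{y \in \S_n(\Sigma_p) : \Delta_{c_p}(y) = y \otimes 1\}$, viewing $\bSF$ as a subalgebra of $\S_n(\Sigma_p)$ via the injection $\bar\Theta_p$ of Corollary~\ref{c.inj}. For the easier inclusion $\subseteq$, I will use associativity of splitting together with the $\Oq$-comodule triviality of $\S_n(\fM) \cong R$. Given $x \in \S_n(\Sigma)$, the element $\Delta_{c_p}(\Theta_p(x))$ can be computed as an iterated splitting of $\Sigma$: first along $c_p$, then along a push-in $c'_p$ of $c_p$ into the $\Sigma_p$-side. By the Splitting Theorem~\ref{t.splitting} applied to both cuts simultaneously, together with the isotopy of trivial loops around $p$, this equals the iterated splitting along $c_p$ followed by a push-in $c''_p$ into the $\fM$-side, which is $(\Delta_b^{\fM} \otimes \id) \circ \Theta_{c_p}(x)$. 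Since $\S_n(\fM) \cong R$ (Theorem~\ref{t.monogon}) is the trivial $\Oq$-comodule (splitting the empty web along a push-in of the boundary edge yields empty pieces on both sides, so $\Delta_b^{\fM}(\emptyset) = \emptyset \otimes 1$), applying $\mu^{-1}$ to the $\fM$-factor gives $\Delta_{c_p}(\Theta_p(x)) = \Theta_p(x) \otimes 1$, so $\Theta_p(x)$ is coinvariant.

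For the inclusion $\supseteq$, which is the main obstacle, I plan to construct a lift of each coinvariant using the Reshetikhin-Turaev interpretation of $\S_n(\fM)$. For an unstated topological type $T$ of $n$-web in $\Sigma_p$ with $k$ endpoints on $c_p$, the state-labeled versions span a subquotient of $\S_n(\Sigma_p)$ on which $\Delta_{c_p}$ acts by the standard right $\Oq$-coaction on $V^{\otimes k}$, where $V$ is the defining $\Uq$-module. Dually, stated webs in $\fM$ evaluate via $\mu^{-1}$ to $\Uq$-invariant functionals $V^{\otimes k} \to R$, and by the monogon analog of Proposition~\ref{p.counit=RT} together with the completeness of Reshetikhin-Turaev invariants for $\Uq$, these evaluations realize \emph{all} $\Uq$-invariant functionals on $V^{\otimes k}$. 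Given a coinvariant $y$, expand it by topological type: coinvariance forces each component's state-coefficient functional to be $\Uq$-invariant, hence realizable by some linear combination of $\fM$-webs which, when glued along $c_p$ to the corresponding $T$, produce $n$-webs in $\Sigma$ whose combined $\Theta_p$-image equals $y$.

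The key technical hurdle will be justifying this topological-type decomposition in the presence of skein relations that may identify stated webs of different types in $\S_n(\Sigma_p)$. I plan to handle this by reducing to a canonical basis via Theorem~\ref{t.Aisom}: since cutting $\Sigma$ along $c_p$ introduces a new boundary edge, the component of $\Sigma_p$ containing it is essentially bordered, so (after passing to connected components of $\Sigma$) $\S_n(\Sigma_p)$ is a free $R$-module with basis inherited from tensor products of Kashiwara-Lusztig canonical bases of $\Oq^{\otimes r(\Sigma_p)}$. In this basis, coinvariance at $c_p$ becomes a finite-dimensional $\Uq$-equivariance condition tractable within each isotopy class, and the monogon RT essential surjectivity yields the preimage $x \in \S_n(\Sigma)$ of $y$ explicitly.
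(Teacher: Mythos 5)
Your $\subseteq$ direction is fine and matches the paper's (which dismisses it as obvious): a representative of $\Theta_p(x)$ has no endpoints on $c_p$, hence can be isotoped off the bigon used to define $\Delta_{c_p}$, and the coaction returns $\Theta_p(x)\otimes\emptyset$. The $\supseteq$ direction, however, has two genuine gaps. First, the ``decomposition by topological type'' on which your argument rests is not available in $\S_n(\Sigma_p)$: the boundary relations at $c_p$ (e.g.\ \eqref{e.capwall}, \eqref{e.capnearwall}) identify stated webs having different numbers of endpoints on $c_p$, and the internal relations mix underlying unstated webs, so ``each component's state-coefficient functional'' is not well defined, and coinvariance cannot be read off componentwise. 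Your proposed repair via Theorem \ref{t.Aisom} does not fix this: the canonical basis of $\Oq^{\otimes r}$ attached to a saturated arc system is not organized by topological type, several arcs of the system may end on $c_p$, and the coaction $\Delta_{c_p}$ in that basis is not the naive coaction on $V^{\boeta}$ (it involves coproducts and braidings spread over several tensor factors); identifying its coinvariants there is essentially the statement you are trying to prove. Second, the ``completeness of Reshetikhin--Turaev invariants'' you invoke --- that every $\Uq$-invariant functional on $V^{\boeta}$ over the ground ring is realized by a web in the cut-off monogon --- is a nontrivial first-fundamental-theorem statement that is established nowhere in the paper, and over $\Zv$ (the ring over which the structural results are proved) it would require an integral form of quantum Schur--Weyl duality. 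The paper needs no such input.

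For contrast, the paper's proof is structural and sidesteps both problems. It first treats $\Sigma=\Sigma'\sqcup\fM$ with $\Sigma'$ essentially bordered: there $\S_n(\Sigma_p)=\S_n(\Sigma')\otimes\Oq$ by Theorem \ref{t.Aisom}, the coaction is $\id\otimes\Delta$, and writing a coinvariant as $\sum_i y_i\otimes z_i$ with the $y_i$ linearly independent forces $\Delta(z_i)=z_i\otimes 1$, hence $z_i\in R\cdot 1$, because the coinvariants of $\Oq$ under its own coproduct are the scalars. The general case is reduced to this one by splitting along an arc $c_p'$ parallel to $c_p$ (using commutativity and injectivity of the splitting maps), and the conclusion is pulled back through $\Theta_{c_p'}$ via Corollary \ref{c.noendpoints}. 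If you wish to keep an invariant-theoretic flavor you would have to supply proofs of the two missing inputs above; the Hopf-algebraic reduction is considerably shorter.
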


Let $\Sigma=\overline\Sigma -\cal P$, where $\cal P$ is a finite subset of compact surface $\overline\Sigma$, as in Subsec. \ref{ss.i-pbsurf}. 
Generalizing the setup of Subsec. \ref{ss.i-surf-boundary}, consider a collection $A$ of disjoint, oriented, arcs in $\Sigma$, each with endpoints in $\p \Sigma\cup \cal P,$ satisfying the conditions (i) and (ii) above.  We show in Subsec. \ref{ss.kernel}  that such $A$ defines an identification of $\bSF$ with $\Oq^{\ot r}$ and, hence, it determines a basis of $\bSF$.

%
\subsection{The Image of the Splitting homomorphism}
\def\fli{\mathrm{fl}}

Let $c$ be an interior oriented ideal arc of a pb surface $\Sigma$.
Denote the two copies of $c$ in $\Cut_c\,\Sigma$ by $a_1$ and $a_2$. We have the splitting $R$-algebra homomorphism
$$\Theta_c: \S_n(\Sigma)\to \S_n(\Cut_c\, \Sigma).$$
and $\S_n(\Cut_c\, \Sigma)$ is a $\Oq$-bi-comodule with the right and left coactions
\begin{align*} \Delta_{a_1}: \S_n(\Cut_c\, \Sigma)\to \S_n(\Cut_c\, \Sigma)\otimes \Oq \\
_{a_2}\Delta: \S_n(\Cut_c\, \Sigma)\to \Oq \ot  \S_n(\Cut_c\, \Sigma)
\end{align*} 
respectively, where $\Oq$ is identified with the skein algebra of the bigon directed by the orientation of $c$.
 {}
Recall that the Hochshild cohomology module is defined by
$$HH^0(\S_n(\Cut_c\, \Sigma))= \{ x \in \S_n(\Cut_c\, \Sigma) \mid  \Delta_{a_1}(x)= \fli \circ {}_{a_2}\Delta(x) \},$$
where $\fli$ is the transposition  
$$\fli : \Oq\otimes \S_n(\Sigma)\to \S_n(\Sigma)\otimes \Oq,\quad \fli(x\otimes y)=y\otimes x.$$

\bthm[\cite{CL,KQ} for $n=2$ and \cite{Hi} for $n=3$]
The image of $\Theta_c$ is equal to $HH^0(\S_n(\Cut_c\, \Sigma))$.
\ethm

We prove it by considering the projected version $\Theta_c: \bSF\to HH^0(\S_n(\Cut_c\, \Sigma))$, which has the same image as non-projected one. Furthermore, we construct an explicit inverse map 
$\nabla: HH^0(\S_n(\Cut_c\, \Sigma))\to \bSF.$

%
\subsection{Relation to Factorization Homology, skein categories and lattice field theory}
\label{ss.intro-facthom}
%

Factorization homology is an invariant of oriented $n$-dimensional manifolds introduced by Beilinson and Drinfeld \cite{BD} in the setting of conformal field theory and then in \cite{Lu, AF, AFT} in the topological context. For $n=2$, it associates with every surface $\Sigma$ and every balanced braided category 
$\cal A$, a certain category denoted by $\int_\Sigma \cal A.$ When the base ring is a {\em field}, using reconstruction theory  Ben-Zvi, Brochier, and Jordan \cite{BBJ} showed that  $\int_\Sigma \cal A$ is equivalent to the category of left modules over a certain algebra ${\cal A}_\Sigma$, which is isomorphic to the quantum moduli spaces of Alekseev-Grosse-Schomerus and Buffenoir-Roche \cite{AGS, AS, BR1,BR2, BFK}, and also to the internal algebras of the skein categories of Walker and Johnson-Freyd, \cite{Wa,JF,Co}.

Building upon the above theory of stated skein algebras, we prove that for surfaces $\Sigma$ with $\p \Sigma=S^1$, the algebra $A_{\Sigma}$, for the representation category $\cal A$ of $U_q(sl(n))$,
is isomorphic with $\S_n(\Sigma^*)$, where $\Sigma^*$ is $\Sigma$ with a boundary point removed. 
The $n=2$ case follows also from the results of \cite{Fa, Ko, LY1}; see further discussion in Subsec. \ref{ss.skeincat}.

Although our construction of the stated skein modules was motivated by its rich theory 
developed in this paper, the above result provides a further justification for that construction.

On the one hand, factorization homology of \cite{BBJ} is more general in that it can defined for all semi-simple Lie algebras $\mathfrak g$ and it can be viewed as quantizing the entire moduli stacks of representations, rather than just the character varieties.

On the other hand, our approach has its own advantages. First our theory is defined over any ground ring, a commutative domain, while the factorization homology approach defines the algebra ${\cal A}_\Sigma$ over a field. For example, our theory works over the cyclotomic ring (and of course cyclotomic field), an important quantization case. Over the ring $\BZ[q, q^{-1}]$ our theory highlights some integral results in quantum group theory, like relations to canonical basis.  Furthermore, we define the stated skein module not only for surfaces but also for all $3$-dimensional manifolds, and we worked out the theory for surfaces with multiple boundary components and multiple markings.

The stated skein algebra of a surface is defined explicitly, via generators (which are geometric objects and relations, making the theory elementary, while in factorization homology the algebra $A_\Sigma$  is defined up to an isomorphism only. 

The cutting homomorphism in our theory, though related to the excision in factorization homology, is different from the latter. Our cutting homomorphism and gluing over triangle operations make the study of the stated skein algebra easy by cutting surfaces into triangles.

An important application of our approach is it allows the first author and T. Yu \cite{LY3} to prove the existence of the quantum trace map which quantizes the classical Fock and Goncharov trace map \cite{FG1}  and generalizes the quantum $SL_2$-trace map of Bonahon and Wong \cite{BW} to all $SL_n$. The concrete geometric nature of the generators of the stated skein algebra allows in many cases to present a set of elements which generates a quantum space inside the stated skein algebra. This eventually leads to various versions of quantum traces.

\no{The existence of our stated skein algebras over $\Z[q^\frac1{2n}]$ allows to construct quantum trace  
homomorphisms of the stated skein algebras into quantum tori, over any ground ring. It was done for $n=2$ by Bonahon-Wang, \cite{BW}, and more generally in \cite{CL, LY2}. 
The construction of quantum trace was generalized to all $n$ in \cite{LY3}, see further details in Subsec. \ref{ss.facthom}.}

The above works relate our algebras to the theory of quantum cluster algebras, which provide alternative quantizations of character varieties. Further connections to quantum cluster algebras are through \cite{CS, Sh, JLSS}.

\no{In Subsections \ref{ss.skeincat}-\ref{ss.latticegauge-quantmoduli} we also establish isomorphisms of our skein algebras  
with the quantum moduli spaces of Alekseev-Grosse-Schomerus and Buffenoir-Roche,  \cite{AGS, AS, BR1,BR2, BFK}, and with the internal algebras of the skein categories of Walker and Johnson-Freyd, \cite{Wa,JF},
for surfaces $\Sigma$ with $\p \Sigma=S^1$ and $\mathfrak g=sl(n)$.
}



%
\subsection{Compatibility with stated Kauffman bracket skein modules}
\label{ss.i-sKB}

The stated Kauffman bracket skein algebras (of  surfaces) of the first author \cite{Le-triang} were generalized to stated skein modules of marked $3$-manifolds in \cite{BL} (cf. also \cite{LY1}). We are going to prove that these modules are isomorphic with our $SL(2)$-skein modules, $\S_2(M,\cN)$.  

To relate these modules to ours, let us replace the variable $q$ of \cite{Le-triang} with $q^{1/2}$ and denote the resulted stated Kauffman bracket skein module by $\cS(M,\cN)_{q^{1/2}}$.  Recall that it is built of non oriented $2$-webs without sinks nor sources, stated by sings $\pm$.

\begin{theorem*}[Thm. \ref{t.su2}] Suppose $(M,\cN)$ is a marked $3$-manifold. \\
(1) There is a unique $R$-linear isomorphism $\Lambda: \cS(M, \cN)_{q^{1/2}}\to \cal S_2(M,\cN)$ which maps framed links $\al$ to a stated $2$-webs by assigning arbitrary orientation to them, and changing the minus state to $1$ and the plus state to $2$.\\ 
(2) The splitting homomorphism of \cite{Le-triang, BL} coincides with ours through $\Lambda$. 
\end{theorem*}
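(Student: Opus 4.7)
The plan is to define $\Lambda$ on representatives and then verify well-definedness, bijectivity, and splitting compatibility. Given a stated framed link $(\alpha,s)$ in $\MN$, choose an arbitrary orientation $\vec\alpha$ of $\alpha$ and set $\Lambda[(\alpha,s)]=[(\vec\alpha,\tilde s)]$, where $\tilde s$ is obtained from $s$ by the bijection $-\leftrightarrow 1$, $+\leftrightarrow 2$. A natural candidate inverse first resolves every $2$-valent internal sink or source appearing in a stated $2$-web by the $n=2$ instance of \eqref{e.sinksource} (which expresses these vertices as caps and cups, as they correspond for $n=2$ to the antisymmetrizer $V\otimes V \to \wedge^2 V$ and its dual), then forgets orientation, then applies the inverse state bijection.

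For well-definedness of $\Lambda$ the main issue is independence of the chosen orientation. I would use the orientation-reversal automorphism $\cev{\,\cdot\,}$ introduced in Subsection~\ref{ss.symmetries} together with the self-duality of the fundamental $U_q(sl_2)$-module: under the duality $V\cong V^*$ sending $e_i$ to $f_{\bar i}$ up to the normalising factor $(-1)^{i-1}q^{i-\frac{n+1}{2n}}$ from Subsection~\ref{ss.intro-RT}, reversing the orientation of a component of a stated oriented framed link in $\S_2(M,\cN)$ yields the same class once the states are transformed by this duality. Under the identification $-\leftrightarrow 1$, $+\leftrightarrow 2$ this transformation becomes the identity, which is precisely what forces the stated bijection of part~(1).

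Next, each defining relation of the Kauffman bracket skein module must be shown to hold in $\S_2(M,\cN)$ after applying $\Lambda$, and vice versa for the candidate inverse. I would verify them one by one: the Kauffman crossing smoothing $[\mathrm{cross}] = A[\mathrm{identity}] + A^{-1}[\mathrm{cap}\text{-}\mathrm{cup}]$ with $A=q^{-1/2}$ follows from the $n=2$ crossing relation \eqref{e.pm} after resolving the ensuing $2$-valent vertices via \eqref{e.sinksource}; the framed unknot value $-q-q^{-1}$ matches the modified Reshetikhin-Turaev quantum dimension of $V$, with the sign produced by the factor $(-1)^{i-1}$ in $\{f_i\}$; the stated boundary relations \eqref{e.vertexnearwall}--\eqref{e.crossp-wall} translate term-by-term using self-duality and the stated bijection. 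Once both $\Lambda$ and its candidate inverse are well-defined they are manifestly mutual inverses.

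Part~(2) is then a consequence of the uniqueness in the Splitting Theorem~\ref{t.splitting}: both the Kauffman splitting of \cite{Le-triang, BL} and our $\Theta_{(D,\beta)}$ are characterized by the same summation formula $\Theta(\alpha)=\sum_s \alpha(s)$ over states on $\alpha\cap\beta$, and $\Lambda$ intertwines the two summation indexings $\{\pm\}\leftrightarrow\{1,2\}$. The main obstacle throughout will be the careful tracking of signs and framing factors: matching the loop value $-q-q^{-1}$ of the Kauffman bracket against the a priori positive quantum dimension $[2]_q$ in the modified RT framework, and matching the Kauffman crossing coefficients $q^{\pm 1/2}$ against the coefficients in \eqref{e.pm}, both rely delicately on the sign $(-1)^{i-1}$ built into the definition of $\{f_i\}$.
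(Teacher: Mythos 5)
Your overall architecture (orient arbitrarily, prove orientation-independence, check relations both ways, conclude via the fact that links span both modules) matches the paper's, but three steps that you treat as routine are exactly where the real work lies, and as written they do not go through. First, orientation-independence: the automorphism $\cev{\,\cdot\,}$ of Corollary \ref{c.orient-rev} reverses the orientation of \emph{all} components at once, whereas well-definedness of $\Lambda$ requires that reversing a \emph{single} component (with the states fixed under $-\leftrightarrow 1$, $+\leftrightarrow 2$) preserves the class in $\S_2(M,\cN)$ for an arbitrary marked $3$-manifold, where no RT functor or self-duality argument is directly available. The paper's Lemma \ref{l.2-orient} supplies the mechanism you are missing: using \eqref{e.sinksource2}, \eqref{e.twist2} and \eqref{e.unknot2} one shows that inserting a pair of $2$-valent cusps on a strand has coefficient $1$ (Eq.\ \eqref{e.2kinks}), and that a $2$-valent vertex absorbed at a marking contributes $-1$ (Eq.\ \eqref{e.arc-near-w}); sliding the cusp pair along a component then reverses its orientation at no cost. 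Second, your treatment of the Kauffman crossing relation skips the central difficulty: of the two unoriented smoothings, exactly one is incompatible with the chosen orientation of the ambient link and must be identified with the sink--source web $\wallsinksourcetwowall{n}{n}{}{}{}{}$ of \eqref{e.sinksource2}. Whether and how this identification holds depends on how the crossing's endpoints are joined up globally (the paper's dichotomy of $\infty$-type versus parallel-type crossings), and it is proved case by case with the same cusp-insertion tricks. Your phrase ``resolving the ensuing $2$-valent vertices via \eqref{e.sinksource}'' is not a valid substitute: that relation eliminates an adjacent sink--source \emph{pair} and reintroduces a crossing, so it neither applies to a lone $2$-valent vertex nor terminates.

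Third, your candidate inverse is wrong by a sign. For $n=2$ a sink or source, smoothed into an unoriented cap or cup, differs from it by a factor $-1$ (compare \eqref{e.vertex-wall-2} with \eqref{e.caps-wall-2a}, or Eq.\ \eqref{e.arc-near-w}); the paper's inverse therefore sends a $2$-web $\al$ to $(-1)^{|V_2(\al)|}\bar\al$, with $\bar\al$ the unoriented smoothing. Omitting $(-1)^{|V_2(\al)|}$ makes the map fail to factor through \eqref{e.vertex-wall-2} and hence fail to be well defined. Your part (2) is fine. To repair part (1) you need: (i) the explicit cusp calculus \eqref{e.2kinks} and \eqref{e.arc-near-w}; (ii) the crossing-type case analysis identifying the orientation-incompatible smoothing with the sink--source web; and (iii) the sign $(-1)^{|V_2(\al)|}$ in the inverse.
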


\subsection{Compatibility with the SU(n)-skein modules}
\label{ss.i-SLn}
\def\UM{UM}

As mentioned already, a partial motivation for our definition of $\S_n\MN$ were the $SU(n)$-skein modules of $3$-manifolds introduced by the second author in \cite{Si}. These skein modules are built of  {\bf based $n$-webs} in $M$ which are defined as our $n$-webs in $(M,\emptyset)$, except that the half-edges incident to any of their $n$-valent vertices are linearly ordered. In particular, the based $n$-webs have no endpoints and $SU(n)$-skein modules have no boundary skein relations. In Subsection \ref{ss.SLn} we show that for any $3$-manifold $M$ and any $n$ our $\S_n(M, \emptyset)$ is isomorphic with the SU(n)-skein module of $M$. That isomorphism is straightforward for $n$ odd, but it requires a choice of a spin structure on $M$ for $n$ even.

%
\subsection{Compatibility with Higgins' SL3 skein algebras}
%

In his recent work \cite{Hi} Higgins introduced his version of stated $SL_3$-skein algebras, denoted by $\S_q^{SL_3}(\Sigma)$, of punctured bordered surfaces $\Sigma$.  His skein algebra is the $R$-module freely generated by $3$-webs stated by $-1,0,1$, subject to his system of skein relations. 
We show that in that our theory recovers Higgins' for $n=3$ in Subsection \ref{ss.higgins}.

%
\subsection{Relation to the Frohman-Sikora SU(3)-skein algebras}
%

C. Frohman and the second author considered in \cite{FS} the ``reduced $SU(3)$-skein algebra'' of marked surfaces built of unstated $3$-webs, subject to the $SU(3)$-skein relations of \cite{Ku}, extended by certain boundary skein relations, which depend on an invertible parameter $a\in R$.
We denote that algebra by $\cal S_{FS}(\Sigma)$ for the value $1$ of that parameter.

For an unstated $3$-web $\alpha$ in $\Sigma$, let $\eta_+(\alpha)$ (respectively: $\eta_-(\alpha)$) denote $\alpha$ stated with threes (respectively: ones) at all its ends. In Subsection \ref{ss.FS-SU3} we show that these operations extend to injective homomorphisms $\eta_+,\eta_-: \cal S_{FS}(\Sigma)\to \S_3(\Sigma)$. Furthermore, their images are direct summands of $\S_3(\Sigma)$.

%
\subsection{Thanks}

{\bf Acknowledgements.} The authors would like to thank F. Costantino, D. Douglas, V. Higgins, J. Korinman, and T. Yu for helpful discussions. 

The first author is partially supported by NSF grant DMS-1811114. Many results of the paper were proved and discussed during the visit of both authors to Toulouse in Spring 2017,  supported by the Centre International de Mathématiques et d'Informatique’s Excellence program during the Thematic Semester ``Invariants in low-dimensional geometry and topology”.

The authors presented the results of this paper in the form of talks or mini-courses at many conferences, including at Noncommutative Geometry Seminar, IMPAN, March 2017 and the special session at the AMS meeting at Hunter College in May 2017 and would like to thank the organizers for the opportunities to present their work.

%
\section{Quantum groups associated to $sl_n$}
\label{s.quantumg}
%

\subsection{Notations and conventions} 
\label{ss.notation}
We use the notations $\C, \BR, \BQ, \BZ, \BN$ for respectively the sets of complex numbers, reals, rationals numbers, integers, and non-negative integers. We emphasize that our $\BN$ contains 0. The number $n$ (in $sl_n$) is a fixed integer $\ge 2$.

The ground ring $R$ is a commutative ring with unit containing a distinguished invertible element $v$. The element $q= v^{2n}$ is the usual quantum parameter. 
The basic example is $R=\BZ[v^{\pm1}]$, the ring of Laurent polynomials in $v$ with integer coefficients. 
All fractional powers of $q$ in our papers are defined via the obvious integral powers of $v= q^{1/2n}$. 

For a non-negative integer $m$ we define the quantum integer $[m]$ and its factorials by
$$[m] = \frac{q^m - q^{-m}}{q - q^{-1}}, \quad [m]!= \prod_{i=1}^m [i], 
\quad [0]!=1.$$

We will often use the following scalars:
\begin{align}
t_0^{1/2}&= q^{\frac{n^2-1}{2n}}:= v^{n^2-1}, \quad 
t= (-1)^{n-1} t_0= (-1)^{n-1} q^{\frac{n^2-1}{n}}, \ t^{n/2} =  (-1)^{\frac{(n-1)n}2} q^{\frac{n^2-1}{2}} \label{e.t}\\
a &= (-v)^{n(n-1)/2} t^{-n/2}=q^{(1-n)(2n+1)/4}=  q^{\frac{1-n^2}{4}-\frac{n^2-n}{4}},\quad 
d_i = i- \frac{n+1}{2}   
\label{e.a} \\
c_i&= (-1)^{n-i} q^{\frac{n+1}{2}-i} q^{\frac{n^2-1}{2n}} =  (-1)^{n-i} q^{-d_i} t_0^{1/2}.  \label{e.ci}
\end{align}

Note that
\beq\label{e.prodc} 
\prod_{i=1}^n c_i  = t^{n/2}= (-1)^{\binom n 2 } q^\frac{n^2-1}{2} 
\ \text{and}\ c_i\cdot c_{\bar i}=t, \ \text{for}\ i=1,\dots, n,
\eeq
where $\bar i$ is the {\bf conjugate of $i$}, defined as $n+1-i.$

In fact, our entire theory works (up to a normalization) for any invertible $a,t, c_1,...,c_n$ satisfying Eq. \eqref{e.prodc}, cf. Subsection \ref{ss.uniqueness}. However, our particular choice of these constants makes our theory invariant under the orientation reversal of $3$-manifolds and the orientation reversal of webs in it, cf. Corollary \ref{c.orient-rev} and Theorem \ref{t.framing-rev}.

Let $S_n$ be the group of permutations of $\{1,\dots, n\}$. 
The {\bf length} of $\sigma\in S_n$ is the minimal number of factors in the decomposition of $\sigma$ into elementary transpositions $(i,i+1),$ $i=1,...,n-1.$
Alternatively, it is
\beq\label{e.length}
\ell(\sigma)= |\{ (i,j) | 1\le i<j\le n, \ \sigma(i) > \sigma(j)\}|.
\eeq
 The longest element $w_0\in S_n$ is the permutation $w_0(i) = \bi$.
 
We use the convention that
\beq
(-q)^{\ell(\sigma)} =0\  \text{if} \ \sigma: \{1,\dots, n\}\to \{1,\dots, n\} \ \text{is not a permutation}.
\label{e.ell}
\eeq
We also use Kronecker's delta notation and its sibling:
\begin{align*}
\delta_{i,j}  =\begin{cases} 1 \quad  & \text{ if } j=i \\
0  & \text{ if }  j \neq i
\end{cases}, \qquad 
\delta_{j>i}  =\begin{cases} 1 \quad  & \text{ if } j>i \\
0  & \text{ if }  j \le i.
\end{cases}
\end{align*}

 \def\hR{\hat \cR}
 \def\eva{\mathrm{ev}}
\def\coev{\mathrm{coev}}
\def\tev{\widetilde{\eva}_0}
\def\tcoev{\widetilde{\coev}_0}
\def\Heav{\mathrm{Heav}}
\def\UU{{\mathcal C_n  }}
\def\Cn{{\mathcal C_n  }}

\subsection{Quantized enveloping algebra $\Uqq$}
\label{ss.qea}
The quantized enveloping algebra $\Uqq$ is a Hopf algebra over the field $\BQ(v)$ with explicit presentation given in \cite[Section 6.1.2]{KS}. 

Let $V=\BQ(v)^n$ be the defining representation of $\Uq$. Its dual $V^*$ is the simple $\Uqq$-module with highest weight the $(n-1)$-st fundamental weight. Let $e_1, \dots, e_n$ be the standard basis of $V$ with $e_n$ being the highest weight vector, see  \cite[Section 8.4.1]{KS}. 
Let $e^1, \dots, e^n$ be the dual basis of $V^*$, defined by
$e^i(e_j) = \delta_{i,j}$. As $\Uqq$ is a Hopf algebra, the category of finite dimensional $\Uq$-modules is monoidal.
Let $\Cn$ be the full subcategory of $\Uq$-modules consisting of objects isomorphic to tensor products of copies of $V$ and $V^*$.

It is known that $\Hom_\Uqq(V^{\ot n},\Qv)$ has dimension 1 and is generated by the 
 $q$-antisymmetrizer $\cal A_-: V^{\ot n} \to \Qv$ defined by
 \beq\label{e.A-}
{\cal A}_- (e_{\sigma(1)} \otimes \dots\otimes  e_{\sigma(n)})= t^{n/2} a  (-q)^{\ell(\sigma)},\  \text{for any }  \sigma: \{1, \dots, n\} \to \{1, \dots, n\},
\eeq
where   $(-q)^{\ell(\sigma)}=0$ if $\sigma$ is not a permutation (according to Eq. \eqref{e.ell}), and  $a,t$ are given in Subsection~\ref{ss.notation}, cf. \cite{Gy}. Similarly, $\Hom_\Uqq(\Qv,V^{\ot n})$ has dimension 1 and  is generated by
$\cal A_+: \Qv \to V^{\ot n}$, given by
 \beq\label{e.A+}
{\cal A}_+(1)=  a  \sum_{\sigma\in S_n} (-q)^{l(\sigma)} 
e_{\sigma(1)}\otimes e_{\sigma(2)}\otimes ... \otimes e_{\sigma(n)},
\eeq

\subsection{Braiding and the Iwahori-Hecke algebra} 
\label{ss.Hecke}
The algebra $\Uqq$ has a topological completion which is a topological ribbon Hopf algebra, making the category of  finite dimensional $\Uqq$-modules a  ribbon category, see  \cite{CP,Tu1}.  The ribbon structure defines (through the universal $R$-matrix) for any two $\Uqq$-modules $V_1, V_2$ a braiding $\hR_{V_1, V_2}: V_1 \ot V_2 \to V_2 \ot V_1
$, which is an invertible $\Uqq$-morphism satisfying certain conditions discussed below. Let us record here the formula for $\hR_{V,V}$, which will be simply denoted by $\hR$. 
An operator $A: V\ot V \to V \ot V$ is given by its matrix entries $A^{ij}_{mk}\in \Qv$, with $i,j,m,k\in \{1,\dots, n\}$, which are defined such that
$$ A(e_m\ot e_k) = \sum_{i,j} A^{ij}_{mk} e_i \ot e_j.$$

The braiding $\hR: V \ot V \to V\ot V$ and the matrix $\cR$ are given by
\beq\label{e.R}
\hR^{ji}_{lk} =  \cR^{ij}_{lk} = q^{-\frac 1n} \left(    q^{ \delta_{i,j}} \delta_{j,k} \delta_{i,l} + (q-q^{-1})
    \delta_{j<i} \delta_{j,l} \delta_{i,k}\right),
\eeq
cf. \cite[Section 8.4.2(60) and Section 9.2]{KS},
where $\delta_{j<i}=1$ if $j<i$ and $\delta_{j<i}=0$ otherwise, as in Sec. \ref{ss.notation}. 

For an integer $k\ge 2$ and $i=1, \dots, k-1$ we define $\widehat R_i : V^{\ot k} \to V^{\ot k}$  by
\beq 
\widehat R_i = \id ^{\ot i-1} \ot \hR \ot \id^{\ot^{k-i-1}}.
\notag
\eeq
Then the operators $\widehat R_i$ satisfy the following relations
\begin{align*}
q^{\frac 1n} \widehat R_i - q^{-\frac 1n} \widehat R_i & = (q-q^{-1}) \id, \quad \text{for} \ i=1, \dots , k-1\\   
\widehat R_i \widehat R_j &= \widehat R_j \widehat R_i  \quad \text{for} \ 1 \le i < j-1  \le k-1\\   
\widehat R_i \widehat R_{i+1} \widehat R_i &= \widehat R_{i+1} \widehat R_i \widehat R_{i+1} \quad \text{for} \ i=1, \dots , k-2.  
\end{align*}
The last equation, known as the braid relation, is a consequence of $\hR$ being induced by the $R$-matrix of $\Uqq$.

\def\HH{  \mathcal H}

By \cite{Gy, Si}, 
\beq\label{e.H2}
\widehat R_i \circ \cal A_+ =  - q^{-\frac 1n -1} \cal A_+, \quad 
\cal A_- \circ \widehat R_i = - q^{-\frac 1n -1} \cal A_-.
\eeq

\subsection{The dual module} 
\label{ss.dual}
For $x\in V^*$ and $y\in V$ let $\la x, y \ra$ denote $x(y) \in \Qv$. 
There is an invertible element $g_0\in \Uqq$ called the charmed element, whose action on $V$ is given by
$$g_0(e_i) = q^{2i-n-1} e_i = q^{2 d_i} e_i.$$
The ribbon structure implies that the following $\Qv$-linear maps are $\Uqq$-morphisms:

\begin{align*}
&\eva: V^* \ot V \to \Qv,&  \quad & \eva(e^i \ot e_j)  = \la e^i, e_j \ra = \delta_{i,j}\\
& \coev: \Qv \to V \ot V^* ,& & \coev(1) = \sum_{i=1}^n e_i \ot e^i \\
&\tev: V \ot V^* \to \Qv,& & \tev(e_i \ot e^j) = q^{2i-n-1} \delta_{i,j}= \la e^j, g_0(e_i) \ra \\
&\tcoev: \Qv \to V^* \ot V ,&  &\tcoev(1) = \sum_{i=1}^n  q^{n+1-2i} e^i \ot e_i = \sum_{i=1}^n e^i \ot (g_0)^{-1} e_i.
\end{align*}

\subsection{The quantized coordinate algebra $\cO_q(SL(n))$}
\label{ss.oqsln} 

The {\bf algebra of quantum matrices} $\cO_q(M(n))$ is the associative $\Zv$-algebra generated by elements $u^i_j$, for  $i,j=1,2,\dots, n,$
subject to relations 
\beq
(\buu \ot \buu) \cR = \cR (\buu \ot \buu),  \label{e.OqM}
\eeq
where $\cR$ is the $R$-matrix given by Eq. \eqref{e.R}, and $\buu \ot \buu$ is the $n^2\times n^2$ matrix with entries $(\buu \ot \buu)^{ik}_{jl} = u^i_j u^k_l$ for $i,j,k,l\in \{1,\dots, n\}$. 
We call $\buu:= (u^i_j)$ as well as its images under  algebra homomorphisms {\bf quantum matrices}. Any square submatrix of $\buu$ is  a quantum matrix (of smaller size). The element 
$$ {\det}_q(\buu):=\sum_{\sigma\in S_n} (-q)^{\ell(\sigma)}u_1^{\sigma(1)}\cdots u_n^{\sigma(n)} = \sum_{\sigma\in S_n} (-q)^{\ell(\sigma)}u^1_{\sigma(1)}\cdots u^n_{\sigma(n)}$$
is central and called the {\bf  quantum determinant} of the quantum matrix $\buu$, cf.  \cite[9.2.2]{KS}.

The {\bf quantized coordinate algebra of $SL(n)$} is the quotient  
$$\Oq =\cO_q(M(n))/(\text{det}_q \buu-1).$$
It is a Hopf algebra with comultiplication, counit, and antipode given by
\begin{align}\label{e-Oq-ops}
\Delta(u^i_j) =\sum_k u^i_k \otimes u^k_j,\quad 
\epsilon(u^i_j) =\delta_{i,j},\quad 
S({u}^i_j)= (-q)^{i-j}M^j_i(\buu), 
\end{align}
where $M^j_i(\buu)$ is the quantum determinant of the minor of $\buu$ obtained by removing the $j$-th row and $i$-th column of the matrix $\buu$, see eg \cite[9.2.2]{KS} or \cite{Ta}. 

For technical convenience we have defined $\Oq$ over the  ring $\Zv$.
The dual $\Uqq^*$, consisting of all $\Qv$-linear maps $\Uqq \to \Qv$, has a $\Qv$-algebra structure, dual to the coalgebra structure of $\Uqq$, and is considered as a $\Zv$-algebra via  $\Zv \embed \Qv$.

\bpro[Hopf duality between $\Oq$ and $\Uq$, {\cite[Sec. 4]{Takeuchi0}}] \label{p.Hopfdual} 
There is a unique pairing $\la \cdot, \cdot \ra: \Oq \times \Uqq\to \Q(v),$ such that $\la u^i_j, x\ra= e^i(x(e_j))$ for $x\in \Uqq$ and $i,j=1,\dots, n.$ It is a Hopf pairing which induces an embedding of $\Zv$-algebras $\Oq \embed \Uqq^*$.
\epro

For the convenience of the reader, we recall that $\la \cdot, \cdot \ra$ being a Hopf pairing means that $\Oq$ and $\Uqq$ are in Hopf duality: for all $u,u'\in \Oq$ and $x,x'\in \Uqq$
\begin{align*}
\la uu', x \ra & = \sum \la u, x_{(1)} \ra \la u', x_{(2)} \ra, \quad \text{where} \ \Delta(x) = \sum x_{(1)} \ot x_{(2)}\\
\la u, x x'\ra & = \sum \la u_{(1)},x  \ra \la u_{(2)},x' \ra, \quad \text{where} \ \Delta(u) = \sum u_{(1)} \ot u_{(2)} \\
 \la 1, x \ra & = \ve(x), \quad 
 \la u,1 \ra  = \ve(u), \quad 
 \la S(u), x \ra  = \la u, S(x) \ra.
 \end{align*}

\def\Line{\Lambda}
\def\Lr{\Line_r}
\def\Ll{\Line_l}

\def\tal{{\tilde \al}}
\def\bal{{\bm{\al}}}

\def\rot{\mathsf{rot}}
\newcommand{\eqRT}{\overset{\mathsf{RT}}{\, =\joinrel= \, }}
\newcommand{\eqRTz}{\overset{\mathsf{RT}_0}{\, =\joinrel= \, }}
\def\Dij{\raisebox{-10pt}{\incl{1 cm}{Dij_no}}}
\def\Dijo{\raisebox{-10pt}{\incl{1 cm}{Dij_no1}}}
\def\Cij{\raisebox{-10pt}{\incl{1 cm}{Cij_no-orient}}}
\def\Cijo{\raisebox{-10pt}{\incl{1 cm}{Cij_no-orient1}}}
\def\dual{\raisebox{-25pt}{\incl{2.5 cm}{dual}}}

\def\corss{\raisebox{-10pt}{\incl{1 cm}{cross}}}
\def\corssn{\raisebox{-10pt}{\incl{1 cm}{crossn}}}
\def\corsseq{\raisebox{-10pt}{\incl{1 cm}{crosseq}}}
\def\idd{\raisebox{-10pt}{\incl{1 cm}{idd}}}
\def\trivial{\raisebox{-10pt}{\incl{1 cm}{trivial}}}
\def\emptyt{\raisebox{-10pt}{\incl{1 cm}{emptyt}}}
\def\sinkn{\raisebox{-10pt}{\incl{1 cm}{sinkn}}}
\def\sinknnn{\raisebox{-10pt}{\incl{1 cm}{sinknnn}}}
\def\raa{\raisebox{-10pt}{\incl{1 cm}{raa}}}
\def\rab{\raisebox{-10pt}{\incl{1 cm}{rab}}}
\def\rac{\raisebox{-10pt}{\incl{1 cm}{rac}}}

\section{Review of the Reshetikhin-Turaev theory}
\label{s.RT}

\subsection{Based $n$-tangles and their operator invariants} 
\label{ss.based-t}
Reshetikhin-Turaev theory associates with every ribbon category an operator invariant of ribbon graphs, see \cite{Tu2}. Let us make this construction explicit for the category of left $\Uq$-modules and a special class of ribbon graphs, called based $n$-tangles, defined below.

The cube $Q:= [-1,1]\times (-1,1)^2$ in the 3-space, see Figure \ref{f.cube}(a), has boundary consisting of the right face $\{1\} \times (-1,1)^2$ and the left face $\{-1\} \times (-1,1)^2$. The intersection of $Q$ and the XY-plane is $S= [-1,1]\times (-1,1) \times \{0\}.$ It is depicted as the shaded square in Figure \ref{f.cube}(a),   
with its sides $\Lambda_l=\{-1\} \times \{0\} \times (-1,1)$ and $\Lambda_r=\{1\} \times \{0\} \times (-1,1)$.
We will say that vectors of the form $(0,0,z)$ with $z>0$ are in the {\bf Z-direction}.

\FIGc{cube}{(a) the cube $Q$, 
the square $S$ (shadowed) and its sides $\Lambda_r, \Lambda_l$. (b) an example of a diagram of a based $3$-tangle. The order at the source is counterclockwise, beginning with lowest branch. (c) the tensor product $\al \ot \beta$. (d) the composition $\beta\circ \al$. (Orientations of $\alpha,\beta$ are not shown.}{3.5cm}

For the sake of the definition below and for later use, we say that directed graph $\alpha$ is {\bf properly embedded} into a $3$-manifold $M$, if its set of $1$-valent vertices, $\partial \al$, coincides with $\al \cap \partial M$ and  $\al$ is transversal to $\partial M$. 

\bdf\label{d.based-t}
 A {\bf based  $n$-tangle} $\al$ is a disjoint union of finitely many oriented circles and directed graphs properly embedded into the cube $Q$, such that 
\benu
\item  The graphs of $\alpha$ have finitely many vertices only. Every vertex of $\alpha$ is either a sink or a source
and either $1$-valent or $n$-valent. We denote  
the set of $1$-valent vertices, called {\bf endpoints} of $\al$, by $\pal$. 
\item Each edge of the graph is a smooth embedding of the closed interval $[0,1]$ into $Q$.  
\item $\alpha$ is equipped with a {\bf framing} which is a continuous non-vanishing vector field transversal to $\alpha$. In particular, the framing at a vertex is  transversal to all incident edges.
\item The set of half-edges at every $n$-valent vertex is linearly ordered. 
\item The endpoints of $\alpha$ lie in $\Line_l \cup \Line_r$, and the framing at these endpoints has a $Z$-direction.
\eenu
\edf
We  consider based  $n$-tangles up to {\bf isotopies} which are continuous deformations of $n$-webs in their class.

\brem
Our notion of an $n$-tangle is broader than that of a traditional tangle, because it allows for $n$-valent vertices. It is a version of the notion of an $n$-web of \cite{Si}. 
However, we use a different name for it here to distinguish it from $n$-webs which we will introduce in Subsec. \ref{ss.marked} and which are unbased and have a different framing setup near their boundaries.
\erem

Suppose $\al$ is a based $n$-tangle. The {\bf sign} $\sgn(e)$ of an endpoint  $e\in \pal$ is positive if the direction of $\al$ goes from left to right at $e$, and negative otherwise. Let $\sgnl(\al)$ (respectively $\sgnl(\al)$) be the sequence of signs of endpoints of $\al$ on $\Lambda_l$ (respectively $\Lambda_r$) appearing from the bottom to the top.  

Let $\fCn^b$ be the $\Zv$-linear monoidal category whose objects $\boeta$ are finite sequences of signs $\pm$, and the set of morphisms $\Hom_{\fCn^b}(\boeta, \bomu)$ is the  $\Zv$-module freely spanned by isotopy classes of based $n$-tangles $\al$ such that $\sgnr(\al)=\boeta$ and $\sgnl(\al)=\bomu$. Here the tensor product of two sequences $\boeta$ and $\bomu$ is the concatenation of $\boeta$ followed by $\bomu$. The empty sequence is the unit. The tensor product $\al \ot \beta$ of two based $n$-tangles is the result of stacking $\beta$ above $\alpha$, as in  Figure \ref{f.cube}(c). If $\sgnl(\al) = \sgnr(\beta)$ then the composition $\beta\circ \alpha$  is obtained by placing $\beta$ to the left of $\al$ (after an isotopy to match endpoints), as in Figure \ref{f.cube}(d).

Let $\Cn$ be the category of 
left $\Uq$-modules isomorphic to tensor products of finite numbers of modules $V$ and $V^*$. The morphisms 
of $\Cn$ are $\Uq$-module homomorphisms. It is a ribbon category. Since based tangles can be viewed as tangles with coupons, in the sense of Reshetikhin-Turaev, their theory (\cite{RT}) defines a monoidal functor $ \RT: \fCn^b \to \Cn$ constructed as follows. For a sequence $\boeta =(\eta_1, \dots, \eta_k)$ of signs $\pm$ let
$$ \RT(\boeta) = V^{\boeta}:= V^{\eta_1} \ot \dots \ot V^{\eta_k},$$
where $V^+= V \ \text{and}\ V^-= V^*$. 

We will define values of $\RT$ for based $n$-tangles through their diagrams. 
For that purpose, we will identify the XY-plane in $\R^3$ with the pages of this paper and we will point the $z$-axis towards the reader, as in Figure \ref{f.cube}(a). 
Any based $n$-tangle $\alpha$ can be represented by its diagram obtained by isotoping $\alpha$ first so that its framing is in the Z-direction everywhere and then by putting it in a general position with respect to the projection 
$p: \BR^3 \to \BR^2$ onto the XY-plane, cf. Figure \ref{f.cube}(b). We further assume that the projection of $\al$ near an $n$-valent vertex consists of $n$ lines directed from left to right, and that their linear order is 
counterclockwise beginning from the lowest line to the highest for the source, and from the highest to the lowest line, for the sink. 

Such a projection of $\al$ onto the square $S=[-1,1]\times (-1,1)\times \{0\}$ (shaded in Figure \ref{f.cube}(a)), together with the over/under information at every crossing is called a {\bf  diagram of $\al$.} 

In Eqs. \eqref{e.id}--\eqref{e.RTT} we list {\bf elementary} based  $n$-tangles $\alpha$ and the corresponding operators $\RT(\alpha).$ The associated operators $\eva, \tev, \coev, \tcoev$ were defined in Subsection \ref{ss.dual}, while $\cal A_-, \cal A_+$ and $\hR$ were given by Eqs. \eqref{e.A-}--\eqref{e.R}  respectively.
Since  every based $n$-tangle can be built of them through tensor products and compositions,
these operators totally determine $\RT$.

\def\idi{\raisebox{-8pt}{\incl{.8 cm}{idi}}}
\def\ida{\raisebox{-8pt}{\incl{.8 cm}{id}}}

\beq\label{e.id}
\begin{array}{cccc}
\edgewall[ ]{}{>}{} & \edgewall[ ]{}{<}{} &  \capwall[w]{}{>}{}{} &   \wallcup[w]{}{>}{}{}   \\
 \text{\small $\id: V \to V$} & \text{\small $\id: V^* \to V^*$} & \text{\small $\eva: V^*\otimes V\to R$}&
  \text{\small $\coev: R\to V\otimes V^*$}  
\end{array}
\eeq
\beq\label{e.cap2}
\begin{array}{cccc}
  \capwall[w]{}{<}{}{} &   \wallcup[w]{}{<}{}{}  \\
 \text{\small $\tev: V\otimes V^*\to R$} &
\text{\small $\tcoev: R\to V^*\otimes V$}
\end{array}
\eeq
\beq\label{e.RTT}
\begin{array}{cccc}
\cross{}{}{p}{>}{>}{}{}{}{} & \cross{}{}{n}{>}{>}{}{}{}{} & \vertexnearwall[ ]{t}{>} & \vertexwall[w]{}{b}{>}{}{}{}\\
\text{\small $\hat R: V\otimes V\to V\otimes V$} & \text{\small $\hat R^{-1}: V\otimes V\to V\otimes V$} &
\text{\small $\cal A_+: R\to V^{\otimes n}$} & \text{\small $\cal A_-: V^{\otimes n}\to R$}\\ 
\end{array}
\eeq

\def\cyclica{\raisebox{-10pt}{\incl{1.1 cm}{cyclica}}}
\def\raab{\raisebox{-10pt}{\incl{1.1 cm}{raab}}}

Based $n$-tangles and the Reshetikhin-Turaev functor on them were considered in \cite{Si}, albeit with a different normalization of $\cA_\pm$. 

\def\cC{{\mathcal C}}

\subsection{Kernel of functor $\RT$} 
Recall that a monoidal ideal in a monoidal category $\cal C$ is a subset $I\subset \Hom(\mathcal C)$ such that for $x\in I$ and $y\in \Hom(\cC)$ we have $x\ot y, y \ot x \in I$ and  $x \circ y, y\circ x\in I$, whenever such compositions can be defined.

The following are well-known elements of $\ker \RT$ (see \cite{Si}):

\begin{align} 
q^{\frac{1}{n }} \cross{}{}{p}{>}{>}{}{}{}{}
 -  q^{-\frac{1}{n }} \cross{}{}{n}{>}{>}{}{}{}{}
 &  \eqRTz   (q-q^{-1})\, \walltwowall{}{}{>}{>}{}{}{}{}  \label{e.ma10}\\
\kink[w] &\eqRTz  t_0\,  \edgewall[ ]{}{>}{}{}
\label{e.ma20}\\
\circlediag[w]{>} &\eqRTz   [n] \, 
\emptyd[w] \label{e.ma30} \\
\sinksourcethree[w]{>}
&\eqRTz  (-q)^{\binom n2} \sum_{\sigma \in S_n}   (-q ^{\frac 1n -1} )^{\ell(\sigma)} \, 
  \coupon[w]{$\sigma_+$}{>}{}{}{}{},  \label{e.ma40}
\end{align}

where $t_0$ is given by Eq. \eqref{e.t}, $\sigma_+$ is the minimum crossing positive braid representing a permutation $\sigma$ and $\ell(\sigma)$ is the length of $\sigma$ defined in Subsection \ref{ss.notation}. Here $x\eqRTz y$ means $x-y\in \ker(\RT)$.
 
\bcon \label{c.1} The kernel $\ker \RT$ is the monoidal ideal $I_0$ generated by elements given in Eq. \eqref{e.ma10}--\eqref{e.ma40}. 
\econ

\def\cyclic{\raisebox{-10pt}{\incl{1.1 cm}{cyclic}}}

For example, the arguments in \cite{Si} indeed imply that the following identities are consequences of  \eqref{e.ma10}--\eqref{e.ma40}:
\begin{align}
\label{e.cyclic0}
 \vertexwallcyclic[w]{>} \, &\eqRTz  (-1)^{n-1}\, 
 \vertexwall[w]{}{t}{>}{}{}{} 
 \, , \qquad 
  {}
  \scalebox{-1}[1]{\vertexwallcyclic[w]{<}}\,  \eqRTz   (-1)^{n-1}\,  
  \vertexnearwall[ ]{t}{>}
  , \\
\vertexfourcross[w]{>} &  \eqRTz -q^{-(1+\frac 1n)} 
\vertexfour[w]{>}
\label{e.vertextwist0}\\
 [n-2]!\cross{}{}{p}{>}{>}{}{}{}{}  &\eqRTz [n-2]!\, q^\frac{n-1}{n} 
  \walltwowall{}{}{>}{>}{}{}{}{} - (-1)^{\binom n2} q^{-\frac{1}{n}}
\twoonetwowall[w]{}{>}{>}{\hspace*{-.05in}$n$-$2$}{}{}, \label{e.crossing0}
\end{align}
where the tangle on the right has $n-2$ parallel edges in the middle.

Conjecture \ref{c.1} is analogous to Morrison's conjecture \cite[Sec. 5.5]{Mor} which was proved in \cite{CKM}. Note that our formalism of $n$-webs leads to a simpler set of kernel generators than that in \cite{Mor}.
After this manuscript was posted on arxiv, A. Poudel announced a proof of the above conjecture, \cite{Po}.

\def\cyclic{\raisebox{-10pt}{\incl{1.1 cm}{cyclic}}}

\def\raab{\raisebox{-10pt}{\incl{1.1 cm}{raab}}}
\def\raa{\raisebox{-10pt}{\incl{1 cm}{raa}}}
\def\cyclica{\raisebox{-10pt}{\incl{1.1 cm}{cyclica}}}

\subsection{(Unbased) $n$-tangles} 
\label{ss.unbased-t}
Now we will consider a modified ribbon structure on the category of  $\Uq$-modules, giving rise to a new Reshetikhin-Turaev functor, denoted by $\RTp$, with simpler skein properties. We will explain in 
 Subsection \ref{ss.half-ribbon} how this new ribbon structure comes from the theory of quantized enveloping algebras. For now we simply declare $\RTp$ to coincide on all elementary based $n$-tangles with $\RT$, except that we multiply the values of $\tev$ and $\tcoev$ of \eqref{e.cap2} by $(-1)^{n-1}$. Thus, if 
 $D$ is a diagram of a based $n$-tangle $\al$ and $\#\downarrow(D)$ is the number of its downward critical points, i.e. points where the tangent is parallel to the vertical $y$-axis and pointing downward, then
\be \RTp(D) = (-1)^{(n-1)\#\downarrow(D)} \RT(\al).  \label{e.RTp}
\ee
It is an easy exercise to show that $\RTp(D)$ is an isotopy invariant of based $n$-tangles. 
 
From Eq. \eqref{e.cyclic0} it follows that $\RTp$ is invariant under cyclic changes of an order at a vertex: 
\beq\label{e.cyclic1}
 \vertexwallcyclic[w]{>} \, \eqRT  
 \vertexwall[w]{}{t}{>}{}{}{} 
 \, , \qquad 
  {}
  \scalebox{-1}[1]{\vertexwallcyclic[w]{<}}\,  \eqRT   
  \vertexnearwall[ ]{t}{>},
\eeq
 where $x \eqRT y$ means $\RTp(x)= \RTp(y)$.
  
An (unbased) {\bf $n$-tangle} is defined exactly as a based $n$-tangle except that half-edges incident to its every $n$-valent vertex are required to be cyclically ordered only.
(Such cyclic orderings of edges around each vertex are called a ribbon structure.) Eq. \eqref{e.cyclic1} shows that $\RTp$ is an invariant of $n$-tangles. In a diagram of an $n$-tangle the cyclic order at every $n$-valent vertex is the counterclockwise order. 
 
Let $\fCn$ be a monoidal category obtained from $\fC_n^b$ by replacing based $n$-tangles with $n$-tangles.
Then $\RTp$ is a $\Zv$-linear monoidal function from $\fCn$ to $\Cn$.

By Eqs. \eqref{e.ma10}--\eqref{e.ma40}, we have
\begin{align} 
q^{\frac{1}{n }} \cross{}{}{p}{>}{>}{}{}{}{} -  q^{-\frac{1}{n }} \cross{}{}{n}{>}{>}{}{}{}{}  &  \eqRT   (q-q^{-1})\,  \walltwowall{}{}{>}{>}{}{}{}{}  \label{e.ma1}\\
\kink[w] &\eqRT t\,  \edgewall[ ]{}{>}{}{}   \label{e.ma2}\\
\circlediag[w]{>}  &\eqRT  (-1)^{n-1} [n] \, \emptyd[w] \label{e.ma3} \\
\sinksourcethree[w]{>} &\eqRT  (-q)^{\binom n2} \sum_{\sigma \in S_n}   (-q ^{\frac 1n -1} )^{\ell(\sigma)} \, \coupon[w]{$\sigma_+$}{>}{}{}{}, \label{e.ma4}
\end{align}

The following two are  consequences of Eqs. \eqref{e.ma1}--\eqref{e.ma4}:
\begin{align}
\vertexfourcross[w]{>} 
&  \eqRT -q^{-(1+\frac 1n)} 
\vertexfour[w]{>},\label{e.vertextwist1}\\
  [n-2]!\cross{}{}{p}{>}{>}{}{}{}{} &\eqRT [n-2]!q^\frac{n-1}{n} \walltwowall{}{}{>}{>}{}{}{}{} 
   - (-1)^{\binom n2} q^{-\frac{1}{n}}
\twoonetwowall[w]{}{>}{>}{\hspace*{-.05in}$n$-$2$}{}{},
\label{e.crossing1}
\end{align}

\def\comm{\raisebox{-13pt}{\incl{1.1 cm}{comm1}}}
\def\commb{\raisebox{-13pt}{\incl{1.1 cm}{comm2}}}

\subsection{Linear functionals on $\Uqq $ from $n$-tangles} 
\label{ss.Gamma}

For $x\in \Hom_\fCn(\boeta, \bomu)$ and an element $u\in \Uq$  define 
$\RTp(x)\circ  u: V^\boeta \to V^\bomu$ as the composition of the action of $u$ on $V^\boeta$ with $\RTp(x)$.
Similarly one defines  $u \circ \RTp(\al): V^\boeta \to V^\bomu$ where $u$ acts on $V^\bomu$ now. 
We will use ovals for depicting morphisms of $\fCn$ and rectangles for elements of $ \Uq$ throughout the paper.
Since $\RTp(x)$ is a $\Uq$-morphism, we have
\beq \RTp(x)\circ  u = u \circ \RTp(x), \quad \text{or, pictorially,} \quad \comm \eqRT \commb,
\label{e.com1}
\eeq

A vector space equipped with a basis is called {\bf based}. The tensor product of based vector spaces $V_i$ with bases $B_i$ for $i=1,\dots, k$, has the natural tensor product basis $\prod_{i=1}^k B_i$. A linear operator $A: V_1 \to V_2$ between based vector spaces with bases 
$\{ e_i^{(1)}\} $ and $\{ e_j^{(2)}\} $ defines a matrix with elements $ A^j_i$, also denoted by $ \la j | A |i\ra$, such that
$$A(e^{(1)}_i) = \sum_j A^j _i  e^{(2)}_j, \quad \la j | A | i\ra : = A^j_i.$$

We consider the $\Uq$-module $V$  as a based $\Qv$-vector space with basis $\{ e_1,\dots, e_n\}$,
defined in Subsec. \ref{ss.qea}. The dual $V^*$ will be considered as a based vector space with the basis
$\{ f^1, \dots , f^n\}$ where 
\beq f^i = c_{\bi } e^\bi,\ \text{for}\ i=1,\dots, n, \ \text{where $c_i$ are given by \eqref{e.ci}}. \label{e.fi}
\eeq
This basis allows for a simplification of our theory and, in particular, makes the $\RTp$ functor independent of web orientations, cf. Subsection \ref{ss.dualop}. Now for any sign sequence $\boeta$, the vector space $V^\boeta$ is based with the {\bf tensor basis}, 
indexed by $\{1, \dots, n \}^{|\boeta|}$, induced by the above bases of $V$ and $V^*$.

\def\bx{\mathbf{x}}

A {\bf right state} (respectively  a {\bf left state}) of a morphism $x\in \Hom_\fCn(\boeta, \bomu)$, for some $\boeta$ and $\bomu$, is an assignment of $i_1, \dots, i_{|\boeta|}\in \{1,\dots, n\}$ (respectively $j_1, \dots, j_{|\bomu|}\in \{1,\dots, n\}$) to the right (respectively left) endpoints of $x$, listing them from the bottom to the top. A right (respectively: a left) stated morphism $x$ as above will be denoted by $\bx =(x,\boi)$ (respectively: $\bx =(\boj,x)$), where $\boi=\{i_1, \dots, i_{|\boeta|}\}\in \{1,\dots, n \}^{|\boeta|}$ and $\boj=\{j_1, \dots, j_{|\bomu|}\}\in \{1,\dots, n \}^{|\bomu|}$. Similarly, a {\bf stated morphism} is a triple $\bx =(\boj,x,\boi)$, as above. For such $\bx$ define a $\Qv$-linear function $\Gamma(\bx): \Uq \to \Qv$ whose value at $u\in \Uq$ is
$$\la \Gamma(\bx), u \ra := \la \boj |\RTp(x) \circ  u |\boi \ra= \la \boj |u \circ \RTp(x) |\boi \ra.$$
\def\ep{{\epsilon}}
For example, for $x= \emptyset$ 
\begin{align}
\Gamma(\emptyset)= \ep : \Uq \to \Qv,\quad \text{is the counit}.
\label{e.G0}
\end{align}

Let $\cT$ be the set of isotopy classes of stated $n$-tangles. The module $\Zv \cT$ over $\Zv$ 
 freely spanned by  $\cT$ is an algebra with the product $\boal \bobeta= \boal \ot \bobeta$ obtained by placing $\boal$ below $\bobeta$ and concatenating the states.
We extend $\Gamma$ linearly onto a $\Zv$-linear map 
$$\Gamma: \Zv \cT \to \Uq^*,$$ 
\bpro \label{p.Gamma}
The map $\Gamma$ is a $\Zv$-algebra homomorphism.
\epro
\begin{proof} Since the $\Uq$-action on tensor product of $\Uq$-modules is given by the coproduct on $\Uq$, for any $n$-tangles $\al$ and $\beta$ and any $u\in \Uq$ we have
$$ (\al \ot \beta)\circ u = \sum  (\al \circ u') \ot (\beta \circ u''), \quad \text{where} \ \Delta(u)= \sum u' \ot u''.$$
By assigning states, we get
$$ \la \boal \ot \bobeta, u \ra = \sum \la \boal, u' \ra \la \bobeta , u'' \ra,
$$
which means that $\Gamma$ maps the product $\boal \ot \bobeta$ to the dual of the coproduct in $\Uq$, which is the product on $\Uq^*$. Hence, $\Gamma$ is an algebra homomorphism.
\end{proof}

\subsection{Dual operator, orientation reversal invariance}
\label{ss.dualop}
For a stated morphism $\bx = (\boi, x, \boj)$ let $\la \bx \ra$ be the $(\boi, \boj)$-element of the matrix of $ \RTp(x)$,
$$\la \bx \ra = \la \boi \mid \RTp(x) \mid \boj \ra \in \Qv.$$
 {}

For a stated $n$-tangle, $\boal = (\boi, \al, \boj)$, let $\cev{{\boal}}=(\boi, \cev{\al}, \boj)$, where $\cev{\al}$ is obtained from $\al$ by reversing the orientation of all its edges and and of all circle components.

 \def\ro{\mathsf{ro}}
Furthermore, let 
$$\boal^*= (c_{\boi})^{-1}\,  c_\boj\, (\boj^*, \ro(\al),  \boi^*),$$ 
where $\ro( \al)$ is obtained by $180^\circ$ rotation of $\alpha$,
$$c_\boi = \prod_{m=1}^k c_{i_m} \ \ \text{and}\ \  (i_1, \dots, i_k)^*=(\bar i_k, \dots, \bar i_1),\ \ \text{where}\ \ \bar i=n+1-i.$$
Pictorially,
\def\dualx{\raisebox{-16pt}{\incl{1.6 cm}{dualx}}}
\def\dualy{\raisebox{-16pt}{\incl{1.6 cm}{dualy}}}
$$\boal^*=\left( \dualx \right)^*= (c_{\boi})^{-1} c_\boj \, \dualy.$$

The above operations extend linearly onto $\Z[v^{\pm 1}]\cal T.$ 

It should be noted that for an $n$-tangle $\al$ the operators  $\RTp(\al), \RTp(\cev\al) $, and $\RTp(\ro(\al))$ have different domains and different target spaces. The following statement shows an important benefit the basis $\{ f^1,\dots, f^n\}$ of $V^*$:

\bpro \label{p.reverse} For any stated morphism $\bx = (\boi, x, \boj)$ one has
$\la \bx \ra = \la \cev{\bx} \ra=\la \bx^* \ra$.  
\epro

\begin{proof} Checking $\la \cev{\alpha}\ra=\la \alpha\ra$ for cups and caps is quite easy:
\begin{align}
 \capwall[w]{}{>}{$i$}{$j$} \eqRT 
  \capwall[w]{}{<}{$i$}{$j$} \eqRT \delta_{i, \bar j}  c_i , \qquad 
   \wallcup[w]{}{>}{$i$}{$j$} \eqRT 
    \wallcup[w]{}{<}{$i$}{$j$} \eqRT \delta_{i, \bar j} (c_{\bi})^{-1}.
\label{e.capcup}
\end{align}
 
(1) The identity $\la \alpha^*\ra=\la \alpha\ra$ follows from
$$ \la \boj^*  |\RTp(\ro(\al)) | \boi^* \ra \eqRT \dual =  \left( \prod_{m=1}^k c_{i_m}\right) \left( \prod_{m=1}^l c_{j_m}\right)^{-1} \la \boi |\RTp(\al) | \boj \ra,$$
where the first identity is by an isotopy. For the second identity, we decompose the tangle above along the dashed lines and use the values of cups and caps in  Eq. \eqref{e.capcup}.

(2) To prove $\la \cev{\alpha}\ra=\la \alpha\ra$ one needs to check it for 
 the elementary $n$-tangles. For cups and caps it have been done in Eq. \eqref{e.capcup}. 
The statement for the positive crossing (Eq. \eqref{e.RTT}) follows from part (1) and the fact that the $R$-matrix formula \eqref{e.R} is preserved under the involution $i\leftrightarrow \bar l, j\leftrightarrow \bar k.$
The statement for the sink and the source follows by a straightforward computation. 
\end{proof}

%
\subsection{
Annihilators}
\label{ss.annihilators}
%

In this and the next two subsections we will analyze the kernel of $\Gamma: \Zv \cT \to \Uq^*$.

An {\bf internal annihilator} is a $\fCn$-morphism $x$  such that $\RTp(x)=0$. 
From the definition we have:

\bpro\label{r.int_ann}
(a) Internal annihilators form a monoidal ideal in $\fC_n.$\\
(b) For any stating $(\boi,x, \boj)$ of an internal annihilator $x$, we have $\Gamma((\boi,x, \boj))=0$.
\epro
  
Eqs. \eqref{e.ma1}--\eqref{e.ma4} are internal annihilators, called the {\bf basic internal annihilators}. 

\def\tHom{\widetilde {\Hom} }
\def\HHr{\mathcal H_r}
\def\HHl{\mathcal H_l}

 For a sequence $\boeta$ of signs $\{\pm \}$ let $\HHr(\boeta)$ be the $\Zv$-module freely spanned by right stated $n$-tangles $(\al, \boi)$ such that the sequence of left ends of $\al$ has type $\boeta.$
 Note that  $\HHr(\boeta)$ contains 
$\Hom_\fCn(\emptyset, \boeta)$, and we extend the map $\RTp$ to 
 $$\RTp: \HHr(\boeta) \to \Hom_\Qv(\Qv, V^{\boeta}) \ \text{by } \RTp((\al, \boi))(1) = \RTp(\al)(v_\boi),$$
where $v_\boi$ is the basis vector of $V_r(\al)$ with index $\boi$. For example, for $\eta=(+,-,+,-)$, 
$\alpha=
\tangleeg{$-$}{$+$}{$-$}{$+$}{$-$}{$+$}$ and $\boi=(i_1,i_2)$, we have
$$\RTp((\alpha,\boi))=e_{i_1}\otimes  \tcoev(1) \otimes f^{i_2}\in V\otimes V^*\otimes V\otimes V^*.$$

Note that for $x\in \HHr(\boeta) \setminus \Hom(\emptyset,\boeta)$, the operator $\RTp(x)$ might  not be a $\Uq$-morphism.

For $y \in \HHr(\boeta')$ define $x \ot y \in \HHr(\boeta\ot \boeta')$ by placing $y$ atop  $x$ and by concatenating the right states. For $z\in \Hom_{\fCn}( \boeta, \bomu)$ we can define the composition $z \circ x\in \HHr(\bomu)$. Clearly 
$$\RTp(x \ot y)= \RTp(x) \ot \RTp(y)\quad \text{and}\quad \RTp(x \circ y)= \RTp(x) \circ \RTp(y).$$

A {\bf right annihilator} is an element $x\in \HHr(\boeta)$, for  certain $\boeta$, such that $\RTp(x)=0$. By the 
above equation 
and 
by Eq. \eqref{e.com1}, we have

\bpro\label{r.right_ann}
Let $x\in \HHr(\boeta)$, $y\in \HHr(\boeta')$ be right annihilators and let $z\in \Hom(\boeta,\bomu)$.

(a) For any left state $\boi$ of $x$  we have $\Gamma((\boi,x))=0$.

(b) All  $x \ot y,y\ot x$, and $z \circ x$ are right annihilators.
\epro

We do not draw the left boundary vertical edge in pictures of right annihilators, to indicate that $\eqRT$ holds for their composition with any web on the left (for which such composition is possible).

\bpro \label{r.ra}
We have the following identities for the values of the function $\RTp$:
 \begin{align}
   \vertexnearwall{b}{white}
  & \eqRT 
   a \sum_{\sigma \in S_n} (-q)^{\ell(\sigma)}\,  \nedgewall{}{b}{white}{$\sigma(n)$}{$\sigma(2)$}{$\sigma(1)$}
  \label{e.ra1}\\
\capwall{}{white}{$i$}{$j$}  & \eqRT  \delta_{\bar j,i }\,  c_i\ \twowallpic{}{right}{}{}{},  
 \label{e.ra2}\\
\capnearwall{white}
&\eqRT  \sum_{i=1}^n  (c_{\bar i})^{-1}\, \twowall{}{white}{black}{$i$}{$\overline{i}$}
\label{e.ra3}\\
\crosswall{}{p}{white}{white}{$i$}{$j$} & \eqRT q^{-\frac{1}{n}}\left(\delta_{{j<i} }(q-q^{-1})\twowall{}{white}{white}{$i$}{$j$}+ q^{\delta_{i,j}}\twowall{}{white}{white}{$j$}{$i$}\right),\label{e.ra4} 
\end{align}
for any $i,j=1,\dots, n,$ 
where the white circle represents the orientation, left-to-right or right-to-left, and is the same for all white circles in one identity. The black circle stands for the opposite orientation of the white one. The values $\delta_{j<i}, \delta_{i,j}, a, c_1,\dots, c_n$ were defined in Subsec. \ref{ss.notation}.
\epro

By subtracting the left side from the right side in the equations above we obtain right annihilators which we call {\bf basic}.

\begin{proof}[Proof of Proposition \ref{r.ra}:] By applying the total orientation reversion and Proposition \ref{p.reverse} if necessary, we can assume all the white circles indicate the left-to-right orientation. 
Identity \eqref{e.ra1}  is the defining equation \eqref{e.A+} of the operator $\cal A_+$, while Identity \eqref{e.ra4} is the defining equation \eqref{e.R} of the braiding. Identities \eqref{e.ra2} and \eqref{e.ra3} are consequences of Eq.~\eqref{e.capcup}.
\end{proof}

\def\coeff{\mathrm{coeff}}

 We now  define left annihilators.  For a sign sequence $\boeta$, let $\HHl(\boeta)$ be the $\Zv$-module freely spanned by left-stated $n$-tangles $(\boi,\al)$ such that $V_r(\al) = V^\boeta$. Then  $\HHl(\boeta)$ contains $\Hom_\fCn(\boeta, \emptyset)$, and we extend $\RTp$ to 
$$\RTp: \HHl(\boeta) \to \Hom_\Qv( V^{\boeta}, \Qv) \ \text{by }\RTp((\boi,\alpha))(z) = \coeff_\boi(\RTp(\al)(z)),$$
for $z\in V^\eta,$ where $\coeff_\boi: V^\bomu \to \Qv$ is coefficient of the basis vector 
of $V^\bomu$ indexed by $\boi$.

A {\bf left annihilator} is an element $x\in \HHl(\boeta)$, for a certain $\boeta$, such that $\RTp(x)=0$. 
In analogy to Proposition \ref{r.right_ann}, we have:

\bpro \label{r.left_ann}
Let  $x\in  \HHl(\boeta), y\in \HHl(\boeta')$ be left annihilators and let $z\in \Hom(\bomu, \boeta)$.

(a) For any right state $\boi$ of $x$  we have $\Gamma((x, \boi))=0$.

(b) All  $x \ot y,y\ot x$, and $x\circ z$ are left annihilators.
\epro

\def\boal{{\boldsymbol{\alpha}}}
\def\boi{{\mathbf{i}}}

\def\hd{\mathsf{hd}}

\def\htwzzz{\raisebox{-13pt}{\incl{1.2 cm}{htw000}}}
\def\htwzzo{\raisebox{-13pt}{\incl{1.2 cm}{htw001}}}
\subsection{Turning right annihilators to left ones}  \label{ss.ltr}

For an integer $k \ge 2$ let $H_k$ be the positive half-twist of $k$ strands and let $\bar H_k$ be its inverse:

$$ H_k = \phalftwist{b}{>}{}{}{}, 
\quad \bar H_k = \nhalftwist{b}{>}{}{}{}.
$$
(Note that $H_k$ does not twist the framing, which always points towards the reader. This applies to all half-twists considered in this paper.)
By abuse of notation,  for any $n$-tangle $\al$ with $k$ left endpoints, denote by $\bar H_k \circ \al$ the composition of $\alpha$ 
with a version of $\bar H_k$ in which the orientation of some of its components was reversed so that it is composable with~$\alpha.$ 

Let $\hd: \HHr(\boeta) \to \HHl(\boeta)$ be a $\Zv$-linear map given by
$$ \hd((\al,\boi))= (\boi, \bar H \circ \ro (\al)),$$
where $\ro (\al)$ denotes the $180^o$ rotation of $\al$ about the center, as before.

In Subsection \ref{ss.dualop}, we showed that our basis $\{f^i\}$ of $V^*$ makes the matrices $\RTp(x)$ invariant under the total reversal of orientation of $x$. It also makes the following statement hold:

\bpro \label{p.left-right0}
 If $x$ is a basic right annihilator then $\hd(x)$ is a left annihilator.
\epro 

\noindent{\it Proof:} The statement for each basic right annihilator can be checked by a direct computation.
The calculation for Relations \eqref{e.ra2}-\eqref{e.ra3} follows from the framing change, Eq. \eqref{e.ma2}. 
The calculation for Relation \eqref{e.ra4} utilizes Eq. \eqref{e.ma1}. Finally, let us show that 
the image of \eqref{e.ra1} under $\hd$, pictured below, is a left annihilator. (We assume its orientation to the right, since the statement for the opposite orientation follows from Proposition \ref{p.reverse}.)
$$ 
\vertexwall[w]{}{t}{>}{}{}{} \  \eqRT a \sum_{\sigma\in S_n} (-q)^{ \ell(\sigma) }\, 
\coupon[w]{$\bar{H}$}{>}{$\sigma(n)$}{$\sigma(2)$}{$\sigma(1)$}.$$
Note that since $H_k$ is invertible in $End(V^{\otimes n})$ we can consider the above equality composed with $H$ on the right instead. Then, by Eq. \eqref{e.H2}, it reduces to
$$(-q^{-\frac 1n-1})^\frac{n(n-1)}{2}\cal A_-=a\sum_{\sigma\in S_n} (-q)^{\ell(\sigma)} e^{\sigma(1)}\otimes \dots \otimes e^{\sigma(n)},$$
which is indeed equivalent to the definition of $\cal A_-,$ Eq. \eqref{e.A-}, since $(-q^{-\frac 1n -1})^\frac{n(n-1)}{2}=t^{-n/2}.$
\qed

A stronger statement, valid for all right annihilators,  will be shown in Subsection \ref{ss.half-ribbon} using a more conceptual approach. 

We call the left annihilators of Proposition \ref{p.left-right0} {\bf basic}.

\subsection{Kernel of $\Gamma$}

\def\cI{\mathcal I}
\begin{theorem}[Proof in Subsection \ref{s.kernel}] \label{t.kernelG}
The kernel of $\Gamma: \Zv\cT \to \Uq^*$ is generated by internal basic annihilators, right basic annihilators, and left basic annihilators.
\end{theorem}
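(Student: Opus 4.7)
\textbf{Proof plan for Theorem \ref{t.kernelG}.}

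Let $J \subseteq \Zv\cT$ denote the $\Zv$-submodule generated by the internal, right, and left basic annihilators. Propositions \ref{r.int_ann}, \ref{r.right_ann}, and \ref{r.left_ann} together with Proposition \ref{p.left-right0} give immediately that $J \subseteq \ker\Gamma$, so it suffices to prove $\ker\Gamma \subseteq J$. My strategy is to identify $\Zv\cT/J$ (as a $\Zv$-algebra via Proposition \ref{p.Gamma}) with $\Oq$ and to invoke the non-degeneracy of the Hopf pairing of Proposition \ref{p.Hopfdual} to conclude that the composite $\Zv\cT/J \to \Oq \hookrightarrow \Uq^*$ is injective.

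The first step is a reduction to ``elementary'' diagrams. For a stated tangle $(\boj,\al,\boi)$, pick a diagram of $\al$ in general position and subdivide horizontally so each slab contains at most one elementary piece (cup, cap, crossing, sink, or source). Working modulo internal basic annihilators \eqref{e.ma1}--\eqref{e.ma4}, the braiding and half-twist identities let us isotope every crossing, sink, and source arbitrarily close to the right wall, and every cap arbitrarily close to the left wall (in each case paying at most a scalar and some parallel strands). Applying the right basic annihilators \eqref{e.ra1}--\eqref{e.ra4} to eliminate these pieces near the right wall, and the left basic annihilators (obtained via $\hd$) to eliminate caps near the left wall, any stated diagram becomes a $\Zv$-combination of \emph{simple} stated diagrams in which each connected component is a single oriented arc from the left wall to the right wall, with states at both ends and trivial framing. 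I expect this reduction algorithm to terminate by induction on a complexity measure (e.g.\ lexicographic combination of the number of crossings, sinks/sources, cups, caps, and closed components).

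Under $\Gamma$ the simple arcs map precisely to the matrix coefficients $u^i_j$ and their antipodes: $\Gamma$ of the oriented left-to-right arc stated $(i$ on the right, $j$ on the left$)$ is $u^i_j$, and the reversed-orientation arc, by Proposition \ref{p.reverse} together with the cap/cup values \eqref{e.capcup}, corresponds (up to a scalar matching \eqref{e-Oq-ops}) to $S(u^i_j)$ up to a unit. Proposition \ref{p.Gamma} makes $\Gamma$ an algebra map, so its image is contained in the subalgebra of $\Uq^*$ generated by these matrix coefficients, which is exactly $\Oq$ by Proposition \ref{p.Hopfdual}. The next step is to verify that the basic annihilators encode the defining relations of $\Oq$, namely the $R$-matrix relation \eqref{e.OqM}, the quantum determinant relation $\det_q(\buu)=1$, and the compatibility with the antipode. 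The $R$-matrix relation follows from applying Relation \eqref{e.ra4} to two adjacent strands; the quantum determinant relation follows from \eqref{e.ra1} (the normalization of $\cal A_+$) combined with its dual, and the antipode identity follows from Proposition \ref{p.reverse} together with the cap/cup annihilators \eqref{e.ra2}, \eqref{e.ra3}. Therefore there is a well-defined surjective $\Zv$-algebra homomorphism $\Oq \twoheadrightarrow \Zv\cT/J$, whose composition with the induced map $\bar\Gamma : \Zv\cT/J \to \Uq^*$ coincides, on the generators $u^i_j$, with the Hopf pairing embedding $\Oq \hookrightarrow \Uq^*$ of Proposition \ref{p.Hopfdual}. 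Since that embedding is injective, $\bar\Gamma$ is injective, which is exactly the statement $\ker\Gamma = J$.

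The principal obstacle is the reduction in the second paragraph: making the isotopy-and-annihilator algorithm actually terminate, and in particular controlling what happens when a sink or source is trapped in the interior of the diagram by bounding strands. Here I expect to combine the cyclic vertex relation \eqref{e.cyclic1} with the expansion \eqref{e.ma4} of a composed sink-source (which turns any internal $n$-valent pair into a linear combination of braids, reducing the number of vertices at the cost of crossings) to push all $n$-valent vertices toward the right wall. A secondary subtlety is that the basic left annihilators are defined via $\hd$ from right ones and one should check, as indicated after Proposition \ref{p.left-right0}, that they genuinely annihilate — this is used tacitly when one reflects the reduction algorithm from right to left. Once the algorithm terminates, the remaining combinatorial work is the verification of the $\Oq$-relations, which is a finite computation in each generator degree.
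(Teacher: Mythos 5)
Your proposal is correct and follows essentially the same route as the paper: the paper deduces Theorem \ref{t.kernelG} in two lines from Theorem \ref{t.Hopf}, whose proof is exactly your program transported through the dictionary $T$ between stated tangles and stated webs on the bigon — reduction of every diagram to a polynomial in the arcs $a^i_j$ (Lemmas \ref{l.span} and \ref{l.S_n-gens}), verification that $(a^i_j)$ is a quantum matrix with $\det_q=1$, and the splitting $\Phi\circ\Psi=\id$ against the embedding $\Oq\hookrightarrow\Uq^*$. The termination worry you flag is handled cleanly in the paper by dragging crossings and $n$-valent vertices to the boundary edge and resolving them there via the boundary annihilators, so no separate complexity induction is needed.
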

This means that if we begin with the internal, left, and right  basic annihilators, and use procedures described in Propositions \ref{r.int_ann}, \ref{r.right_ann}, and \ref{r.left_ann}, we obtain the entire kernel $\ker \Gamma$.
This theorem 
is analogous to Conjecture \ref{c.1}, except that it describes the kernel of a stated version of RT.

\def\tUq{\widetilde{\Uq}}
\def\tot{\tilde{\ot}}
\def\vt{\vartheta}
\def\rev{\mathrm{rev}}

%
\subsection{Half-ribbon Hopf algebra} 
\label{ss.half-ribbon}

In this subsection we explain conceptually some technical aspects of this paper. In particular, we interpret our sign modification of the Reshetikhin-Turaev functor, Eq. \eqref{e.RTp}, through a modification of  the ribbon element in a completion of $\Uq.$ Coincidently, that modification leads to a ``half-ribbon" element which makes it possible to prove a stronger version of Proposition \ref{p.left-right0}, below. Although the content of this subsection is more technical, it is not 
necessary for the paper.

\bpro\label{p.left-right} If $x$ is a right annihilator then $\hd(x)$ is a left annihilator.
\epro 

We precede the proof with a few preliminaries: the quantized enveloping algebra $\Uqq$ is a topological ribbon Hopf algebra, meaning it has an $R$-matrix $\cR$  in a completion of $\Uqq\ot \Uqq$ and a ribbon element $\vt_0$ in a completion of $\Uq$, satisfying certain conditions.  It is proved in \cite{ST} that there is a completion  $\tUqq$ of $\Uq$ having the same $R$-matrix  but a new ribbon element $\vt$, which acts on $V$ and $V^*$ the same way as $(-1)^{n-1}\vt_0$. (That completion was studied in different context also in \cite{Lu-Zform}.) Consequently the new charmed element  $g$ is $(-1)^{n-1}g_0$ on $V$ and $V^*$, explaining the sign correction in \eqref{e.RTp}, which we used as the definition of $\RTp$. 

\def\fl{\mathrm{fl}}
Additionally, there is an invertible element $X \in \tUq$, called the half-twist, such that $X^2 = \vt$ and the universal $R$-matrix $\cR$ satisfies
\begin{align}
\cR &= (X^{-1}\ot X^{-1}) \Delta(X) =  ((\fl\circ \Delta)(X))  (X^{-1}\ot X^{-1}), \ \text{where}\ \fl(x\ot y) := y\ot x.  \label{e.X}
\end{align}

For a sign sequence $\boeta = (\eta_1, \dots, \eta_k)$ let $\cev{\boeta} = (\eta_k,\dots, \eta_1)$ and let $\rev_k: V^\boeta \to V^{\cev{\boeta}}$ be the $R$-linear operator given by $\rev_k(x_1 \ot \dots \ot x_k) = (x_k \ot \dots \ot x_1)$.
From Eq. \eqref{e.X} by induction on $k$ we get the following, see  \cite[Proposition 4.18]{ST}: If $\bar H$ stands for $\bar H_k$ with an orientation on the strands on the right given by $\boeta$ then we have an equality of transformations $V^{\boeta}\to  V^{\cev{\boeta}}:$ 
\beq
 \RTp(\bar H)=     \rev_k \circ  X ^{\ot k} \circ \Delta^{[k]} (X^{-1}).
 \label{e.XXX}
\eeq 
Here $\Delta^{[k]}$ is defined inductively by $\Delta^{[2]}= \Delta$ and $\Delta^{[k+1]} = (\Delta\ot \id^{\ot k }) \circ \Delta^{[k]}$.  

An additional special feature of the basis $\{f^i\}$ of $V^*$ (besides those discussed already) is: 

\bpro\label{p.Xaction}
The actions of $X$ on $V$ and $V^*$ are given by the same matrix
\be 
X^i_j = \delta_{i,\bar j}\  c_{i}.  \label{e.Xij}
\ee
\epro

As this is not proved in \cite{ST}, we give a proof of this result in the appendix. Similarly, the actions of the charmed element $g$ on both $V$ and $V^*$ are given by the same diagonal matrix with entries 
\beq 
g^i_j = \delta_{i,j} g_i, \quad \ \text{where}\ g_i= (-1)^{n-1} q^{2i-n-1} = (-1)^{n-1} q^{2d_i}.
\label{e.gi}
\eeq

\def\htwz{\raisebox{-28pt}{\incl{2cm}{htw0}}}
\def\htwo{\raisebox{-28pt}{\incl{2cm}{htw1}}}
\def\htwt{\raisebox{-28pt}{\incl{2cm}{htw2}}}
\def\htwth{\raisebox{-28pt}{\incl{2cm}{htw3}}}
\def\htwfi{\raisebox{-38pt}{\incl{3cm}{htw5}}}
\def\htws{\raisebox{-38pt}{\incl{3cm}{htw6}}}

\begin{proof}[Proof of Proposition \ref{p.left-right}] Let $x\in \HHr(\boeta)$ for some $\boeta$ be a right annihilator.
We need to show that $\RTp(\hd(x))=0$. 
Since $X$ is invertible, this is equivalent to  $\RTp (\hd(x)) \circ X = 0$, which, in turn, is equivalent to:  $$ \la \RTp (\hd(x)) \circ X, \boj\ra =0 \qquad \text{for all } \ \boj=(j_1,\dots, j_l)\in \{1,\dots, n \}^l, l:=|\boeta|.$$

Suppose $(\al, \boi)$  is a  right stated $n$-tangle.
By Eq. \eqref{e.XXX} and then by Eq. \eqref{e.com1}, we have
$$ \hd((\al,\boi))    =  \htwz =  \htwo =\htwt.$$
By composing with $X$ on the right, then decomposing along the dashed line and using the values of 
$X^i_j$ from Eq. \eqref{e.Xij}, we get
$$\la \hd((\al, \boi)) \circ X, \boj \ra  \eqRT  c_{\boi}  \,  \htwth=  c_\boj  \,  \la \boj^* | (\al, \boi)\ra $$ 
where  $c_\boi=\prod_{k} c_{i_k}$. The second identity follows from Proposition \ref{p.reverse}. By linearity,
$$\la \hd(x ) \circ X , \boj \ra  \eqRT  c_\boj   \la \boj^* | x\ra =0,$$
for every $\boj\in \{1,\dots, n\}^{|\boeta|}.$
This proves $\hd(x) \eqRT  0$.
\end{proof}
\brem The use of half-ribbon element came up in a discussion of the first author with F. Costantino and J. Korinman. A full fledged theory of stated skein algebra based on half-ribbon category will be developed in an upcoming work \cite{CKL}.
\erem

\section{Stated SL($n$)-skein modules}
\label{s.stated}
%

\def\pM{\partial M}
\def\vk{\varkappa}
\def\drQ{\partial_r(Q)}
\def\pal{\partial \al}
\def\bpp{\beta^\perp}
\subsection{Marked $3$-manifolds and $n$-webs}
\label{ss.marked} 

A {\bf marked \pmb{$3$}-manifold} is a pair $(M, \cN),$ where $M$ is a smooth oriented $3$-manifold with (possibly empty) boundary $\p M$ and $\cN \subset \p M$ consists of open intervals, 
called {\bf markings}. The topological closure of each marking is required to be the closed interval $[0,1]$, disjoint from the closure of other markings.

Roughly speaking an $n$-web in $\MN$ is like $n$-tangle, except that the framing at boundary points is different. Here is the precise definition, where the first four requirements are the same as those in the definition of an $n$-tangle.

\bdf\label{d.n-web}
An  {\bf $n$-web} $\alpha$ in $\MN$ is a disjoint union of finite number of oriented circles and a finite directed graph properly embedded into $M$ such that
\benu[nosep]
\item  Every vertex of $\alpha$ is either a sink or a source and either $1$-valent or $n$-valent. 
We denote the set of 1-valent vertices, called {\bf endpoints} of $\al$, by $\pal$. 
\item Each edge of the graph  is a smooth embedding of the closed interval $[0,1]$ into $M$.
\item $\alpha$ is equipped with a {\bf framing} which is a continuous non-vanishing vector field transversal to $\alpha$. In particular, the framing at a vertex is  transversal to all incident edges.
\item[(4')] The set of half-edges at every $n$-valent vertex is cyclically ordered. 
\item[(5')] The endpoints of $\alpha$ lie in $\cN$ and the framing at these endpoints is a tangent vector of $\cN$, pointing in the direction of the orientation of $\cN$. We call such tangent vector {\bf positive}.
\eenu
\edf

Webs are considered up to continuous isotopy within their space.

Note that the only difference between the unbased $n$-tangles of Subsection \ref{ss.unbased-t} and $n$-webs in the cube $Q$ marked with $\Lambda_l, \Lambda_r$ is the framing at their endpoints. The difference explains why half-twists appear in our theory. 

The  {\bf height order} on $\p \al$  is the  partial order in which  two points $x,y\in \p \al$ are comparable if and only if they belong to the same marking, and $x > y$, or {\bf $x$ is higher than $y$}, if going along the positive direction of the marking we encounter $y$ first. We say $x$ and $y$ are {\bf consecutive} if there is no $z\in \p \al$ such that $ x > z > y$ or $ y > z > x$.

To depict a local part of an $n$-web $\al$ we consider the intersection of $\al$ with the cube  $Q=[-1,1]\times (-1,1)^2$ embedded into $M$, presented in Figure \ref{f.cubep}(a). The cube $Q$ can be either in the interior of $M$ or its right side, $\{1\}\times (-1,1)^2$, lies in $\pM$. 
\bite[leftmargin=*]
\item  If $Q$ is in the interior of $M$ then we assume that $\al\cap Q$ is an $n$-tangle, and depict $\al\cap Q$ by its $n$-tangle diagrams on the shaded square, as in Subsection \ref{ss.unbased-t}. In particular, for all drawn diagrams the framing is perpendicular to the page and pointing to the reader, and the cyclic order of half-edges at each $n$-valent vertex is counterclockwise.

 \FIGc{cubep}{
 (a) Cube $Q$ with a marking $\beta$ and a perpendicular line $\beta^\perp$ on the right face. The shaded square $S$ is in the $XY$-plane. (b) An example of a web $\alpha$ with three strands (depicted by different colors) in $\al\cap Q$. (c) The projection of $\al\cap Q$ onto $S$. (d) A diagram of $\al \cap Q$, with the height order indicated by numeric labels: $i>j$ means $i$ is higher than $j$. (e) Another diagram of $\al\cap Q$ obtained by using a different height preserving deformation. (f) This is the same diagram of (e), with the height order indicated by the direction of the boundary line.}{3cm} 

\item In the second case, we assume that $Q\cap \pM$ is equal to the right face of $Q$, and $Q\cap \cN$ is a subinterval of a marking $\beta$ depicted pointing in the direction of the $z$-axis, as in Figure \ref{f.cubep}(a). In the $Q$ coordinates, $\beta \cap \pM = \{1\}\times  \{0\} \times (-1,1)$.  Let $\beta^\perp=\Lambda_r$, the right side of the shaded square. 
As in the previous case, we assume that the framing points to the reader.
(Note the difference between webs in $M$ and tangles in $Q$:  the boundary points of $\al$ in $Q$ are in $\beta$, while the right endpoints tangles in $Q$ are in $\Lambda_r=\bpp$.)
By an isotopy,  we can bring $\al\cap Q$ to a general position with respect to the projection $p: \BR^3 \to \BR^2$, except that all the points in $\p \al \cap \beta$ project to the same point, see Figure~\ref{f.cubep}(c).   To resolve this issue, first we define a {\bf  height-preserving deformation of $Q$} as a continuous family of diffeomorphisms  $\phi_t: Q \to Q, t\in [0,1],$ supported in  a small neighborhood of the right face of $Q$ and preserving the $z$-coordinate, i.e. the height above the page. We use such a height-preserving deformation to bring $\al$ to $\al'$ whose endpoints on the right face of $Q$ have distinct projections (through $p$).  The image $p(\al'\cap Q)$ together with the usual over- and undercrossing data and with the linear order of its boundary points on $\bpp$ (induced from the height order) is a {\bf diagram of $\al\cap Q$}. For example, Figure \ref{f.cubep}(d) shows a diagram of Figure \ref{f.cubep}(b). Note however that a different height-preserving deformation can give rise to different diagram, see for example Figure \ref{f.cubep}(e). Note that $\al'$ is not an $n$-web because its endpoints are not in $\cN$ in general.
\eite

Although the height order of web ends in $\alpha\cap Q$ can be always indicated by integers as in Figure \ref{f.cubep}(d)-(e), in this paper we will always use the following convention: when presenting $\alpha\cap Q\subset M$ diagrammatically, we will choose a direction of $\bpp$ (indicating it by an arrow down or up) and arrange for the height order of web ends in $\alpha\cap Q$ to increase monotonically (without gaps) in the indicated direction.
For example, Figure \ref{f.cubep}(f) indicates the web in part (e). 
Note that the height order of endpoints of the $\al$ outside the drawn part can be arbitrary.

\def\sign{\mathrm{sign}}

%
\subsection{Skein relations for $n$-webs}
\label{ss.skein_rel} 

A {\bf state} of an $n$-web $\al$ is a map $s: \pal \to \{1,2,\dots, n\}$. The value $s(x)$, for $x\in \pal$, is called the {\bf state of $x$}.
A web with a state $s$ is called {\bf stated}.

We will consider stated $n$-webs up to isotopy (in the space of all stated $n$-webs) and denote the set of their isotopy classes by $\cal W_n(M,\cN)$.

Recall that the ground ring $R$ is commutative and it comes with a distinguished invertible $v=q^{1/2n}\in R$.
The {\bf stated $SL(n)$-skein module of $(M,\cN)$}, denoted by $\S_n(M,\cN)$, is the quotient of the free $R$-module $R \cal W_n(M,\cN)$ by the submodule $SkRel_n(M,\cN)$ generated by the following {\bf internal relations} (\ref{e.pm})-(\ref{e.crossp-wall}), which are the basic internal annihilators, and {\bf boundary relations} \eqref{e.vertexnearwall}-\eqref{e.crossp-wall}, which comes from the basic right annihilators:
\beq\label{e.pm}
q^{\frac 1n} \cross{n}{n}{p}{>}{>}{}{}{}{} 
- q^{-\frac 1n}\cross{n}{n}{n}{>}{>}{}{}{}{} 
= (q-q^{-1})\walltwowall{n}{n}{>}{>}{}{}{}{} 
\eeq

\beq\label{e.twist}
\kink
= t\horizontaledge{>},\quad \text{where}\  t=(-1)^{n-1} q^{n-\frac 1n} 
\eeq
\beq\label{e.unknot}
\circlediag{<} 
= (-1)^{n-1} [n]_q\ \emptyd,\ \text{where}\ [n]_q=\frac{q^n-q^{-n}}{q-q^{-1}},
\eeq
\beq\label{e.sinksource}
\sinksourcethree{>}=(-q)^{\binom n2}\cdot \sum_{\sigma\in S_n}
(-q^{\frac{1-n}n})^{\ell(\sigma)} \coupon{$\sigma_+$}{>}{}{}{}
\eeq
where the ellipse enclosing $\sigma_+$  is the minimum crossing positive braid representing a permutation $\sigma\in S_n$ and $\ell(\sigma)$ is the length of $\sigma\in S_n$, as before.

The remaining relations in $SkRel_n(M,\cN)$ take place near markings where we use the convention in Subsection \ref{ss.marked} about height order. Thus, the bold boundary line of a shaded rectangle is a part of  $\bpp$, orthogonal to a marking $\beta$, and if it has a direction, then the endpoints on that part are consecutive in the height order, given by the direction.  The height order outside the drawn part of $\bpp$ can be arbitrary. Here are the boundary relations:
\def\ori{{\mathrm{or}}}
  \begin{align}
   \vertexnearwall{b}{white}
  & = 
   a \sum_{\sigma \in S_n} (-q)^{\ell(\sigma)}\,  \nedgewall{<-}{b}{white}{$\sigma(n)$}{$\sigma(2)$}{$\sigma(1)$}
  \label{e.vertexnearwall}\\
\capwall{<-}{white}{$i$}{$j$}  & = \delta_{\bar j,i }\,  c_i\ \twowallpic{}{right}{}{}{},  
 \label{e.capwall}\\
\capnearwall{white}
&= \sum_{i=1}^n c_{\bar i}^{-1}\, \twowall{<-}{white}{black}{$i$}{$\overline{i}$}  
\label{e.capnearwall}\\
\crosswall{<-}{p}{white}{white}{$i$}{$j$} &=q^{-\frac{1}{n}}\left(\delta_{{j<i} }(q-q^{-1})\twowall{<-}{white}{white}{$i$}{$j$}+q^{\delta_{i,j}}\twowall{<-}{white}{white}{$j$}{$i$}\right),\label{e.crossp-wall}
\end{align} 
where the values $\delta_{j<i}, \delta_{i,j}, a, c_1,\dots, c_n$ were defined in Sec. \ref{ss.notation} and
the small white circles represent an arbitrary direction of the edges (left-to-right or right-to-left), consistent for the entire equation, as before. The black circle represents the opposite direction.

We show in Proposition \ref{p.crossingrels} that if $[n-2]!$ is invertible in $R$ then Relation \eqref{e.crossp-wall} is a consequence of Relations \eqref{e.pm}-\eqref{e.capnearwall}. 

\subsection{Eliminating sinks and sources}
\label{ss.nosink}
\bpro\label{c.nosinks}
 For any marked $3$-manifold, $\S_n(M,\cN)$ is spanned by stated $n$-webs with no sinks nor sources.
\epro
\begin{proof} If $\cN=\emptyset$ then the numbers of sinks and sources in any $n$-web coincide and they can be eliminated by
Relation \eqref{e.sinksource}. If $\cN\ne \emptyset$ then 
 sinks and sources can be eliminated by Relation \eqref{e.vertexnearwall}.
\end{proof}

Nonetheless, the use of sinks and sources in our theory makes it much more manageable.

%
\subsection{Change of ground ring}
\label{ss.change-coef} 
We will use the notation $\S_n(M,\cN,R)$ when we need to make the coefficient ring $R$ explicit. 
By our assumptions, $R$ is an algebra over $\BZ[v^{\pm 1}]$. 
The right exactness of tensor product gives a natural isomorphism
$$\S_n(M,\cN,\BZ[v^{\pm 1}])\otimes_{\BZ[v^{\pm 1}]} R\xrightarrow{ \cong}  \S_n(M,\cN,R).$$
Therefore, many properties of $\S_n(M,\cN,R)$ follow from those of $\S_n(M,\cN,\BZ[v^{\pm 1}])$. 

\subsection{Functoriality}
\label{ss.functor} 
An {\bf embedding} of a marked 3-manifold $\MN$ into a marked 3-manifold $(M', \cN')$ is an orientation preserving proper embedding $f: M \embed M'$   which maps $\cN$ into $\cN'$ preserving their orientations. Clearly $f$ induces an $R$-module homomorphism $\S_n(f): \S_n\MN \to \S_n(M', \cN')$ 
mapping each $n$-web $\alpha$ to $f(\alpha)$ with its framing transformed by the differential $
f_*: T M\to T M'.$ That homomorphism depends only on the isotopy class of $f$ (in the embeddings).
A {\bf morphism} from  $\MN$ to $(M', \cN')$ is an isotopy class of embeddings from $\MN$ to $(M', \cN')$.
Hence,  $\S_n(\cdot )$ defines a functor from the category of marked $3$-manifolds to the category of $R$-modules.

\begin{example} \label{e.functoriality}
Let $\MN$ be a marked 3-manifold. For any closed subset $X$ of \mbox{$\pM-\cN$}, its complement $(M-X, \cN)$ is a marked $3$-manifold as well and the natural embedding $\iota: (M-X, \cN) \embed \MN$ is a morphism called a {\bf pseudo-isomorphism}. It induces an $R$-module isomorphism $\iota_*: \S_n(M',\cN) \xrightarrow{\simeq} \S_n(M,\cN)$.
\end{example}

In this paper we will consider certain geometric operations on $3$-manifolds, like cutting and gluing them along disks, which produce new manifolds defined up a diffeomorphisms only. 
We will address this issue with the aid of the following notion:

A {\bf strict isomorphism class of marked 3-manifolds} is a family of marked 3-manifolds $(M_i, \cN_i), i\in I$ equipped with isomorphisms $f_{ij}  : (M_i, \cN_i) \to (M_j, \cN_j)$ for any two indices $i,j$ such that $f_{ii}=\id$ and $f_{jk}\circ f_{ij} = f_{ik}$. For a strict isomorphism class of marked 3-manifolds we can identify all $R$-modules $\S_n(M_i, \cN_i)$ via the isomorphisms $\S_n(f_{ij})$. 

For example, to glue a  pair of boundary edges $e_1$ and $e_2$ we first fix an orientation reversing diffeomorphism $\phi: e_1\to e_2$ and then identify $x\equiv \phi(x)$ for all $x\in e_1$. Various $\phi$'s give various surfaces, but they belong to the same strict isomorphism class.

For a disjoint union of $M_1$ and $M_2$, the map
$$\S_n(M_1,\cN_1)\otimes \S_n(M_2,\cN_2)\to \S_n(M_1\sqcup M_2,\cN_1\sqcup \cN_2)$$ 
sending $\alpha_1 \otimes \alpha_2$ to $\alpha_1\sqcup \alpha_2$ is an isomorphism. We will identify
$\S_n(M_1\sqcup M_2,\cN_1\sqcup \cN_2)$ with $\S_n(M_1,\cN_1)\otimes \S_n(M_2,\cN_2)$ though this map.

\subsection{Grading} 
\label{ss.grading}

For a stated $n$-web $\alpha$ in $(M,\cN)$ and a marking $\beta \subset\cN$ we define the {\bf $\beta$-degree}
$$\deg_{\beta}(\al) = \sum_{x\in \al \cap \beta} d_{s(x)} =\sum_{x\in \al \cap \beta} \left(s(x) - \frac{n+1}{2}\right)\in \frac{1}{2}\Z,$$
where $s(x)$ denotes the state of $\alpha$ at $x.$
Note that the $\beta$-degree is preserved by the skein relations \eqref{e.pm}-\eqref{e.crossp-wall} and, therefore, it descends to $\frac{1}{2}\Z$-valued grading on $\S_n(M,\cN)$.

\subsection{Useful Identities} Recall that $a,t, c_i$ were defined in Subsection \ref{ss.notation}.

  \def\tCijrotate{\raisebox{-12pt}{\incl{1.2 cm}{tCij180}}}
  \def\tCij{\raisebox{-17pt}{\incl{1.2 cm}{tCij}}}
  \def\tCijTwist{\raisebox{-12pt}{\incl{1.2 cm}{tCijTwist}}}
  \def\tDijrotate{\raisebox{-12pt}{\incl{1.2 cm}{tDij180}}}
  \def\tDij{\raisebox{-17pt}{\incl{1.2 cm}{tDij}}}
  \def\Cji{\raisebox{-12pt}{\incl{1.2 cm}{Cji}}}
  \def\hookup{\raisebox{-12pt}{\incl{1.2 cm}{hookup}}}
  \def\hookupr{\raisebox{-12pt}{\incl{1.2 cm}{hookupr}}}
  \def\decompuprr{\raisebox{-12pt}{\incl{1.2 cm}{decompuprr}}}
  \def\hookdown{\raisebox{-12pt}{\incl{1.2 cm}{hookdown}}}
  \def\hookdownr{\raisebox{-12pt}{\incl{1.2 cm}{hookdownr}}}
  \def\circle{\raisebox{-12pt}{\incl{1.2 cm}{circle}}}
   \def\circleSplit{\raisebox{-12pt}{\incl{1.2 cm}{circleSplit}}}

\def\sourcea{\raisebox{-25pt}{\incl{2 cm}{sourcea}}}
\def\sourceb{\raisebox{-25pt}{\incl{2 cm}{sourceb}}}
\def\sourceleft{\raisebox{-25pt}{\incl{2 cm}{sourceleft}}}
\def\outgoingleft{\raisebox{-25pt}{\incl{2 cm}{outgoingleft}}}
 \def\Cijright{\raisebox{-13pt}{\incl{1.2 cm}{Cijright}}}
 \def\decompright{\raisebox{-13pt}{\incl{1.2 cm}{decompright}}}
 \def\decompppright{\raisebox{-13pt}{\incl{1.2 cm}{decompppright}}}
\def\ad{{a_\downarrow}}
\def\au{{a_\uparrow}}
\begin{proposition} \label{r.maintech} The following identities hold in any stated skein module $\S_n\MN$:
\begin{equation} \label{e.crossing}
 [n-2]! \cross{n}{n}{p}{>}{>}{}{}{}{} = [n-2]!q^\frac{n-1}{n}
  \walltwowall{n}{n}{>}{>}{}{}{}{} - (-1)^{\binom n2} q^{-\frac{1}{n}}
\twoonetwowall{n}{>}{>}{\hspace*{-.05in}$n$-$2$}{}{},
\end{equation}
where the label in the diagram on the right indicates $n-2$ parallel horizontal edges.
\begin{equation} \label{e.vertextwist}
\vertexfourcross{>}  = -q^{-(1+\frac 1n)} \vertexfour{>}, 
\end{equation}
\begin{align}
\vertexwall{<-}{b}{white}{$\sigma(n)$}{$\sigma(2)$}{$\sigma(1)$} & = a t^{n/2} (-q)^{\ell(\sigma)} 
\label{e.vertexwall}\\
\vertexwall{->}{t}{white}{$\sigma(1)$}{$\sigma(2)$}{$\sigma(n)$} & = a (-q)^{\ell(\sigma)}\label{e.vertexwall3}
\end{align}

\begin{align}
\wallvertex{t} \ & = a t^{n/2}
\sum_{\sigma\in S_n}   (-q)^{\ell(\sigma)}
    \walledges{b}{$\sigma(n)$}{$\sigma(2)$}{$\sigma(1)$}\label{e.wallvertex}.\\
\wallcup{<-}{white}{$j$}{$i$}= \capwall{->}{white}{$i$}{$j$} & = 
\delta_{\bar i, j}c_i^{-1}\label{e.capwallup} \\
\wallnearcap{white} & = \sum_{i=1}^n   c_{i} \, \walltwo{<-}{white}{black}{$i$}{$\overline{i}$}\label{e.wallnearcap}\\
\wallcross{<-}{p}{white}{white}{$i$}{$j$}\ & =q^{-\frac 1n}\left(\delta_{j<i}(q-q^{-1})\walltwo{<-}{white}{white}{$i$}{$j$}+ q^{\delta_{i,j}}\walltwo{<-}{white}{white}{$j$}{$i$}\right)\label{e.wallcross}
\end{align}
\end{proposition}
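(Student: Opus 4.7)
My plan is to derive each identity from the defining skein relations \eqref{e.pm}--\eqref{e.crossp-wall} together with the consequences \eqref{e.ma1}--\eqref{e.crossing1} already stated in Section~\ref{s.RT}.

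The internal identities \eqref{e.crossing} and \eqref{e.vertextwist} coincide verbatim with \eqref{e.crossing1} and \eqref{e.vertextwist1}, which were shown to follow from \eqref{e.ma1}--\eqref{e.ma4} at the level of the Reshetikhin-Turaev functor. Since the defining internal skein relations \eqref{e.pm}--\eqref{e.sinksource} of $\S_n(M,\cN)$ are precisely the skein-module versions of \eqref{e.ma1}--\eqref{e.ma4}, the same algebraic derivations apply verbatim in $\S_n(M,\cN)$ with no modification needed.

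For the boundary vertex identities \eqref{e.vertexwall}, \eqref{e.vertexwall3}, and \eqref{e.wallvertex}, I would combine the basic vertex-near-wall relation \eqref{e.vertexnearwall} with the cup-cap evaluations \eqref{e.capwall} and \eqref{e.capnearwall}. Concretely, for \eqref{e.vertexwall} one first isotopes the interior source vertex so that its $n$ edges travel briefly leftward out of a neighborhood of the wall, loop around, and return to meet the wall at the prescribed states $\sigma(1),\dots,\sigma(n)$. The resulting near-wall vertex can then be resolved via \eqref{e.vertexnearwall}, producing a sum $a\sum_\tau (-q)^{\ell(\tau)}$ of configurations with new wall endpoints labeled by $\tau$; the loops that close these back onto the original states $\sigma$ evaluate through \eqref{e.capwall} with orthogonality $\delta_{\bar j,i}\,c_i$, picking out a single surviving term. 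The framing twist accumulated during the isotopy contributes the extra factor $t^{n/2}$ via relation \eqref{e.twist}, producing the claimed scalar $a t^{n/2}(-q)^{\ell(\sigma)}$. Identity \eqref{e.wallvertex} is the analogue with the wall on the opposite side and is proved identically; \eqref{e.vertexwall3} differs from \eqref{e.vertexwall} only by the wall orientation, which accounts via \eqref{e.twist} for the absence of the factor $t^{n/2}$.

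The remaining boundary identities \eqref{e.capwallup}, \eqref{e.wallnearcap}, and \eqref{e.wallcross} are the wall-orientation-reversed analogues of \eqref{e.capwall}, \eqref{e.capnearwall}, and \eqref{e.crossp-wall}. Each follows by reversing the wall orientation in the original relation, adjusting by a framing twist via \eqref{e.twist}, and invoking the normalization identity $c_i c_{\bar i}=t$ from \eqref{e.prodc}; this substitution is precisely what replaces $c_i$ by $c_i^{-1}$ in \eqref{e.capwallup} and \eqref{e.wallnearcap}. The hardest part of the proof will be the careful bookkeeping of signs, framing factors, and state orderings: the constants $a,t,c_i$ from Section~\ref{ss.notation} are normalized precisely so that all these factors conspire correctly, and most of the work lies in tracking this conspiracy cleanly through each of the eight identities.
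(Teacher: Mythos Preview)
Your overall strategy matches the paper's, but there is a genuine gap in your treatment of \eqref{e.vertexwall}, \eqref{e.vertexwall3}, and \eqref{e.wallvertex}. For \eqref{e.vertexwall}, you attribute the factor $t^{n/2}$ to ``framing twist accumulated during the isotopy'' via \eqref{e.twist}. That is not where it comes from: the isotopy that bends the $n$ edges around into nested caps introduces no kinks. The factor $t^{n/2}$ arises because the $n$ nested caps evaluate via \eqref{e.capwall} to $c_{\sigma(1)}\cdots c_{\sigma(n)}=\prod_{i=1}^n c_i$, and then one invokes $\prod_{i=1}^n c_i = t^{n/2}$ from \eqref{e.prodc}. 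If you tried to account for $t^{n/2}$ through kinks you would double-count and get the wrong scalar.

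Relatedly, your explanation that \eqref{e.vertexwall3} ``differs from \eqref{e.vertexwall} only by the wall orientation, which accounts via \eqref{e.twist} for the absence of the factor $t^{n/2}$'' does not work. Reversing the wall arrow reverses the height order of the $n$ endpoints, and undoing that requires a positive half-twist on the $n$ strands near the vertex---not a single kink. The relevant identity is \eqref{e.vertextwist}, applied $\binom{n}{2}$ times, giving the factor $(-q^{-(1+\frac1n)})^{\binom n2}=t^{-n/2}$, which cancels the $t^{n/2}$ from \eqref{e.vertexwall}. The same mechanism (compose \eqref{e.vertexnearwall} with a half-twist, then apply \eqref{e.vertextwist}) is what drives \eqref{e.wallvertex}; your claim that it is ``proved identically'' to \eqref{e.vertexwall} misses this step. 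Your treatment of \eqref{e.capwallup}--\eqref{e.wallcross} is essentially correct: one inserts a kink via \eqref{e.twist} (or a double crossing for \eqref{e.wallcross}) and then applies the original boundary relation, with $c_ic_{\bar i}=t$ handling the constants.
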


\begin{proof} Identities \eqref{e.crossing} and \eqref{e.vertextwist} are respectively \eqref{e.crossing1} and \eqref{e.vertextwist1}. As remarked in Subsec. \ref{ss.unbased-t}, these identies are consequences of the basic internal annihilators, which are skein relations for  $\S_n(M,\cN).$

\noindent{\it Proof of \eqref{e.vertexwall}:} $\vertexwall{<-}{b}{white}{$\sigma(n)$}{$\sigma(2)$}{$\sigma(1)$} = 
\vertexwallturn{white}{$\sigma(n)$}{$\sigma(2)$}{$\sigma(1)$} = a\sum_{\tau\in S_n} (-q)^{\ell(\tau)}
\threecaps{white}{$\sigma(n)$}{$\sigma(2)$}{$\sigma(1)$}{$\tau(1)$}{$\tau(2)$}{$\tau(n)$}.$
The web on the right is non-zero only for $\tau(i)=\overline{\sigma(\overline{i})}$ for $i=1,...n.$
Since $\ell(\tau)=\ell(\sigma)$ then,
the above equals
$$a(-q)^{\ell(\sigma)}\cdot c_1\cdot ...\cdot c_n=
a t^{n/2}(-q)^{\ell(\sigma)},$$
by \eqref{e.prodc}. 
\vspace*{.1in}

\noindent{\it Proof of
\eqref{e.vertexwall3}}:
\vertexwall{->}{t}{white}{$\sigma(1)$}{$\sigma(2)$}{$\sigma(n)$}$=$
\raisebox{-.4\height}{\includegraphics[height=.7in]{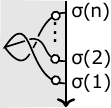}}
$=(-q^{-(1+\frac 1n)})^{\binom n2}$ \vertexwall{<-}{b}{white}{$\sigma(n)$}{$\sigma(2)$}{$\sigma(1)$}
 by \eqref{e.vertextwist}. Since 
\beq\label{e.a2}
(-q^{-(1+\frac 1n)})^{\binom n2}=(-1)^{\binom n2}q^{-\frac{n^2-1}{2}}, 
\eeq
the statement follows by \eqref{e.vertexwall}.

\noindent{\it Proof of \eqref{e.wallvertex}:} By composing \eqref{e.vertexnearwall} with 
\phalftwist{n}{white}{}{}{}\ on the left, we obtain 
$$(-q^{-(1+\frac 1n)})^{\binom n2} \vertexnearwall{b}{white} =  \sum_{\sigma \in S_n} a\, (-q)^{\ell(\sigma)}\,  
\phalftwist{<-}{white}{$\sigma(n)$}{$\sigma(2)$}{$\sigma(1)$}\ 
=\sum_{\sigma \in S_n} a\, (-q)^{\ell(\sigma)}\,  
\nedgewall{->}{t}{white}{$\sigma(1)$}{$\sigma(2)$}{$\sigma(n)$}
,$$
by \eqref{e.vertextwist}.
Now the statement follows by \eqref{e.a2} and by rotating these skeins $180^o$.\vspace*{.1in}

\noindent{\it Proof of \eqref{e.capwallup}:} The left side equals
$$\capcrosswall{->}{m}{white}{$j$}{$i$}=t^{-1}\cdot \capwall{<-}{white}{$j$}{$i$}=t^{-1}\delta_{\bar i, j}c_j=
t^{-1}\delta_{\bar i, j}c_{\bar i}=\delta_{\bar i, j}c_i^{-1},$$
by \eqref{e.twist}.
 
\noindent{\it Proof of \eqref{e.wallnearcap}:} By \eqref{e.twist} and \eqref{e.capnearwall}, 
$t^{-1}\capnearwall{white}= \kinkwall{n}{white}=\sum_{i=1}^n (c_{\bar i})^{-1} \crosswall{<-}{p}{white}{black}{$i$}{$\bar i$}$.

Now the statement follows by $180^o$ rotation and the fact that $tc_{\bar i}^{-1}=c_i,$ cf. \eqref{e.prodc}.

\noindent{\it Proof of \eqref{e.wallcross}:} $\wallcross{<-}{p}{white}{white}{$i$}{$j$}=\crosswall{->}{p}{black}{black}{$j$}{$i$}= 
\doublecrosswall{<-}{black}{black}{$i$}{$j$}
= q^{-\frac 1n}\left(\delta_{j<i}(q-q^{-1})\crosswall{<-}{p}{black}{black}{$i$}{$j$}+ q^{\delta_{i,j}}\crosswall{<-}{p}{black}{black}{$j$}{$i$}\right)=$
$q^{-\frac 1n}\left(\delta_{j<i}(q-q^{-1})\twowall{->}{black}{black}{$j$}{$i$}+q^{\delta_{i,j}}
\twowall{->}{black}{black}{$i$}{$j$}\right),$
by \eqref{e.crossp-wall}. 
\end{proof}

\subsection{Splitting homomorphism}
\label{ss.split}
%

As mentioned in Introduction, an important property of stated skein modules is that they behave in a simple manner under the splitting of 3-manifolds along disks. This property, known as the Splitting Theorem, was first proved by \cite{Le-triang, BL} for the Kauffman bracket skein modules ($n=2$) and then by \cite{Hi} for $n=3$. We formulate it now and prove for all $n$ below.

Suppose $(M,\cN)$ is a marked $3$-manifold and $D$ is a properly embedded closed disk in $M$ (and, hence, $\p D\subset \p M$), disjoint from the closure of $\cN$. By removing a collar neighborhood of $D$ we get a topological 3-manifold $M'$ whose 
boundary contains two copies $D_1$ and $D_2$ of $D$ such that gluing $D_1$ to $D_2$ yields $M$ together with a surjective homomorphism $\pr: M' \to M$. The manifold $M'$ has a smooth structure with corners. However, these corners can be smoothed out uniquely up to isotopy.
  
Let $\beta\subset D$ be an oriented open interval, and  $\beta_1\subset D_1$ and $\beta_2\subset D_2$ be preimages of $\beta$. The {\bf splitting of $\MN$ along $(D,\beta)$}, denoted by  $\Cut_{(D,\beta)}\MN$, is  the
marked 3-manifold $(M', \cN')$,  where $\cN'= \cN \cup \beta_1 \cup \beta_2$. 
It is easy to see that $\Cut_{(D,\beta)}\MN$ is defined uniquely as a strict isomorphism class, cf. Subsection \ref{ss.functor}.

Let $\al$ be a stated $n$-web in $\MN$. This web in this subsection is given by a specific embedding and, hence, not considered up to isotopy.
It is said to be {\bf $(D,\beta)$-transverse} if the vertices of $\al$ are not in $D$, $\al$  is transverse to $D$,  $\al \cap D \subset \beta$, and the framing at every point of $\al \cap \beta$ is a positive tangent vector of $\beta$. Note that every web $\alpha$ in $(M,\cN)$ can be isotoped 
so that it is $(D,\beta)$-transverse. 
Suppose in addition that $\al$ is stated. Then the $n$-web $\pr^{-1}(\al)$ of $(M', \cN')$ is stated everywhere except for its endpoints in  $\beta_1 \cup \beta_2$,  cf. Fig. \ref{f.splitexample}. Given any map $s:\al \cap \beta \to \{\pm\}$, let $\al(s)$ denote the (partially stated) $n$-web $\pr^{-1}(\al)$ in $(M', \cN')$ with additional states $s(\pr(x))$ for $x\in \pr^{-1}(\al) \cap (\beta_1 \cup \beta_2)$. Hence, $\al(s)$ is fully stated.
We call $\al(s)$ a {\bf lift} of $\al$. If $|\al\cap \beta|=k$ then $\al$ has $n^k$ distinct lifts.

\begin{figure}[h] 
   \centering \diagg{splitexample1.pdf}{0.7in} $\xrightarrow{\Theta_{D,\beta}} \sum_{i,j=1}^n$
   \centering \diagg{splitexample2.pdf}{0.7in}
    {}
   \caption{An example of a splitting of an $n$-web (in green) intersecting the splitting disk $D$ twice.}
   \label{f.splitexample}
\end{figure}

\bthm\label{t.splitting}
Let $D$ be a closed disk properly embedded in a marked $3$-manifold $\MN$ and let $\beta$ be an oriented open arc in $D$. Let $\Cut_{(D,\beta)}\MN$ 
be the splitting of $(M, \cN)$ along $(D,\beta)$, as described above.
Then there is a unique $R$-module homomorphism
$$ \Theta_{(D,\beta)}: \S_n(M,\cN) \to \S_n(\Cut_{(D,\beta)}\MN)$$
sending every stated $(D,\beta)$-transverse $n$-web $\alpha$ in $\MN$ to the sum of all of its lifts,
\beq
\Theta_{(D,\beta)}(\al) = \sum_{s:\, \al \cap \beta \to \{\pm\}} \al(s).
\label{e.split0}
\eeq
\ethm

Note that for any arcs $\beta, \beta'$ in $D$ there is an isomorphism between marked $3$-manifolds 
$\Cut_{(D,\beta)}\MN \simeq \Cut_{(D,\beta')}\MN$, inducing an isomorphism of stated skein modules which commutes with the splitting homomorphisms.
Consequently, we will often denote $\Theta_{(D,\beta)}$ and $\Cut_{(D,\beta)}\MN$  by $\Theta_D$ and $\CutD\MN$ when it does not lead to confusion.

\def\pr{\mathrm{pr}}
\def\tW{\tilde {\mathcal W}}

\begin{remark}\label{r.two-splittings}
It is easy to see that splitting homomorphisms along any two disjoint splitting disks $D_1$ and $D_2$ commute,
\[\Theta_{D_1}\circ \Theta_{D_2}=\Theta_{D_2}\circ \Theta_{D_1}.\]
\end{remark}

\begin{remark}
\label{r.notcloseddisk} 
By removing a closed subset of $\partial M$ disjoint from $\cN$ and using the pseudo-isomorphism
of Example \ref{e.functoriality},  we can apply the theorem to many cases when $D$ is a closed disk with some closed intervals on its boundary removed. This fact will be useful in Section~\ref{s.surfaces} where we will apply the Splitting Theorem to thickened surfaces $\Sigma \times (-1,1)$ cut along open disks, $(-1,1)\times (-1,1)$.
\end{remark}

In general the splitting homomorphism is not injective. For an example in the $n=2$ case see \cite{CL2}. 
We will discuss the injectivity and the image of the splitting homomorphism for thickened surfaces in Section~\ref{s.surfaces}.\vspace*{.1in}

\noindent{\it Proof of the Splitting Theorem:} We identify a closed collar neighborhood of $D$ with the closed cube $\bQ= [-1,1]^3$ so that $D= \{0\} \times [-1,1]^2$ and $\beta$ is an open interval subset of $ \{0\} \times  \{0\}\times  [-1,1]$, as in Figure \ref{f.cube_a}(a). 
For a stated $(D,\beta)$-transverse $n$-web $\al$ let
 $\Theta(\al)\in \S_n(\Cut_{(D,\beta)}\MN)$
 be the right side of Eq. \eqref{e.split0}. 
 To prove the theorem we need to show $\Theta(\al)$ is invariant under isotopies of $\al$.

\FIGc{cube_a}{
(a) The cube $\bar Q$. The disk $D$ is the middle square containing the lines $\beta$ and $\bpp$. (b) The splitting homomorphism $\Theta$.}{3cm}

An ambient isotopy of $\alpha$ in $M$ can be decomposed into a sequence of isotopies, each of which is supported in a small neighborhood of $D$ or supported outside of $D$. The latter clearly preserves $\Theta$, so we only need to check invariance of $\Theta$ under isotopies with support in the interior of $\bar Q$. By an isotopy outside $D$ we can assume that $\al\cap Q$ is an $n$-tangle. To get a diagram of $\al\cap Q$ we first use height preserving deformation near $D$ to move $\al$ to a general position with respect to the projection onto $[-1,1]^2\times \{0\}$ (as always considered in page). The points in $\al\cap \beta$, after that deformation, project to points on $\beta^\perp$. We will always choose a height preserving deformation such that the height order on $\beta^\perp$ is given by the direction from the top to the bottom, as in Figure \ref{f.cube_a}(b).

Now we can decompose a diagram of $\al$ into elementary tangle diagrams listed in \eqref{e.id}-\eqref{e.RTT}. If  $\al'$ is isotopic to $\al$ by an isotopy in $Q$, then its diagram can be obtained by 
 a sequence of operations moving elementary tangles through $ \beta,$ and the height exchange move, discussed in point (d) below. Therefore, it is enough to verify that $\Theta(\alpha)$ is preserved by the following  four moves:

\begin{enumerate}
\item[(a)] passing a cap through $\beta$. The invariance of $\Theta(\alpha)$ under this move is a consequence of skein relations (\ref{e.capwall}) and (\ref{e.wallnearcap}): 
$$\Theta\left(\capthroughwall\right)=\sum_{i,j} \capseparatedwall{$i$}{$j$}=\sum_{i=1}^n c_i \walltwo{<-}{white}{black}{$i$}{$\overline{i}$}= \Theta\left(\wallnearcap[-.2]{white}\right).$$
By the same argument, $\Theta\left(
\scalebox{-1}[1]{
\wallnearcap[-.2]{white}}\hspace*{-.15in}\right)=
\Theta\left(
\scalebox{-1}[1]{\capthroughwall}\right).$

\item[(b)]  passing a sink or a source through $\beta$. The invariance of $\Theta$ under this move is a direct consequence of skein relations \eqref{e.vertexwall} and  \eqref{e.wallvertex}:
$$\Theta\left(\vertexwallext{b}\right)=\sum_{\sigma\in S_n} 
\vertexwall{<-}{b}{white}{$\sigma(n)$}{$\sigma(2)$}{$\sigma(1)$}\hspace*{-.13in}
\scalebox{1.09}{\walledges{b}{}{}{}}
=  at^{n/2}
\sum_{\sigma\in S_n} (-q)^{\ell(\sigma)}  \walledges{b}{$\sigma(n)$}{$\sigma(2)$}{$\sigma(1)$}=\Theta\left(\wallvertex[-.4in]{b}\right).$$

\item[(c)]  passing a positive crossing through $\beta$:
\begin{multline*} 
\Theta\left(\crosswallext{<-}{p}{white}\right)=
\sum_{i,j} \crosswall{<-}{p}{white}{white}{$i$}{$j$} \walltwo{<-}{white}{white}{$i$}{$j$}=
q^{-\frac{1}{n}}\left(\sum_{j<i}  (q-q^{-1})\twowall{<-}{white}{white}{$i$}{$j$} \walltwo{<-}{white}{white}{$i$}{$j$}+q^{\delta_{i,j}}\sum_{i,j}\twowall{<-}{white}{white}{$j$}{$i$}\walltwo{<-}{white}{white}{$i$}{$j$}\right)=\\
q^{-\frac{1}{n}}\left(\sum_{j<i}  (q-q^{-1})\twowall{<-}{white}{white}{$i$}{$j$} \walltwo{<-}{white}{white}{$i$}{$j$}+q^{\delta_{i,j}}\sum_{i,j}\twowall{<-}{white}{white}{$i$}{$j$}\walltwo{<-}{white}{white}{$j$}{$i$}\right)=
\sum_{i,j} \twowall{<-}{white}{white}{$i$}{$j$} \wallcross{<-}{p}{white}{white}{$i$}{$j$}=
\Theta\left(\raisebox{.15in}{\rotatebox{180}{\crosswallext{->}{p}{white}}}
\right),
\end{multline*}
by \eqref{e.crossp-wall} and \eqref{e.wallcross}.\vspace*{.1in}

\item[(d)] passing a negative crossing through $\beta$ follows from (c) by composing the fragments of diagrams on the left and the right side of the above identity with a negative crossing on their both sides.

\item[(e)] height exchange of two consecutive points of $\al\cap \beta$ as in Figure \ref{f.cube_b}. The invariance of $\Theta$ under this moves follows from the move in (c) or (d) if the arcs involved have coinciding orientations. If Figure \ref{f.cube_b} involves arcs in opposite directions, then the left side of the diagram on the right can be decomposed into elementary diagrams and all of them can be moved to the right side by (a)-(d).

\FIGc{cube_b}{Height exchange move}{1.5 cm}
\end{enumerate}

The above argument shows that $\Theta(\alpha)$ is preserved by isotopies of $\alpha$.
To finish the proof off, observe that $\Theta$ maps the defining relations \eqref{e.pm}-\eqref{e.crossp-wall} to $0$ in $\cal S_n(M',N'),$ because they are all local and can be moved away from $D$.\qed

%
\subsection{Reversing orientations of $3$-manifolds and of webs}
\label{ss.symmetries}
An \underline{orientation} of a web consists of orientations of all its loop components and directions of all its edges.
Let $\cev{\alpha}$ denote an $n$-web $\alpha$ with its orientation reversed (and unchanged framing). 
Since the defining relations  \eqref{e.pm}-\eqref{e.crossp-wall} of $\S_n(M,\cN)$ are invariant under the total orientation inversion, we have 
\bcor\label{c.orient-rev}
$$\cev {\,\cdot\,}: \S_n(M,\cN)\to \S_n(M,\cN)$$
is a well defined $R$-module automorphism.
\ecor 

\def\bal{\bar \al}

Let $\overline{(M,\cN)}$ denote $M$ and $\cN$ with reversed orientations. Let $\bar R$ be the ring $R$ with the distinguished element $v^{-1}$ instead of $v$. For an $n$-web $\al$ of $\MN$ let $\overline{\alpha}$ be the $n$-web in $\overline{(M,\cN)}$ obtained from $\al$ by negating its framing, $f \to -f$, but retaining the orientation.

\bthm\label{t.framing-rev} 
(1) Any ring isomorphism $\vk: R\to \bar R$ sending $v$ to $v^{-1}$ extends to an isomorphism of $R$-modules
$\vk_{(M,\cN)}: \S_n(M,\cN,R)\xrightarrow{\cong} \S_n(\overline{M,\cN}, \bar R)$ sending every stated $n$-web $\alpha$ to $\overline{\alpha}$, where $\S_n(\overline{M,\cN}, \bar R)$ is an $R$-module via $\vk: R\to \bar R.$

(2) The composition   $ \vk_{(\overline{M,\cN})} \circ \vk_{(M,\cN)}$ is  the identity on $\S_n(M,\cN,R)$.
\ethm

The above isomorphism is called  the {\bf orientation reversion isomorphism}. (Note that for some rings $R$, an isomorphism $\vk:R\to \bar R$ as above may not exist or be non-unique.)\vspace*{.1in}

\noindent{\it Proof of Theorem \ref{t.framing-rev}:}  
By abuse of notation, we define an $R$-linear map $\vk_{(M,\cN)}$ first as
$$\vk_{(M,\cN)}: R{\cal W}_n(M,\cN)\to \S_n(\overline{M,\cN}),\quad \vk_{(M,\cN)}(\alpha)=\overline{\alpha}.$$ 
One checks immediately that that map factors through relations \eqref{e.pm}-\eqref{e.unknot}. 

By our graphical convention, a diagram of $\overline{\alpha}$ 
near a marking is given by switching all crossings in a diagram $\alpha$ and by reversing of the direction of the vertical line $\beta^\perp.$ For example, if 
$\alpha=\crosswall{->}{p}{<}{<}{$i$}{$j$}$ then $\overline{\alpha}=\crosswall{<-}{n}{<}{<}{$i$}{$j$}.$
It is clear that $\vk_{(M,\cN)}$ maps \eqref{e.sinksource} to the equality of Lemma \ref{l.sinksource-involution} (below) and, therefore, it preserves that relation.

To see that $\vk_{(M,\cN)}$ factors through \eqref{e.vertexnearwall} substitute $\sigma'$ for $\sigma$ in \eqref{e.wallvertex}, where 
$\sigma'(i)=\sigma(\bar i)$, for $i=1,...,n,$ 
and rotate that equation $180^o.$ Since $\ell(\sigma')=\binom n2-\ell(\sigma),$ we get
$$\vertexnearwall{b}{white} = t^{n/2}a \sum_{\sigma \in S_n} (-q)^{\binom n2-\ell(\sigma)}\,  \nedgewall{->}{b}{white}{$\sigma(n)$}{$\sigma(2)$}{$\sigma(1)$}.$$
Since 
$$(-q)^{\binom n2}t^{n/2}=q^{\binom n2+\frac{n^2-1}{2}} =a^{-2},$$
by Eq. \eqref{e.a}, we get the desired relation
$$\vertexnearwall{b}{white} = a^{-1} \sum_{\sigma \in S_n} (-q)^{-\ell(\sigma)}\,  \nedgewall{->}{b}{white}{$\sigma(n)$}{$\sigma(2)$}{$\sigma(1)$}.$$

Furthermore, $\vk_{(M,\cN)}$ maps \eqref{e.capwall} and \eqref{e.capnearwall} to \eqref{e.capwallup} and \eqref{e.wallnearcap}.

Let us show now that $\vk_{(M,\cN)}$ factors through \eqref{e.crossp-wall}.
We need to verify that 
$$\crosswall{->}{n}{white}{white}{$i$}{$j$} =q^{\frac 1n}\left(\delta_{j<i}(q^{-1}-q)\twowall{->}{white}{white}{$i$}{$j$}+ q^{-\delta_{i,j}}\twowall{->}{white}{white}{$j$}{$i$}\right).$$
By \eqref{e.pm}, the left side is
$$q^\frac{2}{n} \crosswall{->}{p}{white}{white}{$i$}{$j$} -q^\frac{1}{n}(q-q^{-1}) \twowall{->}{white}{white}{$i$}{$j$}$$ 
and since $1-\delta_{j<i}=\delta_{\bar j<\bar i} +\delta_{i,j},$ the above equation reduces to 
$$q^\frac{2}{n} \crosswall{->}{p}{white}{white}{$i$}{$j$}=q^{\frac 1n}\left(\delta_{\bar j<\bar i}(q-q^{-1})\twowall{->}{white}{white}{$i$}{$j$}+ q^{\delta_{i,j}}\twowall{->}{white}{white}{$j$}{$i$}\right),$$
which is \eqref{e.wallcross} rotated $180^o$ (and with $i$ and $j$ interchanged).

Hence, we have shown that the above map factors to
$$\vk_{(M,\cN)}: \S_n(M,\cN)\to \S_n(\overline{M,\cN}).$$
It is an $R$-module homomorphism by definition.

Part (2) is obvious.

\blem\lb{l.sinksource-involution} One has
$$\sinksourcethree{>} = (-q)^{-\binom n2} \sum_{\sigma \in S_n} (-q^{-\frac{1-n}n})^{\ell(\sigma)}  \coupon{$\sigma_-$}{>}{}{}{},$$
where $\sigma_-$ is the minimal crossing negative braid representing $\sigma\in S_n.$
\elem

\bpr
Let $\tau(i)=n+1-i$, for $i=1,...,n$. Then $\tau_-$ is the negative half-twist $n$-braid.
By applying it to the left side of \eqref{e.sinksource} we obtain
\beq\lb{e.sinksource-twist}
(-q^{-\frac{n+1}n})^{-{\binom n2}}\sinksourcethree{>} =(-q)^{\binom n2}\cdot \sum_{\sigma\in S_n}
(-q^{\frac{1-n}n})^{\ell(\sigma)} \coupon{\hspace*{-.1in}$\tau_-\sigma_+$\hspace*{-.05in}}{>}{}{}{},
\eeq
by \eqref{e.vertextwist}. Note that $\tau_-\sigma_+=(\tau\sigma)_-$ for every $\sigma\in S_n$ and that by \eqref{e.length},
$$\ell(\sigma)+\ell(\tau\sigma)=\ell(\tau)={\binom n2}.$$
Therefore, by denoting $\tau\sigma$ by $\sigma'$, the right side of \eqref{e.sinksource-twist} reduces to
$$(-q)^{\binom n2}\cdot \sum_{\sigma'\in S_n}
(-q^{\frac{1-n}n})^{{\binom n2}-\ell(\sigma')} \coupon{$\sigma_-'$}{>}{}{}{}=q^{\binom n2} q^{\frac{1-n}n \cdot\binom n2}\sum_{\sigma'\in S_n}(-q^{\frac{1-n}n})^{-\ell(\sigma')} \coupon{$\sigma_-'$}{>}{}{}{}.$$
and, hence, \eqref{e.sinksource-twist} becomes
$$\sinksourcethree{>} =(-1)^{\binom n2} q^D\cdot \sum_{\sigma'\in S_n}
(-q^{\frac{1-n}n})^{\ell(\sigma)} \coupon{$\sigma_-'$}{>}{}{}{},$$
where $$D=-\frac{n+1}n\cdot\binom n2+ \binom n2+\frac{1-n}n\binom n2=
-\binom n2.$$
\epr

%
\subsection{Marking automorphisms}
\label{ss.edgeauto}
%
Consider a function $\eta: \{1,\dots, n\}\to R^*$ such that 
$$\prod_{i=1}^n \eta(i)=1\ \text{and}\ \eta(i)\eta(\bar i)=1\ \text{for every}\ i,$$
where the bar denotes the conjugation, $\overline{i}=n+1-i,$ as before.
It is easy to see that for every such $\eta$ and every marking $\beta$ in $\cN$ there is an $R$-module automorphism $\phi_{\eta, \beta}$ of $\S_n(M,\cN)$ sending stated $n$-webs $\alpha$ to
$$\phi_{\eta, \beta}(\alpha)=\prod_{x\in \alpha\cap \beta} \eta(s(x))\cdot \alpha$$ 
where $s(x)$ is the state of the endpoint $x$ of $\alpha.$ We call $\phi_{\eta, \beta}$ a {\bf marking automorphism} of $\S_n(M,\cN)$. 

When $\eta_i = g_i= (-1)^{n-1} q^{2i-n-1}$, as in Eq. \eqref{e.gi}, we denote $\phi_{\eta,\beta}$ by $g_\beta$.

\subsection{Half-twist automorphisms} 
\label{ss.htw}

\def\tw{\text{tw}}

\begin{proposition}\label{p.twist} 
 For any marking $\beta$ in $\cN$ there exist unique $R$-linear isomorphisms 
 $$\htw_{\beta},\wt\htw_\beta:\S_n(M,\cN) \to \S_n(M,\cN)$$
 sending any stated $n$-web $\alpha$ in $(M,\cN)$ with $k$ endpoints on $\beta$ to
 \beq\label{e.htw} \htw_{\beta}   \left(\nedgewalltall{<-}{b}{}{${i_k}$}{${i_2}$}{${i_1}$}\right)= 
\left( \prod_{j=1}^k c_{\overline{i_j}}\right) \cdot
\nedgewalltall{->}{b}{}{$\overline{i_k}$} {$\overline{i_2}$}{$\overline{i_1}$} = \left( \prod_{j=1}^k c_{\overline{i_j}}\right) \cdot \htwf,
\eeq
and to 
\beq\label{e.htw-t} \wt\htw_{\beta}   \left(\nedgewalltall{<-}{b}{}{${i_k}$}{${i_2}$}{${i_1}$}\right)= 
\left( \prod_{j=1}^k c_{i_j}\right) \cdot
\nedgewalltall{->}{b}{}{$\overline{i_k}$} {$\overline{i_2}$}{$\overline{i_1}$} = \left( \prod_{j=1}^k c_{i_j}\right) \cdot \htwf,
\eeq
where $H$ is the positive half-twist, $\phalftwist{<-}{}{}{}{}$.
(The orientations of the horizontal edges are arbitrary.)
\end{proposition}
 
We call $\htw_{\beta}$ and $\wt\htw_\beta$ the {\bf half-twist automorphisms}. Note that $\wt\htw_\beta$ coincides with $\htw_\beta$ composed with $g_\beta$ of Subsec. \ref{ss.edgeauto}, because $c_i=c_{\bar i}(-1)^{n-1} q^{2\bar i -n-1}.$
 
\brem\label{r.Uq-action}
We will show in Subsection \ref{ss.coaction} that every marking in $\cN$ defines a left action of a completion $\widetilde{\UL}$ of the Lusztig integral version $\UL$ of $\Uq.$ That completion contains the half-twist element $X$ and the charmed element $g$ of Subsection \ref{ss.half-ribbon}.
The automorphisms $\htw_\beta$ and $g_\beta$ coincide with the actions of $X$ and $g$, respectively.
\erem

The map $\htw_{\beta}$ generalizes the inversion along an edge, for the stated Kauffman bracket skein algebras of thickened surfaces in \cite{CL}.  However it is the inverse of that map.

The inverse of $\htw_\beta$ is given by, 
$$\htw_{\beta}^{-1}   \left(  \nedgewalltall{<-}{b}{}{${i_k}$}{${i_2}$}{${i_1}$}\right)= 
 \left(\frac 1{ \prod_{j=1}^k c_{{i_j}}}\right) \cdot \htwn,
$$
where $\bar H$ denotes the negative half-twist, as before.\vspace*{.1in}
 
\noindent{\it Proof of Proposition \ref{p.twist}:}
 By abuse of notation, let us first consider a map $\htw_\beta:\cal W_n(M,\cN) \to \S_n(M,\cN)$ sending 
 any stated $n$-web $\alpha$ in $(M,\cN)$ with $k$ endpoints on $\beta$ to
 $$\htw_\beta\left(\nedgewalltall{<-}{b}{}{$i_k$}{$i_2$}{$i_1$}\right)= \prod_{j=1}^k 
 c_{\overline{i_j}} \cdot \nedgewalltall{->}{b}{}{$\overline{i_k}$}{$\overline{i_2}$}{$\overline{i_1}$}.$$

Obviously, $\htw_\beta$ preserves the internal skein relations, \eqref{e.pm}-\eqref{e.sinksource}. It maps \eqref{e.vertexnearwall} to
$$\vertexnearwall{b}{white} = 
a\cdot 
\left(\prod_{i=1}^n c_i\right)\cdot
\sum_{\sigma \in S_n} \, (-q)^{\ell(\sigma)}\,  \nedgewalltall{->}{b}{white}{$\overline{\sigma(n)}$}{$\overline{\sigma(2)}$}{$\overline{\sigma(1)}$}.$$
The right side equals 
$$a \left(\prod_{i=1}^n c_i\right) \sum_{\sigma \in S_n} \, (-q)^{\ell(\sigma)}\, 
\phalftwist{<-}{white}{$\overline{\sigma(1)}$}{$\overline{\sigma(2)}$}{$\overline{\sigma(n)}$}\ 
= 
a \left(\prod_{i=1}^n c_i\right) (-q^{-\frac{n+1}n})^\frac{n(n-1)}{2}\sum_{\sigma \in S_n} \, (-q)^{\ell(\sigma)}\, 
\nedgewalltall{<-}{t}{white}{$\overline{\sigma(1)}$}{$\overline{\sigma(2)}$}{$\overline{\sigma(n)}$},$$
by \eqref{e.vertextwist}.
By \eqref{e.prodc}, $\left(\prod_{i=1}^n c_i\right) \cdot (-q^{-\frac{n+1}n})^\frac{n(n-1)}{2}=1$ and, hence, the expression above coincides with the right side of \eqref{e.vertexnearwall} by the substitution $\sigma\to \sigma'$, where $\sigma'(i)\to \overline{\sigma(\overline{i})},$ for $i=1,...,n$, which does not affect the permutation length. Consequently, $\htw_\beta$ preserves \eqref{e.vertexnearwall}. 

The preservation of the remaining relations, \eqref{e.capwall}-\eqref{e.crossp-wall} by $\htw_{\beta}$ is an immediate consequence of the left boundary relations \eqref{e.capwallup} -- 
\eqref{e.wallcross}.
 
This shows that our map descends indeed to an $R$-module homomorphism 
$$\htw_{\beta}: \S_n(M,\cN)\to \S_n(M,\cN).$$

Similarly, it is straightforward to show that there is a well defined map
$\htw_\beta':\cal S_n(M,\cN) \to \S_n(M,\cN)$ sending 
 any $n$-web $\alpha$ in $(M,\cN)$ with $k$ endpoints on $\beta$ to
 $$\htw_\beta'\left(\nedgewalltall{->}{b}{white}{$i_k$}{$i_2$}{$i_1$}\right)= \left(\prod_{j=1}^k 
 c_{i_j}\right)^{-1} \cdot \nedgewalltall{<-}{b}{white}{$\overline{i_k}$}{$\overline{i_2}$}{$\overline{i_1}$}.$$
Since $\htw_\beta'$ is an inverse of $\htw_\beta$, both are isomorphisms.
 \qed

%

%

\subsection{Essential uniqueness of the skein relations of $\S_n(M,\cN)$}
\label{ss.uniqueness}
\def\bou{{\bm{u}}}

In the context of our theory it is natural to ask how arbitrary are the constants $a, c_i$ in Subsection  \ref{ss.notation}. 
For a tuple $\bou=(u, u_1, \dots, u_n)$ of $n+1$ invertible elements of $R$ let $\S_n(M,\cN; \bou)$ be defined the same as $\S_n(M,\cN)$, with $c_i$ and $a$ replaced  respectively by $c'_i = c_i (u_i u_{\bar i})^{-1}$ and $a'= a (\prod_{i=1}^n u_i)/u$, and with the right side of \eqref{e.sinksource} multiplied by $u^2$. 
We denote the set of $n$-valent vertices of $\alpha$ by $V_n(\alpha)$.
Then it is easy to see that the map
$$\al \to \al \,  u^{|V_n(\al)|} \prod_{x\in \partial \al} u_{s(x)},$$ 
defined on stated $n$-webs, extends to an $R$-linear isomorphism from $\S_n\MN$ to $\S_n(M,\cN; \bou)$.

One can show that 
the new stated skein module $\S_n(M,\cN; \bou)$ satisfies the splitting homomorphism if and only if the following holds:
$$u_i= \pm 1,\  \prod_{i=1}^n u_i =1\ u_i u_{\bar i} =1,\ \text{for every}\ i.$$
Furthermore, all properties of $\S_n(M,\cN)$ formulated so far have their version for $\S_n(M,\cN; \bou)$.

\section{Stated SL($n$)-skein algebras of surfaces}
\label{s.surfaces}
%

The theory of stated SL($n$)-skein modules is particularly rich for thickened surfaces $M=\Sigma \times (-1,1).$ Note that any finite set $B\subset \p \Sigma$ defines markings $\cN=B\times (-1,1)$ for which $\S_n(M,\cN)$ is an $R$-algebra with the product of webs $\alpha_1\cdot \alpha_2$ given by stacking $\alpha_1$ on top of $\alpha_2$.  It is convenient however to represent unmarked boundary components of $\Sigma$ by punctures and to separate points of $B$ by ideal boundary points. That leads to the notion of a punctured bordered surface, considered for example in \cite{Le-triang, CL} already.  In particular, a punctured bordered surface encapsulates information about the points $B$ in it.

\def\bS{\bar \Sigma}
\def\pS{\partial \Sigma}

\subsection{Punctured bordered surface} 
\label{ss.pbsurf}

 A {\bf punctured bordered surface} (a {\bf pb surface} for short) $\Sigma$ is an oriented surface with possibly empty boundary $\pS$ such that each connected component of $\pS$ is an open interval. 
These components are called {\bf boundary edges}.

For simplicity we will assume that $\Sigma$ is of finite type in the sense that $\Sigma=\bS\setminus \cP$, where $\bS$ is a compact oriented  surface and $\cP \subset \bS$ is a finite set, called the {\bf ideal points} of $\Sigma$. Note that each connected component of $\partial \bS$  meets $\cP$. (However, some of the points of $\cP$ may be in the interior of $\Sigma$.)

An {\bf ideal arc} in ${\Sigma}$ is the image of a proper embedding
 $c:(0,1)\embed {\Sigma}$. This means $c$ can be extended to an immersion $\bar c : [0,1] \to \Sigma$ such that $\bar c(0), \bar c(1) \in 
 \cP$. An ideal arc is {\bf trivial} if it bounds a disk in $\Sigma$.

In each boundary edge $e$ choose a point $b_e$.
Let $\S_n({\Sigma})= \S_n(M, \cN)$, where $M= {\Sigma} \times (-1,1)$ and $\cN$ is the union of all $b_e \times (-1,1)$, each having the natural orientation of the interval $(-1,1)$. 
Since up to a canonical isomorphism, $\S_n({\Sigma})$ does not depend 
on the specific choice of the points $b_e$, we do not specify them in our notation. An $n$-web in $\MN$ is simply called an $n$-web over ${\Sigma}$.

For stated $n$-webs $\al$ and $\beta$ over ${\Sigma}$  let their product $\al\beta\in \S_n({\Sigma})$ be the result of stacking $\al$ above $\beta$. This product turns $\S_n({\Sigma})$ into an $R$-algebra.  

According to the graphical convention of Subsec. \ref{ss.marked}, an $n$-web over ${\Sigma}$ is presented by its diagram on $\Sigma$, which is the projection of $\al$ onto ${\Sigma}$ with the over/undercrossing information at every double point. Before projecting, we use  height-preserving deformation near the markings $b_e \times (-1,1)$ to make the projections of endpoints of $\al$ distinct. As before, the height order at endpoints of the diagram on each boundary edge is part of the diagram.

The orientation of ${\Sigma}$ induces an orientation on its boundary. When part of ${\Sigma}$ is drawn on a page of paper, which is identified with the standard $XY$-plane,  the orientation of $\pS$ is the counterclockwise direction.
 A diagram where the height order on a boundary edge $e$ is given by the orientation of $e$ induced from that of ${\Sigma}$  (respectively: the opposite orientation) is called {\bf positively (respectively: negatively) ordered on $e$}.
 
Given two edges $e_1, e_2$ of a pb surface ${\Sigma}$, not necessarily connected, the gluing ${\Sigma}/(e_1=e_2)$ is the result of identifying $e_1$ with $e_2$ via a diffeomorphism $e_1 \to e_2$ such that the resulting surface has an orientation induced from that of ${\Sigma}$. Such a surface is defined uniquely up to strict isomorphisms.

A pb surface ${\Sigma}$ is {\bf essentially bordered} if every connected component of it has non-empty boundary.

\blem\label{l.span} If a pb surface $\Sigma$ is essentially bordered then  $\S_n(\Sigma)$ is spanned by stated $n$-web diagrams without any of: sinks, sources, crossings, trivial loops, and trivial arcs.
\elem 

\bpr Crossings can be eliminated by bringing them to near a boundary edge as in Fig. \ref{f.cross-elim}(left), then expressing them as linear combinations of webs of the form Fig. \ref{f.cross-elim}(right) by Relation \eqref{e.capnearwall}, and finally eliminating them by Relation \eqref{e.crossp-wall}.

\begin{figure}[h] 
   \begin{center}
   \crosswithcaps{>}
   \quad \crosswithtwoextra
   \caption{}
   \label{f.cross-elim}
\end{center}   
\end{figure}
Sinks and sources, trivial loops, and trivial arcs  can be eliminated by Relations
\eqref{e.vertexnearwall}, \eqref{e.unknot}, \eqref{e.capwall}, respectively.
\epr

\subsection{Splitting homomorphism for surfaces} 
Let $c$ be an ideal arc in the interior of a pb surface $\Sigma$. The splitting $\Cut_c({\Sigma})$ is a pb surface having two boundary edges $c_1, c_2$ such that ${\Sigma}= \Cut_c({\Sigma})/(c_1=c_2)$. Let $\pr: \Cut_c({\Sigma}) \to {\Sigma}$ be the natural projection map. An $n$-web  diagram $D$  is {\bf $c$-transverse} if  $n$-valent vertices of $D$ are not in $c$ and $D$ is transerve to $c$.
Assume that $D$ is a stated $c$-transverse $n$-web diagram. Let $h$ be a linear order on the set $D \cap c$. For a map $s: D \cap c \to \{1,\dots, n\}$ let $D(h,s)$ be the stated $n$-web diagram over $\Cut_c({\Sigma})$ which is $\pr^{-1}(D)$ with the height order on $c_1 \cup  c_2$ induced (via $\pr$) from $h$, and the states on $c_1 \cup c_2$ induced (via $\pr$) from $s$. 
The Splitting Theorem (Thm. \ref{t.splitting}) for ${\Sigma}$ becomes

\begin{theorem}\label{t.splitting2} Let $c$ be an interior ideal arc of a pb surface ${\Sigma}$. There is a unique $R$-linear map $\Theta_c: \S_n({\Sigma}) \to \S_n(\Cut_c({\Sigma}))$ such that if $D$ is a diagram of a stated $n$-web $\al$ over ${\Sigma}$ which is $c$-transverse and $h$ is any linear order on $D \cap c$, then
$$\Theta_c(\al) =\sum_{s: D \cap c \to \{1,\dots, n\}} D(h, s).$$
The map $\Theta_c$ is an $R$-algebra homomorphism.
\end{theorem}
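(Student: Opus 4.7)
The plan is to deduce Theorem \ref{t.splitting2} from the more general Splitting Theorem \ref{t.splitting} applied to the marked 3-manifold $(M,\cN) = (\Sigma\times(-1,1),\cN)$. Take $D = c\times(-1,1)$, which is a properly embedded open disk; via Remark \ref{r.notcloseddisk} we remove the closed subset $c\times\{-1,1\}$ of $\partial M$ (disjoint from $\cN$) so that Theorem \ref{t.splitting} applies. Pick any oriented open interval $\beta\subset D$. Splitting $(M,\cN)$ along $(D,\beta)$ produces precisely the marked 3-manifold associated to $\Cut_c\Sigma$ (with the two new markings on $c_1$ and $c_2$ playing the roles of $b_{c_1},b_{c_2}$), so Theorem \ref{t.splitting} furnishes a unique $R$-linear map $\Theta_{(D,\beta)}$.

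Next I would verify that $\Theta_{(D,\beta)}$ matches the formula in Theorem \ref{t.splitting2}. Given a $c$-transverse stated diagram $D$ on $\Sigma$ and a linear height order $h$ on $D\cap c$, I would isotope the web $\alpha$ it represents inside $\Sigma\times(-1,1)$ so that its intersection points with $D$ all lie on $\beta$, arranged along $\beta$ in the order prescribed by $h$. Then each lift $D(h,s)$ in the surface formulation coincides (under the projection $\pr$) with the lift $\alpha(s)$ from Theorem \ref{t.splitting}, so $\Theta_c(\alpha)=\sum_s D(h,s)$. Independence of the choice of $h$ is immediate from the already-established well-definedness of $\Theta_{(D,\beta)}$, since any two height orders on $D\cap c$ are related by isotopy of $\alpha$ in $\Sigma\times(-1,1)$.

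For the algebra homomorphism property, let $\alpha,\beta$ be stated $n$-webs over $\Sigma$, represented by $c$-transverse diagrams $D_\alpha,D_\beta$. The product $\alpha\beta$ is realized by placing $\alpha$ above $\beta$ in $\Sigma\times(-1,1)$, and $(\alpha\beta)\cap c = (\alpha\cap c)\sqcup(\beta\cap c)$. Choose a height order $h$ on $(\alpha\beta)\cap c$ in which every point of $\alpha\cap c$ is higher than every point of $\beta\cap c$, with $h|_{\alpha\cap c}=h_\alpha$ and $h|_{\beta\cap c}=h_\beta$. A state $s$ on $(\alpha\beta)\cap c$ decomposes as $s=(s_\alpha,s_\beta)$, and by the choice of $h$ the lift $(\alpha\beta)(h,s)$ factors, as an element of $\S_n(\Cut_c\Sigma)$, as the stacking product $\alpha(h_\alpha,s_\alpha)\cdot\beta(h_\beta,s_\beta)$. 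Summing over $s=(s_\alpha,s_\beta)$ distributes over the product to give
\[
\Theta_c(\alpha\beta)=\sum_{s_\alpha,s_\beta}\alpha(h_\alpha,s_\alpha)\cdot\beta(h_\beta,s_\beta)=\Theta_c(\alpha)\,\Theta_c(\beta).
\]

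The main obstacle, which turns out to be mild here, is the height-order bookkeeping: one must be sure that the particular choice of $h$ adapted to the stacking product agrees with any other choice (so that $\Theta_c$ is unambiguously defined), and that the lift of a stack equals the stack of lifts rather than some twisted version. Both points reduce to the height-exchange invariance established in step (e) of the proof of Theorem \ref{t.splitting}, so once that is in hand, the multiplicativity follows with no additional computation.
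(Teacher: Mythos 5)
Your proposal is correct and follows essentially the same route as the paper: both deduce the surface statement from the general Splitting Theorem \ref{t.splitting} applied to the thickened surface split along $c\times(-1,1)$ (handling the non-closed disk via Remark \ref{r.notcloseddisk}), and both obtain multiplicativity directly from the definition by stacking, with independence of the height order $h$ reducing to the isotopy invariance already established in the proof of Theorem \ref{t.splitting}. The paper is terser (it dismisses the algebra-homomorphism claim as clear from the definition), but your added detail on the choice of $h$ adapted to the stacking product is exactly the implicit content of that remark.
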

\begin{proof} The set $c \times (-1,1)$ is not a closed disk but we can still use Theorem \ref{t.splitting}, see Remark \ref{r.notcloseddisk}. More precisely, let us enlarge the ideal points of $\Sigma$ to open disks in $\bS$, and embed  ${\Sigma}$ into $\BR^3$. Let $\bar M$ be the topological closure of ${\Sigma} \times (-1,1)$ in $\BR^3$. 

 Then $(M, \cN)$ is pseudo-isomorphic to $(\bar M, \cN)$.  
 Applying Theorem \ref{t.splitting} to split $(\bar M, \cN)$ along the topological closure of $c \times (-1,1)$ in $\bar M$, we get the $R$-linear map $\Theta_c: \S_n({\Sigma}) \to \S_n(\Cut_c({\Sigma}))$ defined in the statement. 

From the definition it is clear that $\Theta_c$ is an algebra homomorphism.
\end{proof}

\subsection{Reflection anti-involution} 

For any pb surface $\Sigma$ we have an involution $\tau: \Sigma\times (-1,1) \to \Sigma\times (-1,1),\ 
(x,u) \to (x,-u)$ which maps webs $\alpha$ to $\tau(\alpha)$ (with their framing transformed by the tangent map $\tau_* :T (\Sigma\times (-1,1))\to T (\Sigma\times (-1,1))$. 
Given a stated web $\alpha$ in $\Sigma\times (-1,1)$ let $\bar \alpha$ be $\tau(\alpha)$ with its framing reversed, $f \to -f$.

\bpro For any commutative ring $P$ and $R=P[v^{\pm 1}]$ and for any pb surface $\Sigma$,
there is a unique $P$-algebra anti-involution $\bar \cdot: \SF \to \SF$ such that $\bar v = v^{-1}$ and $\bar\al$ for stated webs $\alpha$ is defined as above.
\epro

We call $\bar \cdot$ the {\bf mirror reflection} map.

\begin{proof} Let $\MN$ be defined as in Subsection \ref{ss.pbsurf} and $\overline {\MN}$ be defined as in Subsection~\ref{ss.symmetries}. Then the mirror reflection map is the composition of the orientation reversion $\vk_{(M,\cN)}$ with $\tau$ and hence, a $P$-linear isomorphism sending $v$ to $\bar v = v^{-1}$ by Theorem \ref{t.framing-rev}. It is easy to see that 
$$\bar{\bar \al} =\al \ \text{and}\ \overline{\alpha\cdot \alpha'}=\bar{\alpha'}\cdot \bar{\alpha}$$
for stated webs $\alpha, \alpha'.$
\end{proof}
If $\al$ is a stated $n$-web diagram over ${\Sigma}$ then $\bar \al$ is obtained from $\al$ by switching all the crossings and reversing the height order on each boundary edge.

\subsection{Embedding of punctured bordered surfaces}
\label{ss.surfacef}
Consider a proper embedding of a pb surface $\Sigma_1$ into $\Sigma_2$. Note that it can map several boundary edges of $\Sigma_1$ into one boundary edge of $\Sigma_2$. For a boundary edge $b$ of $\Sigma_2$ a linear order on the set of boundary edges of $\Sigma_1$ mapped into $b$ is called a {\bf b-order}. Fixing it for each $b$ defines a {\bf height ordered embedding} $f: \Sigma_1 \embed \Sigma_2$, inducing an $R$-module homomorphism 
$f_*: \S_n(\Sigma_1) \to \S_n(\Sigma_2)$, where $f_*(\al)$ is $\al$ with its height order on each $b$ determined by the $b$-order in addition to the height order of $\partial \al$. 
If the $b$-order is given by the positive (respectively negative) orientation of $b$, we say $f_*$ is positively (respectively negatively) induced from $f$, cf. Fig. \ref{f.orderedembed}. 

\begin{figure}[h]
\diagg{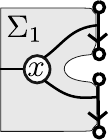}{.5in }\quad \diagg{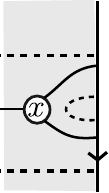}{.7in }\quad  \diagg{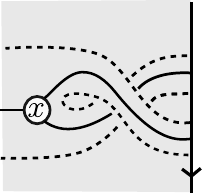}{.7in }
\caption{Left: A surface $\Sigma_1$ with a skein $x$. Middle: The image of $x$ under the negative height order embedding of $\Sigma_1$ into $\Sigma_2$. Right: The image of $x$ under the positive height order embedding of $\Sigma_2$.}
\label{f.orderedembed}
\end{figure}

Note that $f_*$ is an $R$-algebra homomorphism if and only if each boundary edge of $\Sigma_2$ contains the image of at most one boundary edge of $\Sigma_1$.

%
\section{Skein algebras of bigon and quantum groups}
\label{s.quantum-groups}
%

\def\OqM{{\mathcal O_q(M_n)}}

In this section we prove that the stated skein algebra $\S_n(\fB)$ of the bigon, $\fB$, has a natural structure of a co-braided Hopf algebra which is naturally isomorphic to  the quantized coordinate algebra $\Oq$. 
We also show that the stated skein algebra of the monogon, $\fM$, is the ground ring $R$. Finally, we prove Theorem \ref{t.kernelG} which identifies the kernel of the map $\Gamma$.

\subsection{Monogon} 
\label{ss.monobigon}
%

Let $D =\{ (x,y)\in \BR^2 \mid  x^2 + y^2\le 1\}$ be the standard disk with the counterclockwise orientation. The {\bf  monogon} $\fM$ is the pb surface obtained by removing the bottom point $(0,-1)$ from $D$.
 
\bthm[Proof in Subsec. \ref{ss.p.monogon}.]\label{t.monogon} The stated skein algebra $\S_n(\fM)$  of the monogon $\fM$ is isomorphic to the ground ring $R$ via the map 
$\mu: R\to S_n(\fM)$ given by $\mu(r)= r\cdot \emptyset$.
\ethm

%
\subsection{Bigon} 
\label{ss.bigon}

The {\bf  bigon} $\fB$ is the punctured bordered surface obtained from the standard disk $D$ by removing the top and the bottom points, $(0,1), (0,-1)$. The two edges of $\fB$ are denoted by $e_l$ and $e_r$  as in Figure \ref{f.bigon}. 
Up to isotopy there are two orientation preserving auto-diffeomorphisms of $\fB$, the identity and the rotation $\rot$ by 180 degrees about the center of $\fB$. The rotation $\rot$ induces an algebra involution 
$
\rot_* : \S_n (\fB) \to \S_n(\fB).
$

\FIGc{bigon}{(a) \& (b) Bigon $\fB$. (c) Stated arc $a^i_j$, (d) splitting of $\fB$}{2.5cm}

A {\bf directed bigon} is an oriented surface diffeomorphic to $\fB$, with one ideal vertex designated as the {\bf bottom vertex}. 
Equivalently the direction of a bigon can be specified by choosing the left (or right) edge. 
We often depict $\fB$ as the square $[-1,1] \times (-1,1)$, as in Figure \ref{f.bigon}(b). Let $a_{ij}$ be the stated $n$-web over $\fB$ given in Figure \ref{f.bigon}(c), and let $\ca^i_j$ be $a^i_j$ with the reverse orientation.

We will now define a Hopf algebra structure on $\S_n(\fB)$ geometrically.
By splitting $\fB$ along an interior ideal arc connecting its two ideal vertices we get two directed bigons $\fB_l$ and $\fB_r$, for each the  bottom vertex comes from the one of $\fB$. The splitting homomorphism becomes an algebra $R$-homomorphism
$$ \Delta: \S_n(\fB) \to \S_n(\fB) \ot \S_n(\fB).$$
The commutativity of the splitting homomorphisms at disjoint ideal arcs shows that $\Delta$ is a co-product.
For example, from the definition one has 
\beq\label{e.coprod}
 \Delta(a^i_j)=\sum_k a^i_k\otimes a^k_j,\quad \Delta(\ca^i_j)=\sum_k \ca^i_k\otimes \ca^k_j
\eeq

The natural embedding $\iota: \fB \to \fM$ (filling in the top ideal point) induces an $R$-linear map $\iota_{*}: \S_n(\fB) \to \S_n(\fM) $, where the left edge  $e_l$ is higher than the right edge $e_r$. 
 Let $\epsilon: \S_n(\fB) \to R$ be the composition
 \beq\label{e.counit}
 \epsilon: \S_n(\fB) \xrightarrow{ \wt\htw_{e_r}}\S_n(\fB) \xrightarrow{\iota_*} \S_n(\fM)\simeq R,
 \eeq
where $\wt\htw_{e_r}$ is a half-twist automorphism of Subsection \ref{ss.htw}.
 Explicitly, for a stated diagram $\al$,
$$\epsilon\left( \counit\right)= \left( \prod_{j} c_{ i_j}  \right ) \,  \mu ^{-1} \left(\counitb\right),$$
where $\mu: R \to \S_n(\fM)$ is the isomorphism of Theorem \ref{t.monogon}. For example, 
\beq\label{e.counit-a}
\epsilon(a^i_j)=
 \epsilon(\ca^i_j)=c_j\, \,  \counitc
=\delta_{i,j}.
\eeq

\def\antipode{\raisebox{-16pt}{\incl{1.6 cm}{antipode}}}
\def\antipodea{\raisebox{-16pt}{\incl{1.6 cm}{antipodea}}}
 \def\ro{\mathsf{ro}}
 
Let the $R$-module automorphism $S: \S_n(\fB) \to \S_n(\fB)$ be the composition 
$$S= \rot_* \circ  \htw_{e_l }^{-1} \circ \wt\htw_{e_r},$$
Explicitly,  for a stated diagram $\al$, 
\beq S\left( \antipode \right)= \left( \prod_{m} c_{ i_m } \right) ^{-1} \left(   \prod_{m} c_{ j_m} \right ) \, \antipodea,
\label{e.antip}
\eeq
where $\ro(\al)$ is the result of rotating the planar diagram $\al$ about the center of the square by 180 degrees. (Here, we use the fact that $c_{\bar j}=(-1)^{n-1} q^{2j-n-1}c_j.$)

For example, we have
\beq
S(a^i_j) = (-q)^{i-j}\,\,  \ca^\bj _\bi.
\label{eq.Saij}
\eeq

\brem\label{r.tangles-webs}
Note that stated $n$-tangle diagrams can be identified with the diagrams of stated $n$-webs in $\fB$ with the downwards ascending height order on $\p_l \fB$ and $\p_r \fB,$  i.e. the height order is positive on the left edge but negative on the right edge.

That leads to a natural identification of stated $n$-tangles with stated $n$-webs in the thickened bigon, for which the basic internal, right, and left annihilators of Subsec. \ref{ss.annihilators} correspond to defining skein relations  \eqref{e.pm}--\eqref{e.crossp-wall} of $\S_n(\fB).$

The positive order on the left edge explains why there is a twist in the definition of the operation $\hd$ which turns right annihilators to basic annihilators of Subsection \ref{ss.ltr}.

By this identification $S$ corresponds to the dual operation $\alpha\to \alpha^*$ of Subsection \ref{ss.dualop}. 
\erem

Recall that $\fB$ is the standard bigon.

\bthm[Proof in Subsec. \ref{ss.p.t.Hopf}]\label{t.Hopf} 
(a) The algebra $\S_n(\fB)$ has the structure of a Hopf algebra over $R$ with the coproduct $\Delta$, the counit $\ep$, and the antipode $S$. 

(b) The map $ \Psi (u^i_j) = a^i_j$ extends to a unique Hopf algebra isomorphism  
$$\Psi: \OqR \xrightarrow{\cong} \S_n(\fB) .$$
\ethm
Here $\OqR:= \Oq \ot_\Zv R$ is the algebra $\Oq$ of Subsec. \ref{ss.oqsln} with the ground ring $R$.

%
\subsection{Cobraided structure}

 The Hopf algebra $\OqR$ is {\bf dual quasitriangular} (see  \cite[Section 2.2]{Maj}, 
 \cite[Section 10]{KS}, \cite[Section 10.3]{ES}), also known as {\bf cobraided} (see e.g. \cite[Section VIII.5]{Kass}). This means it has an $R$-form (i.e. a co-$R$-matrix), which is a bilinear form
 $$ \rho : \OqR \otimes \OqR \to R$$ satisfying certain properties, with the help of which one can make the category of $\OqR$-modules a braided category. 
 {}
 The following generalizes \cite[Theorem 3.5]{CL} from  $n=2$ to all $n$:
 
\def\xybar{  \raisebox{-9pt}{\incl{.9 cm}{xybar}} }
\def\yxbar{  \raisebox{-9pt}{\incl{.9 cm}{yxbar}} }
\def\letterxyz{  \raisebox{-10pt}{\incl{1 cm}{xyz}} } 
\def\braido{  \raisebox{-10pt}{\incl{1 cm}{braid1}} }
\def\pcB{\partial \cB}
 
\begin{theorem}[Proof in Subsec. \ref{ss.p.t.cobraid}]\label{t.cobraid} Under the identification of $\S_n(\cB)$ and $\OqR$ via the isomorphism $\Psi$,
 the $R$-form $\rho$ 
 {}
 has the following geometric description
 \begin{align}
 \rho \left (\letterx  \ot \lettery \right) & = \epsilon \left(  \letterxy \right),  \label{eq.cobraid} 
 {}
\end{align} 
for any $x,y\in \OqR.$  
 \end{theorem}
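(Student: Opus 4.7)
My plan is to define the bilinear form $\tilde\rho\colon\S_n(\fB)\otimes\S_n(\fB)\to R$ via the right-hand side of \eqref{eq.cobraid}, extended $R$-bilinearly, and to identify $\tilde\rho$ with $\rho$ through the uniqueness characterization of the co-$R$-matrix on $\OqR$. Recall that $\rho$ is determined by its values on the generators $u^i_j\otimes u^k_l$ together with the multiplicativity identities
\begin{equation*}
\rho(ab\otimes c)=\sum\rho(a\otimes c_{(1)})\rho(b\otimes c_{(2)}),\qquad \rho(a\otimes bc)=\sum\rho(a_{(1)}\otimes c)\rho(a_{(2)}\otimes b),
\end{equation*}
so it suffices to verify (a) $\tilde\rho(a^i_j\otimes a^k_l)$ matches the standard $R$-matrix entries, and (b) $\tilde\rho$ obeys both multiplicativity identities. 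Unitality $\tilde\rho(1\otimes y)=\epsilon(y)=\tilde\rho(y\otimes 1)$ is automatic since an empty web contributes no crossings.

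For (a), the crossed diagram associated to $\tilde\rho(a^i_j\otimes a^k_l)$ is a pair of stated arcs with a single interior crossing. I would push the crossing to a boundary edge using the wall-crossing skein relations \eqref{e.crossp-wall} and \eqref{e.wallcross}, expressing the configuration as a linear combination of products of crossing-free stated arcs. Applying $\epsilon(a^i_j)=\delta_{i,j}$ from \eqref{e.counit-a} then extracts a scalar. A direct computation yields exactly the entries of $\mathcal R$ displayed in \eqref{e.R}, matching the standard co-$R$-matrix on $\OqR$ as presented in \cite[Section 10]{KS}.

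For (b), the key geometric observation is that the algebra product $xy$ in $\S_n(\fB)$ is the vertical stacking of $x$ above $y$, while the coproduct $\Delta$ is the splitting along an interior ideal arc joining the two ideal vertices of $\fB$. Consequently, in the diagram computing $\tilde\rho(xy\otimes z)$, inserting a splitting disk into the bigon containing $z$ at the level between the crossings with $x$ and with $y$ decomposes $z$ as $\Delta(z)=\sum z_{(1)}\otimes z_{(2)}$, and the entire crossed configuration is the stacking of ``$x$ crossed with $z_{(1)}$'' above ``$y$ crossed with $z_{(2)}$''. Since $\epsilon$ is an algebra homomorphism (being the counit of the Hopf algebra $\S_n(\fB)$ by Theorem \ref{t.Hopf}), applying it distributes over the stacking product and yields $\sum\tilde\rho(x\otimes z_{(1)})\tilde\rho(y\otimes z_{(2)})$. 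The second multiplicativity formula follows analogously by splitting the bigon containing $x$ at the level between its crossings with $y$ and with $z$; the ``swap'' $\rho(a_{(1)}\otimes c)\rho(a_{(2)}\otimes b)$ appears because it is the upper and lower halves of $x$ that end up paired with $y$ and $z$ in opposite vertical order.

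The main obstacle will be careful bookkeeping of the geometric conventions: identifying which half of a split bigon corresponds to $z_{(1)}$ versus $z_{(2)}$ (dictated by the directed-bigon conventions in Subsection \ref{ss.bigon}), and tracking how the half-twist $\htw_{e_r}$ appearing in the definition \eqref{e.counit} of $\epsilon$ interacts with the crossings in the defining picture of $\tilde\rho$. Once these conventions are pinned down so that the values on generators in step (a) match \eqref{e.R} with the correct $q^{-1/n}$ normalization, step (b) reduces to a direct invocation of the Splitting Theorem (Theorem \ref{t.splitting2}) together with the multiplicativity of $\epsilon$, and the conclusion $\tilde\rho=\rho$ follows because $\OqR$ is generated as an algebra by the $u^i_j$.
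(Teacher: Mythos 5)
Your proposal is correct and follows essentially the same route as the paper: define the bilinear form by the right-hand side of \eqref{eq.cobraid}, check the two multiplicativity identities by splitting the bigon and using multiplicativity of $\epsilon$, and check the values on the generators $a^i_j\otimes a^k_l$ against the $R$-matrix, invoking the uniqueness characterization of the co-$R$-matrix. The only cosmetic difference is that the paper reads off $\epsilon$ of the crossed generator diagram directly from Proposition \ref{p.counit=RT} (i.e.\ Eq.\ \eqref{eq.Re}), whereas you propose to recompute it via the boundary skein relations \eqref{e.crossp-wall}--\eqref{e.wallcross}; both give the same entries of $\cR$.
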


\def\tPhi{\tilde \Phi}
\def\ttPhi{\tilde{\tilde \Phi}}

\subsection{Ground ring} 

The remainder of this section is devoted to proving Theorems \ref{t.monogon}, \ref{t.Hopf}, \ref{t.cobraid} and \ref{t.kernelG}. 
Since it is enough to do it for $R=\Zv$, we will assume this ground ring for the rest of this section.

%
\subsection{Algebra homomorphism $\S_n(\fB)\to \Oq$}

 \blem\label{l.S_n-gens}
The webs $a^i_{j}$ for $i,j\in \{1,\ldots, n\},$ generate $\S_n(\fB)$ as an $R$-algebra. 
 \elem 

\begin{proof} By Lemma \ref{l.span}, $\S_n(\fB)$ is generated by $a^i_j$ and $\ca^i_j$ for 
$i,j=1,\dots ,n.$

Fix $i,j$ and choose a permutation $\tau\in S_n$   with $\tau(1)=\bar i$. By \eqref{e.wallnearcap} and \eqref{e.vertexwall3},
$$\ \hspace*{-.2in} \vertexnearwallcap{<-}{>}{$\tau(n)$}{$\tau(2)$}{$j$}{<}=c_{\bar i}\vertexnearwalledge{<-}{t}{>}{$\tau(n)$}{$\tau(n-1)$}{$\tau(1)$}{$i$}{$j$}{<}=
a\, c_{\bi } \,  (-q)^{\ell(\tau)}\,  \ca^i_j.$$

On the other hand, Eq. 
\eqref{e.vertexnearwall} expresses the left side in terms of $a^k_l$'s:
$$a\sum_{\sigma\in S_n} (-q)^{\ell(\sigma)}\wallnedgewallcap{b}{$\tau(n)$}{$\tau(2)$}{$\sigma(n)$}{$\sigma(2)$}{$\sigma(1)$}{$j$}=
a\cdot c_{\bar j}\cdot \hspace*{-.2in} \sum_{\sigma\in S_n:\, \sigma(1)=\bar j} \hspace*{-.1in} (-q)^{\ell(\sigma)} a^{\tau(2)}_{\sigma(2)}\cdot \dots \cdot a^{\tau(n)}_{\sigma(n)}.$$
 
Since $c_{\bar j} c_{\bi} ^{-1} = (-q)^{j-i} $, by comparing the two equalities, we have
\beq 
\ca^i_j =  (-q)^{j-i} \sum_{\sigma\in S_n: \sigma(1)=\bar j} (-q)^{\ell(\sigma)-\ell(\tau)} a_{\sigma(2)}^{\tau(2)}\cdot \dots \cdot 
a_{\sigma(n)}^{\tau(n)} 
\label{eq.ca}
\eeq
which shows $\ca^i_j$ is in the subalgebra generated by $a^i_j$.
\end{proof}

As the first step towards proving the theorems of this section we will construct an $R$-algebra homomorphism
$\Phi: \S_n(\fB) \to \Oq\subset \Uq^*$.

Let $\al \to T(\al)$ be the bijection of Remark \ref{r.tangles-webs} between the set of isotopy classes of stated $n$-webs over $\fB$ and stated $n$-tangles. It extends to an $R$-algebra isomorphism  $T: R {\mathcal W}_n (\fB) \to R \cT $.

By Remark \ref{r.tangles-webs}, the composition $\Gamma\circ T: R {\mathcal W}_n (\fB) \to \Uq^*$ of $T$ with $\Gamma$ defined in Subsection~\ref{ss.Gamma} preserves all the defining relations of $\S_n(\fB)$. 
Hence, $\Gamma\circ T$ descends to an $R$-linear homomorphism $\Phi: \S_n(\fB) \to \Uq^*$, which  by Proposition \ref{p.Gamma} is an algebra homomorphism. 
From Eq.  \eqref{e.G0} and Proposition \ref{p.Hopfdual} we have 
\begin{align}
\Phi (\emptyset) &= \epsilon, \quad \text{counit of } \ \Uq,   \label{e.eps}\\
\Phi (a^i_j)  &= u^i_j, \quad \text{generators of } \ \Oq, \label{e.aijtou}
\end{align}
for $i,j=1,\dots, n.$ As $u^i_j$ generate $\Oq$, Lemma \ref{l.S_n-gens} and Eq. \eqref{e.aijtou} show that 
$$\Phi(\S_n(\fB)) = \Oq.$$

%
\subsection{Proof of Theorem \ref{t.monogon}} \label{ss.p.monogon}

\def\monogon{\raisebox{-13pt}{\incl{1.3 cm}{monogon}}}
\def\monogona{\raisebox{-13pt}{\incl{1.3 cm}{monogona}}}

 By  Lemma  \ref{l.span}, $\S_n(\fM)$ is spanned by the empty $n$-web. 
Therefore, the map $ 
 \mu: R \to \S_n(\fM)$ given by $\mu(r)=r\cdot \emptyset
 $
is surjective.  By removing the left edge of $\fB$ we get a monogon. This gives an embedding $\iota: \fM \embed \fB$, which induces an $R$-algebra homomorphism $\iota_*: \S_n(\fM) \to \S_n( \fB)$.  By Eq. \eqref{e.eps} the composition 
$$ R  \xrightarrow{ \mu} \S_n(\fM)\xrightarrow     { \iota_* } \S_n( \fB)   \xrightarrow     { \Phi }  \Uq^*$$
is an $R$-linear map sending 1 to $\ep$. As the free $R$-module generated by $\ep$ is a submodule of $\Uq^*$, the composition is injective.  Thus $\mu$ is injective, and hence, bijective. \qed.

Note that we have 
\beq\label{e.bigonscalar}
\monogon = \monogona,
\eeq
since the skein $x$ can be brought to a scalar with the same skein relations on the left as on the right.

%
\subsection{Proof that $\S_n(\fB)$ is a Hopf algebra}
%
We already noted that $\Delta$ is a coproduct. 
Since $\mu$ is an isomorphism, $\epsilon$ is well defined by Eq. \eqref{e.counit}.
A version of the argument of~\cite{CL} shows that $\epsilon$ is an $R$-algebra homomorphism as well:
for any webs $\alpha_1,\alpha_2,$ 
$$\epsilon(\alpha_1\alpha_2)= \epsilon\left(
\bigontwonodes{<}{<}{b}{$\alpha_2$}{$\alpha_1$}{$\p_l \alpha_2$}{$\p_l \alpha_1$}{$\p_r \alpha_2$}{$\p_r \alpha_1$}\right)=
\prod_{x\in (\alpha_1\cup \alpha_2)\cap \p_r \fB} \hspace*{-.2in} c_{\overline {s(x)}} \cdot 
\mu^{-1}\left(\bigontwonodes{<}{<}{m}{$\alpha_2$}{$\alpha_1$}{$\p_l \alpha_2$}{$\p_l \alpha_1$}{$\overline{\p_r \alpha_2}$}{$\overline{\p_r \alpha_1}$}\right)=$$ 
$$\prod_{x\in \alpha_1\cap \p_r \fB} \hspace*{-.2in} c_{\overline {s(x)}} \cdot \prod_{x\in \alpha_2\cap \p_r \fB} \hspace*{-.2in}  c_{\overline {s(x)}}  \cdot  \mu^{-1}\left(\bigonalpha{<}{}{m}{$\alpha_1$}{$\p_l \alpha_1$}{$\overline{\p_r \alpha_1}$}\right)\cdot \mu^{-1}\left(
\bigonalpha{<}{}{m}{$\alpha_2$}{$\p_l \alpha_2$}{$\overline{\p_r \alpha_2}$}\right)
=\epsilon(\alpha_1)\epsilon(\alpha_2),$$
where $\p_l \alpha, \p_r \alpha$ denote the sequences of left and right side states of $\alpha$ and
the third identity follows from \eqref{e.bigonscalar}.

We also have
$$(\epsilon \otimes id)\circ \Delta(x)= x= (id \otimes \epsilon)\circ \Delta(x)$$
for all $x\in \S_n(\fB)$. Indeed, since $\Delta$ and $\ep$ are algebra homomorphisms, it is enough to verify it for the generators $x= a^i_j$ and that follows  from the explicit values of $\Delta(a^i_j)$ and of $\ep(a^i_j)$ given by Eqs. \eqref{e.coprod} and \eqref{e.counit-a}.

Consequently, $(\S_n(\fB), \Delta,\epsilon)$ is an $R$-bialgebra. By ~\eqref{e.antip}, $S$ is $R$-algebra anti-isomorphism. Therefore, to prove that $S$ is an antipode for $(\S_n(\fB), \Delta,\epsilon)$ it remains to to be shown that 
\beq\label{e.S-property}
\sum S(x_{(1)})x_{(2)} =\ve(x)= \sum x_{(1)} S(x_{(2)}),\quad  \text{where} \ \Delta x=\sum x_{(1)}\otimes x_{(2)},
\eeq
As before, it suffices to be verified for the generators $a^i_j$, $i,j=1,\dots, n,$ only.
Since $\Delta(a^i_j)=\sum_k a^i_k\otimes a^k_j,$ the left side of \eqref{e.S-property} reduces to:
$$\sum_k S(a^i_k)a^k_j=\sum_k \frac{c_k}{c_i}
\bigontwoedges{<}{<}{b}{>}{<}{$k$}{$\bar k$}{$j$}{$\bar i$}
=  \frac{1}{c_i}\bigoncap{}{<}{b}{<}{$j$}{$\bar i$}
=\delta_{i,j}$$
by Eqs. \eqref{eq.Saij}, \eqref{e.wallnearcap}, and \eqref{e.capwall}. The proof of the right identity of \eqref{e.S-property} is analogous.

This completes the proof  that $(\S_n(\fB), \Delta, \epsilon, S)$ is a Hopf algebra.

\def\state{\mathrm{st}}
\def\dl{\partial_\ell}
\def\dr{\partial_r}
\def\ba{{\mathbf a}}
\def\buu{{\mathbf u}}
\def\boal{{\boldsymbol{\alpha}}}

%
\subsection{Proof of Theorem \ref{t.Hopf}}
\label{ss.p.t.Hopf}
%

\bpro\label{p.counit=RT} Suppose $\boal=(\boi, \al, \boj)$ is a stated $n$-web on $\fB$. Then $\ep(\boal)$ is equal to the matrix element of the corresponding modified Reshetikhin-Turaev operator:
\beq \ep (\boal) = \la  \boi \mid  \RTp(T(\al))  \mid \boj \ra.
\label{eq.epsilon}
\eeq
\epro
\begin{proof} The map $\S_n(\fB)\to \Qv$ given by $\boal \to \la \boi \mid  \RTp(T(\al))  \mid \boj \ra$ is clearly an $R$-algebra homomorphism whose values on $a^i_j$ coincide with those of $\ep$, by Eq. \eqref{e.eps}.  \end{proof}

\def\braid{  \raisebox{-10pt}{\incl{1 cm}{braid}} }
\def\braida{  \raisebox{-10pt}{\incl{1 cm}{braida}} }
\def\braidab{  \raisebox{-10pt}{\incl{1 cm}{braidab}} }
\def\braidac{  \raisebox{-10pt}{\incl{1 cm}{braidac}} }
\def\braidbb{  \raisebox{-10pt}{\incl{1 cm}{braidbb}} }
\def\braidbc{  \raisebox{-10pt}{\incl{1 cm}{braidbc}} }
\def\braidz{  \raisebox{-10pt}{\incl{1 cm}{braid0}} }
\def\sinkz{  \raisebox{-17pt}{\incl{1.5 cm}{sinkz}} }

In particular, we have 
\beq \epsilon \left(  
\cross{<-}{<-}{p}{>}{>}{$l$}{$k$}{$j$}{$i$}
\right) = \hR_{ij}^{kl}=  \cR_{ij}^{lk}.
\label{eq.Re}
\eeq
Let us show that $\ba = (a^i_j)$ is a quantum matrix. By isotopy,
$$\crossextr{<-}{<-}{p}{>}{>}{$l$}{$k$}{$j$}{$i$}=\crossextl{<-}{<-}{p}{>}{>}{$l$}{$k$}{$j$}{$i$}$$
Split along the dashed lines (coproduct), then apply the counit
$$  \ep \left(\cross{<-}{<-}{p}{>}{>}{$l$}{$k$}{$r$}{$s$}\right) \cdot \walltwowall{<-}{<-}{>}{>}{$r$}{$s$}{$j$}{$i$}   =  \walltwowall{<-}{<-}{>}{>}{$l$}{$k$}{$r$}{$s$}\ \cdot  \ep\left(\cross{<-}{<-}{p}{>}{>}{$r$}{$s$}{$j$}{$i$}\right),$$
 Using the value of $\cR$ in Eq. \eqref{eq.Re}, the above identity becomes
\be 
\cR (\ba \ot \ba) = (\ba  \ot \ba ) \cR ,  \notag
\ee
which is the defining relation Equ. \eqref{e.OqM} of a quantum matrix. Besides
$${\det}_q(\ba)=\sum_{\sigma\in S_n} (-q)^{\ell(\sigma)}\wallnedgewall{<-}{b}{>}
{$n$}{$2$}{$1$}
{$\sigma(n)$}{$\sigma(2)$}{$\sigma(1)$}
= a^{-1}\,   \left(\vertexnearwalll{<-}{b}{>}{$n$}{$2$}{$1$}
\right)=1$$
where the second equality is from Eq. \eqref{e.vertexnearwall} and the third is from Eq. \eqref{e.vertexwall3}.
Hence, the algebra map $\Psi:\Oq\to \S_n(\fB)$ given by $\Psi(u^i_j)= a^i_j$ is well-defined. 

Since $\Phi \circ \Psi(a^i_j)= a^i_j$,  we have $\Phi \circ \Psi =\id$. This shows $\Phi$ is injective, and hence  $\Phi: \S_n(\fB) \to\Oq $ is an algebra isomorphism. By checking the values of $\Delta, \ep$, and $S$ on the generators $a^i_j$ we see that $\Phi$ is a Hopf algebra homomorphism. 
This completes the proof of Theorem \ref{t.Hopf}.
\qed

%
\subsection{Proof of Theorem \ref{t.cobraid}}
\label{ss.p.t.cobraid}  
%
 
\def\letterx{  \raisebox{-9pt}{\incl{.9 cm}{x}} } 
\def\lettery{  \raisebox{-9pt}{\incl{.9 cm}{y}} }
\def\letterxy{  \raisebox{-9pt}{\incl{.9 cm}{xy}} }
\def\xybar{  \raisebox{-9pt}{\incl{.9 cm}{xybar}} }
\def\yxbar{  \raisebox{-9pt}{\incl{.9 cm}{yxbar}} }
\def\letterxyz{  \raisebox{-10pt}{\incl{1 cm}{xyz}} } 
\def\braido{  \raisebox{-10pt}{\incl{1 cm}{braid1}} }
\def\pcB{\partial \cB}

 \begin{proof} The $R$-form satisfies the following equalities (stated with Sweedler's notation for the coproduct): 
\begin{align}
\rho (xy \ot z)& = \sum \rho (x \ot z') \rho  (y \ot z'')  
\label{eq.cobraid3}\\
\rho (x \ot y z)& = \sum \rho  (x' \ot z) \rho (x'' \ot y).
\label{eq.cobraid4}
\end{align}
For $\Oq$ the values of $\rho$ are given by (see \cite[Section 2.2]{Maj} or \cite[Section 10.1.2]{KS})):
\be 
\rho(u^i_j \ot 1) = \rho(1 \ot u^i_j)= \delta_{ij}, \ \rho(u^i_j \ot u^k_l)= R^{ik}_{jl},  \label{eq.cobraid0}
\ee
which, together with  Relations \eqref{eq.cobraid3} and \eqref{eq.cobraid4}, totally determine $\rho$.

The first part of the proof follows that of \cite{CL}.
Let $\rho'$ be the map defined by the right side of \eqref{eq.cobraid}; we will show that $\rho'=\rho$. It is enough to show that $\rho'$ satisfies \eqref{eq.cobraid3}, \eqref{eq.cobraid4}, and the initial values \eqref{eq.cobraid0}, all with $\rho$ replaced by $\rho'$. 
We have, where a line labeled by, say $x$, stands for the stated $n$-tangle diagram $x$,
$$\rho'(xy \ot z)= \epsilon \left(\letterxyz   \right)$$
Splitting the bigon by the vertical middle ideal arc, then  using $\epsilon(u) =\sum \epsilon(u_{(1)}) \epsilon(u_{(2)}))$,
\begin{align*}
\rho'(xy \ot z) &= \sum \epsilon \left(\diagg{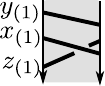}{.5in} \right) \cdot \epsilon \left(\diagg{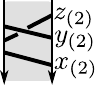}{.5in} \right)   \\
&= \sum \rho'  (x_{(1)} \ot z_{(1)}) \rho'  (y_{(2)} \ot z_{(2)}) \epsilon(x_{(2)}) \epsilon (y_{(1)})=  \sum \rho'  (x \ot z_{(1)}) \rho'   (y \ot z_{(2)}).
\end{align*}
This proves \eqref{eq.cobraid3} for $\rho'$. The proof of \eqref{eq.cobraid4} is similar.

Under the isomorphism, $u^i_j$ becomes $a^i_j$. Using Eq. \eqref{eq.Re}, we have
\begin{align*}
\rho'(a^i_j \ot a^k_l) &= \epsilon \left(  
\cross{<-}{<-}{p}{>}{>}{$i$}{$k$}{$l$}{$j$}
\right) = \hR_{jl}^{ki}=  \cR_{jl}^{ik},   \\
\rho'(a^i_j \ot 1) &=  \rho'(1\ot a^i_j) = \epsilon \left(  a^i_j \right) = \delta_{i,j}
\end{align*}
which proves  \eqref{eq.cobraid0}, completing the proof of the theorem.
 \end{proof}

\def\W{{\mathcal W}}
\subsection{Proof of Theorem \ref{t.kernelG}} \label{s.kernel}
\begin{proof} The proof of Theorem \ref{t.Hopf} shows that the kernel of the map $\Gamma\circ T: R\W_n(\fB) \to \Uq^*$ is generated by internal relations, boundary relations on the right side, and boundary relations on the left side. Transferring back to $\Gamma: R \cT \to \Uq^*$ via the isomorphism
 $T: R\W_n(\fB) \to R \cT$ we conclude that the kernel of $\Gamma$ is generated by the basic internal annihilators, basic right annihilators, and basic left annihilators. 
\end{proof}

%
\subsection{Additional facts}
%
\begin{enumerate}
\item 
Formula \eqref{eq.ca}, after a simple manipulation, has the form
$$\ca^i_j = {\det}_q M^{\bi}_{\bar j}(\mathbf a).$$
where $M^{\bi}_{\bar j}(\mathbf a)$ is the submatrix of $\ba$ obtained my removing the $\bi$-row and $\bar j$-column. Alternatively this formula is a consequence of the (geometric) antipode formula,
 $S(a^i_j)= (-q)^{i-j} \ca^{\bar j}_\bi,$ combined with the (algebraic) antipode formula, \eqref{e-Oq-ops}, in $\Oq$.

\item The antipode, given by \eqref{e.antip}, is equivalent to the dual map of Section \ref{ss.dualop} via $T$: $$ T(S(x)) = (T(x))^*.$$

\item The algebra involution $\rot_*: \Oq \to \Oq$, induced from the rotation by 180 degrees, is a coalgebra anti-homomorphism. It is easy to show that its dual restrict to an algebra anti-involution $\rot^*: \Uq \to \Uq$. One can check that $\rot^*$ is equal the anti-involution $\rho$ introduced by Lusztig \cite[Chapter 19]{Lu-book} in his study of canonical bases of quantized enveloping algebras.
\end{enumerate}

%
\section{Coaction of $\Oq$ on stated skein modules}
\label{s.coaction}

Similarly to the case $n =2$ considered in \cite{CL, BL}, we are going to show that every marking $\beta$ of a marked $3$-manifold $(M,\cN)$ defines a right coaction of $\Oq$ on $\S_n(M,\cN)$. Dually, it defines a left-$\tUL$ module structure on $\S_n\MN$, where $\tUL$ is  a completion of the Lusztig integral version $\UL$ of the quantum group $\Uq.$ We will observe that the actions of the charmed and the half-ribbon elements on $\S_n(M,\cal N)$ coincide with the marking automorphism $g_\beta$ and the half-twist automorphism $\htw_\beta$ of Subsections \ref{ss.edgeauto}-\ref{ss.htw}, respectively.

The above $\Oq$-coaction will be very important for the further development of the theory of stated skein algebras in the remainder of this paper.
For simplicity we assume $R=\Zv$ in this section.

\subsection{Module and Co-module structures}
\label{ss.coaction}
 
Suppose ${\Sigma}$ is a punctured bordered surface and $b$ is a boundary edge. Let $c$ be an interior ideal arc isotopic to $b$. This means that $b$ and $c$ cobound a bigon. By splitting ${\Sigma}$ along $c$ we get a surface ${\Sigma}'$ and a directed bigon with $b$ considered its right edge. As ${\Sigma}'$ is diffeomorphic to ${\Sigma}$ via a unique up to isotopy diffeomorphism, we identify $\S_n({\Sigma}') = \S_n({\Sigma})$. The splitting homomorphism gives an algebra homomorphism
\beq 
\Delta_b: \S_n({\Sigma}) \to \S_n({\Sigma}) \ot \Oq. \label{e.coact}
\eeq
The commutativity of splitting maps and the values of $\ep$ on horizontal stated arcs given by \eqref{e.eps} imply that
 $\Delta_b$ is a right coaction of $\Oq$ on $\S_n({\Sigma})$. Moreover,  the right coactions at different boundary edges commute. Since $\Delta_b$ in Eq. \eqref{e.coact} is an algebra homomorphism,  $\S_n({\Sigma})$ is a right {\em  comodule-algebra} over $\Oq$, as defined in \cite[Section III.7]{Kass}.
 
If we split off a bigon (as above) and identify $b$ with its left edge, we get a left $\Oq$-comodule structure on $\SF$.

The above construction of the $\Oq$-coactions on $\S_n(\Sigma)$ generalizes to $\Oq$-coactions on stated modules of marked 3-manifolds. Given a marking $\beta$ of a marked 3-manifold $\MN$, consider its closed disk neighborhood $D$ in $\pM$, disjoint from the other markings of $\MN$. By pushing the interior of $D$ inside $M$ we get a new disk $D'$ which is properly embedded in $M$. Splitting $\MN$ along $D'$, we get a new marked 3-manifold $(M', \cN')$ isomorphic to $\MN$, and another marked 3-manifold bounded by $D$ and $D'$. The latter,
after removing the common boundary of $D$ and $D'$, is isomorphic to the thickening of the 
bigon, with $\beta$ considered its right face marking, as depicted in Figure \ref{f.cubep}(a).
Hence, this construction yields an $R$-linear splitting map
$$\Delta_\beta: \S_n\MN \to \S_n\MN\otimes \Oq.$$
As in the surface case, this is a right coaction of $\Oq$ on $\S_n\MN$, and the right coactions at different markings commute.

The completion $\tUqq$ of $\Uq$ of \cite{ST}, cf. Subsection \ref{ss.half-ribbon}, has its integral version,
$\widetilde{\UL},$  which contains the half-twist element $X$ of Subsection \ref{ss.half-ribbon}, see \cite[Comment 3.7]{KT}.  Equivalently, this is a completion of the Lusztig integral version $\UL$ of $\Uq$ \cite[Sec. 1.3]{Lu-root1}. The Hopf algebras $\tUL$ and $\Oq$ are in Hopf duality over $\Zv$, which turns 
any right $\Oq$-comodule $W$ to a left $\tUL$-module as follows: for $u\in \tUL$ and $ x\in W$,
$$ u * x =  \sum x_{(1)} \la f_{(2)}, u\ra,  \quad \text{where}\  \Delta(x) =\sum x_{(1)} \ot f_{(2)}$$ is the $\Oq$-coaction map.

To make explicit the left action of $ \tUL$ on $\S_n\MN$ coming to the right coaction $\Delta_\beta$ we extend the states of an $n$-web at  marking $\beta$ as follows.
Suppose $\al$ is an $n$-web in $\MN$ with the sign sequence on $\beta$ equal to $\boeta=(\eta_1,\dots, \eta_k)\in \{\pm \}^k$. The set $\{ v_{\boi} \mid \boi \in \{1,\dots, n\}^k\}$ is the  $R$-basis of the based module $V^\boeta$. Assume $\al$ is stated at all markings except  $\beta$. For $x\in V^\boeta$ let $(\al, x)\in \S_n\MN$ be defined so that  $(\al, v_\boi)$ is $\al$ with states $\boi$ on $\beta$, and the map $x \to (\al, x)$ is $R$-linear. From the definition we have
\beq 
u * (\al,x) = (\al, ux)  \label{e.act0}
\eeq

\bexa The action of the charmed element $g$ on $\S_n\MN$ is exactly the map $g_\beta$ of Subsection \ref{ss.edgeauto}. In fact, $g$ is a group-like element, i.e. $\Delta^{[k]} (g) = g ^{ \ot k}$ for $k=1,2, \dots,$ and the actions of $g$ on the based $\Uq$-modules $V$ and $V^*$ are given by the same diagonal matrix with $g_1,\dots, g_n$
on the diagonal, see \eqref{e.gi}. That ensures that the action of $g$ on $\S_n\MN$ coincides with the map $g_\beta$.
\eexa

\def\htwx{\raisebox{-25pt}{\incl{2cm}{htwx}}}
\def\htwy{\raisebox{-25pt}{\incl{2cm}{htwy}}}

\bexa The action of the half-ribbon element $X$ is the half-twist homomorphism $\htw_\beta$ of Subsection \ref{ss.htw}. Indeed, for the positive half-twist $H$ on $k$ strands (with arbitrary orientations) by \eqref{e.XXX} we have 
$$\RTp(H) = \Delta^{[k]}(X) \circ (X^{-1})^{\ot k} \circ \rev_k.$$ 
By applying this value of $\RTp(H)$ to \eqref{e.htw} we obtain
$$ \htw_{\beta}   \left(  \nedgewalltall{<-}{b}{}{${i_k}$}{${i_2}$}{${i_1}$}\right)
  = \left( \prod_{j=1}^k c_{\overline{i_j}}\right) \cdot \htwx = \htwy,
$$
where the second identity follows from \eqref{e.Xij}. This proves the statement.
\eexa

Formula \eqref{e.act0} makes it easy to study $\S_n\MN$ as an $\tUL$-module. For example, one can show that over the field $\Qv$ the $\Uq$-module $\S_n\MN\ot_\Zv \Qv$ is integrable and is a direct sum of finite-dimensional simple $\Uq$-modules. For the case $n=2$ see \cite{CL}.

\subsection{Boundary relations revisited} 
\label{ss.Drel}

\def\rrel{\raisebox{-25pt}{\incl{2cm}{rrel}}}
\def\rrela{\raisebox{-25pt}{\incl{2cm}{rrela}}}
\def\rrelb{\raisebox{-25pt}{\incl{2cm}{rrelb}}}
\def\boD{{\bm{D}}}

Let $\boD=(D,\boi)$ be an $n$-web diagram $D$ over the bigon $\fB$ right stated by
$\boi=(i_1,\dots,i_k)$. 
Assume that $D$ has $l$ left endpoints, which are not stated.
Suppose further that $\al$ is a stated $n$-web in a marked 3-manifold $\MN$ and that in a cube $Q$ which intersects $\cN$ at a subinterval of a marking $\beta$ the intersection $\al\cap Q$ has diagram equal to $\boD$, as in the left side of \eqref{e.rrel}.  
\text{By the property of the counit of the coaction,} we have 
\beq
\rrel  = \sum _{j_1,\dots, j_l} \rrela \cdot \ep \left(   \rrelb \right).
\label{e.rrel}
\eeq
This identity provides a local relation in any stated skein module, called the {\bf $\boD$-relation}.
By \eqref{eq.epsilon}, the values of $\ep$ of stated $n$-webs are the entries of the matrix describing the Reshetikhin-Turaev operator $\RTp(D)$ and are not difficult to calculate. All the boundary relations \eqref{e.vertexnearwall}-\eqref{e.crossp-wall} are of this type. Since Relations \eqref{e.pm}-\eqref{e.crossp-wall} are sufficient for defining the $\Oq$-coation on $\S_n(\MN)$, any $\boD$-relation is a consequence of these relations.

\def\heightinvbb{\raisebox{-12pt}{\incl{1.2 cm}{height_inv_bb}}}
\def\heightinvc{\raisebox{-12pt}{\incl{1.2 cm}{height_inv_c}}}
\def\heightinvp{\raisebox{-12pt}{\incl{1.2 cm}{height_inv_p}}}
\def\heightinvbbb{\raisebox{-12pt}{\incl{1.2 cm}{height_inv_bbb}}}

\bexa For $\boD= \crosswall{<-}{n}{white}{black}{$i$}{$j$}$, the $\boD$-relation is
$$\crosswall{<-}{n}{white}{black}{$i$}{$j$}
=   q^{-\frac{1}{n}}  \left (   q^{\delta_{\bar j i} }   
\twowall{<-}{white}{black}{$j$}{$i$}
+   \delta_{\bar j i} \sum_{k > i} (-q)^{k-i}(q-q^{-1}) 
\twowall{<-}{white}{black}{$\bar k$}{$k$}
\right) .$$
\eexa

\bexa The following relations for $n=3$ will be useful later:
\begin{align}
\twoonetwowall{->}{>}{>}{}{$i$}{$j$}
 & = q^{\delta_{i,j}} \twowall{->}{>}{>}{$j$}{$i$} - \twowall{->}{>}{>}{$i$}{$j$}\cdot \begin{cases} q & \text{for}\ i\geq j\\
q^{-1} & \text{for}\ i< j\\ \end{cases}\label{e.Hib0} \\
\forkwall{->}{black}{black}{white}{$i$}{$j$}& = \delta_{i\ne j}(-q)^{\delta_{i>j}} q^{-\frac76}  \edgewall{}{white}{$i+j-2$}  \label{e.Hib1} \\
 \Hwall{<-}{white}{black}{}{$i$}{$i$} & = \twowall{<-}{white}{black}{$i$}{$i$}, \ \text{for } \ i=1,3. \label{e.FSboundary0}
 \end{align}
\eexa

%
\subsection{The last among the defining skein relations}
%
\bpro\label{p.crossingrels}
If $[n-2]!$ is invertible in $R$ then the last defining relation, \eqref{e.crossp-wall}, is a consequence of the other defining relations 
\eqref{e.pm}-\eqref{e.capnearwall}.
\epro
\begin{proof}
\def\OqR{{\mathcal O_q(sl_n;R)}} Note that Relation \eqref{e.crossp-wall} is the $\boD$-relation for $\boD= \crosswall{<-}{p}{white}{white}{$i$}{$j$}$.

Since $[n-2]!$ is invertible, Identity  \eqref{e.crossing} makes it possible to eliminate all crossings in every diagram, 
\beq 
\cross{n}{n}{p}{>}{>}{}{}{}{}
 = q^\frac{n-1}{n}\walltwowall{n}{n}{>}{>}{}{}{}{} - (-1)^{\binom n2} \frac{q^{-\frac{1}{n}}}{[n-2]!}
\twoonetwowall{n}{>}{>}{\hspace*{-.05in}$n$-$2$}{}{},
\label{e.cr2}
\eeq

For the purpose of this proof, let $\S_n'(M,\cN)$ be defined as $\S_n\MN$, only without Relation \eqref{e.crossp-wall}. 
The Splitting Theorem holds for $\S_n'(M,\cN)$, as using \eqref{e.cr2} we do not have to consider the invariance of the splitting homomorphism under moving a crossing through the splitting disk.
Lemma \ref{l.span} holds for $\S_n'(\fB)$ because all crossings of diagrams on $\fB$ can be eliminated. Consequently, Lemma \ref{l.S_n-gens} holds as well and the proof of the isomorphism 
$\S_n(\fB) \simeq \OqR $  extends to an isomorphism 
$$\S'_n(\fB) \xrightarrow{\simeq} \S_n(\fB)\xrightarrow{\simeq} \OqR. $$

\def\braidz{  \raisebox{-20pt}{\incl{1.8 cm}{braidz}} }
\def\braidzz{  \raisebox{-20pt}{\incl{1.8 cm}{braidzz}} }
\def\braidzzz{  \raisebox{-20pt}{\incl{1.8 cm}{braidzzz}} }
\def\braidzzzz{  \raisebox{-20pt}{\incl{1.8 cm}{braidzzzz}} }

Furthermore, every marking $\beta$ of $\MN$ defines a right coaction 
$\Delta_\beta': \S'_n\MN \to \S'_n\MN \ot_R \OqR$ as in previous subsection. Using the coaction, one sees that for every right stated $n$-web diagram $\boD=(D,\boi)$ on $\fB$, the  relation \eqref{e.rrel} is  a consequence of the defining relations for $\S'_n\MN$. In particular, for $\boD= \crosswall{<-}{p}{white}{white}{$i$}{$j$}$, we get 
the statement of the proposition.
\end{proof}

\section{Algebraic structure of skein algebras}

\subsection{Glueing over an ideal triangle} 
\label{ss.braidedtensor} 

\def\ufS{\underline{{\Sigma}}}
\def\OSL{\Oq}
\def\CSS{\S_n({\Sigma})}

\def\PP{{\mathcal P}}
\def\fl{{\rev}}

The standard ideal triangle $\fT\subset \BR^2$ is the closed triangle with vertices $(-1,0),(1,0)$ and $(0,1)$
 with these vertices removed.  We will denote its sides by $e_1, e_2$, and $\p_b \fT$ as in Figure \ref{f.brtensor0}. Suppose $a_1, a_2$ are two distinct boundary edges of a (possibly disconnected) pb surface $\Sigma$.
 Define 
$$\Sigma_{a_1\triangle a_2} = (\Sigma \sqcup \fT)/(e_1 =a_1, e_2 = a_2),$$ 
as in Figure\ref{f.brtensor0}. Define the $R$-linear homomorphism $\glue_{a_1,a_2}:\S_n(\Sigma) \to \S_n(\Sigma_{a_1\triangle a_2})$ so that if $\al$ is  a stated $n$-web diagram over ${\Sigma}$ with the negative height order on both $a_1$ and $a_2$ then $\glue_{a_1,a_2}(\al)$ is the result of  continuing the strands of $\al$ with endpoints on $a_1$  and $a_2$ until they reach $\p_b \fT $, as in Figure  \ref{f.brtensor0} (right). (As usual, the arrows indicate the height order.) 

\begin{figure}[htpb]
    \includegraphics[height=2.8cm]{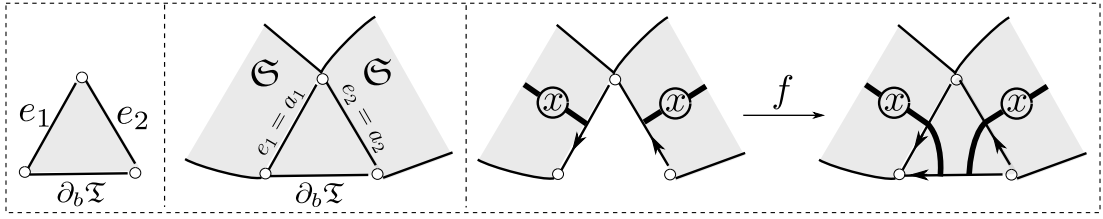} 
\caption{Left: The standard ideal triangle $\fT$. Middle: Glueing $\Sigma$ and $\fT$ by $a_1=e_1$ and $a_2=e_2$ to get $\Sigma_{a_1\triangle a_2}$. Right: tangle diagram $x\in \S_n(\Sigma)$ and its image $\glue_{a_1,a_2}(x)\in \S_n(\Sigma_{a_1\triangle a_2})$}
\lbl{f.brtensor0}
\end{figure}

We are going to show that $\glue_{a_1,a_2}$ is a linear isomorphism and to construct its inverse, $\Cut_{a_1,a_2}.$ 

Let $$\ve_{\fT}: \S_n(\fT)\to \S_n(\fM)=R,\quad \ve_{\fT}=\ve\circ \text{fill}_*$$
where fill embeds $\fT$ into a bigon by filling in the top vertex of $\fT$ and making the web ends at $e_1$ higher than those at $e_2.$ Let us consider also the homomorphism
$$\Cut_{a_1,a_2}:\S_n(\Sigma_{a_1\triangle a_2})\to \S_n(\Sigma),\quad 
\Cut_{a_1,a_2}=(\ve_{\fT}\otimes id_{\S_n(\Sigma)})\circ \Theta_{a_1}\circ \Theta_{a_2}.$$

\bpro\label{p.tri-glue} 
The homomorphisms $\Cut_{a_1,a_2}$ and $\glue_{a_1,a_2}$ are inverses of each other:
$$\Cut_{a_1,a_2}\circ \glue_{a_1,a_2}=id_{\S_n(\Sigma)}\ \text{and}\ \glue_{a_1,a_2}\circ \Cut_{a_1,a_2}=id_{\S_n(\Sigma_{a_1\triangle a_2})}$$
\epro

\bpr We follow Higgins' proof for $n=3$.
It is enough to check the above identities for the diagrams over $\Sigma$ and over $\Sigma_{a_1\triangle a_2}$ respectively.
 We have
\beq\label{e.triangle}
\ve_{\fT}\left(\diagg{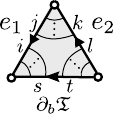}{.7in}\right)=\prod_{a=1}^{|\boj |} \left(\delta_{j_a,\overline{k_a}} c_{j_a}^{-1}\right)\cdot
\left(\prod_{b=1}^{|\boi |} \delta_{i_b, s_b}\right)\cdot \left(\prod_{d=1}^{|\bf l |} \delta_{l_d,t_d} \right),
\eeq
where the orientations of arcs in the triangle are arbitrary and $\boi, {\bf s}$ and $\boj, {\bf k}$ and ${\bf l}, {\bf t}$ are any three pairs are state sequences of equal length,  $|\boi |=|{\bf s}|,$ $|\boj |=|{\bf k}|,$ and $|{\bf l} |=|{\bf t}|.$ 

Let us call diagrams over $\fT$ which are like in Eq. \eqref{e.triangle} without the horizontal arcs, {\bf vertical}.
Since for every $x\in \S_n(\Sigma_{a_1\triangle a_2})$, the skein $\Theta_{a_1}\circ \Theta_{a_2}\circ \glue_{a_1,a_2}(x)$ is a linear combination of diagrams which are vertical on $\fT$, 
the first identity follows.

To prove the second identity observe that each diagram in $\S_n(\Sigma_{a_1\triangle a_2})$ can be positioned so that it intersects $\fT$ in disjoint arcs only. By applying Relation \eqref{e.capnearwall}, it can be presented as a linear combination of diagrams which are vertical on $\fT$. 
By Eq. \eqref{e.triangle}, for each of them we have
$$\glue_{a_1,a_2}\circ (\ve_\fT(W)\otimes id_{\S_n(\Sigma)})\circ \Theta_{a_1}\circ \Theta_{a_2}(W)=W.$$
Hence, the right identity of Proposition \ref{p.tri-glue} follows.
\epr

{
Note that the bijective map $\glue_{a_1,a_2}$ is not an algebra isomorphism. 
In fact, $\glue_{a_1,a_2}(yx)$ and $\glue_{a_1,a_2}(y) \glue_{a_1,a_2}(x) $ are depicted in 
Figure \ref{f.brtensorxy}, where $f= \glue_{a_1,a_2}$.

However, we are going to show that it is one with respect to the {\bf self braided tensor product} 
which we will define right now. The $n=2$ version of this product was considered in  \cite{CL}.

There are two right $\Oq$-comodule algebra structures on $\S_n(\Sigma)$ given by
$$\Delta_i:= \Delta_{a_i}: \S_n(\Sigma) \to \S_n(\Sigma)\ot \OSL,\quad i=1,2,$$
which  commute.

Define the $R$-linear map
$\bD:  \S_n(\Sigma)  \to \S_n(\Sigma) \ot \Oq$  by
$$\bD(x) = \sum x_{(1)} \ot u_{(2)} u_{(3)},$$
in Sweedler's notation, where
$$(\Delta_1\otimes \Id_\Oq)\circ \Delta_2(x) = \sum x_{(1)} \ot u_{(2)} \ot u_{(3)}.$$
For $x,y\in \S_n(\Sigma)$ define a new product by
\be \label{eq:selfproduct}
y\uast x = \sum  y_{(1)} x_{(1)}  \rho(u_{(2)} \ot w_{(2)}) 
\ee
where
$$\Delta_2 (y) =\sum y_{(1)} \ot u_{(2)},\quad  \Delta_1(x) =\sum x_{(1)} \ot w_{(2)},$$
and $\rho$ is the $R$-form.

It is proved in \cite{CL} that $\bD$ and $\uast$ together give $\SF$ a right $\Oq$-comodule algebra structure for $n=2$. That proof extends verbatim to all $n$. 
Denote by $\uot \S_n(\Sigma)$ the $R$-module $\S_n(\Sigma)$ with this $\Oq$-comodule algebra structure. On the other hand  $\S_n(\Sigma_{a_1\triangle a_2})$ has a right $\Oq$-comodule algebra structure coming from the boundary edge $\partial_b \fT$. Here is a stronger version of Proposition~\ref{p.tri-glue}.

\def\brtone{  \raisebox{-13pt}{\incl{1.3 cm}{brtensor1}} }
\def\brttwo{  \raisebox{-15pt}{\incl{1.5 cm}{brtensor2}} }
\def\brtthree{  \raisebox{-13pt}{\incl{1.3 cm}{brtensor3}} }
\def\brtfour{  \raisebox{-8pt}{\incl{.8 cm}{brtfour}} }
\def\brtonenew{  \raisebox{-13pt}{\incl{1.3 cm}{brtensor1-new}} }
\def\brttwonew{  \raisebox{-15pt}{\incl{1.5 cm}{brtensor2-new}} }

\bthm \label{r.braidedtensor}
The maps $\glue_{a_1,a_2} : \uot\, \S_n(\Sigma) \to \S_n({\Sigma}_{a_1 \triangle a_2})$,
$\Cut_{a_1,a_2}:\S_n(\Sigma_{a_1\triangle a_2})\to \uot\, \S_n(\Sigma)$ are
isomorphisms of right  $\Oq$-comodule algebras.
\ethm

\begin{proof} The geometric proof of \cite{CL} for $n=2$ carries over to all $n$ without modification. Here is a sketch. It is enough to show that $f= \glue_{a_1,a_2}$ is an algebra homomorphism.
 Let $x,y$ be stated $n$-web diagrams.

\FIGc{brtensorxy}{Diagrams of $yx,f(yx)$, and $f(y)f(x)$}{2.5cm}
We present $yx, f(yx), f(y)f(x)$ schematically as in Figure \ref{f.brtensorxy}. By splitting along the dashed line in the picture of $f(y)f(x)$ and by using the counit property,
$$ f(y) f(x) = \sum \diagg{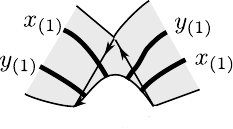}{.6in}
\ \ot \  \ep\left( \diagg{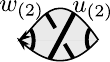}{.3in}
\right),$$ 
where $$\Delta_2(y)= \sum y_{(1)} \ot u_{(2)}, \ \text{and}\ \Delta_1(x)= \sum x_{(1)} \ot w_{(2)}.$$
The above equals
$\sum \,  f( y_{(1)} x_{(1)})\rho (u_{(2)} \ot  w_{(2)}),$
and, by  \eqref{eq:selfproduct}, it reduces to $f(y\uast x).$
Thus $f$ is an algebra homomorphism.
\end{proof}

A special case is when $\Sigma=\Sigma_1 \sqcup\Sigma_2$ and $a_i \subset \Sigma_i$ for $i=1,2$. In this case we say that ${\Sigma}_{a_1 \triangle a_2}$ is the result of gluing $\Sigma_1$ and $\Sigma_2$ over the triangle. 
Each $\S_n(\Sigma_i)$ is a right $\Oq$-comodule algebra via the coaction coming from the edge $a_i$. Then $\uot(\S_n(\Sigma))$ is the well-known {\em braided tensor product} $\S_n(\Sigma_1)$ and $\S_n(\Sigma_2)$ of the two $\Oq$-module algebras $\S_n(\Sigma_1)$ and $\S_n(\Sigma_2)$,  defined in \cite[Lemma 9.2.12]{Maj}.

\def\boa{{\bm{a}}}
\begin{example}[Ideal triangle]\label{eg.idealtriangle} Let $\Sigma_1= \Sigma_2=\fB$, where $a_1$ is the right edge of $\Sigma_1$ and $a_2$ is the left edge of $\Sigma_2$. Then ${\Sigma}_{a_1 \triangle a_2}$ is the triangle $\fT$. Hence, we have 
$$\S_n(\fT) \cong \Oq \uot \Oq,$$ 
where each copy of $\Oq$ is a right $\Oq$-comodule algebra via the coproduct. 
 From here one can easily write down an explicit presentation of the algebra $\S_n(\fT)$. Such presentation is used in the work \cite{LY2} on the quantum trace for stated $SL_n$-skein algebras.
\end{example}

Let $\Sigma_{g,p+1}$ be a $p$-punctured genus $g$ surface with a single loop boundary and let $\Sigma_{g,p}^*$ be $\Sigma_{g,p}$ with a boundary point removed. 

\bexa\label{ex.deltasum}
Let $\mathfrak S=\{\Sigma_{g,p}^*, g\geq 0, p\geq 1\}$ be the set of pb surfaces with a single arc boundary, considered up to a homeomorphism. For $\Sigma_1,\Sigma_2\in \mathfrak S,$
let $\Sigma_1\triangle \Sigma_2$ be the result of gluing over a triangle along $a_1=\p \Sigma_1$ and $a_2=\p \Sigma_2$, as above. Note that the $\triangle$ operation makes $\mathfrak S$ into a monoid with the identity $\fM$. 
\begin{figure}[h] 
   \centering \diagg{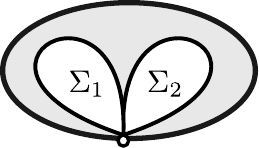}{0.6in} 
   \caption{The operation $\Sigma_1\triangle \Sigma_2$ on two surfaces with a single arc boundaries.}
   \label{f.deltasum}
\end{figure}
Theorem \ref{r.braidedtensor} implies that for any $\Sigma_1,\Sigma_2\in \mathfrak S,$ the algebra $\S_n(\Sigma_1\triangle \S_n)$ is the braided tensor product $\S_n(\Sigma_1) \uot \S_n(\Sigma_2).$ 

Therefore, 
$\S_n(\Sigma_{g,p}^*)$ is the braided tensor product of $p-1$ copies of $\S_n(\Sigma_{0,2}^*)$ and $g$ copies of $\S_n(\Sigma_{1,1}^*)$. 
We will analyze $\S_n(\Sigma_{0,2}^*)$ and $\S_n(\Sigma_{1,1}^*)$ in detail in Subsections \ref{ss.transmut} and \ref{ss.torus} below and we will see in particular that 
$$\S_n(\Sigma_{0,2}^*)\simeq \Oq\ \text{and}\ \S_n(\Sigma_{1,1}^*)\simeq \Oq^{\ot 2}$$ 
as $R$-modules.
Consequently,
$$\S_n(\Sigma_{g,p}^*)\simeq \Oq^{\otimes (p-1+2g)}$$ 
as an $R$-module. (A version of this formula appeared in \cite{BBJ}.)
This statement will be generalized by Theorem \ref{t.Aisom}.
\eexa

\subsection{Punctured Monogon and Majid's Transmutation} 
\label{ss.transmut} 

By attaching an ideal triangle to the bigon $\fB$ along its left and right edges $e_l$ and $e_r$,
as in Figure \ref{f.b_trigon}, we obtain the {\bf once-punctured monogon}, $\Sigma_{0,2}^*={\fB}_{a_1 \triangle a_2}$. 
\FIGc{b_trigon}{From the bigon to a punctured monogon by gluing over a triangle.}{2cm}

An algebraic description of the product on $\S_n(\Sigma_{0,2}^*)$ can be derived from that for $\Oq$ by the rule described in Eq. \eqref{eq:selfproduct}.  This allows to identify $\S_n(\Sigma_{0,2}^*)$ with Majid's transmutation of $\Oq$, as we explain now.

\def\tm{{\mathsf{tm}}}
Let $\tm$ be the composition of the inverse of the half-twist around the left edge with the above triangle gluing map,
\beq\label{e.tm}
\tm=\glue_{e_l,e_r}\htw_{e_l}^{-1}: \Oq=\S_n(\fB)\to \S_n(\Sigma_{0,2}^*).
\eeq
This map can be visualized as follows:  let a stated web $x$ in $\fB$ be represented by a diagram \bigonalpha{<}{<}{b}{$x$}{$\bf i$}{$\bf j$}, where the left horizontal line represents multiple horizontal edges (of possibly different directions) whose ends on the left are labeled by
$\bf i=\bmat
i_s\\ \vdots\\ i_1
\emat$ 
and, similarly, the right horizontal line represents multiple horizontal edges whose ends on the right are labeled by  $\bf j=\bmat
j_t\\ \vdots\\ j_1
\emat.$ 
Then $\htw_{e_l}^{-1}(x)=\frac1{\prod{c_{\bf i}}}\bigonalpha{>}{<}{b}{$x$}{$\overline{\bf i}$}{$\bf j$}$ 
and $\tm(x)=\frac1{\prod{c_{\bf i}}}\bigonpunctc{<}{<}{m}{$x$}{$\overline{\bf i}$}{$\bf j$}$.
By Propositions \ref{p.twist} and \ref{p.tri-glue}, $\tm$ is an $R$-linear isomorphism.
We will prove that this map defines Magid's {\em transmutation} on $\Oq$. (That was the reason for denoting the above map by $\tm$.)

\def\cH{{\mathcal H}}

Let us recall that notion first: for every Hopf algebra $\cH$, 
Majid proved that
\beq\label{e.adjco} 
Ad: \cH\to \cH\ot \cH,\quad Ad(x)=\sum x_{(2)}\otimes (Sx_{(1)})x_{(3)},
\eeq
defines a coaction on $\cH$ on itself \cite[Example 1.6.14]{Maj}, called the adjoint $\cH$-coaction and that there is
an associative {\em braided product} (or, {\em covariantised})
\beq\label{e.braidedprod}
x\underline{\cdot}\, y=\sum x_{(2)}y_{(2)}\rho((Sx_{(1)})x_{(3)}\otimes Sy_{(1)}),
\eeq
cf. \cite[Example 9.4.10]{Maj}. (This product should not be confused with the braided tensor product of Majid, which we discussed in Subsec. \ref{ss.braidedtensor}.)
Furthermore, he showed that $\cH$ with the braided product and the adjoint $\cH$-coaction is an $\cH$-comodule coalgebra. With these structures, $\cH$ is denoted by $\mathcal TH$ called the {\em transmutation} of $\cH$.

\bpro\label{p.tm}
(1)  $\tm$ is an isomorphism between the $\Oq$-comodule algebra $\cT\Oq$ 
and $\S_n(\Sigma_{0,2}^*)$. 
Hence, $(\tm \ot I)\circ Ad = \Delta_{\p \Delta_{0,2}^*} \circ \tm$.\\
(2) $\tm$ is a ring isomorphism. Hence, $\tm(x)\tm(y)=\tm(x\underline{\cdot}\, y).$
\epro

This statement was proved for $n=2$ in \cite{CL}.

\bpr[Proof of Prop. \ref{p.tm}](1)
Let a stated web in $\fB$ be given by the diagram \hspace*{-.1in}\bigonalpha{<}{<}{b}{$x$}{$\bf i$}{$\bf j$}, as above. 
Then $\Delta^2(x)=\sum_{\bf k, l} \bigonedge{<}{<}{b}{}{$\bf i$}{$\bf k$}\ot \bigonalpha{<}{<}{b}{$x$}{$\bf k$}{$\bf l$}\ot 
\bigonedge{<}{<}{b}{}{$\bf l$}{$\bf j$}$, where as above, horizontal arcs indicate multiple edges of possibly different directions. Consequently, by Eqs. \eqref{e.adjco} and \eqref{e.antip},
\beq\label{e.Adxbigon}
Ad\left( \bigonalpha{<}{<}{b}{$x$}{$\bf i$}{$\bf j$} \right)=\sum_{\bf k, l} \frac{\prod c_{\bf k}}{\prod c_{\bf i}}  \bigonalpha{<}{<}{b}{$x$}{$\bf k$}{$\bf l$}\otimes \bigontwonodes{<}{<}{b}{-}{\footnotesize{Ro}}{$\bf l$}{$\wh{\bf k}$}{$\bf j$}{$\wh{\bf i}$},
\eeq
where Ro denotes $180^\circ$ rotation in plane, $\hat{\bf i}$ denotes conjugation of components and the inversion of their order: $\wh{\bf i}=\bmat \overline{i_1}\\ \vdots\\ \overline{i_s}\emat$ for $\bf i=\bmat
i_s\\ \vdots\\ i_1\emat,$ and $\prod c_{\bf i}$ denotes $\prod_{t=1}^s c_{i_t}$. 

On the other hand, we have
$$\Delta_{\p \Sigma_{2,0}^*}\left(\bigonpunctc{<}{<}{m}{$x$}{$\overline{\bf i}$}{$\bf j$}\right)=
\sum_{\bf k, l} \bigonpunctc{<}{<}{m}{$x$}{$\overline{\bf k}$}{$\bf l$}\otimes \bigontwonodes{<}{<}{b}{-}{\footnotesize{Ro}}{$\bf l$}{$\wh{\bf k}$}{$\bf j$}{$\wh{\bf i}$}$$
and, hence,
$$\Delta_{\p \Sigma_{2,0}^*}\left( \frac1{\prod{c_{\bf i}}}
\bigonpunctc{<}{<}{m}{$x$}{$\overline{\bf i}$}{$\bf j$}\right)=
\sum_{\bf k, l} \frac{\prod c_{\bf k}}{\prod c_{\bf i}} \frac1{\prod{c_{\bf k}}}
\bigonpunctc{<}{<}{m}{$x$}{$\overline{\bf k}$}{$\bf l$}\otimes \bigontwonodes{<}{<}{b}{-}{\footnotesize{Ro}}{$\bf l$}{$\wh{\bf k}$}{$\bf j$}{$\wh{\bf i}$}.$$
Since this equality coincides with \eqref{e.Adxbigon} after replacing $x$ with $\tm(x)$ and $Ad$ by $\Delta_{\p \Sigma_{2,0}^*}$,
the statement follows. 

(2) Consider stated webs $x=\bigonalpha{<}{<}{b}{$x$}{$\bf i$}{$\bf j$}$ and  $y=\bigonalpha{<}{<}{b}{$y$}{$\bf k$}{$\bf l$}$ in $\fB.$
Then
\beq\label{e.tmxy}
 \tm(xy)=\tm\left(\bigontwonodes{<}{<}{b}{\small{$y$}}{$x$}{$\bf k$}{$\bf i$}{$\bf l$}{$\bf j$}\right)= \frac1{c_{\bf i}c_{\bf k}}
\diagg{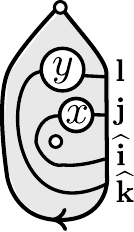}{1in}.
 \eeq
On the other hand,  
\beq\label{e.tmxy2} 
\tm(x)\tm(y)=\frac1{c_{\bf i}c_{\bf k}} \diagg{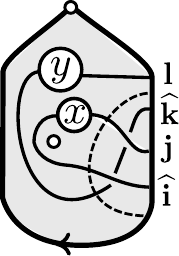}{1in}.
\eeq
Denoting the stated web diagram on the right by $z\in\S_n(\Sigma_{0,2}^*)$, we have 
\beq\label{e.Deltaepsilon}
\notag
 z=\sum z_{(1)}\ve(z_{(2)}), \  \text{where}\ \Delta(z)= \sum z_{(1)}\ot z_{(2)}\in \S_n(\Sigma_{0,2}^*)\ot \Oq
 \eeq 
 is the coaction along the dashed line and $\ve$ is the counit in $\Oq$. By applying this identify to 
\eqref{e.tmxy2} we obtain
$$\tm(x)\tm(y)=\frac1{c_{\bf i}c_{\bf k}} \sum \tm(x_{(2)} y_{(2)}) T(x_{(1)},x_{(3)}, y_{(1)}),$$
where $T(x_{(1)},x_{(3)}, y_{(1)})=\rho((Sx_{(1)})x_{(3)}\otimes Sy_{(1)}),$ by Theorem \ref{t.cobraid}.
Consequently, 
$$\tm(x)\tm(y)=\tm(x\underline{\cdot}\, y),$$
by \eqref{e.braidedprod}.
\epr

Consequently, our theory provides simple geometric proofs of the associativity of the (braided) product on $\cT\Oq$ and of $\cT\Oq$ being an $\Oq$-comodule algebra. (The proofs of these facts are quite technical and involved in \cite{Maj}.) Furthermore, our theory generalizes these statements to the boundary $\Oq$-coaction on the skein algebra of any essentially bordered punctured surface.

Let us discuss generators and relations of $\cT\Oq$ now. 
A {\em reflection equation algebra} $A_q(M_n)$ is an $R$-algebra generated by formal variables $x_{ij}$ for $i,j=1,...,n$ subject to the quadratic relations of the {\em reflection equation}:
\beq\label{e.refeq}
X_2 \hR X_2 \hR =\hR X_2 \hR X_2,
\eeq
where $X=(x_{ij})_{i,j=1,\dots, n}$, $X_2=X\ot Id,$ and $\hR$ is the braiding matrix of Subsec. \ref{ss.Hecke}.
(These equations are written explicitly out in \cite[Sec. 3]{DL}.) 

It is proved in \cite{KoS} that $\cT\Oq$ is the quotient of the reflection equation algebra by the {\em braided determinant} which is the image of the quantum determinant under the linear isomorphism $\cal{TO}_q(SL(n))\simeq \Oq$ above. An explicit polynomial expression in $x_{ij}$'s for it appears in \cite{JW}. Consequently, that expression together with the relations \eqref{e.refeq} are a complete set of relations for $\cT\Oq.$  

Let us relate this discussion to $\S_n(\Sigma_{0,2}^*)$ now. It is straightforward to verify that $\tm$ maps the generators $x_{ij}\in \cT\Oq$ to the arcs \diagg{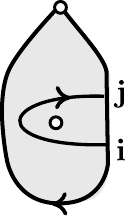}{.8in} which we will denote by $b_{i,j}.$ (Then $\tm(a_{i,j})=\frac1{c_{\bar i}}b_{\bar i,j}$ for the generators $a_{ij}$ for $\S_n(\fB)$ of Subsec. \ref{ss.bigon}. Independently of the above considerations, it is easy to see that $b_{ij}$'s for $i,j=1,\dots, n$ generate $\S_n(\Sigma_{0,2}^*)$, since any web in $\S_n(\Sigma_{0,2}^*)$ can be pushed towards the boundary of $\p \Sigma_{0,2}^*$ and simplified by the boundary relations to a polynomial expression in $b_{ij}$'s.)

Consequently, the above discussion provides a concrete finite presentation for $\S_n(\Sigma_{0,2}^*)$.

\subsection{On injectivity of splitting homomorphism}
\label{ss.injective}

\bpro \label{r.inj}
Suppose $\Sigma$ is an essentially bordered pb surface. Then for  any interior ideal arc $c$ of $\Sigma$, the splitting homomorphism $\Theta_c: \S_n(\Sigma) \to \S_n(\Cut_c\Sigma)$ is injective.
\epro

\begin{proof} First assume that an endpoint of $c$ is a boundary ideal point, which is an endpoint of a boundary edge $e\ne c$. In a small neighborhood of $e\cup c$ we can find an interior ideal arc $c'$ such that $e,c,c'$ cobound an ideal triangle; see Figure \ref{f.tri7}.

\FIGc{tri7}{The curve $c'$. Left: general case. Middle: $c$ and $e$ form a bigon. Right: $c$ cuts out a monogon.}{2.5cm}
Let $\mathring \fT$ be the interior of the triangle $\fT$  bounded by $c,c',$ and $e$, and let
 $\Sigma'= \mathring\Sigma \setminus (\mathring\fT \cup e)$. Then $(\Sigma')_{c \Delta c'}= \Sigma$.
By Theorem  \ref{r.braidedtensor} the map $\Cut_{c, c'} = (\ve_\fT \ot \id_{S_n(\Sigma')}) \circ \Theta _{c'} \circ \Theta_c$ is bijective.  It follows  that $\Theta_{c}$ is injective.

Now assume that both endpoints of $c$ are interior ideal points. Since $\Sigma$ is essentially bordered it contains an interior ideal arc $d$, disjoint from $c$, with one endpoint coinciding with an endpoint of $c$ and the other endpoint being a boundary ideal point. By the above case, the splitting map $\Theta_d: \S_n(\Sigma) \to \S_n(\Cut_d(\Sigma))$ is injective. Since the interior ideal arc $c\subset \Cut_d(\Sigma)$ has one endpoint on the boundary, $\Theta_c: \Cut_d(\Sigma) \to \Cut_{c,d}(\Sigma)$ is injective. From the commutativity $\Theta_c \circ \Theta_d = \Theta_d \circ \Theta_c$ we conclude that $\Theta_c: \S_n(\Sigma) \to \Cut_c(\Sigma)$ is injective.
\end{proof}

\bcon\label{con.inj} For any punctured bordered surface $\Sigma$ and any interior ideal arc $c$ the splitting homomorphism $\Theta_c$ is injective as well.
\econ

The conjecture is true when $n=2$ by \cite{CL} and for $n=3$ by Higgins \cite{Hi}. In both cases explicit bases of $\S_n(\Sigma)$ were used. Proposition \ref{r.inj} shows the conjecture is true if $\Sigma$ has non-trivial boundary. Furthermore, the argument of the proof reduces the conjecture to the empty boundary surfaces with a trivial ideal arc $c$, i.e. an ideal arc bounding a disk in $\Sigma$. Corollary \ref{c.inj} will establish a weaker version of this conjecture for all pb surfaces.
 
%
\subsection{Skein algebras of surfaces with boundary} 
\label{ss.surf-boundary}

Let $\Sigma$ be an essentially bordered pb surface. A collection 
$A=\{a_1,\dots , a_r\}$ of disjoint compact oriented arcs properly embedded into $\Sigma$ is {\bf saturated} if 
\begin{itemize}
\item[(i)] each connected component of ${\Sigma} \setminus \bigcup_{i=1}^r a_i$ contains exactly one ideal point (interior or boundary) of ${\Sigma}$, and
\item[(ii)] $A$ is maximal with respect to the above  condition.
\end{itemize}
Note that condition (i) does not imply (ii). For example, $A=\emptyset\subset \Sigma_{1,1}^*$ satisfies (i), but not (ii). Saturated $A$ consists of two ideal arcs in this surface.

Let $U(a_1),\dots, U(a_r)$ be a collection of disjoint open tubular neighborhoods of $a_1,\dots, a_r$, respectively. Each $U(a_i)$ is homeomorphic with $a_i\times (-1,1)$ (by an orientation preserving homeomorphism) and we require that $(\p a_i) \times (-1,1)\subset \p \Sigma.$

Recall from Subsec. \ref{ss.surfacef} that any embedding of pb surfaces $\Sigma'\subset \Sigma$ together with an ordering on the boundary edges of $\Sigma'$ in the boundary edges $b$ of $\Sigma$, called $b$-orders, 
defines a linear homomorphism $\S_n(\Sigma')\to \S_n(\Sigma).$ 
We will show that it is an isomorphism for a saturated system for arcs $a_1,\dots, a_r$ and $\Sigma'=U(A)=\bigcup_{i=1}^r U(a_i)$:

\def\tri{{\mathsf{tri}}}
\bthm\label{t.Aisom}
Assume $\Sigma$ is an essentially bordered pb surface and $A=\{a_1,\dots , a_r\}$ is a saturated system of arcs.

(1) We have $r= r(\Sigma):=\# \p \Sigma-\chi(\Sigma),$
where $\# \p \Sigma$ is the number of boundary components of $\Sigma$ and $\chi$ denotes the Euler characteristics.

(2) The embedding $U(A) \embed {\Sigma}$ with negative $b$-orderings for all boundary edges $b$ of $\Sigma,$ induces an $R$-module isomorphism $f_A:\S_n(U(A)) \to\S_n(\Sigma)$.
\ethm

Note that each $U(a_i) = a_i\times (-1,1)$ is naturally a directed bigon, with its sides $(\p a_i) \times (-1,1)$ oriented in the direction of $(-1,1)$.

\begin{figure}[h] 
   \centering \diagg{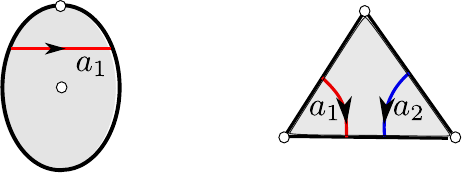}{0.7in}
   \caption{Examples of saturated systems. Left: $A=\{a_1\}$ in a punctured monogon, $\Sigma_{0,2}^*$. 
   Right: $A=\{a_1,a_2\}$ in an ideal triangle $\fT$ with $a_1$ in blue and $a_2$ in red.}
   \label{f.satsystems}
\end{figure}

\bexa\label{eg.sat-sys}
The saturated systems of Fig. \ref{f.satsystems} induce the linear isomorphisms 
\beq\label{e.satsys}
\tm: \S_n(\fB)\to \S_n(\Sigma_{0,2}^*) \quad \text{and}\quad \S_n(\fB)\ot \S_n(\fB)\to \S_n(\fT)
\eeq
of Eq. \eqref{e.tm} and Example \ref{eg.idealtriangle}.
\eexa

By the above theorem, for any essentially bordered pb surface we have an $R$-linear isomorphism
$$\Oq^{\ot r} \xrightarrow{\Psi^{\ot r}} \S_n(U(A)) \xrightarrow{ f_A} \S_n(\Sigma).$$

Since $\Oq$ has a Kashiwara-Lusztig's canonical basis over $\Z[v^{\pm 1}]$, cf. \cite[Prop 5.1.1]{Du}, 
 we have 
 
\bcor For any essentially bordered pb surface, 
$\S_n(\Sigma)$ is a free $R$-module with a basis given by the image of tensor product of Kashiwara-Lusztig's canonical bases on $\Oq^{\ot r}$ under $f_A\Psi^{\ot r}$.
\ecor

Remark \ref{r.projected-base} (below) generalizes the above theorem and corollary to all non-closed pb surfaces.

\brem  Part (1) implies that condition (ii) in the definition of a saturated system can be replaced by condition $|A|=r({\Sigma})$.
\erem

\begin{proof}[Proof of Theorem \ref{t.Aisom}] (1) By the maximality of $A$, its arcs cut $\Sigma$ into pieces of the following two types: 
$$\diagg{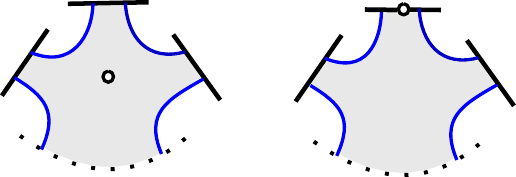}{.5in},$$ 
whose Euler characteristics are respectively 0 and 1. Hence the Euler characteristic of the result is the number of boundary ideal points, which is $\# \p \Sigma$. On the other hand each arc cut increases the Euler characteristic by 1. Hence $\chi({\Sigma}) + r = \# \p \Sigma$, proving part (1).

(2) We prove it by induction on $\tri({\Sigma})$ which is the number of ideal triangles in an ideal triangulation of ${\Sigma}$, defined as follows. Let $\cE$ be a maximal collection of nontrivial ideal arcs in ${\Sigma}$ which are pairwise disjoint and pairwise non-isotopic. The ideal arcs in $\cE$ not isotopic to boundary edges split ${\Sigma}$ into pieces, each is either a monogon, a bigon, or a triangle. Then $\tri({\Sigma})$ is the number of triangles, which is known to be independent on the choice of $\cE$. Note that $\tri({\Sigma})=0$ if and only if ${\Sigma}$ is a disjoint union of monogons and bigons, and the theorem is true for this case.

Suppose $\tri({\Sigma}) >0$. We can assume that ${\Sigma}$ is connected. 

\blem
There is a boundary edge of $\Sigma$ containing at least two endpoints of arcs in $A$.
\elem

\bpr Since arcs of $A$ are disjoint and simple, they have $2|A|$ endpoints and it is enough to prove that $2|A|> \# \p \Sigma$. Assuming otherwise, $2|A| \le \# \p \Sigma$, and by part (1) we have $1\le  \# \p \Sigma \le 2 \chi({\Sigma})$. The positivity of the Euler characteristic implies that $\chi({\Sigma}) =1$ and ${\Sigma}$ is a polygon. Then $ \# \p \Sigma \le 2 \chi({\Sigma})=2$ implies ${\Sigma}$ is a monogon or a bigon, contradicting the assumption $\tri({\Sigma}) >0$.
\epr

Let $b$ be a boundary edge containing at least two endpoints of $A$. Let $p$ the ideal end point of $b$, following the positive direction of $b$. Among all arcs in $A$ having endpoints in $b$ assume $a_1$ has an endpoint closest to $p$. When ${\Sigma}$ is cut by arcs in $A$, there are two pieces adjacent to $a_1$, one of them, denoted by $P_1$, contains the ideal point $p$. The other piece, denoted by $P_2$, contains an ideal point $p'$. Let $e_1$ be an ideal arc of ${\Sigma}$ lying in the interior of $P_1 \cup P_2$ connecting $p$ and $p'$ and intersecting $a_1$ once. No other arcs in $A$ intersects $e_1$.  Because the geometric intersection of $b$ with all arcs in $A$ is at least 2, $e_1$ cannot be isotopic to $b$. Pushing the union $e_1 \cup b$ slightly into the interior of ${\Sigma}$ yields an ideal arc $e_2$ such that $e_1, e_2, b$ bounds an ideal triangle $T$, as in Figure \ref{f.tri5}.  After an isotopy we can assume that $A$ is taught with respect to $e_1, e_2, b$ in the sense that for each $a_j\in A$ and each $e\in \{b,e_1, e_2\}$ the number $|a_j \cap e|$ is minimal when we replace $a_j$ by any isotopic arc. 

\FIGc{tri5}{Left: The arcs in $A$ are in red, the ideal arcs $b, e_1, e_2$ are in blue. Right: After pulling $e_1, e_2$ and arcs in $A$ taut.}{3cm}

Let ${\Sigma}'$ be the result of removing $b$ and the interior of $T$ from ${\Sigma}$, and let $A'$ be the collection $a_i':= a_i \cap {\Sigma}', i=1,\dots, r$. As $|A'|=|A|= r({\Sigma}) = r({\Sigma}')$, the system $A'$ is saturated for ${\Sigma}'$. Since each $a_i'$ is a shrinking of $a_i$, there is a natural isomorphism $f_{A\to A'}: \S_n(U(A)) \to  \S_n(U(A'))$. 
Since $\tri({\Sigma}')= \tri({\Sigma}) -1$ the induction hypothesis applies to ${\Sigma}'$. From the definition we see that $f_A$ is the composition of
$$\S_n(U(A))  \xrightarrow{ f_{A \to A'} } \S_n(U(A')) \xrightarrow{ f_{A'} } \S_n ({\Sigma}')  \xrightarrow{ \glue_{e_1,e_2} }  \S_n({\Sigma}).$$
Since each map in this composition is an $R$-linear isomorphism, so is $f_A$.
\end{proof}  

We will show now that the assumption about the negativity of all $b$-orderings in Theorem \ref{t.Aisom}(2) is unnecessary.

Let us enumerate the boundary edges of $\Sigma$ by $b_1,\cdots, b_s$ for bookkeeping purposes.
Let $o_1,...,o_s$ be some $b_1$-,..., $b_s$-orderings of the boundary intervals of $U(A)$ in the boundary intervals of $\Sigma$ and let $f_{A,o_1,\dots, o_s}: \S_n(U(A))\to \S_n(\Sigma)$ be the homomorphism induced by that height ordered embedding.

\def\brd{\mathsf{braid}}
To relate $f_{A,o_1,\dots, o_s}$ to $f_A$, note that each $b$-ordering $o$ is obtained by a certain permutation $\sigma$ of the negatively height ordered points $A\cap b$. Let us denote by $\sigma_1,...,\sigma_s$ the permutations corresponding to height orderings $o_1,...,o_s$.
Then $f_{A,o_1,\dots, o_s}(x)$ is induced by the embedding of $U(A)\times (-1,1)$ into $\Sigma \times (-1,1)$ with the boundary intervals of $U(A)$ braided by $(\sigma_1)_+,..., (\sigma_s)_+,$ cf. Figure \ref{f.orderedembed}.
Let us elaborate on it more detail now.

Let us call skeins of the form $f_A(x_1\ot ... \ot x_r)\in \S_n(\Sigma)$ {\bf pure}. 

\blem\label{l.braidedembed}
(i) For any braids $\tau_1\in B_{|A\cap b_1|},\cdots, \tau_s\in B_{|A\cap b_s|}$ there exists a unique linear transformation
$$\brd_{A,\tau_1,\cdots, \tau_s}: \S_n(\Sigma)\to \S_n(\Sigma)$$ 
which braids the endpoints of each pure skein in $\S_n(\Sigma)$ in $b_i$ by $\tau_i,$ for $i=1,...,s.$ (All skeins are considered with negative $b_i$-orderings for $i=1,...,s$.)\\
(ii) Let $\sigma_1,...,\sigma_s$ be permutations corresponding to height orderings $o_1,...,o_s$ on $b_1,...,b_s.$
Then for pure $x$,
$$f_{A,o_1,\dots, o_s}(x)=\brd_{A,(\sigma_1)_+,...,(\sigma_s)_+}\circ f_A.$$
(iii) $(\tau_1, ..., \tau_s)\to \brd_{A,\tau_1,\cdots, \tau_s}$ defines a group homomorphism from $B_{|A\cap b_1|}\times \cdots \times B_{|A\cap b_s|}$ to the group of $R$-linear automorphisms of $\S_n(\Sigma).$
In particular, each $\brd_{A,\tau_1,\cdots, \tau_s}$ is a linear isomorphism of $\S_n(\Sigma)$.
\elem

\bpr (i) 
For braids $\tau_1\in B_{|A\cap b_1|},\cdots, \tau_s\in B_{|A\cap b_s|}$ consider the embedding 
$$U(a_1)\cup ... \cup U(a_n) \subset \Sigma \times (-1,1)$$ 
modified by the braiding by $\tau_i$ of its components going towards $b_i\subset \Sigma$, for $i=1,...,s.$ This map is considered with the negative height order. It induces a linear map of skein algebras which we denote by 
$g_{A,\tau_1,\dots, \tau_s}: \S_n(U(A)) \to \S_n(\Sigma).$ Then for pure $x$, let
$$\brd_{A,\tau_1,\cdots, \tau_s}(x)=g_{A,\tau_1,\dots, \tau_s}\circ f_A^{-1}.$$
By Theorem \ref{t.Aisom}(2) pure skeins span $\S_n(\Sigma)$. Consequently, the condition of (i) determines $\brd_{A,\tau_1,\cdots, \tau_s}$ completely.

\noindent (ii) follows from the discussion above Lemma \ref{l.braidedembed}

\noindent (iii) By definition, 
$$\brd_{A,\tau_1,\cdots, \tau_s}\circ\brd_{A,\tau_1',\cdots, \tau_s'}=\brd_{A,\tau_1\tau_1',\cdots, \tau_s\tau_s'},$$ 
for any $\tau_1,\tau_1'\in B_{|A\cap b_1|},\cdots, \tau_s,\tau_s'\in B_{|A\cap b_s|}.$ Consequently, each
$\brd_{\tau_1,\cdots, \tau_s}$ is a linear isomorphism of $\S_n(\Sigma)$. Since $\brd_{A,id,...,id}=id$, the statement follows.
\epr

By Theorem \ref{t.Aisom}(2) and Lemma \ref{l.braidedembed}(2) and (3), we have: 

\bcor\label{c.Aisomanyorder} $f_{A,o_1,\dots, o_s}:\S_n(U(A)) \to\S_n(\Sigma)$ is a linear isomorphism for every $o_1,...,o_s.$
\ecor

%
\subsection{Products on skein algebras of surfaces with boundary} 
\label{ss.prod-surf-boundary}

In the previous subsection, we discussed $R$-module structures of skein algebras only. We will address the algebra products now.

Let $a_1,...,a_r$ be a saturated system of arcs in $\Sigma$ as before.
Note that the induced linear homomorphism $\S_n(U(a_i))\to \S_n(\Sigma)$ is an algebra homomorphism if and only if $a_i$ has its ends at different boundary intervals of $\Sigma.$ (We have seen this already in Example \ref{eg.sat-sys}, where 
the right map of Eq. \eqref{e.satsys} is an algebra homomorphism on each of the components, $\S_n(\fB)$, but the left map 
$tm: \S_n(\fB)\to \S_n(\Sigma_{0,2}^*)$ is not an algebra homomorphism.) 

Therefore, for the sake of studying algebra products on $\S_n(\Sigma)$ let us consider modified neighborhoods $U'(a_i)=U(a_i)\cup V$ for arcs $a_i$ with both their ends in the same boundary interval, where $V$ is a tubular neighborhood of the arc of $\p \Sigma$ connecting the endpoints of $a_i$. We assume that $V$ is small enough so that $U'(a_i)$ is homeomorphic to a punctured monogon. Note that the transmutation map is a linear isomorphism $\tm: \S_n(U(a_i))\to \S_n(U'(a_i))$ by
Proposition \ref{p.tm}.
We leave the chosen neighborhoods of the arcs with ends in different components of $\p \Sigma$ unchanged, $U'(a_i)=U(a_i).$

\def\mult{{\mathsf{mult}}}
Let us consider the map
$$\mult_A: \S_n(U'(a_1))\ot ... \ot \S_n(U'(a_r))\to \S_n(\Sigma),\quad \mult_A(x_1\ot ... \ot x_r)=x_1\cdot ... \cdot x_r.$$
Note that by the transmutation map for arcs in the same component of $\p \Sigma,$
$\mult_A$ coincides with $f_{A,o_1,\dots, o_s}$ for $b_1$-,..., $b_s$-orderings $o_1,...,o_s$, for which the boundary arcs of $U'(a_i)$ are higher than the boundary arcs of $U'(a_j)$ for $i>j$ in any boundary interval of $\p \Sigma$.

Consequently, by the above discussion and by Corollary \ref{c.Aisomanyorder}:
\bcor\label{c.mult} 
$\mult_A: \S_n(U'(a_1))\ot ... \ot \S_n(U'(a_r))\to \S_n(\Sigma)$ is an $R$-linear isomorphism and an algebra homomorphism on 
$$\S_n(U'(a_i))=R\ot ... \ot R\ot \S_n(U'(a_i))\ot R \ot ... \ot R\subset \S_n(U'(a_1))\ot ... \ot \S_n(U'(a_r))$$ 
for every $i$ (where each $R$ is spanned by the appropriate identity element).
\ecor

$\S_n(\Sigma)$ is not the tensor product of the algebras $\S_n(U'(a_1)),... ,\S_n(U'(a_n))$ because elements of different component algebras do not necessarily commute in $\S_n(\Sigma).$

We have seen in Examples \ref{eg.idealtriangle} and \ref{eg.sat-sys} already that the skein algebra of the ideal triangle, 
$\S_n(\fT)$ is the braided tensor product $\S_n(\fB)\underline{\ot} \S_n(\fB)$.
We will see in the next subsection however that our stated skein algebras are not braided products of their component algebras,
$\S_n(U'(a_1)), ..., \S_n(U'(a_r))$ in general.

\subsection{Torus with an arc boundary} 
\label{ss.torus}

Let us apply the approach of the above section to analyze 
the skein algebra of the torus with an arc boundary, $\S_n(\Sigma_{1,1}^*).$
Fig. \ref{f.torus} shows a torus (in black) with a saturated arc collection: $a_1$ in red and $a_2$ in blue. 

\begin{figure}[h] 
   \centering \diagg{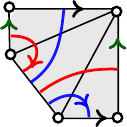}{0.7in}\quad \diagg{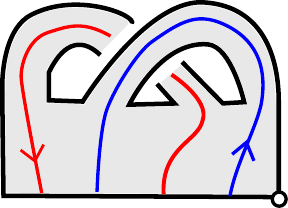}{0.7in}\quad 
   \caption{Left: A triangulation of $\Sigma_{1,1}^*$ (in black) with the horizontal edges identified and with the vertical edges identified. (Hence, all ideal vertices are identified.) The red and the blue arcs form a saturated arc collection. Right: Another presentation of $\Sigma_{1,1}^*$ with the corresponding red and blue arcs.}
   \label{f.toruswithboundary}
   \label{f.torus}
\end{figure}

\def\mult{{\mathsf{mult}}}
By Corollary \ref{c.mult},
$$\mult_A: \S_n(\Sigma_{0,2}^*)\ot \S_n(\Sigma_{0,2}^*)\to \S_n(\Sigma_{1,1}^*),
\quad \mult(x\ot y)=\diagg{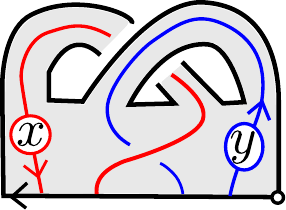}{0.6in}$$
is an $R$-linear isomorphism and an algebra homomorphism on each of the components algebras.


We described a method of finding an algebraic presentation of $\S_n(\Sigma_{0,2}^*)$ in Subsec. \ref{ss.transmut}. The above discussion allows for an algebraic description of the product on 
$\S_n(\Sigma_{0,2}^*)\ot \S_n(\Sigma_{0,2}^*)$ (induced from $\S_n(\Sigma_{1,1}^*)$ by $\mult_A$)
as follows: by the construction of $\mult_A,$ 
$$(x\ot 1)\cdot (x'\ot 1) =(x\cdot x')\ot 1,\quad  (1 \ot y)\cdot (1\ot y') =1\ot (y\cdot y'),\quad (x\ot 1)\cdot (1\ot y)=x\ot y$$ 
in $\S_n(\Sigma_{1,1}^*)$ for $x\in \S_n(U'(a_1))$ and $y\in \S_n(U'(a_2))$.
Therefore, to complete the algebraic description of the product in $\S_n(\Sigma_{1,1}^*)$ it remains to consider
$$(1\ot y)\cdot (x \ot 1)=\diagg{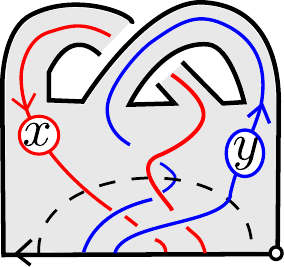}{.7in}.$$
Denoting this diagram by $z$, and applying the identity \eqref{e.Deltaepsilon}, where $\Delta$ is our $\Oq$-coproduct taken with respect to the dashed line, we see that
$$(1\ot y)\cdot (x \ot 1)=\sum x_{(2)}\cdot y_{(2)}\cdot T(x_{(1)},x_{(3)}, y_{(1)},y_{(3)}),$$
where $T(x_{(1)},x_{(3)}, y_{(1)},y_{(3)})$ is the counit value of the tangle in $\fB$ cut off from the diagram above by the dashed line. 

The above formulas completely determine the multiplication in $\S_n(\Sigma_{1,1}^*)$ 
and allow for writing a finite presentation of $\S_n(\Sigma_{1,1}^*)$ in terms of generators and relators.
Note that $\mult_A$ in this case is not a braided tensor product of the component algebras $\S_n(U'(a_1))$, for $i=1,2$, in the sense of \cite{Maj}.

For $R=\Bbbk(q)$, the skein algebra $\S_n(\Sigma_{1,1})$ is given by a semi-direct product 
$U_q(sl(n))\ltimes \uot \Oq$ and is called the ``elliptic double'' of $U_q(sl(n))$, and also the ``algebra of quantum differential operators on $SL(n,\Bbbk)$'', cf. \cite[Sec. 6.4]{BBJ}. 

\brem The finite presentations of $\S_n(\Sigma_{0,2}^*)$ and $\S_n(\Sigma_{1,1}^*)$ (discussed in Subsec. \ref{ss.transmut} and above) induce finite presentations of algebras $\S_n(\Sigma_{g,p}^*)$ for all $g\geq 0$, $p>0$ by the method of Example \ref{ex.deltasum}.

Furthermore, Corollary \ref{c.mult} allows for a generalization of the above method to provide a finite presentation of $\S_n(\Sigma)$ in terms of generators and relators for every essentially bordered surface $\Sigma.$
\erem

\section{ Kernel and image of the splitting homomorphism.}

\subsection{Kernel of the Splitting homomorphism} 
\label{ss.kernel}

Suppose ${\Sigma}$ is a connected pb surface with an ideal point $p$ and a trivial ideal arc $c_p$ at $p$. Then $\Cut_{c_p} {\Sigma}$ is the disjoint union of a monogon $\fM$ and of a new pb surface ${\Sigma}_p$ which has $c_p$ as its boundary edge, see Figure \ref{f.proj}. Let $\K_p({\Sigma})$ be the kernel of the composition
$$ \Theta_p: \SF \xrightarrow{ \Theta_{c_p}} \S_n({\Sigma}_p) \ot_R \S_n(\fM)  \xrightarrow \cong \S_n({\Sigma}_p).$$
Explicitly $\Theta_{p}$ is given as follows. Any stated $n$-web $\alpha$ over ${\Sigma}$ can be isotoped so that it is disjoint from $c_p$ and, hence, lying in $\Sigma_p$. Then $\Theta_{p}(\al)=\al$ as elements of $\S_n({\Sigma}_p)$.

\FIGc{proj}{From ${\Sigma}$ to ${\Sigma}_p$. Here $p$ is an interior ideal point. The picture when $p$ is a boundary ideal point is similar.}{2cm}

\bthm\label{t.K} 
For any two ideal points $p$ and $p'$ of a connected punctured bordered surface $\Sigma$ we have
 $\K_{p}({\Sigma}) = \K_{p'}({\Sigma})$. 
\ethm

\begin{proof} Assume the two trivial ideal arcs $c_p$ and $c_{p'}$ are disjoint. By splitting both $c_p$ and $c_{p'}$ we get two monogons and a pb surface ${\Sigma}_{p,p'}$. Let $\Theta: \SF \to \S_n({\Sigma}_{p,p'})$ be the composition of the two splittings, first along $c_p$ and then along $c_{p'}$. Since by Proposition \ref{r.inj} the second one is injective, we have $\ker \Theta_{p} = \ker \Theta$. By switching the order of the splitting we have $\ker \Theta_{{p'}}= \ker \Theta$. Thus $\K_p = \K_{p'}$.
\end{proof}

We denote this common ideal by $\K({\Sigma})$.
The quotient $\bSF:= \SF/\K(\Sigma)$ is called the {\bf projected stated skein algebra of ${\Sigma}$}. 
By Proposition \ref{r.inj}, $\K_p$ is trivial and $\bSF= \SF$ if $\pS\neq \emptyset$,

\bcor\label{c.inj} For any ideal arc $c$, the splitting homomorphism descends to an injective algebra homomorphism
$$\bar \Theta_c: \bSF\to \bar \S_n(\Cut_c\, \Sigma)=  \S_n(\Cut_c\, \Sigma).$$
\ecor

\bpr The proof is similar to that of Theorem \ref{t.K}. Assume $c$ is disjoint from a trivial arc $c_p$. Since the compositions 
$\Theta_c\Theta_{c_p}, \Theta_{c_p}\Theta_{c}: \SF \to \S_n (\Cut_c(\sF_p))$ coincide and for both of them the second map is injective, $\ker \Theta_{c_p} = \ker \Theta_c$.
\epr

\bcor\label{c.projected}
Conjecture \ref{con.inj} is equivalent to the projection $\S_n(\Sigma)\to \bar \S_n(\Sigma)$ being an isomorphism.
(And, hence, this projection is an isomorphism for $n=2$ and $3.$)
\ecor

In the next subsection we will prove the following:
\bthm\label{t-coinvariants}
For any $\Sigma$, $p$ and $c_p$ as above,  $\bSF$ coincides with the subalgebra of $\S_n(\Sigma_p)$ coinvariant under the coaction $\Delta_{c_p}: \S_n(\Sigma_p)\to \S_n(\Sigma_p)\otimes \S_n(\fB)$ at $c_p$: 
$$\bSF=\{x\in \S_n(\Sigma_p): \Delta_{c_p}(x)=x\otimes 1\}.$$
\ethm

\brem\label{r.projected-base} Let $\Sigma=\overline\Sigma -\cal P$, where $\cal P$ is a finite subset of compact surface $\overline\Sigma$, as in Subsec. \ref{ss.pbsurf}. 
Generalizing the setup of Subsec. \ref{ss.surf-boundary}, consider a collection $A$ of disjoint, oriented, arcs in $\Sigma$, each with endpoints in $\p \Sigma\cup \cal P,$ satisfying conditions (i) and (ii) above.  Theorem \ref{t.Aisom} and the discussion of the projected stated skein algebra implies that such $A$ defines an identification of $\bSF$ with $\Oq^{\ot r}$ and, hence, it determines a basis of $\bSF$.
\erem

%
\subsection{The Image of the Splitting homomorphism}
\label{ss.image}

Let $c$ be an interior oriented ideal arc of a pb surface $\Sigma$.
Denote the two copies of $c$ in $\Cut_c\,\Sigma$ by $a_1$ and $a_2$. We have the splitting $R$-algebra homomorphism
$$\Theta_c: \S_n(\Sigma)\to \S_n(\Cut_c\, \Sigma).$$
and $\S_n(\Cut_c\, \Sigma)$ is a $\Oq$-bi-comodule with the right and left coactions
\begin{align*} \Delta_{a_1}: \S_n(\Cut_c\, \Sigma)\to \S_n(\Cut_c\, \Sigma)\otimes \Oq \\
_{a_2}\Delta: \S_n(\Cut_c\, \Sigma)\to \Oq \ot  \S_n(\Cut_c\, \Sigma)
\end{align*} 
respectively, where $\Oq$ is identified with the skein algebra of the bigon directed by the orientation of $c$.
 {}
Recall that the Hochshild cohomology module is defined by
$$HH^0(\S_n(\Cut_c\, \Sigma))= \{ x \in \S_n(\Cut_c\, \Sigma) \mid  \Delta_{a_1}(x)= \fli \circ {}_{a_2}\Delta(x) \},$$
where $\fli$ is the transposition  
$$\fli : \Oq\otimes \S_n(\Sigma)\to \S_n(\Sigma)\otimes \Oq,\quad \fli(x\otimes y)=y\otimes x.$$

\bthm [See \cite{CL,KQ} for $n=2$ and \cite{Hi} for $n=3$] The image of $\Theta_c$ is equal to $ HH^0(\S_n(\Cut_c\, \Sigma))$.
\ethm
\begin{proof} Since the image of $\Theta_c$ is equal to the image of $\bar\Theta_c: \S_n(\Sigma)\to \S_n(\Cut_c\, \Sigma)$, 
we can work with projected skein algebras. More specifically, we will assume that one end $v$ of $c$ is a boundary ideal point of $\Sigma$, since we can remove a disk from $\Sigma$, adjacent to $v$ and disjoint from $c$, if necessary. We will present $c, a_1, a_2$ in $\Cut_c\,\Sigma_{a_1\triangle a_2}$ as in Fig. \ref{f.aac}, with $v$ in the bottom. 

\begin{figure}[h] 
   \diagg{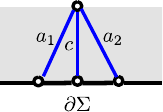}{.8in} 
   \caption{$(\Cut_c\,\Sigma)_{a_1\triangle a_2}$}
   \label{f.aac}
\end{figure}

Let $(\Cut_c\,\Sigma)_{a_1\wedge a_2}$ denote $(\Cut_c\,\Sigma)_{a_1\triangle a_2}-\p_0 \fT,$ 
for simplicity, where $\p_0 \fT$ is the bottom edge of $\fT$, as in Subsec. \ref{ss.braidedtensor}.
Note that we can identify the image of $\S_n((\Cut_c\,\Sigma)_{a_1\wedge a_2})\to \S_n((\Cut_c\, \Sigma)_{a_1\triangle a_2})$ with $\overline{\S}_n(\Sigma).$ We will use this identification below.

Let 
$$\nabla_{a_1,a_2}: \S_n(\Cut_c\, \Sigma)\to \S_n((\Cut_c\, \Sigma)_{a_1\triangle a_2}),\quad \nabla_{a_1,a_2}=\glue_{a_1,a_2}\circ \htw_{a_2}^{-1}.$$
It is an isomorphism by Propositions \ref{p.twist} and \ref{p.tri-glue}.

\blem\label{p.nabla}
(1) $\nabla_{a_1,a_2}\Theta_c(\S_n(\Sigma)) = \S_n((\Cut_c\, \Sigma)_{a_1\wedge a_2})\simeq \overline{\S}_n(\Sigma).$\\
(2) $\nabla_{a_1,a_2}$ restricted to $\im\, \Theta_c$ 
is the inverse to $\Theta_c.$
\elem

\bpr For every stated web diagram $D$ on $\Sigma$ we have
$$\diagg{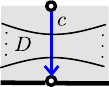}{0.6in}\xrightarrow{\Theta_c} \sum_{i_1,...,i_k} \diagg{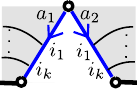}{0.6in} \xrightarrow{\nabla_{a_1,a_2}}\sum_{i_1,...,i_k} \left(\prod c_{i_j}^{-1}\right) \diagg{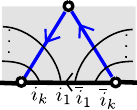}{0.7in}=D,$$ 
with the last equality by Relation \eqref{e.capnearwall}.
Hence,
$$\nabla_{a_1,a_2}\Theta_c=\id_{\overline{\S}_n(\Sigma)}.$$
Since $\nabla_{a_1,a_2}$ restricted to $\im \Theta_c$ is a bijection onto $\overline{\S}_n(\Sigma)$,
this identity implies (2). 
\epr

For any collection of boundary edges $X\subset \p\Sigma$, let  $\im_{\Sigma}\, \S_n(\Sigma-X)$ denote the image $\imath_*(\S_n(\Sigma-X))$ in $\S_n(\Sigma)$ of the homomorphism induced by $\imath: \Sigma-X\embed \Sigma$.

\blem\label{l.embedonto}
Let $b_1,b_2$ be two boundary components of $\Sigma$ separated by a puncture.
Then the embedding
$$\im_{\Sigma}\, \S_n(\Sigma-(b_1\cup b_2))\embed \im_{\Sigma}\, \S_n(\Sigma-b_1)\cap \im_{\Sigma}\, \S_n(\Sigma-b_2)$$
is onto.
\elem

\bpr
Consider an arc $\gamma$ parallel to $b_1\cup b_2$, as in Fig. \ref{f.t1t2gamma}.
\begin{figure}[h] 
   \diagg{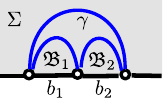}{0.6in} 
   \caption{}
   \label{f.t1t2gamma}
\end{figure}
Then the following diagram commutes:
$$\begin{array}{ccc}
\im_{\Sigma}\, \S_n(\Sigma-(b_1\cup b_2)) & \embed & \im_{\Sigma}\, \S_n(\Sigma-b_1)\cap \im_{\Sigma}\, \S_n(\Sigma-b_2)\\
\downarrow \Theta_\gamma & & \downarrow \Theta_\gamma\\
\im_{\Cut_\gamma\, \Sigma}\, \S_n(\Cut_\gamma\, \Sigma-(b_1\cup b_2)) & \embed & \im_{\Cut_\gamma\, \Sigma}\, \S_n(\Cut_\gamma\, \Sigma-b_1)\cap \im_{\Cut_\gamma\, \Sigma}\, \S_n(\Cut_\gamma\, \Sigma-b_2).
\end{array}$$
By Proposition \ref{r.inj}, both homomorphisms $\Theta_\gamma$ in the diagram are $1$-$1$ and, hence, it is enough to show that the embedding in the bottom line is onto. Since the skein algebra a surface is the tensor product of its connected components, 
it is enough to show the statement of the lemma for the triangle $\fT$ bounded by $b_1,b_2$ and $\gamma.$
By Proposition \ref{r.braidedtensor},  $\S_n(\fT)$ is isomorphic with $\S_n(\fB_1)\otimes \S_n(\cal \fB_2),$ as an $R$-module, where $\fB_i$ for $i=1$ are disjoint bigons in $\fT$ such that $b_i\subset \p \fB_i$, as in Fig. \ref{f.t1t2gamma}.
Through that isomorphism the statement of the lemma reduces to 
$$\Oq\otimes R\cdot 1\cap R\cdot 1\otimes \Oq=\S_n(\fM)=R,$$
where $1$ is the identify in $\Oq$ and $\fM$ is the monogon $\fT-b_1-b_2$. That follows from the fact that $R\cdot 1$ is a direct summand of $\Oq$, by \cite[Prop. 5.1.1]{Du}.
\epr

Let us continue with the proof of the theorem. To prove that the inclusion 
$$\Theta_c(\S_n(\Sigma))\subset Ker\, (\Delta_{a_1} - \fli\circ {}_{a_2}\Delta)$$ 
is an equality, we will show that for any $y\in Ker\, (\Delta_{a_1} - \fli\circ {}_{a_2}\Delta)$,
$$ x=\nabla_{a_1,a_2}(y)\in \S_n((\Cut_c\, \Sigma)_{a_1\triangle a_2})$$
lies in $\S_n((\Cut_c\, \Sigma)_{a_1\wedge a_2}).$
Then 
\beq\label{e.Th-xy}
\Theta_c(x)=y
\eeq by Proposition \ref{p.nabla}(2).
 
As mentioned above, it remains to be shown that $x\in \S_n((\Cut_c\, \Sigma)_{a_1\wedge a_2}).$
Recall that $\Delta_{a_1}$ and $\fli{}_{a_2}\circ \Delta$ map $y$ into $\S_n(\Cut_c\, \Sigma)\otimes \S_n(\fB)$, where the left and right edges of $\fB$ are denoted by $e_l,e_r,$ respectively.

Let $$z=\Delta_{a_1}(y)=\fli\circ {}_{a_2}\Delta(y).$$
By Proposition \ref{p.nabla}(2), 
$$\nabla_{a_1,b_l}(z)=\nabla_{a_1,b_l}\Delta_{a_1}(y)=y,$$
where $y$ at the end of the above equation is a skein in $\Cut_c\, \Sigma\sqcup \fB/(a_1=b_l)$ identified with $\Cut_c\, \Sigma$.
Then
$$\nabla_{b_r,a_2}\nabla_{a_1,b_l}(z)=\nabla_{b_r,a_2}(y)=\nabla_{b_r,a_2}\Theta_c(x)=x,$$
by \eqref{e.Th-xy}.

By Proposition \ref{p.nabla}(1), $\nabla_{a_1,b_l}\Delta_{a_1}(y)$ belongs to  $\S_n((\Sigma\sqcup \fB)_{a_1\wedge b_l}).$ By applying $\nabla_{b_r,a_2}$ to it, we see that
$$x\in \S_n((\Sigma\sqcup \fB)_{a_1\wedge b_l, b_r\triangle a_2}).$$
Since nablas for disjoint pairs of edges commute,
$$x=\nabla_{b_r,a_2}\nabla_{a_1,b_l}(z)$$
and by an analogous argument
$$x\in \S_n((\Sigma\sqcup \fB)_{a_1\triangle b_l, b_r\wedge a_2}).$$
Now the statement follows from
Lemma \ref{l.embedonto}. \end{proof}

The construction of the inverse of the splitting map (Lemma \ref{p.nabla}) implies the following:

\bcor\label{c.noendpoints}
For any union $\cal C$ of ideal boundary arcs of $\Sigma,$  
$$\Theta_c(\S_n(\Sigma-\cal C)) = \Theta_c(\S_n(\Sigma))\cap \S_n(\Cut_c\, \Sigma-\cal C)\quad \text{in}\ 
\S_n(\Cut_c \Sigma).$$
\ecor

\bpr
The inclusion $\subset$ is obvious and the opposite inclusion $\supset$ is obtained by applying the inverse map to $\Theta_c$ of Lemma \ref{p.nabla}.
\epr

\bpr[Proof of Theorem \ref{t-coinvariants}:]
$$\bSF\subset \{x\in \S_n(\Sigma): \Theta_{c_p}(x)=x\otimes 1\}$$ is obvious. 
The opposite inclusion, $\supset,$ is immediate for $\Sigma=\fM$: in that case $p$ is the only vertex of $\fM$, $\fM_{c_p}=\fB,$ where $c_p=\p_r \fB$ and
if $\Delta_{c_p}\, x= x\otimes 1$ for $x\in \fM_{c_p}$ then by applying $\ve\otimes 1$ we obtain $x=\ve(x)1\in R.$

More generally, let $\Sigma$ be a disjoint union of an essentially bordered $\Sigma'$ and of $\fM$ (with a vertex $p$ and an arc $c_p$ as above).
Let $\Delta_{c_p}\, x= x\otimes 1$ for $x\in \Sigma'\sqcup \fM_{c_p}$. Then by Theorem \ref{t.Aisom}, $x$ can be written as 
$x=\sum_{i=1}^N y_i\otimes z_i$, where $y_1,\dots, y_N\in \S_n(\Sigma')$ are linearly independent and $z_1,\dots, z_N\in \S_n(\fB)$.
Then $\Delta_{c_p} z_i=z_i\otimes 1$ for every $i$ and, hence, $z_1,\dots, z_N\in R.$ That concludes the proof of the inclusion $\supset$ in that case. 

Let $\Theta_{c_p}(x)=x\otimes 1$ now for some arbitrary $\Sigma, p, c_p$ (as above) and $x\in \S_n(\Sigma_p).$ We need to show that $x$ lies in the image of $\S_n(\Sigma_p-c_p)$ in $\S_n(\Sigma_p).$

Let $c_p'$ be an arc in $\Sigma_p$ parallel to $c_p,$ splitting $\Sigma_p$ into $\Sigma_p'$ and a bigon bounded by $c_p$ and $c_p'.$ Then 
$$\Theta_{c_p}\Theta_{c_p'}(x)=\Theta_{c_p'}\Theta_{c_p}(x)=\Theta_{c_p'}(x)\otimes 1.$$
Since 
$\Theta_{c_p'}(x)\in \S_n(\Sigma_p'\sqcup \fM_{c_p})$ the previous case implies that 
$\Theta_{c_p'}(x)$ is of the form $y\otimes 1\in \S_n(\Sigma_p')\otimes S_n(\fM_{c_p})$, for some $y\in \S_n(\Sigma_p')$. 
By Corollary \ref{c.noendpoints} above for $\cal C=c_p,$ we have $\Theta_{c_p'}(x)\in \Theta_{c_p'}(\S_n(\Sigma_p-c_p))$. Since $\Theta_{c_p'}$ is $1$-$1$, $x$ lies in $\S_n(\Sigma_p-c_p).$
\epr

%
\section{Relation to factorization homology, skein categories, and lattice gauge theory}
\label{s.facthom_etc}

%
\subsection{Factorization homology}
\label{ss.facthom}
%

Factorization homology was introduced by Beilinson and Drinfeld \cite{BD} in the setting of conformal field theory and then in \cite{Lu, AF, AFT} in the topological context. Given an algebraic object $\cal A$ called an $E_n$-algebra, it associates to oriented $n$-dimensional manifolds (with boundary) $M$ categories $\int_M \cal A,$ 
which are linear over a certain ring of coefficients $R$.

For $n=2$, the notion of $E_2$-algebra is equivalent to that of a braided tensor category. Important examples of such categories are the categories of finite dimensional representations of quantum groups $U_q(\mathfrak g).$ The factorization homology of surfaces for these categories was studied in \cite{BBJ}, where the authors proved that if
$\p \Sigma= S^1$ then $\int_M \cal A$ is equivalent to the category of left modules over a certain algebra $A_\Sigma$ (depending on the Lie algebra $\mathfrak g$). 

The factorization homology of \cite{BBJ} and skein categories (discussed below) are theories parallel to ours. 
We show:

\bthm\label{t.facthomo}
Let $R=\Bbbk(q)$ for a field $\Bbbk$ and let $E_2$ be the category of type $1$ finite dimensional representations of $U_q(sl(n))$ (over $R$). Then $A_{\Sigma_{g,p}}$  is isomorphic to $\S_n(\Sigma_{g,p}^*)$ (as $R$-algebras) for every $g\geq 0, p\geq 1$.
\ethm

On the one hand, factorization homology of \cite{BBJ} is more general in that it is defined for all semi-simple Lie algebras and it can be viewed as a quantization of the entire moduli stacks of representations, rather than just the character varieties.

On the other hand, one may consider our theory more elementary because it does not involve higher category theory. More importantly,
our stated skein modules are 
defined over non-field rings of coefficients, for all $3$-dimensional manifolds, and we worked out their theory for surfaces with multiple boundary components and multiple markings. Furthermore, unlike our stated skein algebras, the algebras $A_\Sigma$ of \cite{BBJ} are defined up to an isomorphism only.

The existence of our stated skein algebras over $\Z[q^\frac1{2n}]$ allows to construct quantum trace  
homomorphisms of the stated skein algebras into quantum tori, over any ground ring. It was done for $n=2$ by Bonahon-Wang, \cite{BW}, and more generally in \cite{CL, LY2}. 
The construction of quantum trace was generalized to all $n$ in \cite{LY3}, where two versions of quantum trace maps, quantizing respectively the length coordinates and the shear coordinates trace formulas, were introduced.

Embeddings into quantum tori allow to study algebraic properties $\S_n(\Sigma)$ and their representations.

The above works relate our algebras to the theory of quantum cluster algebras, which provide alternative quantizations of character varieties. Further connections to quantum cluster algebras are through \cite{CS, Sh, JLSS}.


For completeness, let us summarize briefly the construction of the factorization homology of \cite{BBJ} (in dimension $2$): 
it is based on the $(\infty,1)$-category $\mathsf{Mfld}^2$ whose objects are oriented surfaces (with boundary), morphisms are given by their embeddings, $2$-morphisms are isotopies between embeddings and higher order morphisms are isotopies between them. This category has a monoidal structure given by disjoint embeddings and it has a full subcategory $\mathsf{Disc}^2$ consisting of disks (with partial boundaries) and their embeddings. One can prove that any pivotal ribbon category defines a symmetric monoidal functor into a symmetric monoidal $(\infty,1)$-category $\cal C$ whose objects are certain presentable categories and its monoidal structure is given by the categorical product. 
Then $\int_M \cal A$ is the left Kan extension
$$\begin{tikzcd}
\mathsf{Disc}^2\  \arrow[r, "F"]
\arrow[d, hook]  
&  \cal C \\
\mathsf{Mfld}^2 \arrow[ru, "\int_- \cal A "'] 
& \\
\end{tikzcd},$$
which, in more concrete terms, is a certain colimit in $\cal C$ over all possible embeddings of collections of disks into a given surface.


\bpr[Proof of Theorem \ref{t.facthomo}]
Let $\mathfrak g=sl(n).$ Then the algebra $\mathfrak F_A$ of \cite{BBJ} is isomorphic with $\S_n(\Sigma_{0,2}^*)$, cf. \cite[Sec. 6.1]{BBJ} 
Furthermore, one can see that $A_{\Sigma_{1,1}}$ coincides with $\S_n(\Sigma_{1,1}^*),$ by comparing the ``gluing pattern'' of $\Sigma_{1,1}$ in \cite[Thm. 5.11]{BBJ} with ours in Fig. \ref{f.toruswithboundary}(right) or by \cite[Cor. 6.8]{BBJ}.

By \cite[Thm. 5.11]{BBJ}, $A_{\Sigma_{g,p}}$ is the braided tensor product of $p-1$ copies of $A_{\Sigma_{0,2}}=\S_n(\Sigma_{0,2}^*)$ and $g$ copies of $A_{\Sigma_{1,1}}=\S_n(\Sigma_{1,1}^*).$ Now the statement follows from Example \ref{ex.deltasum}.

\epr

%
\subsection{Skein categories}
\label{ss.skeincat}
%

Skein categories are categorical analogous of skein algebras introduced by Walker and Johnson-Freyd, \cite[p. 70]{Wa}, \cite[Sec. 9]{JF}. 
A framing of a point $p$ on surface $\Sigma$ is a choice of a non-zero vector $v\in T_p \Sigma.$
Let $\cal V$ be a ribbon category, linear over $R$. 
The {\bf ribbon category} $\mathsf{Rib}_{\cal V}(\Sigma)$ has objects given by finite sets of framed, signed disjoint points of $\Sigma$. 
Its morphisms are $R$-linear combinations of ribbon graphs in $\Sigma \times [0,1]$ (in the sense of Reshetikhin-Turaev) whose edges are decorated with objects of $\cal V$ and coupons are decorated with intertwiners. 
The ends of a ribbon graph $\Gamma$ in $\Sigma\times \{0\}$ (respectively: in $\Sigma\times \{1\}$) determine the source (and respectively: the target) of the morphism 
$\Gamma.$

For an oriented arc $C$, 
the Reshetikhin-Turaev construction defines a functor $\mathsf{RT}:$\\ ${\mathsf{Rib}_{\cal V}(C \times [0,1])\to \cal V.}$ (This functor was denoted by $\RT$ for the ribbon category $\cal C_n$ of Subsec. \ref{ss.based-t}.)

The {\bf skein category}, $\mathsf{Sk}_{\cal V}(\Sigma)$ is $\mathsf{Rib}_{\cal V}(\Sigma)$ modulo the relation on morphisms $\sum c_i\Gamma_i\sim 0$, whenever a restriction of $\sum c_i\Gamma_i$ to a certain cube $C\times [0,1]\times [0,1]$ is in the kernel of Reshetikhin-Turaev evaluation, $\mathsf{RT}.$

Cooke proved that for the category $\cal V$ of finite dimensional representations of a quantum group $U_q(\mathfrak g),$ the skein category of any surface coincides with its factorization homology $\int_\Sigma \cal V$, \cite{Co}. Furthermore, \cite{Fa,Ha,LY1} proved that for $\mathfrak g=sl(2)$ and $R$ a field, the skein category of $\Sigma$ with $\p \Sigma= S^1$ is equivalent to the category of left modules over $\S_2(\Sigma).$ 
By Theorem \ref{t.facthomo}, we obtain 
\bcor\label{c.skeincat}
For $\mathfrak g=sl(n)$ (for any $n$) and $R$ a field, the skein category of $\Sigma$ with $\p \Sigma= S^1$ is equivalent to the category of left modules over $\S_n(\Sigma).$ 
\ecor

Note that this equivalence is quite non-intuitive, as skein categories are built of unstated ribbon graphs with ends in $\Sigma\times \{0,1\}$ rather than of stated webs with ends in $\p \Sigma \times (-1,1)$ considered in stated skein algebras.

Corollary \ref{c.skeincat} asserts that the $sl(2)$-skein category of $\Sigma$ with $\p \Sigma= S^1$ has an internal algebra object isomorphic to $\S_2(\Sigma)$. In fact, by \cite[Thm. 1.1]{Ha} and \cite[Thm. 5.3]{Fa} this internal algebra object is isomorphic to $\S_2(\Sigma)$ as a $\mathcal Oq(SL(2))$-comodule algebra. We expect that this statement generalizes to $sl(n)$ for all $n$.

%
\subsection{Lattice Gauge Theory, Quantum moduli spaces}
\label{ss.latticegauge-quantmoduli}
%

A ciliated graph $\Gamma$ is a finite graph with an additional data specifying for each vertex of $\Gamma$ a linear order of half-edges adjacent to it. Each ciliated graph $\Gamma$ is ribbon and, hence, defines a surface which contracts onto $\Gamma.$
Inspired by an earlier Fock-Rosly's work \cite{FR}, Alekseev-Grosse-Schomerus and Buffenoir-Roche quantized moduli spaces of flat connections on such surfaces in \cite{AGS, AS, BR1,BR2}.  (See also \cite{BFK}.) Specifically, for each ciliated graph $\Gamma$ and a quantized coordinate Hopf algebra $O_q(G)$ they have defined an $O_q(G)$-comodule ${\cal L}(\Gamma)$, called quantum moduli space, quantizing a (properly defined) algebra of functions on the space of flat $G$-connections on $\Gamma.$

Let $\Sigma(\Gamma)$ be a surface without boundary realizing the ribbon structure on $\Gamma$ and let $\Sigma^0(\Gamma)$ be $\Sigma(\Gamma)$ with one of its punctures blown up into a disc, as in Fig. \ref{f.proj}. (Hence, $\Sigma^0(\Gamma)=\Sigma(\Gamma)_p$ for some puncture $p$, in the notation of Subsec. \ref{ss.kernel}. Note that $\Sigma^0(\Gamma)$ and $\Sigma(\Gamma)$ are uniquely determined up to a homeomorphism.)
Then, as observed in \cite{BBJ}, the defining equations for ${\cal L}(\Gamma)$ coincide with those induced by the gluing patterns of \cite{BBJ}. In other words, quantum moduli spaces are determined by the factorization homology of \cite{BBJ} and, consequently, for $G=SL(n),$ 
$${\cal L}(\Gamma)\simeq \S_n(\Sigma^0(\Gamma))$$ 
as $O_q(G)$-comodule algebras. This result was observed independently by the first author and proved by \cite{Ko} for $n=2$. 

By Theorem \ref{t-coinvariants},  the coinvariant subalgebra ${\cal L}(\Gamma)^{\Oq}$ is isomorphic with our projected skein algebra $\bar \S_n(\Sigma(\Gamma)).$ This result generalizes the results of \cite{BFK, Ko} for $n=2$.

%
\section{Relation to other known cases}
%

%
\subsection{Compatibility with stated Kauffman bracket skein modules of $3$-manifolds}
\label{ss.sKB}
%

The stated Kauffman bracket skein algebras (of  surfaces) of the first author \cite{Le-triang} were generalized to stated skein modules of marked $3$-manifolds in \cite{BL} (cf. also \cite{LY1}). We are going to prove that these modules are isomorphic with our $SL(2)$-skein modules, $\S_2(M,\cN)$.  

To relate these modules to ours, let us replace the variable $q$ of \cite{Le-triang} with $q^{1/2}$ and denote the resulted stated Kauffman bracket skein module by $\cS(M,\cN)_{q^{1/2}}$.  Let a {\bf framed link} in $\MN$ be a non-oriented $2$-web without sinks nor sources, stated by signs $\pm$. By definition $\cS(M,\cN)_{q^{1/2}}$ is the $R$-module freely spanned by isotopy classes of framed links subject to the relations \eqref{e.KB-Le}-\eqref{e.cap-near-w-Le} below. 

\bthm\label{t.su2} Suppose $(M,\cN)$ is a marked $3$-manifold. \\
(1) There is a unique  
 $R$-linear isomorphism $\Lambda: \cS(M, \cN)_{q^{1/2}}\to \cal S_2(M,\cN)$ which maps framed links $\al$ to stated $2$-webs by assigning arbitrary orientations to them, and changing the minus state to $1$ and the plus state to $2$.\\ 
(2) The splitting homomorphism of \cite{Le-triang, BL} coincides with ours through $\Lambda$. 
\ethm

\noindent{\it Proof of Theorem \ref{t.su2}:}
Let $\cal L(M,\cal N)$ be the set of all stated framed links in $(M,\cN)$.
First let us record the defining relations for $\cS(M,\cN)_{q^{1/2}}$:
\beq\label{e.KB-Le}
\cross{n}{n}{p}{}{}{}{}{}{}=q^{1/2} \walltwowall{n}{n}{}{}{}{}{}{} +q^{-1/2} \wallcapcupwall{n}{n}{}{}{}{}{}{}
\eeq
\beq\label{e.unknot-Le}
 W\cup \circlediag{} = -(q+q^{-1})\cdot W
\eeq
\beq\lb{e.caps-Le}
\capwall{<-}{}{$+$}{$+$}=\capwall{<-}{}{$-$}{$-$}=0,\quad \capwall{<-}{}{$+$}{$-$}=q^{1/4}, 
\capwall{<-}{}{$-$}{$+$}=-q^{5/4}\ \text{(by \cite[(18)]{Le-triang}).}    
\eeq
\beq\label{e.cap-near-w-Le}
\capnearwall{}= q^{-1/4}\twowall{<-}{}{}{$-$}{$+$}-q^{-5/4}\twowall{<-}{}{}{$+$}{$-$}.
\eeq
(This last equality is a consequence of applying a half-twist to \cite[(13)]{Le-triang}.)

For convenience, we draw diagrams with the arrow down, rather than up as in \cite{BL,Le-triang}, to make them compatible with the skein relations of our $\S_2(M,\cN).$ Since the half-twist is an invertible operation, they form an alternative set of defining skein relations of the stated skein module of \cite{Le-triang, BL}.

On the other hand, for $n=2$ our skein relations are: 
\beq\label{e.pm2} q^{1/2}\cross{n}{n}{p}{>}{>}{}{}{}{} - q^{-1/2}\cross{n}{n}{n}{>}{>}{}{}{}{} =(q-q^{-1}) 
\walltwowall{n}{n}{>}{>}{}{}{}{} \eeq
\beq\label{e.twist2} \kink=-q^{3/2}\horizontaledge{>}\eeq
\beq\label{e.unknot2} W\cup \circlediag{<} = -(q+q^{-1})\cdot W\eeq
\beq\label{e.sinksource2} \wallsinksourcetwowall{n}{n}{}{}{}{}
= -q 
\walltwowall{n}{n}{>}{>}{}{}{}{} +q^{1/2}
\cross{n}{n}{p}{>}{>}{}{}{}{} 
\eeq
\beq\label{e.vertex-wall-2}
\veenearwall= q^{-5/4}\left(\twowall{<-}{white}{black}{$2$}{$1$}-q\twowall{<-}{white}{black}{$1$}{$2$}\right)
\eeq
 {}
 \beq\label{e.caps-wall-2}
 \capwall{<-}{white}{$1$}{$1$}= \capwall{<-}{white}{$2$}{$2$}=0,  \capwall{<-}{white}{$1$}{$2$}=-q^{5/4} 
 ,\capwall{<-}{white}{$2$}{$1$}=q^{1/4}
 \eeq
 \beq\label{e.caps-wall-2a}
 \capnearwall{white}= -q^{-5/4}\twowall{<-}{white}{black}{$2$}{$1$}+q^{-1/4}\twowall{<-}{white}{black}{$1$}{$2$} 
 \eeq
 (By Proposition \ref{p.crossingrels}, Relation \eqref{e.crossp-wall} is redundant.)

\blem\label{l.2-orient}
The value of any framed link $T$ in $(M,\cN)$ considered as a $2$-web in $\cal S_2(M,\cN)$ does not depend on the orientation of $T.$
\elem

\bpr 
By \eqref{e.sinksource2}, \eqref{e.twist2} and \eqref{e.unknot2} we have
\beq\label{e.2kinks}
\twokinks{>}=q(q+q^{-1})\horizontaledge{>} +q^{\frac{1}{2}}\kink= \horizontaledge{>}.
\eeq
For any arc with a $2$-vertex near its end we have
\beq\label{e.arc-near-w}
\capveewall{$i$}=\capedgewall{$i$}{$\bar i$}{$i$}\cdot q^{-5/4}\cdot \begin{cases} -q & \text{for}\ i=1\\ 1 & \text{for}\ i=2\\ \end{cases} = -\edgewall{<-}{white}{$i$}.
\eeq

Hence, for any arc we have,\\
$$\walledgewall{}{}{white}{$i$}{$j$}= \wallcuspswall{<-}{<-}{white}{black}{$i$}{$j$} =
\walledgewall{}{}{black}{$j$}{$i$}$$
(Both ends may lie on the same marking in $\cN$.)
Similarly, by \eqref{e.2kinks}, for loops we have
$$
 \diagg{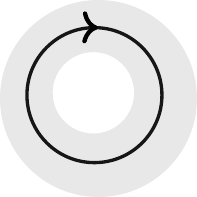}{0.6in}=   \diagg{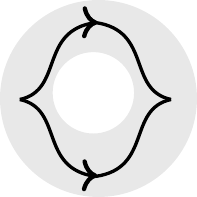}{0.6in} =  \scalebox{-1}[1]{\diagg{core-anul.pdf}{0.6in}}$$ 
\epr

 By Lemma \ref{l.2-orient}, assigning arbitrary orientations of links defines a map $\Lambda: R\cal L(M,\cN)\to \cal S_2(M,\cN)$ preserving relations \eqref{e.unknot-Le}-\eqref{e.cap-near-w-Le}. 
To see that \eqref{e.KB-Le} is preserved as well, we start with the following combinatorial observation: 

\def\kb#1{\hspace*{-.13in}\cross{n}{n}{#1}{}{}{}{}{}{}\hspace*{-.15in}}
\def\kbo#1{\hspace*{-.15in}\cross{n}{n}{#1}{>}{>}{}{}{}{}\hspace*{-.15in}}
\def\kbinfty{\hspace*{-.15in}\wallcapcupwall{n}{n}{}{}{}{}{}{}\hspace*{-.1in}} 
\def\kbsinksource{\hspace*{-.1in}\wallsinksourcetwowall{n}{n}{}{}{}{}\hspace*{-.11in}}
\def\kbsmooth#1{\hspace*{-.1in}\walltwowall{n}{n}{#1}{#1}{}{}{}{}\hspace*{-.1in}}

We say that a crossing \kb{p} or \kb{m}  
in an unoriented framed link $L$ in $(M,\cN)$ is of \kbinfty-{\bf type} if $L$ with that smoothing can be oriented so that it is a $2$-web with only two $2$-vertices, looking like \kbsinksource. We define a crossing of 
\kbsmooth{}-type analogously. It is straightforward to verify that every crossing is of one of these two types. (However, it can be of both types simultaneously, if the crossing involves an arc.) 

\blem
If  a crossing \kbo{p} or \kbo{m} in $L$ 
is of \kbinfty-{\bf type} then for that smoothing of $L$ we have $\Lambda(\,\kb{p}\,)= \kbsinksource .$
 {}
\elem

\bpr There are two possibilities:\\
(1) the NE end of  is connected to the NW or the SW end. Then the statement follows by introducing two $2$-valent vertices as in \eqref{e.2kinks}.\\ 
(2) the NE end is connected to the marking. Then one of the SE or SW ends must be connected to a marking and the statement follows by applying \eqref{e.arc-near-w} twice near the markings. 
\epr

Suppose that the crossing \kb{p} 
on the left side of \eqref{e.KB-Le} is of \kbinfty-type. 
Then by the above lemma, $\Lambda$ maps that skein relation to 
$$\kbo{p} = q^{1/2}\kbsmooth{>} + q^{-1/2}\Lambda\left(\kbinfty\right)=q^{1/2}\kbsmooth{>} + q^{-1/2}\kbsinksource,$$
which coincides with \eqref{e.sinksource2} in $\S_2(M,\cN)$.
The proof for a crossing of \kbsmooth{}-type is analogous. Thus the $R$-linear map $\Lambda: \cS(M, \cN)_{q^{1/2}}\to \cal S_2(M,\cN)$ is well-defined. 

We prove that $\Lambda$ is an isomorphism by constructing its inverse:  Consider first the map $R\cal W_2(M,\cN)\to \cS(M, \cN)_{q^{1/2}}$
sending webs $\alpha$ to $(-1)^{|V_2(\alpha)|} \bar \alpha$, where $\bar \al$ is the result of forgetting the orientation and of smoothing all the 2-valent vertices. It is immediate to see that it factors through relations \eqref{e.pm2}-\eqref{e.caps-wall-2a} into a homomorphism $\S_2(M,\cN)\to \cS(M, \cN)_{q^{1/2}}$.
 Since $\cS(M, \cN)_{q^{1/2}}$ and $\S_2(M,\cN)$ are spanned by links
 (i.e. webs with no sinks nor sources) and since $\Lambda$ and the above map 
$\S_2(M,\cN)\to \cS(M, \cN)_{q^{1/2}}$ are inverses of each other on links, the statement follows.

The proof of part (2) is straightforward.
\qed

\subsection{Compatibility with the SU(n)-skein modules}
\label{ss.SLn}
\def\UM{UM}

In this subsection we are going to show that for any $3$-manifold $M$ and any $n$ our skein module $\S_n(M, \emptyset)$ is isomorphic with the $SU(n)$-skein module introduced by the second author in \cite{Si}. That module is built of  {\bf based $n$-webs} in $M$ which are defined as our $n$-webs in $(M,\emptyset)$, except that the half-edges incident to any of their $n$-valent vertices are linearly ordered. We denote the set of all such webs up to isotopy by $\cal W_n^b(M)$.
Let $\S_n^b(M)$ be the quotient of the $R$-module freely generated by $\cal W_n^b(M)$ subject to relations \eqref{e.ma10}-\eqref{e.ma40}, which are the internal annihilators of the functor $\RT$. For an invertible $u\in R$ let $\S_n^b(M;u)$ be an $R$-module defined as $\S_n^b(M)$, except that the right side of \eqref{e.ma40} is multiplied by $u$. From the definition we see that $\S_n^b(M;u)$ is isomorphic to $\S_n^b(M)$ via the map $\al \to u^{\#{ sinks }(\al)} \al$. The $SU_n$-skein module defined in \cite{Si} is actually $\S_n^b(M;(-q)^{n(n-1)/2})$.

Given a based $n$-web $\alpha$, let $\al^\circ$ denote the underlying $n$-web in $\cal S_n\MN.$
Recall that every oriented 3-manifold has a spin structure.

\begin{theorem} Let $M$ be an oriented 3-manifold.\\
(a) For $n$ odd, the operation $\alpha\to \alpha^\circ$ on based $n$-webs extends to an isomorphism $\S_n^b(M) \cong \S_n(M, \emptyset)$.\\
(b) Every spin structure on $M$ defines a function $s: \cal W_n^b(M)\to \{1,-1\}$ such that the map $\al \to f(\al)=(-1)^{s(\al)} \al^\circ$ induces a unique $R$-linear isomorphism  $\S_n^b(M) \cong \S_n(M, \emptyset)$.
\end{theorem}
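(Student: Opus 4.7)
My plan is to treat the two parts separately, since the odd case requires no spin structure and the even case reduces to a sign-bookkeeping problem that spin structures are designed to solve.

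For part (a), I will define the map $\phi: R\mathcal{W}_n^b(M) \to \mathcal{S}_n(M, \emptyset)$ by $\alpha \mapsto \alpha^\circ$, i.e.\ by forgetting the linear order at each $n$-valent vertex and retaining only the induced cyclic order. First I need to check that $\phi$ is well-defined on the level of $R\mathcal{W}_n^b(M)$: two linear orders at a vertex inducing the same cyclic order differ by a power of the cyclic shift, which is a product of $n-1$ transpositions. By the cyclic rotation identity \eqref{e.cyclic0}, such a shift acts on a based web in $\mathcal{S}_n^b(M)$ by $(-1)^{n-1}$, which is $+1$ for $n$ odd, so the map does not depend on the chosen lift of the cyclic order. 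Next I will verify that $\phi$ factors through the defining relations \eqref{e.ma10}--\eqref{e.ma40} of $\mathcal{S}_n^b(M)$. The crossing relation \eqref{e.ma10} is literally \eqref{e.pm}, and the sink/source relation \eqref{e.ma40} is \eqref{e.sinksource}. The framing relation \eqref{e.ma20} carries coefficient $t_0$ while \eqref{e.twist} carries $t = (-1)^{n-1} t_0$, and the unknot relation \eqref{e.ma30} carries $[n]$ while \eqref{e.unknot} carries $(-1)^{n-1}[n]$; both signs are $+1$ for $n$ odd, so the relations match. The inverse $\psi:\mathcal{S}_n(M,\emptyset)\to \mathcal{S}_n^b(M)$ is then constructed by choosing, for each vertex of a cyclic $n$-web, any compatible linear order; the same parity argument shows $\psi$ is well-defined for $n$ odd and inverts $\phi$.

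For part (b), when $n$ is even the obstruction $(-1)^{n-1}=-1$ is non-trivial in three places: cyclic shifts at vertices, the framing relation, and the unknot relation. I will construct $s: \mathcal{W}_n^b(M)\to \mathbb{Z}/2$ from the spin structure $\sigma$ as a sum of local contributions. For each connected component of the underlying framed graph, the framing vector together with the edge tangent (and their cross product) trivializes $TM$ along the component, and this trivialization differs from the one determined by $\sigma$ by a class in $H^1(\text{component};\mathbb{Z}/2)$; integrate over the component to get a $\mathbb{Z}/2$ invariant. At each $n$-valent vertex, use the linear order of half-edges together with their framing vectors and tangent directions (viewed in the trivialization of $TM$ at the vertex coming from $\sigma$) to extract another $\mathbb{Z}/2$ contribution depending on the parity of the permutation relating the given linear order to a spin-structure-normalized one. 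Then set $f(\alpha)=(-1)^{s(\alpha)}\alpha^\circ$ and check that (i) under a cyclic shift at a vertex, $s$ changes by $1$ mod $2$ (compensating \eqref{e.cyclic0}), (ii) under adding a kink, $s$ changes by $1$ mod $2$ (compensating $t_0$ vs.\ $t$), and (iii) under introducing a trivial unknot, $s$ changes by $1$ mod $2$ (compensating $[n]$ vs.\ $(-1)^{n-1}[n]$). For odd $n$, every contribution to $s$ can be taken to be $0$, recovering part (a). The inverse map is built as in (a), lifting cyclic orders to linear ones consistently with $\sigma$.

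The hard part will be making the spin-structure construction of $s$ canonical and checking its compatibility with the sink/source relation \eqref{e.ma40}. Both sides of that relation have $n$-valent vertices whose linear orderings interact nontrivially with $s$, and one must verify that the sum over $\sigma\in S_n$ with weights $(-q)^{\ell(\sigma)}$ produces the same combination on both sides after the $(-1)^{s}$ twist; equivalently, that the parity contribution of a minimum-crossing positive braid matches the permutation sign in the way needed to make the identity hold after passing from $\mathsf{RT}$ to $\mathsf{RT}_0$. I expect this to reduce to the observation that the sign-discrepancy between $\mathsf{RT}$ and $\mathsf{RT}_0$ recorded in \eqref{e.RTp} is itself controlled by a spin-structure invariant (the parity of the number of downward critical points in a diagram is a spin-cobordism invariant), so the local compatibility checks assemble into a global well-defined invariant. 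The final step is to verify that $f$ is bijective; since forgetting the linear order is surjective and the inverse is defined by choosing a spin-compatible lift, this follows once $s$ is correctly defined.
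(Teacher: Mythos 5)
Your part (a) is correct and is essentially the paper's own argument: for $n$ odd the cyclic-shift sign $(-1)^{n-1}$ in \eqref{e.cyclic0} is trivial and the coefficient discrepancies $t_0$ versus $t$ and $[n]$ versus $(-1)^{n-1}[n]$ disappear, so the based and unbased presentations literally coincide.

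Part (b) has a genuine gap, in two respects. First, the construction of $s$ --- which is the entire content of the statement --- is left as ``the hard part,'' and the version you sketch is not well-defined: a spin structure does not single out a linear order of the $n$ half-edges at a vertex, so ``the parity of the permutation relating the given linear order to a spin-structure-normalized one'' has no meaning. At best one could normalize a cyclic order, and then the permutation is defined only up to cyclic shifts, whose parity ($(-1)^{n-1}=-1$ for $n$ even) is exactly the ambiguity you are trying to resolve. The paper's construction avoids per-vertex combinatorics entirely: isotope $\al$ so that its framing is normal and all half-edges at each vertex share the same velocity and framing vector; each edge then lifts to the oriented frame bundle $UM$ via (velocity, framing), and because $n$ is even the union of these lifts is a $\Z/2$ one-cycle $\tilde \al$ (each vertex occurs $n\equiv 0$ times in the boundary --- this is precisely where evenness of $n$ is used). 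Then $s(\al)$ is the pairing of the spin structure, regarded as a class in $H^1(UM;\Z/2)$ nontrivial on the fibers, with $[\tilde\al]$. This is manifestly an isotopy invariant of the based web, equals $1$ on the trivial loop, and flips under adding a kink, which is all that the coefficient discrepancies require.

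Second, you misplace the difficulty. Relations \eqref{e.ma40} and \eqref{e.ma4}, and likewise \eqref{e.ma10} and \eqref{e.ma1}, have \emph{identical} coefficients, so there is no sign to compensate in the sink/source relation and no role for the $\#\downarrow$ count of \eqref{e.RTp}; the only discrepancies between the two presentations are the factors $(-1)^{n-1}$ in the kink relation (\eqref{e.ma20} vs.\ \eqref{e.ma2}) and in the unknot relation (\eqref{e.ma30} vs.\ \eqref{e.ma3}). What \eqref{e.ma40} does require is that $s$ take the same value on every term appearing in it, which in the frame-bundle formulation holds because the lifted cycles of the two sides are homologous in $UM$; your proposed reduction to a ``spin-cobordism invariance of the number of downward critical points'' is a red herring.
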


\begin{proof} 
 (a) For $n$ odd, Relations \eqref{e.cyclic0} which are consequences of the defining relations,  \eqref{e.ma10}-\eqref{e.ma40}, show that a based $n$-web $\al$, as an element of $\S_n^b(M)$, is determined by $\al^\circ$. Furthermore, the defining relations  \eqref{e.ma10}-\eqref{e.ma40} coincide with the defining relations \eqref{e.ma1}-\eqref{e.ma4}. 

(b) Let $n$ be even now.  Fix a Riemannian metric on $M$ and let $UM$ be a principal $SO(3)$-bundle associated to the tangent bundle of $M$.  A section at a point is the group $SO(3)$, which can be identified with the set of all ordered, positively oriented, orthonormal bases $(v_1, v_2, v_3)$ of the tangent space at the point. Any such ordered orthonormal basis is totally determined by the first two vectors. A smooth embedding $a: [0,1] \to M$ equipped with a normal vector field  defines a lift $\tilde a: [0,1] \to \UM$ where the first and the second vectors are respectively the velocity vector  and the  framing vector,  normalized to have length 1. 
For a based $n$-web $\al$ define $s(\al)\in \{0,1\}$ as follows:
 First isotope $\al$ so that the framing is normal everywhere, and at every $n$-valent vertex the $n$ half edges  have the same velocity vector. The latter condition implies the lift of the endpoints at all the half-edges at an $n$-valent vertex are the same. As $n$ is even   
 the lifts of all edges of $\al$ and of all its circle components form a $\BZ/2$ one-cycle $\tilde \al$ of $\UM$. 
 Recall that a spin structure $s$ of $M$ can be identified with a cohomology class in $H^1(UM, \BZ/2)$ which is non-trivial at the section at every point of $M$.
 Let $s(\al)$ be the evaluation of the spin structure, considered as an element of  $H^1(UM, \BZ/2)$ on $\tilde \al$.
Clearly $s(\al)$ depends only on the isotopy class of $\al$. From the definition, 
$s(\al)=1$ if $\al$ is the trivial loop. If $\al'$ is the result of adding a positive twist to an edge or loop of $\al$ then $s(\al') =- s(\al)$. Thus $f$ maps the defining relations  \eqref{e.ma10}-\eqref{e.ma40} respectively to the defining relations \eqref{e.ma1}-\eqref{e.ma4}. Hence it extends to a well-defined $R$-linear map $f:\S_n^b(M) \to \S_n(M, \emptyset)$.

For the inverse, note that Relations of \eqref{e.cyclic0} show that the map $R {\mathcal W}_n(M, \emptyset) \to \S_n^b(M)$ sending $\al^\circ \to (-1)^{s(\al)} \al$ for every based $n$-web $\alpha$ is well-defined. Since $\bar f$ maps the defining relations   \eqref{e.ma1}-\eqref{e.ma4} to the defining relations \eqref{e.ma10}-\eqref{e.ma40}, it descends to a well-defined $R$-linear map from $\S_n(M, \emptyset)$ to $\S_n(M)$, which is the inverse of $f$.
\end{proof}

%

\subsection{Compatibility with Higgins' SL3 skein algebras}
\label{ss.higgins}
%

In his recent work \cite{Hi} Higgins introduced his version of stated $SL_3$-skein algebras, denoted by $\S_q^{SL_3}(\Sigma)$, of punctured bordered surfaces $\Sigma$.  His skein algebra is the $R$-module freely generated by $3$-webs stated by $-1,0,1$, subject to his system of skein relations. 

Let us identify Higgins's states $1,0,-1$ of Higgins with our states $1,2,3,$ respectively.

\bthm\label{t.higgins}
For any punctured bordered surface $\Sigma$ there is an isomorphism  
$\phi: \S_q^{SL_3}(\Sigma)\to \S_3(\Sigma)$ sending every stated $3$-web $\alpha$ to
\beq (-1)^{h_-(\alpha)+v_3(\alpha)} \cdot q^{(3v_3(\alpha)+S_{in}(\alpha)-S_{out}(\alpha))/2}\cdot \alpha, 
\label{e.phi}
\eeq
where 
\bite
\item $v_3(\alpha)$ is the number of $3$-valent sources and sinks of $\alpha$, 
\item $h_-(\alpha)$ is the number of Higgins' $-1$ states in $\alpha,$ and
\item $S_{in}(\alpha)$ and $S_{out}(\alpha)$ are sums of Higgins' states 
of all edges
edges coming into and coming out of the boundary respectively.
\eite
\ethm


We thank V. Higgins for suggesting the above formula to us.

Furthermore, our theory recovers most of Higgins' for $n=3$. Specifically, Higgins constructed splitting homomorphisms for his skein algebras and an isomorphism $\S_q^{SL_3}(\Sigma)\simeq \mathcal O_q(sl_3)$. It is straightforward to check that these maps coincide with ours through $\phi.$ Higgins also proved 
a version of Theorem \ref{r.braidedtensor} for $n=3$. 
(However, additionally, he defined bases of his skein algebras $\S_q^{SL_3}(\Sigma)$ for all pb surfaces $\Sigma$.
There appears no easy generalization of these bases to $n>3$, as the relay on the confluence method of \cite{SW}, which works for $n=2$ and $3$ only.)\vspace*{.1in} 

\def\tphi{\tilde \phi}

\noindent{\it Proof of Theorem \ref{t.higgins}:}
By Identity \eqref{e.crossing},
\beq\label{e.crossp3}
\Iweb{>}{}=q^{\frac{1}{3}}\crossround{->}{p} - q\smoothA{>}{>}.
 \eeq
 By capping the skeins of Eq. \eqref{e.crossp3} from the top, we get
\beq\label{e.bigon}
\diagg{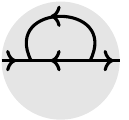}{.5in} = q^{\frac13}\kink - q\diagg{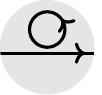}{.5in}= 
\left(q^3-q(q^2+1+q^{-2})\right)\horizontaledge{>}= -(q+q^{-1})\horizontaledge{>}.
\eeq
Taking the reflection of  Eq. \eqref{e.crossp3},
\beq\label{e.crossn3}
\Iweb{>}{}= q^{-\frac13}\crossround{->}{m} -q^{-1}\smoothA{>}{>}
\eeq
Hence, we have 
\begin{multline}\label{e.square}
\diagg{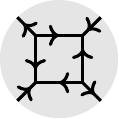}{.5in}=q^{\frac 13} \diagg{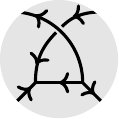}{.5in}-q \diagg{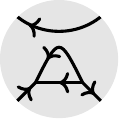}{.5in} =\\
q^{\frac 13}\left(q^{-\frac13} \smoothB{>}{<}-q^{-1}\diagg{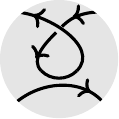}{.5in}\right) +q(q+q^{-1}))\smoothA{<}{>}=
\smoothA{<}{>}+\smoothB{>}{<},
\end{multline}
by Eq. \eqref{e.bigon}.

Let $\W_3^H({\Sigma})$ be the set of all isotopy classes of 3-webs over ${\Sigma}$ with states $-1,0,1$. Consider an $R$-linear isomorphism $\tphi: R \W_3^H({\Sigma}) \to R\W_3({\Sigma})$ given on stated $3$-webs $\alpha$ by \eqref{e.phi}.

By definition, the Higgins algebra $\S_q^{SL_3}(\Sigma)$ is $ R \W_3^H({\Sigma})$ modulo his skein relations \cite[(I1a)-(I4b),(B1)-(B4)]{Hi}. It is easy to see that Higgins' internal relations are pull backs under $\tphi$ of Relations \eqref{e.crossp3},
\eqref{e.crossn3}, \eqref{e.square}, \eqref{e.bigon}, and \eqref{e.unknot}, respectively.

Higgins' boundary relations (B1) and (B3) are pull backs of Relation  \eqref{e.vertexwall3}, (B4) is a pullback of \eqref{e.vertexwall3}, and, finally,  (B2) is a pull back of \eqref{e.crossn3} at the boundary combined with \eqref{e.wallcross}.


Hence, we showed that $\tphi$ descends to an $R$-linear homomorphism $\phi: \S_q^{SL_3}(\Sigma)\to \S_3(\Sigma).$ The definition of $\phi$ suggests an obvious inverse homomorphism 
$\S_3(\Sigma)\to \S_q^{SL_3}(\Sigma)$ and, indeed, one can verify that it is well defined. However, since checking that it respects our relation \eqref{e.sinksource} requires a lengthy calculation, we enclose an alternative proof of $\phi$ being an isomorphism:

Since it is clearly a surjective algebra homomorphism, it remains to show that $\phi$ is injective. 
From the definition it clear that $\phi$ commutes with the splitting homomorphism.
Since $\S_q^{SL_3}(\Sigma)$ satisfies the splitting homomorphism,  $\S_q^{SL_3}(\Sigma)=\mathcal O_q(sl_3)$, and the gluing over a triangle is given by the same isomorphism as described in Theorem \ref{r.braidedtensor}. Theorem \ref{t.Aisom} is also valid with $\S_3$ replaced by $\S_q^{SL_3}$. Part (2) of Theorem \ref{r.braidedtensor} shows that $\phi$ is an isomorphism when ${\Sigma}$ is essentially bordered.

Suppose ${\Sigma}$ is a connected, having empty boundary, and at least one puncture. Let $c$ be an ideal arc of ${\Sigma}$. In the commutative diagram
\be
\begin{tikzcd}
\S_q^{SL_3}({\Sigma})  \arrow[r,hook, " \Theta_c "]
\arrow[d,"\phi"]  
&  \S_q^{SL_3} (\Cut_c {\Sigma}) \arrow[d,"\cong"] \\
\S_3({\Sigma}) \arrow[r,  " \Theta_c "] & \S_3 (\Cut_c {\Sigma})
\end{tikzcd}
\notag
\ee
the upper $\Theta_c$ is injective by Higgins result, which forces $\phi$ to be injective.

\def\relz{  \raisebox{-6pt}{\incl{.6 cm}{relp}} }
\def\Rel{\mathrm{Rel}}

Consider the remaining case when $\Sigma$ is a closed surface without ideal point. Remove a point $p$  from $\Sigma$ to obtained a pb surface ${\Sigma}'$ having one puncture. Since for both $\S= \S_3$ and $\S= \S_q^{SL_3}$ we have
$\S({\Sigma}) = \S({\Sigma}') /\Rel $, where $\Rel$ is the relation $\relz$, we conclude that $\phi:\S_q^{SL_3}(\Sigma) \to \S_3(\Sigma)$ is an isomorphism. This completes the proof.
\qed

%
\subsection{Relation to the Frohman-Sikora SU(3)-skein algebras}
\label{ss.FS-SU3}
%

Frohman and the second author considered in \cite{FS} the ``reduced $SU(3)$-skein algebra'' of marked surfaces built of unstated $3$-webs, subject to the $SU(3)$-skein relations of \cite{Ku}, 
extended by certain boundary skein relations, which depend on an invertible parameter $a\in R$.
We denote that algebra by $\cal S_{FS}(\Sigma, B)$ for the value $1$ of that parameter.

For an unstated $3$-web $\alpha$ in $\Sigma$, let $\eta_+(\alpha)$ (respectively: $\eta_-(\alpha)$) denote $\alpha$ stated with $3$s (respectively: $1$'s) at all its ends. 

\bthm\label{t.SU3}\ \\
(1) For any punctured bordered surface $\Sigma$, the above operations extent to $R$-linear homomorphisms 
$$\eta_\pm: \cal S_{FS}(\Sigma)\to \S_3(\Sigma).$$ 
(2) $\eta_\pm$ are embeddings and $\eta_\pm(\cal S_{FS}(\Sigma))$ are direct summands of $\S_3(\Sigma).$
\ethm

\bpr
(1) 
$\eta_\pm$ maps the internal relations of \cite{FS} to \eqref{e.unknot},\eqref{e.crossp3}-\eqref{e.square}, and the boundary relations (for $a_{FS}=1$) to 
$$\capwall{<-}{white}{$i$}{$i$}=0=\forkwall{<-}{black}{black}{white}{$i$}{$i$},\quad 
\Hwall{<-}{white}{black}{}{$i$}{$i$}= \twowall{<-}{white}{black}{$i$}{$i$},
$$
for $i=1,3$, which are satisfied by \eqref{e.capwall}, \eqref{e.Hib1}, and \eqref{e.FSboundary0}.
 
(2) To prove that we identify $\S_3(\Sigma)$ with Higgins' skein algebra, through Theorem \ref{t.higgins}. Now it is easy to see that $\eta_\pm$ map the basis of reduced non-elliptic webs without British highways of \cite{FS} $1$-$1$ into the basis composed of irreducible webs of \cite{Hi}. 
\epr

\def\hR{\widehat{\mathcal R}}
\def\OqM{\cO_q(M(n))}
\def\Oq{\cO_q(SL(n))}
\def\OqZ{\cO_{q,\BZ}(SL(n))}
\def\Zv{{\BZ[v^{\pm1}]}}
\def\Uqq{{U_q(sl_n) } }
\def\tUqq{\widetilde{\Uqq}}
\def\Qv{{\BQ(v)}}
\def\End{\mathrm{End}}
\def\dl{\partial_\ell}
\def\dr{\partial_r}
\def\ba{{\mathbf a}}
\def\buu{{\mathbf u}}
\def\bX{{\mathbf X}}
\def\bY{{\mathbf Y}}

\appendix
\section{Proof of Proposition \ref{p.Xaction} (A calculation of matrices of $X$.)}
\label{s.appendixA} 

We need to prove Identity   \eqref{e.Xij}. The generators 
The quantized enveloping algebra $\Uq$ is generated by $E_i, F_i, K_i^{\pm 1}$ with relations given in \cite{KS}.
Its action on $V=\Q(q)^n$ with the standard basis $e_1,\dots, e_n$ is given by 
\beq E_i e_j = \delta_{i,j} e_{i+1}, \ F_i e_j = \delta_{i,j+1} e_i, \ K_i e_j = q^{\delta_{i,j+1} - \delta_{i,j}} e_j.
\label{e.EFK0}
\eeq
Note that $e_n$ is the highest weight vector.

By definition \cite{ST}, a half-ribbon element is an invertible element $X\in \tUqq$ satisfying
\be 
\cR=  (X^{-1} \ot X^{-1} ) \Delta(X),\  \text{and} \ X^2 = \vartheta, \text{the ribbon element}.
\label{e.htwist}
\ee
 In \cite[Section 4]{ST}, a half-ribbon element,  denoted here by $X_0$,  was constructed based on work of Kirillov-Reshetikhin \cite{KR} and Levendorskii-Soibelman \cite{LS}. 

To calculated the action of $X_0$, we use the following identities from \cite[Lemma 3.10]{ST}, 
\begin{align}
  X_0 F_i X_0^{-1} &= - E_{n-i },   \label{e.FF} \\
   X_0 K_i X_0^{-1} & = K_{n-i}^{-1}.
\label{e.EFK}
\\
X(T_{w_0}^{-1}(e_n)) & =  t_0^{1/2} e_n, 
\label{e.T0}
\end{align}
where $T_{w_0}\in \tUqq$ the quantum braid group element corresponding to the longest element $w_0$ of the symmetric group $S_n$ whose exact definition is not needed here.

Let  $\delta$ be the sum of all the fundamental weights. From \eqref{e.EFK} we get
\beq 
X_0 K_\delta X_0^{-1} = K_\delta^{-1}. 
\notag 
\eeq 
It follows that $X:= K_\delta^{-1} X_0$ also satisfies \eqref{e.htwist} and hence is a half-ribbon element. Actually $X$ is a half-ribbon element considered in \cite{KR,LS}.

If $x\in V$ has weight $\lambda$, then $T_{w_0}(x)$ has weight $w_0(\lambda)$. As each weight subspace of $V$ is 1-dimensional and $T_{w_0}$ is invertible, we have
$$T_{w_0}^{-1}(e_n) = c e_1, \quad 0 \neq c \in \BQ(v).$$

By \cite[Proposition 5.9]{KT}, there are positive integers $m_1,\dots, m_k$ and a sequence $i_1,\dots, i_k \in \{1,\dots, n-1\}$ such that
$$F_{i_1}^{(m_1)} \dots  F_{i_k}^{(m_k)} (e_n) =T_{w_0}^{-1}(e_n) = c e_1, \quad \text{where}\ F_i^{(m)} = F_i^m /[m]!$$ 
As $F_i^2=0$ on $V$, all the $m_j$ must be 1. Since $F_i e_j$ is either 0 or another $e_{j'}$ by Eq. \eqref{e.EFK0},  we must have $c=1$. Hence $T_{w_0}^{-1}(e_n) = e_1$, and Eq. \eqref{e.T0} becomes
$$X_0(e_1) = t_0^{1/2} e_n.$$
Applying $F_{n-1}$ to the above equation and using \eqref{e.FF}, we get $X_0 e_2 = - t_0^{1/2} e_{n-1}$. Continue applying $F_{n-2}, F_{n-3}, \dots$ and using \eqref{e.FF}, we get 
$$X_0 e_j = (-1)^{j-1} t_0^{1/2} e_\bj.$$

Since $K^{-1}_\delta$ acts on $V$ by 
$ K^{-1}_\delta (e_i) = q^{ \frac{n+1}{2}-i}   e_i,$
we the matrix of the action of $X$ on $V$ 
$$
X^i_j = \delta_{i,\bar j}\ (-1)^{n-i} t_0^{1/2} q^{ \frac{n+1}{2}-i} =  \delta_{i,\bar j}\ c_{i}. 
$$

The action of $X$ on a dual space is given by the antipode $S$. By \cite[Proposition 4.3]{ST} we have
$S(X) = gX$. From here one can easily calculate
$$X e^i = c_\bi e^\bi = f^i.$$
It follows that the action of $X$ on $V^*$ in the basis $\{f^1, \dots, f^n\}$ is given by $X^i_j = \delta_{i,\bar j} c_i$. This completes the proof of Identity  \eqref{e.Xij}. 


\end{document}